\tikzset{shorten <>/.style={shorten >=#1,shorten <=#1}}
\definecolor{darkgreen}{rgb}{0,0.30,0} 
\definecolor{darkred}{rgb}{0.75,0,0}
\definecolor{darkblue}{rgb}{0,0,0.6} 
\renewcommand*{\backref}[1]{}
\renewcommand*{\backrefalt}[4]{({%
    \ifcase #1 Not cited.%
          \or On p.~#2%
          \else On pp.~#2%
    \fi%
    })}
\def\makeautorefname#1#2{\expandafter\def\csname#1autorefname\endcsname{#2}}
\newtheorem{thm}{Theorem}[section]
\newtheorem{cor}{Corollary}[section]
\newtheorem{prop}{Proposition}[section]
\newtheorem{lem}{Lemma}[section]
\theoremstyle{definition}
\newtheorem{defn}{Definition}[section]
\newtheorem{exmp}{Example}[section]
\newtheorem{notn}{Notation}[section]
\newtheorem{rem}{Remark}[section]
\let\c@cor=\c@thm
\let\c@prop=\c@thm
\let\c@lem=\c@thm
\let\c@conj=\c@thm
\let\c@defn=\c@thm
\let\c@df=\c@thm
\let\c@exmp=\c@thm
\let\c@notn=\c@thm
\let\c@rem=\c@thm
\let\c@sch=\c@thm
\let\c@equation\c@thm
\newcommand{\R}{\mathbb R}
\newcommand{\Z}{\mathbb Z}
\newcommand{\Tub}{\textup{Tub}}
\newcommand{\id}{\textup{id}}
\newcommand{\colim}{\textup{colim}\,}
\newcommand{\sma}{\wedge}
\newcommand{\ti}{\widetilde}
\newcommand{\mc}{\mathcal}
\newcommand{\Diff}{\textup{Diff}\,}
\newcommand{\tipartial}{\widetilde\partial}
\newcommand{\Fun}{\textup{Fun}}
\newcommand{\sSet}{\textup{sSet}}
\newcommand{\ssSet}{\textup{ssSet}}
\newcommand{\mirror}{{mirror}}
\newcommand{\mr}{\textup{mr}}
\newcommand{\Sm}{\mathrm{Sm}}
\newcommand{\Emb}{\mathrm{Emb}}
\newcommand{\Mansm}{\mc M^\mathrm{Sm}}
\newcommand{\Man}{\mc M^\mathrm{Emb}}
\newcommand{\Manst}{\mc M^\mathrm{Stab}}
\newcommand{\Hanst}{\mc H^\mathrm{Stab}}
\newcommand{\Manth}{\mc M^\mathrm{\mc U\textup{-}Emb}}
\newcommand{\Top}{\textup{Top}}
\newcommand{\PL}{\textup{PL}}
\newcommand{\Sing}{\rm Sing}
\newcommand{\reg}{\rm reg}
\newcommand{\sF}{\mathcal{F}}
\newcommand{\St}{\mathrm{St}}
\newcommand{\po}{\ar@{}[dr]|{\text{\pigpenfont R}}}
\newcommand{\pb}{\ar@{}[dr]|{\text{\pigpenfont J}}}
\newcommand{\orange}[1]{{\color{black}{#1}}}
\newcommand{\sbt}{\,\begin{picture}(-1,1)(0.5,-1)\circle*{1.8}\end{picture}\hspace{.05cm}}
\newcommand{\ourdefn}[1]{{\it #1}}
\newcommand{\rh}{encasing function }
\newcommand{\therh}{the encasing function }
\newcommand{\arhperiod}{an encasing function. }
\newcommand{\rhs}{encasing functions }
\newcommand{\rhsperiod}{encasing functions. }
\newlength{\storeparskip}
\title{On the functoriality of the space of equivariant smooth $h$-cobordisms}
\author{Thomas Goodwillie}
\address{Department of Mathematics, Brown University}
\email{thomas\_goodwillie@brown.edu}
\author{Kiyoshi Igusa}
\address{Department of Mathematics, Brandeis University}
\email{igusa@brandeis.edu}
\author{Cary Malkiewich}
\address{Department of Mathematics, Binghamton University}
\email{cmalkiew@binghamton.edu}
\author{Mona Merling}
\address{Department of Mathematics, The University of Pennsylvania}
\email{mmerling@math.upenn.edu}
\begin{document}

\maketitle

\begin{abstract}
We construct an $(\infty,1)$-functor that takes each smooth $G$-manifold with corners $M$ to the space of equivariant smooth $h$-cobordisms $\mathcal H_{\Diff}(M)$. We also give a stable analogue $\mathcal H^{\mc U}_{\Diff}(M)$ where the manifolds are stabilized with respect to representation discs. The functor structure is subtle to construct, and relies on several new ideas. In the non-equivariant case $G=e$, our $(\infty,1)$-functor agrees with previous constructions of the smooth $h$-cobordism space as a functor to the homotopy category.
\end{abstract}

\begingroup%
\setlength{\parskip}{\storeparskip}
\makeatletter
\def\l@subsection{\@tocline{2}{0pt}{2.5pc}{5pc}{}}
\def\l@subsubsection{\@tocline{2}{0pt}{5pc}{7.5pc}{}}
\makeatother
\tableofcontents
\endgroup%

\section{Introduction}

The celebrated parametrized $h$-cobordism theorem, envisioned by Waldhausen and brought to fruition in seminal work of Waldhausen, Jahren, and Rognes, states that the stable space of piecewise-linear or topological $h$-cobordisms on a manifold $M$ is equivalent to the fiber of the $A$-theory assembly map,
\[ \xymatrix{
	\mc H^\infty_{\PL}(M) \simeq \mc H^\infty_{\Top}(M) \ar[r] &
	\Omega^\infty(A(*) \sma M_+) \ar[r] &
	\Omega^\infty A(M),
} \]
\orange{and this fiber sequence is natural in $M$.} Furthermore, the stable space of smooth $h$-cobordisms on a smooth manifold $M$ fits in a similar fiber sequence
\[ \xymatrix{
	\mc H^\infty_{\Diff}(M) \ar[r] &
	\Omega^\infty\Sigma^\infty_+ M \ar[r] &
	\Omega^\infty A(M).
} \]
\cite{wjr} gives a precise and detailed proof of the stable parametrized $h$-cobordism theorem in the PL case, which is then used to deduce the smooth version. 

\orange{The functoriality of the $h$-cobordism spaces $\mc H_{\Top}(M)$ and $\mc H_{\PL}(M)$ and their stabilizations, on the category of topological or PL manifolds and embeddings, is easy to establish. However, the functoriality of $\mc H_{\Diff}(M)$ and its stabilization $\mc H^\infty_{\Diff}(M)$, on the category of smooth manifolds and smooth embeddings, is much more subtle and there does not seem to be a complete treatment in the literature. \cite{hatcher_concordance, waldhausen_manifold} provide sketches of how to define $\mc H_{\Diff}(M)$ as a functor to the homotopy category. Even defining the stabilization maps $\mc H_{\Diff}(M) \to \mc H_{\Diff}(M \times I)$ in the smooth case is a delicate problem, which is treated carefully in \cite{igusa}. }


To illustrate the problem, let us describe the standard method for making the unstable space of smooth $h$-cobordisms $\mc H_{\Diff}(M)$ into a functor on smooth manifolds and smooth embeddings. Given a smooth $h$-cobordism $W_0$ over $M_0$, and an embedding $M_0 \to M_1$ with normal bundle $\nu$, we define a new $h$-cobordism $W_1$ on $M_1$ by taking the fiber product $W_0 \times_{M_0} D(\nu)$, an $h$-cobordism over $D(\nu)$. It is not trivial on the sides, so we ``pull up'' a collar of the bottom to make it trivial. Equivalently, we bend the fiber product $W_0 \times_{M_0} D(\nu)$ into a U-shape and glue in trivial regions above and below, as shown in \autoref{fig:intro_stab}. (This idea was first introduced in \cite{igusa}.)

\begin{figure}[h]
	\centering
	\def\svgwidth{4.2in}
	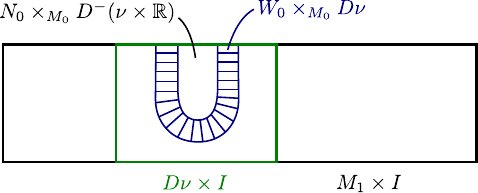
	\caption{\orange{The U-shaped stabilization of an $h$-cobordism $W_0$ from $M_0$ to $N_0$ along an embedding $M_0 \to M_1$ with normal bundle $\nu$. Each strip in the $U$-shape given by the disk bundle $D\nu$ is a copy of $W_0$. In the top region given by the lower hemisphere disk bundle $D^-(\nu\times \R)$ we glue in copies of the top manifold $N_0$}.}\label{fig:intro_stab}
\end{figure}

This depends on choices, but the choices form a contractible space. So, we get a well-defined homotopy class of maps
\[ \mc H_{\Diff}(M_0) \to \mc H_{\Diff}(M_1). \]
For composable embeddings $M_0 \to M_1 \to M_2$, we need to show that these maps respect composition up to homotopy. If one ignores smooth structure, then this is not too difficult. Morally, the choice of smooth structure on the tricky bits is contractible, and so it should be possible to show that the rule respects composition up to homotopy. If we can prove this, then we stabilize by repeatedly embedding the manifolds $M$ into $M \times I$. 

This may seem like a satisfactory sketch, but there is a significant issue. Even if all of the steps described above are accomplished and written down in detail, it only defines $\mc H^\infty_{\Diff}(M)$ as a functor \emph{to the homotopy category} of spaces. To get the full strength of the naturality result in \cite[Thm 0.3]{wjr}, it is necessary to have a functor to the actual category of spaces, or at least an $(\infty,1)$-functor to the $(\infty,1)$-category of spaces. In other words, our functor does not have to respect the composition $M_0 \to M_1 \to M_2$ strictly, only up to a homotopy. If we then take a composite of three embeddings and the homotopies between all the two-fold compositions, those homotopies have to be coherent with each other. And so on.

This makes the problem easier, but even so, the sketch given above does not lead to a proof that $\mc H^\infty_{\Diff}(-)$ is an $(\infty,1)$-functor. It is not enough to know that the choices of data are contractible---one has to link the contractible choices together, showing that they are preserved under composition. And the most obvious ways of defining the contractible choices, e.g. choosing collars for $W_0$ and choosing the shape of the U-band, turn out to \emph{not} be closed under composition, so they do not give an $(\infty,1)$-functor structure. We therefore have a nontrivial problem, that requires new ideas to solve.

Our main theorem is as follows. Since $(\infty,1)$-functors can be modeled by simplicially enriched functors, we state the result in terms of simplicially enriched functors. Let $G$ be a finite group, and let $\mc H_{\Diff}(M)$ denote the space of $G$-equivariant $h$-cobordisms over a compact $G$-manifold $M$. Let $\Man$ be the simplicial category of compact smooth $G$-manifolds and smooth equivariant embeddings.

\begin{thm}\label{unstable_intro}
There is a simplicially enriched functor $$\mc H_{\Diff}(-)\colon \Man \to \sSet\, ,$$ sending each compact $G$-manifold $M$ to a space equivalent to $\mc H_{\Diff}(M)$, and each homotopy class of equivariant embeddings $M_0 \to M_1$ to the homotopy class of maps $\mc H_{\Diff}(M_0) \to \mc H_{\Diff}(M_1)$ given by the stabilization depicted in \autoref{fig:intro_stab}.
\end{thm}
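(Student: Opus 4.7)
The plan is to enlarge $\mc H_{\Diff}(M)$ to include auxiliary data that makes the U-shaped stabilization strictly functorial. Concretely, I would define a variant $\ti{\mc H}_{\Diff}(M)$ whose points are pairs $(W,\phi)$ consisting of an equivariant $h$-cobordism $W$ on $M$ together with an \emph{encasing function} $\phi$ — a rigid piece of data that simultaneously specifies the collars on $W$ and the shape of the U-band used in \autoref{fig:intro_stab}. The central design constraint on $\phi$ is that it must be structured so that, given an equivariant embedding $e\colon M_0 \to M_1$, the encasing function for the stabilization $e_*(W,\phi)$ is canonically determined by $\phi$ and $e$, with no fresh choices made during the construction.

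First I would define the space of encasing functions on a given $h$-cobordism and show it is contractible, which yields an equivalence $\ti{\mc H}_{\Diff}(M) \simeq \mc H_{\Diff}(M)$. Then for each equivariant embedding I would define the induced map on the enlarged spaces by the prescription of \autoref{fig:intro_stab}, reading the shape of the U-band and the collar widths off of $\phi$ rather than choosing them by hand at each step.

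The main obstacle — and the reason the naive approach fails — is verifying strict functoriality along composable embeddings $M_0 \xrightarrow{e} M_1 \xrightarrow{f} M_2$: we need $(f \circ e)_{*} = f_{*} \circ e_{*}$ on the nose. This forces the encasing function to be compatible with nested tubular neighborhoods, so that the U-band added by the second stabilization extends the first without any reparametrization or gluing correction. I expect this will require $\phi$ to be specified as a germ of equivariant data along an embedding of $M$ into a half-space of arbitrary codimension, structured so that it is manifestly preserved by pullback along any embedding of the base. Designing such a format — and verifying that the U-shape construction respects this pullback strictly — is where the real content of the proof lies, and is precisely what the flagged ``obvious'' choices (collars plus U-band shape) fail to do.

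Finally, to obtain the simplicial enrichment I would allow $W$ and $\phi$ to vary in smooth families over the standard simplex $\Delta^n$, so that $\ti{\mc H}_{\Diff}(M)_n$ consists of $n$-parameter families of such data. Naturality in the simplicial direction will then be immediate because the stabilization is given by a fiberwise formula that commutes with the simplicial face and degeneracy operators. The last step is to compare the homotopy class of the induced map $\mc H_{\Diff}(M_0) \to \mc H_{\Diff}(M_1)$ with the map described in the introduction; this reduces to a diagrammatic check, since both produce an $h$-cobordism by the U-shaped procedure of \autoref{fig:intro_stab} and any two such procedures differ by a contractible choice.
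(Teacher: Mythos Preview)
Your proposal has the right instinct---add auxiliary data to the cobordism so that stabilization becomes canonical---but the stated goal of achieving \emph{strict} equality $(f\circ e)_* = f_* \circ e_*$ on the nose is not attainable, and the paper does not attempt it. Even after all the work in \autoref{sec:stab}, the double stabilization $\St^{e_{12}}\St^{e_{01}}(\bar W)$ is only \emph{canonically diffeomorphic} to $\St^{e_{02}}(\bar W)$ (\autoref{canonical_homeo}), not equal. The paper absorbs this discrepancy by building a category $\Hanst$ internal to simplicial sets whose morphisms \emph{record} the diffeomorphism $\St^e(\bar W_0)\cong \bar W_1$, with composition defined via the canonical comparison; it then shows $N\Hanst \to N\Manst$ is a left fibration and invokes straightening--unstraightening (\autoref{mainpedro}) to extract a genuine simplicial functor. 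Your outline omits this entire mechanism, and the sentence ``so that the U-band added by the second stabilization extends the first without any reparametrization or gluing correction'' is asking for something that does not exist.

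There are two further gaps. First, stabilization along an embedding requires a tubular neighborhood, and tubular neighborhoods of composable embeddings do not compose strictly as vector bundles; the paper's solution is to weaken to \emph{round} bundles (\autoref{sec:round}), whose composites are again round bundles on the nose, and to build this data into the morphisms of an auxiliary category $\Manst\simeq\Man$. Your proposal does not address this, and without it the phrase ``compatible with nested tubular neighborhoods'' has no implementation. Second, the encasing function in the paper is not the elaborate germ-along-a-half-space you sketch; it is simply a retraction plus a height function $\rho=(r,h)\colon W\to M\times[-1,0]$. What makes iterated stabilization smooth is a separate piece of data, the \emph{mirror} structure (a smooth structure on the double $\bar W$), which guarantees that the polar stabilization---not the U-band of \autoref{fig:intro_stab}---is again a smooth mirror cobordism with a new encasing function (\autoref{stabsmooth}, \autoref{rhosmooth}). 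Your proposal conflates these roles and stays with the U-band picture, where the analogous smoothness checks would be substantially harder.
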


Stabilizing the input $M$ by representation discs, we get a second functor $\mc H^{\mc U}_{\Diff}(M)$. We also prove that this stable $h$-cobordism space extends to all $G$-spaces:

\begin{thm}\label{stable_intro}
The functor $$\mc H^{\mc U}_{\Diff}(-)\colon \Man \to \sSet\, ,$$ extends up to equivalence to a functor on the simplicial category of all $G$-CW complexes and equivariant continuous maps.
\end{thm}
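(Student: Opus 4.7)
The plan is to define the extension via a homotopy left Kan extension along the inclusion $\Man \hookrightarrow G\text{-CW}$. Specifically, I would set
\[
\widetilde{\mc H}^{\mc U}_{\Diff}(X) := \hocolim_{(M,\, f : M \to X)} \mc H^{\mc U}_{\Diff}(M),
\]
where the homotopy colimit ranges over the simplicially enriched overcategory whose objects are equivariant continuous maps $f$ from a compact smooth $G$-manifold $M$, with morphisms given by smooth equivariant embeddings $M \to M'$ together with homotopies over $X$. This is manifestly a homotopy functor in $X$, because any equivariant continuous map $X \to Y$ induces a functor of overcategories, and an equivariant weak equivalence $X \to Y$ induces a homotopy-cofinal one.

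The first key ingredient is homotopy invariance of $\mc H^{\mc U}_{\Diff}$ on $\Man$: any equivariant embedding $M_0 \to M_1$ that is also an equivariant weak equivalence induces a weak equivalence on $\mc H^{\mc U}_{\Diff}$. The argument is that stabilization by representation discs in $\mc U$ makes the functor insensitive to thickening by equivariantly contractible bundles, which, combined with the equivariant Whitehead theorem (applied via regular neighborhoods inside a large representation), inverts every equivariant weak equivalence between compact $G$-manifolds. The second ingredient is that when $X = M$ is already a compact manifold, the object $(M, \id)$ is homotopy-final in the indexing overcategory: given any $(N, g)$, one builds an equivariant mapping-cylinder thickening of $g$ inside $M \times D(V)$ for large enough $V \subset \mc U$ to produce a canonical morphism $(N, g) \to (M \times D(V), \mathrm{proj}) \to (M, \id)$, and homotopy invariance on $\Man$ then forces $\widetilde{\mc H}^{\mc U}_{\Diff}(M) \simeq \mc H^{\mc U}_{\Diff}(M)$, natural in smooth equivariant embeddings of $M$.

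The hard part will be establishing homotopy invariance on $\Man$ in a usable form. The functor of \autoref{unstable_intro} is constructed on smooth equivariant embeddings, and to invert arbitrary equivariant weak equivalences one needs an explicit mechanism converting such an equivalence into a zig-zag of representation-disc stabilizations, each of which is literally an equivalence by construction. A secondary technical point is arranging the simplicial enrichment of the indexing overcategory so that the homotopy colimit is actually functorial in continuous equivariant maps of $X$ (rather than just smooth embeddings); this can be handled by taking morphism simplicial sets of continuous equivariant maps while keeping the source objects smooth. Once both are in place, the remainder of the argument is formal: the homotopy-cofinality gives the homotopy functoriality, and the identification on manifolds gives the required equivalence $\widetilde{\mc H}^{\mc U}_{\Diff}|_{\Man} \simeq \mc H^{\mc U}_{\Diff}$.
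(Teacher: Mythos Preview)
Your overall strategy---homotopy left Kan extend along $\Man \hookrightarrow G\text{-CW}$ and then verify that the extension agrees with $\mc H^{\mc U}_{\Diff}$ on manifolds---is reasonable in spirit, but the verification step as written does not work. The object $(M,\id)$ is \emph{not} homotopy-final in the overcategory $\Man \downarrow M$: a morphism $(N,g) \to (M,\id)$ requires a smooth equivariant \emph{embedding} $N \to M$ whose underlying map is homotopic to $g$, and such an embedding need not exist (for instance when $\dim N > \dim M$). In particular, your displayed morphism $(M \times D(V),\mathrm{proj}) \to (M,\id)$ does not exist for $V \neq 0$, so the chain you write cannot be a zig-zag in the overcategory. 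Likewise, your ``first ingredient''---that $\mc H^{\mc U}_{\Diff}$ inverts all equivariant weak equivalences that are embeddings---is stated as input, but in the paper this is a \emph{consequence} of the extension theorem, not a lemma available beforehand; your sketch (``insensitive to thickening\ldots combined with the equivariant Whitehead theorem'') does not supply the needed argument.

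What would actually rescue your approach is a \emph{cofinality} argument: show that the subcategory of objects $(M \times D(V),\mathrm{proj})$, indexed by finite $V \subset \mc U$, is homotopy-cofinal in $\Man \downarrow M$. That requires proving that for any $(N,g)$ the space of embeddings $N \hookrightarrow M \times D(V)$ lying over $g$ up to homotopy becomes contractible as $V$ exhausts $\mc U$, i.e.\ that $\Emb(N, M \times \mc U) \to \Sm(N,M)$ is an equivalence. This is precisely \autoref{emb_u_smooth} in the paper, and it is the technical heart of the paper's own proof. The paper packages this lemma differently: rather than Kan extending immediately, it builds an intermediate simplicial category $\Manth$ whose morphisms are pairs of a linear isometry of $\mc U$ and an embedding $M_0 \to M_1 \times \mc U$ landing in the orthogonal complement, proves $\Manth \to \Mansm$ is a Dwyer--Kan equivalence via contractibility of $\mc L(\mc U,\mc U)$, and only then passes to $G$-CW complexes by Kan extension along the (now fully faithful) inclusion $\Mansm \hookrightarrow \mc F$. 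So once repaired, your route and the paper's are essentially equivalent; the paper's version has the advantage that the passage from embeddings to arbitrary smooth maps is isolated as an explicit equivalence of categories, rather than buried inside a cofinality check.
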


In the non-equivariant case $G = e$, \autoref{unstable_intro} is closely related to the main result of the unpublished thesis \cite{pieper}. Pieper describes an $(\infty,1)$-functor structure on the smooth pseudoisotopy space, using an elaborate obstruction theory to show that one can simultaneously interpolate between different composites of U-bands. He obtains as a result the naturality of the $h$-cobordism splitting
\[ A(X) \simeq Wh_{\Diff}(X) \times \Omega^\infty\Sigma^\infty_+ X, \qquad \mc P^\infty_{\Diff}(X) \simeq \Omega^2 Wh_{\Diff}(X). \]
It is a dense and technically impressive treatment, and unfortunately it appears that it will remain unpublished.

 Our motivation for the current paper comes from current work of the first two authors on equivariant Reidemeister torsion, and separately of the last two authors on an equivariant stable parametrized $h$-cobordism theorem \cite{CaryMona, CaryMona3}. Both of these projects require a functor structure on $\mc H_{\Diff}(M)$ in the equivariant case $G \neq e$. The prospect of expanding the treatment in \cite{pieper} to include equivariance seems daunting. Instead, we give a new approach that develops the $(\infty,1)$-functor structure in a simpler and more streamlined way.

We highlight three key ideas that make our approach work.

Instead of using U-shaped bands to stabilize cobordisms, we use polar stabilization, as pictured in \autoref{fig:intro_polar_stab}.\footnote{This is diffeomorphic to the stabilization via U-bands, as we can see by taking a collar on the top and bottom of the cobordism $W_0$.}
\begin{figure}[h]
	\centering
	\def\svgwidth{4.0in}
	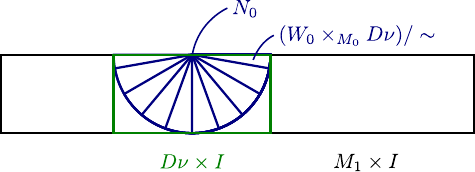
	\caption{\orange{The polar stabilization of an $h$-cobordism $W_0$ from $M_0$ to $N_0$ along an embedding $M_0 \to M_1$ with normal bundle $\nu$. As in \autoref{fig:intro_stab}, each strip along the U-shape given by the disk bundle $D\nu$ is a copy of $W_0$, but now the copies of the top manifold $N_0$ are identified at the cone point.}}\label{fig:intro_polar_stab}
\end{figure}
This is an old idea, but there is a new feature: \orange{we equip the $h$-cobordisms} with additional structure, ensuring that its polar stabilization is smooth and has the same kind of additional structure. \orange{This allows the stabilization to be} repeated multiple times without re-choosing collars, making it feasible to check coherence directly instead of, for example, using a complicated and indirect obstruction theory as in \cite{pieper}.

The extra structure is easy to describe: it is a choice of smooth structure on the manifold obtained by doubling $W_0$ along its top $N_0$, that extends the original smooth structure on each half and is preserved by the involution that interchanges the two halves. We call this a ``mirror'' structure. It exists and it is unique up to a contractible choice. The stabilization of a mirror $h$-cobordism is again canonically a mirror $h$-cobordism.

The second key idea is that, when working with normal bundles of embeddings $M_0 \to M_1$, we relax their structure, treating them as ``round'' disk bundles rather than the usual linear disk bundles. Round bundles are smooth disk bundles for which the structure group is the group of diffeomorphisms of the disk that preserve distance to the center---see \autoref{sec:round}. Round bundles have the advantage that they compose in a natural way along successive embeddings $M_0 \to M_1 \to M_2$, whereas vector bundles require additional contractible choices, and these choices are not easily made to be closed under composition.

One might imagine that switching to round bundles makes it more difficult to define the stabilization maps $\mc H_{\Diff}(M_0) \to \mc H_{\Diff}(M_1)$. On the contrary, it is actually a little easier. Pictorially, this means that it is more appropriate to think of the above polar stabilization using concentric circles, rather than rays, and to think of each circle as holding the points in $W_0$ that are at a single ``height'' along the cobordism.

\begin{figure}[h]
	\centering
	\def\svgwidth{4.0in}
	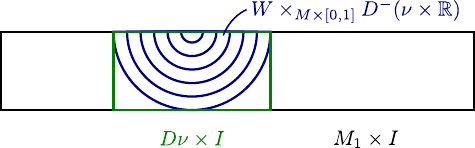
	\caption{Polar stabilization defined using round bundles.}\label{fig:intro_polar_stab_2}
\end{figure}

Lastly, we make use of the straightening-unstraightening theorem, originally due to Lurie \cite[3.2.2]{lurie_htt}. It allows us to avoid defining the $(\infty,1)$-functor directly by specifying the spaces, maps, and coherent sets of homotopies. Rather, we define it indirectly by giving a fibration of $\infty$-categories with the correct lifting properties. The desired maps and homotopies then arise by universal properties.

To be more precise, we define the $h$-cobordism functor by specifying a map from a category object in simplicial sets to a simplicially enriched category,
\begin{equation}\label{intro_left_fibration}
	\Hanst \to \Manst.
\end{equation}
Taking nerves gives a map of Segal spaces, which we show is a left fibration. It follows from a version of the straightening-unstraightening theorem, specificially the version in \cite{pedro,nima}, that this is equivalent to a simplicial functor from $\Manst$ to spaces. Since $\Manst$ is equivalent to the category of smooth manifolds and smooth embeddings, this gives the desired functor $\mc H_{\Diff}(-)$.

\autoref{unstable_intro} and \autoref{stable_intro} give all of the functoriality one could hope for in the space of smooth $h$-cobordisms. In a subsequent paper, we use this functoriality to prove that the equivariant stable space of $h$-cobordisms splits into a product of non-equivariant stable $h$-cobordism spaces. The proof of this splitting takes a long detour through \emph{isovariant} $h$-cobordism spaces, whose definition generalizes that of this paper, but with several additional subtleties.  The construction of the stable equivariant $h$-cobordism space and the forthcoming splitting result are central to work in progress of the first two authors on equivariant Reidemeister torsion, and of the latter two authors on the equivariant stable parametrized $h$-cobordism theorem \cite{CaryMona, CaryMona3}. 


\subsection{Outline}
In \autoref{prelims}, we establish some definitions and results concerning $G$-manifolds with corners. 

In \autoref{pseudosection}, we describe the polar stabilization of pseudoisotopies. Athough the main focus of this paper is $h$-cobordisms, it is easiest to start with pseudoisotopies. Several technical lemmas in \autoref{pseudosection} are used again in \autoref{hcobsection}.

In \autoref{hcobsection}, we define a space of equivariant $h$-cobordisms on a smooth $G$-manifold with corners $M$. We also consider variants of this space in which the cobordisms are equipped with collars or mirror structures, and we show that these have the same homotopy type. 

In \autoref{sec:stab}, we define the polar stabilization of smooth $h$-cobordisms. The construction of the smooth structure on this stabilization, and the proof that successive stabilizations give compatible smooth structures, form the technical core of the paper. As mentioned in the introduction, this requires using cobordisms that are equipped with ``mirror structures'', and using tubular neighborhoods that are equipped with ``round bundle'' structures.

In \autoref{infinitysec}, we recall the notion of a left fibration of Segal spaces, following \cite{pedro,nima}.
We then construct the map of simplicial categories \eqref{intro_left_fibration} and take the associated left fibration, whose fibers are the $h$-cobordism spaces. 

Lastly, in \autoref{hcobspacesec}, we stabilize the equivariant $h$-cobordism space with respect to all $G$-representations, and we extend the resulting functor from smooth $G$-manifolds to $G$-CW spaces.

\subsection{Acknowledgments}
We thank Wolfgang L{\"u}ck for encouraging us to embark on this project of carefully working out the functoriality of the smooth $h$-cobordism space, and for making us aware of this gap in the literature. It is a pleasure to acknowledge contributions  to this project arising from conversations with Mohammed Abouzaid, Julie Bergner, David Carchedi,  Sander Kupers, Wolfgang L{\"u}ck, 
Nima Rasekh, Emily Riehl, Hiro Lee Tanaka and Shmuel Weinberger. We especially thank  David Carchedi and Nima Rasekh  for directing us to the  higher categorical results used in \autoref{infinitysec}. We are very grateful to Dennis DeTurck, Herman Gluck, Ziqi Fang, Robert Kusner, and Leando Lichtenfelz for helping us think about \autoref{frechet}. We also thank Malte Pieper for sharing insights from his thesis with us. 

Igusa was partially supported by Simons Foundation Grant 686616, Malkiewich was partially supported by NSF DMS-2005524 and DMS-2052923, and Merling was partially supported by NSF grants DMS-1943925 and DMS-2052988. This material is in part based on work supported by the National Science Foundation under DMS-1928930 while Merling was residence at the Simons Laufer Mathematical Sciences Institute (previously known as MSRI) in Berkeley, California, during the Fall 2022 semester. Lastly, Malkiewich and Merling thank the Max Planck Institute in Bonn, where they were in residence in the fall of 2018, and where the origins of this project can be traced back to.

\section{Preliminaries on $G$-manifolds with corners}\label{prelims}

In this section we collect some needed technical results on $G$-manifolds with corners, where $G$ is a finite group. These include existence of tubular neighborhoods for smooth embeddings, approximation of continuous maps by smooth maps, existence of collars, and extension of smooth maps from the boundary, or a part of the boundary, to an open neighborhood.

We claim no originality here---the results are either well-known or straightforward adaptions of existing arguments. We include them because the specific versions that we need, in the presence of both corners and a $G$-action, can be difficult to find. We also highlight in \autoref{smooth_extension_counterexample} a surprising difficulty that arises in smoothly extending a map from the boundary to a neighborhood of the boundary.

\subsection{$G$-manifolds with corners} Throughout, a \ourdefn{manifold} of dimension $n$ is a smooth manifold with corners, i.e., a Hausdorff second-countable topological space $M$ with a maximal smooth atlas locally modeled on $[0,\infty)^n$, or equivalently on $[0,\infty)^k \times \R^{n-k}$ for $0 \leq k \leq n$. 

We define smoothness for maps from a manifold, or any subset of a manifold, to a manifold.

 \begin{defn}\label{smooth} \orange{Let $S$ be a subset of $\R^n$. A function $f:S\to \R^m$ is \ourdefn{smooth} if every point $x\in S$ has an open neighborhood $U\subset \R^n$ such that the restriction of $f$ to $S\cap U$ extends to a smooth map $U\to \R^m$. A function from $S$ to a subset $T \subseteq \R^m$ is smooth if it is smooth when regarded as a map from $S$ to $\R^m$. A map from a subset of an $n$-manifold to a subset of an $m$-manifold is smooth if, with respect to coordinate charts, it corresponds everywhere to a smooth map from a subset of $[0,\infty)^k \times \R^{n-k}$ to a subset of $[0,\infty)^\ell \times \R^{m-\ell}$.}
  \footnote{This is the most natural generalization of smoothness to manifolds with corners, as defined in \cite[Section 1.2.1]{cerf}. However in \cite{joyce} this notion is only called \ourdefn{weakly smooth}. In \cite{melrose}, a smooth function on an open subset $\Omega$ of $[0,\infty)^k \times \R^{n-k}$ is defined as a smooth function on the interior, all of those derivatives extend continuously to $\Omega$. This definition is equivalent to ours by \cite[Theorem 1.4.1]{melrose}.}
\end{defn}

The definition above will be useful when the domain of the map is an open subset of the manifold, and also when it is the boundary, or some part of the boundary, and also in some other cases.

\begin{defn}
The \ourdefn{depth} of a point $x$ in $M$ is the unique number $k$ such that there is a coordinate chart that identifies $x$ with the origin in an open subset of $[0,\infty)^k \times \R^{n-k}$. The set of all depth $0$ points is the \ourdefn{interior} of $M$. The set of all depth $\geq 1$ points is the \ourdefn{boundary subspace} $\partial M \subset M$. Depth $\geq 2$ points are called  \ourdefn{corner points}, and $\partial^c M \subset \partial M$ is the set of corner points.\end{defn}

Notice that although $\partial M$ is a topological manifold it does not qualify as a smooth manifold with corners. (The corner points of $M$ lie in its interior.)

\begin{exmp}
	A product of two or more manifolds with corners has the structure of a manifold with corners in a canonical way. For example, the $n$-dimensional cube $I^n$ and the polydisc $D^{n_1} \times \ldots \times D^{n_k}$ are manifolds with corners.
\end{exmp}	

\begin{exmp}	The standard $k$-simplex $\Delta^k$, with smooth structure inherited from $\R^{k+1}$, is a manifold with corners. The affine linear embedding $\Delta^k \to \R^k$ that takes one vertex to the origin and the remaining vertices to the standard basis vectors defines a diffeomorphism between the complement of one face in $\Delta^k$ and an open subset of $[0,\infty)^k$.
\end{exmp}

Because of \autoref{smooth}, the tangent space of a manifold with corners $M$ at a point $x \in M$ can be defined in the usual way, 
even if the point is not in the interior:

\orange{
\begin{defn}\label{tangent_bundle}
	For $x \in M$, the \ourdefn{tangent space} $T_x M$ is the set of equivalence classes of data $(U,\phi,v)$ where $U \subseteq M$ is an open set containing $x$, $\phi\colon U \to [0,\infty)^k \times \R^{n-k}$ is a coordinate chart, and $v \in \R^n$ is a vector (\emph{not} necessarily lying in $[0,\infty)^k \times \R^{n-k}$). The data $(U,\phi,v)$ and $(U',\phi',v')$ are equivalent if the derivative of the transition function $D_{\phi(x)}(\phi' \circ \phi^{-1})$ takes $v$ to $v'$. The \ourdefn{tangent bundle} $TM$ is the set of all tangent vectors at all points $x \in M$, made into a smooth manifold with corners by using the local charts of the form $[0,\infty)^k \times \R^{n-k} \times \R^n$.
\end{defn}
}

A \ourdefn{diffeomorphism} is a smooth map with smooth inverse, i.e. a homeomorphism that identifies the maximal atlases. The following is an easy consequence of the inverse function theorem.

\begin{lem}\label{inverse_fn_thm}
	A map $M \to N$ of smooth manifolds with corners is a diffeomorphism iff it is a smooth bijection and has invertible first derivative at every point of $M$ (including the boundary and corner points).
\end{lem}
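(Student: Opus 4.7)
The forward direction is immediate from the chain rule: if $f$ has smooth inverse $g$, differentiating $g \circ f = \id_M$ at any $x \in M$ yields $Dg_{f(x)} \circ Df_x = \id_{T_x M}$, so $Df_x$ is a linear isomorphism. This argument is uniform across interior, boundary, and corner points, since the tangent space $T_x M$ is defined via derivations without regard to depth.

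For the reverse direction, I would argue that a smooth bijection $f$ with $Df_x$ invertible everywhere admits a smooth inverse, constructed locally and then patched. Fix $x \in M$ with $y = f(x)$ and choose charts $\phi\colon U \to U' \subset [0,\infty)^k \times \R^{n-k}$ at $x$ and $\psi\colon V \to V' \subset [0,\infty)^l \times \R^{n-l}$ at $y$, sending $x$ and $y$ to $0$. By \autoref{smooth}, $F := \psi \circ f \circ \phi^{-1}$ extends to a smooth map $\tilde F$ on an open neighborhood $\tilde U' \subset \R^n$ of $0$ with $\tilde U' \cap ([0,\infty)^k \times \R^{n-k}) = U'$. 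Under the natural identification of tangent spaces with $\R^n$, the derivative $D\tilde F_0$ agrees with $Df_x$ and is therefore invertible. The classical inverse function theorem on $\R^n$ then produces, after shrinking, a smooth inverse $\tilde G \colon \tilde V' \to \tilde U'$ between open neighborhoods of $0$ in $\R^n$.

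Once I know that $\tilde G$ carries the chart $V' = \tilde V' \cap ([0,\infty)^l \times \R^{n-l})$ into $U'$, the composite $\phi^{-1} \circ \tilde G \circ \psi$ is smooth (as a map of manifolds with corners) from a neighborhood of $y$ in $N$ into $M$, and it agrees with the set-theoretic $f^{-1}$ by global injectivity of $f$. Letting $x$ vary and patching these local smooth inverses then gives a global smooth inverse to $f$, completing the argument.

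The main obstacle I expect is precisely this containment $\tilde G(V') \subset U'$. For $y' \in V'$ near $0$, set $x_1 := f^{-1}(\psi^{-1}(y')) \in M$; if $x_1$ happens to lie in the chart $U$ then $\tilde F(\phi(x_1)) = F(\phi(x_1)) = y'$, and the injectivity of $\tilde F$ on $\tilde U'$ forces $\tilde G(y') = \phi(x_1) \in U'$. The containment therefore reduces to the assertion that $f^{-1}(\psi^{-1}(V'))\subset U$ for small enough $V'$, i.e.\ continuity of $f^{-1}$ at $y$. I would establish this by first using invertibility of $Df_x$ to argue that $f$ respects the local stratification by depth (in particular that $f$ carries interior points to interior points, by the standard observation that an open subset of $\R^n$ contained in $[0,\infty)^l\times\R^{n-l}$ must already lie in the interior), and then invoking invariance of domain on the underlying topological manifolds with boundary to upgrade the bijection $f$ to a homeomorphism; smoothness of the inverse then drops out of the local inverse function theorem as above.
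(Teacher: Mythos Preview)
The paper gives no proof here, only the remark that it is ``an easy consequence of the inverse function theorem.'' Your reduction is the right one: once $f^{-1}$ is known to be continuous, your local argument (extend $F$ to $\tilde F$, apply the classical inverse function theorem, and identify $\tilde G|_{V'}$ with the chart expression of $f^{-1}$) is correct and complete. The gap is in your final step. Invariance of domain is not available for manifolds with boundary in the form you need---the inclusion $[0,1)\hookrightarrow(-1,1)$ is a continuous injection that is not open---and your depth argument only shows that interior points go to interior points, not that $f$ is fully depth-preserving.

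In fact the lemma as literally stated is false: take $M=[0,1)\sqcup[2,3)$, $N=[0,2)$, and $f(x)=x$ on $[0,1)$, $f(x)=x-1$ on $[2,3)$. This $f$ is a smooth bijection with $Df\equiv 1$, but $f^{-1}$ is discontinuous at $1\in N$ (the boundary point $2\in M$ is sent to the interior point $1\in N$, so depth is not preserved). Some hypothesis ensuring that $f$ is already a homeomorphism is needed. The fix the paper presumably intends, given its standing focus on compact manifolds, is compactness of $M$: a continuous bijection from a compact space to a Hausdorff space is a homeomorphism, and then your argument goes through verbatim. Alternatively, assuming $f$ is depth-preserving lets one invoke \autoref{depthopenlemma} to see that $f$ is an open embedding, hence (being bijective) a homeomorphism.
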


Following \cite{joyce}, we define a manifold called the \ourdefn{smooth boundary} of $M$, whose interior may be identified with the set of depth $1$ points. This will be used later when discussing faces. \orange{For any point $x\in M$, we define the set of \ourdefn{local boundary components} near $x$ to be the inverse limit, over neighborhoods $U$ of $x$, of $\pi_0(U\cap (\partial M\backslash \partial^c M))$. Of course, there are arbitrarily small neighborhoods $U$ such that $\pi_0(U\cap (\partial M\backslash \partial^c M))$ coincides with this limit. Note that the depth of $x$ is the number of local boundary components near $x$.}

The smooth boundary $\tipartial M$ is the set of pairs $(x,b)$ where $x \in M$ and $b$ is a local boundary component at $x$. This inherits a smooth atlas from $M$ so that $\tipartial M$ becomes a smooth manifold with corners \cite[Definition 2.6.]{joyce}.\footnote{We warn the reader that we are using different notation than in \cite{joyce}, where the subspace boundary is our $i(\tipartial M)$, and where $\partial M$ is defined as the smooth boundary, which we call $\tipartial M$. For us the subspace boundary $\partial M$ will play a key role in some of the technicalities, even though it is not a smooth manifold.} The map $i\colon \tipartial M \to M$ that forgets the local boundary component is smooth, and its image is $\partial M$. It is not injective if $M$ has corner points.

\begin{figure}[h]
	\centering
	\def\svgwidth{2.0in}
	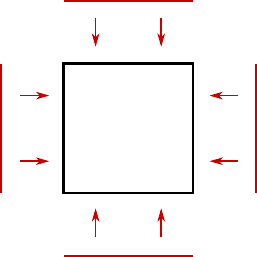
	\vspace{-.5em}
	\caption{The smooth boundary of a square consists of four line segments.}\label{fig:faces}
\end{figure}

\begin{defn}\label{Gcornerdef}
Suppose that $M$ is a manifold with corners, and that $G$ is a finite group acting smoothly on $M$. We say that the action is \orange{\ourdefn{trivial along corners}} if any of the following equivalent conditions hold.
	\begin{itemize}	
		\item For each boundary point $x \in \partial M$, the action of its stabilizer $G_x$ on the set of local boundary components near $x$ is trivial.
		\item $M$ is locally modeled by $G \times_H V \times [0,\infty)^k$, for varying $H \leq G$ and $H$-representations $V$.
		\item $M$ is locally modeled by finite products of smooth $G$-manifolds with boundary.
	\end{itemize}
\end{defn}

For example, a product $D(V_1)\times \ldots \times D(V_n)$, where each $D(V_i)$ is the disk in a $G$-representation, has $G$-trivial action on corners, while the product $I\times I$ with an action that includes the map $(x,y)\mapsto (y,x)$ does not. 

\begin{defn}
	Throughout this paper, a \ourdefn{$G$-manifold with corners} will be a manifold $M$ with corners, with a smooth $G$ action that is \orange{trivial along corners.}
\end{defn}

 \orange{
\begin{rem}
Many of the results here would hold more generally without assuming that the action is trivial along corners. We make this assumption for several reasons. First, it holds in the examples we encounter, so the extra generality is not needed. Second, it gives a simpler local model for $G$-manifolds---otherwise we would have to consider $G$-invariant corner subspaces of $G$-representations. Above all, it is needed in the sequel paper because it guarantees that for each subgroup $H \leq G$ the set of $H$-fixed points $M^H$  is a neat submanifold of $M$ in the sense of \autoref{neatdef}.
\end{rem}}



As usual, a continuous or smooth map between $G$-spaces or $G$-manifolds is said to be \ourdefn{equivariant} if it commutes with the $G$-action.

Smooth vector fields $\xi$ on a manifold with corners $M$ are defined in the obvious way, as smooth sections of the tangent bundle. In the presence of a group action we typically only consider $G$-invariant \orange{(i.e. equivariant) }vector fields.

\orange{
\subsection{Existence of smooth extensions}



In view of  \autoref{smooth}, we may speak of a smooth map $\partial M \to N$ even though $\partial M$ is not a smooth manifold. 

We need results guaranteeing the existence of a smooth extension to a neighborhood of $\partial M$ under certain circumstances. The following example shows that there can be a difficulty with this if a corner point of $M$ is mapped to a boundary point of $N$.


\begin{rem}\label{smooth_extension_counterexample}
	It may happen that a smooth map $f\colon \partial M \to N$ cannot be smoothly extended to any neighborhood of $\partial M$ in $M$ in such a way that the extended map still takes values in $N$. For example, let $M$ be $ [0,\infty)^3$, let $N$ be $ [0,\infty)$, and consider the quadratic form
	\[ q(x_1,x_2,x_3)=x_1^2+x_2^2+x_3^2-c(x_1x_2+x_1x_3+x_2x_3) \]
	where $1<c<2$. Since $c < 2$, $q$ maps $\partial M$ into $N$. On the other hand, any smooth extension of $q$ to a neighborhood must shoot past the boundary of $N$. In fact, any real-valued function $f(x_1,x_2,x_3)$ defined on a neighborhood of the origin in $\mathbb R^3$ and agreeing with $q$ at all points in $\partial M$ must agree with $q$ to second order at the origin and, since $c>1$, this implies that $f(t,t,t)$ is negative for sufficiently small positive values of $t$. Therefore, even though $f$ might extend to a smooth map $[0,\infty)^3 \to \R$, there can be no such extension, even locally, that takes values in $[0,\infty)$.
\end{rem}


For the moment we will confine attention to cases in which the map to be extended takes values in the interior of the target manifold.

\begin{lem}\label{smooth_on_arbitrary_G}
	If $S \subseteq M$ is a $G$-invariant subset and $f\colon S \to N$ is both smooth (in the sense of \autoref{smooth}) and equivariant, and takes values in $\textup{int}\ N$, then every point in $S$ has a $G$-invariant open neighborhood $U$ such that the restricted map $S\cap U\to \textup{int}\ N$ has an equivariant smooth extension to $U$.
\end{lem}


\begin{proof}
	Suppose that $x\in S$. Let $G_x$ and $G_{f(x)}$ be the stabilizers of $x$ and $f(x)$. Note that $G_x \leq G_{f(x)}$. The essential idea is to average a non-equivariant extension of $f$ over $G_x$.
	
	Note that in some coordinate chart of the form $G \times_{G_x} V \times [0,\infty)^k$ the point $x$ is given by the identity of $G$ and the origin of $V \times [0,\infty)^k$. The point $f(x)$ occurs in the same way in a coordinate chart of the form $G \times_{G_{f(x)}} W \times [0,\infty)^\ell$. Restrict $f$ to $V \times [0,\infty)^k$, where it gives a map
	\[ f\colon S' \to W \times [0,\infty)^\ell \]
	where $S' \subseteq V \times [0,\infty)^k$ is a subset containing the origin. Since $f$ is smooth at the origin, it admits a non-equivariant smooth extension $h$, defined on open subset $U \subseteq V \times \R^k$ containing the origin. Define a map $\tilde f$ on the $G_x$-invariant open set $U' = \bigcap_{g \in G_x} gU$ and taking values in the larger space $W \times \R^\ell$, by the formula
	\[ \tilde f = \frac{1}{|G_x|} \sum_{g \in G_x} g \circ h \circ g^{-1} (-). \]
	Then take the unique $G$-equivariant extension of $\tilde f$ from $U' \subseteq V \times \R^k$ to $G \times_{G_x} U' \subseteq G \times_{G_x} V \times \R^k$; it is easy to see that this is smooth as well.
\end{proof}


Next we use this local extension result to deduce a more global one:

\begin{lem}\label{smooth_local_to_global}
	If $M$ and $N$ are $G$-manifolds with corners, $S \subseteq M$ is an arbitrary $G$-invariant subset, and $f\colon S \to \textup{int }N$ is equivariant and smooth, then there exists a $G$-invariant open subset $U \subseteq M$ containing $S$ and an equivariant smooth extension $\tilde f\colon U \to \textup{int }N$.
\end{lem}

\begin{proof}
	For simplicity, we first describe the argument without the $G$-action. Cover $f(S)$ by open sets that are coordinate charts for $\textup{int }N$. As a subspace of a manifold, $f(S)$ is paracompact, so we may replace this cover of $f(S)$ by a locally finite refinement $V_i$. Since $N$ has a countable basis, this implies that the collection is countable.
	
	For each $i$, cover $f^{-1}(V_i) \subseteq S$ by open sets $U$ of $ M$ such that $f|_{f^{-1}(V_i)}$ has a smooth extension to $U$ whose image is contained in $V_i$. Let $U_{ij}$ be a locally finite refinement of this open cover. Choose a smooth partition of unity on $U_i=\bigcup_j U_{ij}$ subordinate to the cover $U_{ij}$, and use this partition of unity to form a weighted sum (with respect to the coordinates in $V_i$) of the given extensions of $f|_{f^{-1}(V_i)}$. This extends $f|_{f^{-1}(V_i)}$ to a smooth map $\tilde h_i\colon U_i \to V_i$ defined on an open neighborhood of $f^{-1}(V_i)$. Do this for each $i$.
	
	Then combine the maps $\tilde h_i$ as follows. Since there are countably many $V_i$, they may be arranged in a sequence. Write $W_n=\bigcup_{i=1}^n U_i$. Define a smooth extension $\tilde f_n\colon W_n \to \textup{int }N$ of $f|_{W_n\cap S}$ by induction on $n$. To go from $n-1$ to $n$, use a partition of unity subordinate to the cover $\{ W_{n-1}\cap U_n, U_n \}$ of $U_n$ to form a weighted sum (in the coordinates of $V_n$) of the function $\tilde f_{n-1}|_{W_{n-1}\cap U_n}$ and the function $\tilde h_n$. This map $U_n\to N$ is smooth, and when it is extended to $W_n$ by $\tilde f_{n-1}|_{W_{n-1}\backslash U_n}$ it remains smooth. This is $\tilde f_n:W_n\to N$.
	
	Finally, write $W=\bigcup_nW_n$. The maps $\tilde f_n$ yield an extension $\tilde f:W\to N$, because the cover of $f(S)$ by $V_i$ is locally finite; for each point $x \in S$ there is a neighborhood on which $\tilde f_n$ stops changing after finitely many steps. 
	
	To prove the statement with $G$-action, we have only to choose our coordinate charts, open sets, and partitions of unity to be $G$-invariant, and to construct the local extensions to be equivariant using \autoref{smooth_on_arbitrary_G}. This ensures that the extension $\tilde f$ is equivariant, and that its domain is a $G$-invariant open neighborhood of $S$.
\end{proof}

In the special case in which $S$ is the boundary of $M$, smoothness of a map can be verified by breaking the boundary up into its local components.

\begin{lem}\label{smooth_on_boundary}
	If $f\colon \partial M \to N$ is such that $f \circ i\colon \ti\partial M \to N$ is smooth, then $f$ is smooth. 
\end{lem}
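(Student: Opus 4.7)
The plan is to treat the two implications separately. The forward implication is essentially formal, while the reverse requires a gluing construction at corner points.

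For the forward direction, fix $x \in \partial M$ and local $M$-coordinates identifying a neighborhood of $x$ with an open subset of $[0,\infty)^k \times \R^{n-k}$, with $x = 0$ of depth $k$. By hypothesis, $f$ extends to a smooth function $F$ on an open neighborhood of $0$ in $\R^n$. The preimages of $x$ under $i$ correspond bijectively to the $k$ local boundary components, i.e., to the $k$ coordinate hyperplanes $\{x_j = 0\}$ of the chart; each such hyperplane provides a chart of $\ti\partial M$ near $(x, b_j)$ as an open subset of $[0,\infty)^{k-1} \times \R^{n-k}$. Restricting $F$ to $\{x_j = 0\}$ gives a smooth function on an open subset of $\R^{n-1}$ extending $f \circ i$ in this chart, proving $f \circ i$ is smooth at $(x, b_j)$.

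For the converse, fix $x \in \partial M$ of depth $k \geq 1$, choose local $M$-coordinates as above, and (by composing with a chart on $N$ and working one component at a time) reduce to the scalar case $N = \R$. Smoothness of $f \circ i$ at each preimage of $x$ provides, for $j = 1, \ldots, k$, a smooth extension $G_j$ of $f|_{\{x_j = 0\}}$ to an open subset of $\R^{n-1}$. For each nonempty $S \subseteq \{1, \ldots, k\}$, iteratively restricting any $G_j$ with $j \in S$ to $\bigcap_{l \in S}\{x_l = 0\}$ gives a smooth function $g_S$ on an open subset of $\R^{n-|S|}$, and this is independent of the chosen $j \in S$ because all of the $G_j$'s specialize to $f$ on that subface. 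Let $\tilde G_S \colon \R^n \to \R$ be the pullback of $g_S$ along the projection forgetting the coordinates in $S$, which is smooth on an open subset of $\R^n$, and define
\[
F(y) := \sum_{\emptyset \neq S \subseteq \{1, \ldots, k\}} (-1)^{|S|+1}\, \tilde G_S(y).
\]

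To verify $F|_{\{x_l = 0\}} = f|_{\{x_l = 0\}}$ for each $l$, note that when $l \notin S$, setting $x_l = 0$ in $\tilde G_S$ returns $\tilde G_{S \cup \{l\}}|_{\{x_l = 0\}}$ by construction. Thus for every nonempty $S$ not containing $l$, the terms indexed by $S$ and $S \cup \{l\}$ in the sum contribute with opposite signs $(-1)^{|S|+1}$ and $(-1)^{|S|+2}$ and cancel. Every $S \neq \{l\}$ is exhausted by such pairings, leaving only the term from $S = \{l\}$, which equals $\tilde G_{\{l\}}|_{\{x_l = 0\}} = f|_{\{x_l = 0\}}$. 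Hence $F$ is a local smooth extension of $f$ near $x$, establishing smoothness of $f$ at $x$. The main obstacle throughout is precisely this reverse direction at corner points of depth $\geq 2$, where smooth data from several meeting faces must be assembled into one smooth function on a Euclidean open set; the necessary compatibility on all deeper subfaces is automatic because every face extension restricts to $f$ on the intersection, so the inclusion-exclusion formula packages the extensions without any recourse to Whitney-type extension theorems or partitions of unity.
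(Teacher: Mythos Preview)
Your proof is correct and takes essentially the same approach as the paper: both work locally in a corner chart, reduce to the scalar case, and build a smooth extension out of pullbacks of $f$ along the coordinate projections that set various subsets of the corner coordinates to zero. The only difference is presentational---the paper constructs the extension inductively by subtracting off one face at a time (``replace $f$ by $f-f_1$, now it vanishes on $V_1$; repeat''), whereas you write down the resulting inclusion--exclusion sum $\sum_{\emptyset\neq S}(-1)^{|S|+1}\tilde G_S$ directly; unwinding the paper's induction yields exactly your formula.
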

}

\begin{proof}
	Since smoothness is defined locally, we may assume that $f$ is a map
	\[ \partial([0,\infty)^k) \times \R^{n-k} \to \R^m, \]
	 and we wish to extend $f$ smoothly to $[0,\infty)^k \times \R^{n-k}$. Let $V_1,\ldots,V_k$ be the subspaces of the domain obtained by restricting one of the first $k$ coordinates to 0. Then $f$ is defined on the union of the $V_i$, and is smooth on each $V_i$ separately.
	
	The question is unaffected if we subtract a function that admits a smooth extension to $[0,\infty)^k \times \R^{n-k}$. One such function is given by $f_1(x_1,\dots ,x_n)=f(0,x_2,\dots ,x_n)$. In other words, we project the first coordinate to 0 and then apply $f$. Replace $f$ by $f - f_1$. It now vanishes on $V_1$. Repeat with a second coordinate. Now the function vanishes on $V_2$ while still vanishing on $V_1$. Repeating with the remaining coordinates, $f$ now vanishes on every $V_i$, and therefore is zero, so it can be extended to the rest of $[0,\infty)^k \times \R^{n-k}$ by the zero function.
	
	\orange{To put it another way, the original function $f$ is the sum of the functions $f_i$ that we have inductively defined. Each $f_i$ smoothly extends to $[0,\infty)^k \times \R^{n-k}$, and so the same is true of $f$. Thus $f$ is smooth. }
	
\end{proof}

\orange{
We will need a slight generalization of the previous result.

\begin{defn}\label{tame_def}
	A subset $A \subseteq \partial M$ is a \ourdefn{partial boundary} if 
	for every point $x \in A$ of depth $k$, there is some number $1\le a\le k$ such that in some coordinate chart $U 		\subseteq [0,\infty)^k \times \R^{n-k}$ the set $A\cap U$ corresponds to 
	\[ U \cap (\partial([0,\infty)^a) \times [0,\infty)^{k-a} \times \R^{n-k}). \]
	Equivalently, near each point of $A$, $A$ is a union of local boundary components. See \autoref{fig:partial_boundary}.
\end{defn}

Note that the preimage $\ti A \subseteq \ti \partial M$ is an open subset of $\ti \partial M$. It need not be a closed subset, but if it is then $\ti A $ is the union of some set of connected components of $\ti \partial M$.

\begin{lem}\label{smooth_on_boundary_tame}
	\orange{Suppose that $A \subseteq \partial M$ is a partial boundary and $f\colon A \to N$ is a map. If $f \circ i\colon \ti A \to N$ is smooth, then $f$ is smooth.}
\end{lem}

\begin{proof}
	The proof is the same as in \autoref{smooth_on_boundary}, except that in the local model $f$ is only defined on $V_1,\ldots,V_a$, and not on $V_{a+1},\ldots,V_k$. However we may apply the same argument, projecting onto each of the subspaces where $f$ is defined, composing with $f$, and then subtracting off this composite function. As before, this reduces the problem to one in which $f$ is zero along each of the subspaces where it is already defined, and then the zero function is a valid extension to the rest of $[0,\infty)^k \times \R^{n-k}$.
\end{proof}

\begin{figure}[h]
	\centering
	\def\svgwidth{4.5in}
	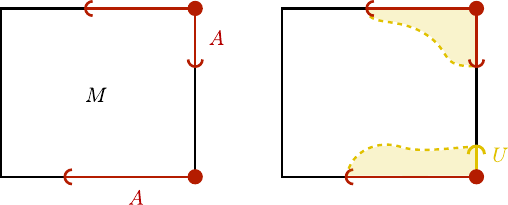
	\vspace{-.5em}
	\caption{A partial boundary $A \subseteq \partial M$ and an open neighborhood of $A$.}\label{fig:partial_boundary}
\end{figure}

Combining this with \autoref{smooth_local_to_global}, we get the main result of this subsection. When $A = \partial M$, it tells us that any equivariant map on $\partial M$ that is smooth on each ``face'' of the boundary extends to an equivariant smooth map on an open neighborhood of the boundary. As discussed before and demonstrated by \autoref{smooth_extension_counterexample}, the map must land in the interior of $N$ for this extension result to work.

\begin{prop}\label{smooth_extension}
	If $A \subseteq \partial M$ is a $G$-invariant partial boundary, $f\colon A \to \textup{int }N$ is an equivariant map, and $f \circ i\colon \ti A \to \textup{int }N$ is smooth, then $f$ extends to an equivariant smooth map $U \to \textup{int }N$ for some $G$-invariant open set $U$ containing $A$.
\end{prop}

We also record the variant of this result for vector fields, omitting the proof since it is similar to the above but significantly easier. We do need to assume the partial boundary $A$ is closed in order to extend the field to all of $M$ and not just to an open subset containing $A$. 

\begin{prop}\label{smooth_extension_vectors}
	If $A \subseteq \partial M$ is a closed $G$-invariant partial boundary, $\xi$ is a $G$-invariant vector field defined along $A$, and $\xi$ is smooth when restricted to each local boundary component near each point in $A$, then $\xi$ extends to a smooth, $G$-invariant vector field on all of $M$.
\end{prop}
}

\subsection{Embeddings, tubular neighborhoods, and submersions}
A \ourdefn{smooth embedding} $M \to N$ is a topological embedding that is a smooth map having full rank (the rank of the derivative is equal to the dimension of $M$) at every point. 
It is a \ourdefn{closed embedding} or \ourdefn{open embedding} if it is topologically a closed or open embedding, respectively. It is elementary that $i$ is a closed embedding if it is smooth, full rank, and injective, and if the source $M$ is compact. \orange{More generally, any map that is full rank and injective on a compact subset of $M$ extends to an embedding near that subset:

\begin{lem}\label{extend_embedding}
	If $f\colon M \to N$ is smooth, $K \subseteq M$ is any compact subset, and $f$ is injective on $K$ and has rank equal to $\dim M$ at each point of $K$, then $f|_U\colon U \to N$ is a smooth embedding for some open set $U$ containing $K$.
\end{lem}

\begin{proof} By a corollary of the inverse function theorem, if $f$ has full rank at $x$ then $f$ becomes an injection when restricted to some neighborhood of $x$. It follows that the following subset of $M\times M$ is open: all pairs $(x,y)$ such that either $f(x)\neq f(y)$ or else $x=y$ and the derivative at $x$ has full rank. 
\newline

That open set contains $K\times K$, so it contains $V\times V$ for some open set $V$ containing $K$. The restriction of $f$ to $V$ is injective and has full rank but is not necessarily a topological embedding. Choose $U\subset V$ to be a smaller open neighborhood whose closure in $V$ is compact. Then the restriction of $f$ to $U$ is a smooth embedding.

	
\end{proof}

}

\begin{figure}[h]
	\centering
	\def\svgwidth{3.5in}
	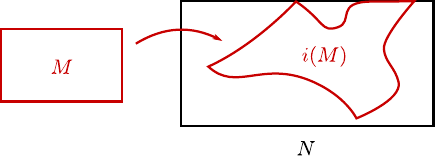
	\vspace{-.5em}
	\caption{A smooth codimension 0 embedding of manifolds with corners.}\label{fig:embedding}
\end{figure}

\begin{lem}\label{depthopenlemma}
	If $M$ and $N$ are (not necessarily compact) smooth manifolds with corners, both of the same dimension, then any smooth map $i\colon M \to N$ that has full rank and is injective and depth-preserving must be an open embedding.
\end{lem}

\begin{proof}
	We work locally at a point of depth $k$, so that without loss of generality $M$ and $N$ are neighborhoods of the origin in $[0,\infty)^k \times \R^{n-k}$ and $i$ preserves the origin. When $k = 0$, openness is a standard consequence of the inverse function theorem. For higher values of $k$, inductively we know that the restriction of $i$ to each face of $[0,\infty)^k \times \R^{n-k}$ is open, so that the restriction to the boundary $\partial([0,\infty)^k) \times \R^{n-k}$ contains a neighborhood of the origin. Therefore the image $i(\partial M)$ disconnects every sufficiently small neighborhood of the origin in $\R^n$.
	
	Since $i$ has full rank, it admits an extension to an open neighborhood of the origin in $\R^n$ that is a homeomorphism to an open neighborhood in $\R^n$. By restricting the size of this neighborhood on the exterior of $[0,\infty)^k \times \R^{n-k}$, we can ensure that this extended version of $i$ does not send exterior points to interior points. Therefore the restriction to $[0,\infty)^k \times \R^{n-k}$ is such that its image is the intersection of an open set in $\R^n$ and the subspace $[0,\infty)^k \times \R^{n-k}$, as desired.
\end{proof}

\orange{
%

\begin{lem}\label{embedding_is_diffeo}
	Suppose $M'$ is a manifold with corners, $M \subseteq M'$ is a compact connected codimension zero embedded manifold with corners, and $f\colon M \to M'$ is an embedding. If $f(\partial M)=\partial M$ and $f$ sends at least one point $x \in \textup{int }M$ into $\textup{int }M$, then $f(M)=M$ and the map $M \to M$ given by $f$ is a diffeomorphism.
\end{lem}

\begin{proof}
	By assumption $f$ is injective and has full rank, so it suffices to show that $f(M)=M$, that is, $f(\textup{int }M)=\textup{int }M$. For the inclusion $f(\textup{int }M)\subseteq \textup{int }M$, note that $f$ must map the connected set $\textup{int }M$ into the component of $M'\backslash \partial M$ that contains $f(x)$. This component is $\textup{int }M$. 
	
	For the reverse inclusion we use homology. Let $m$ be the dimension of $M$. The map $f$ induces horizontal maps as shown.
	\[ \xymatrix{
		H_{m}(M,\partial M;\Z/2) \ar[d]_-\cong \ar[r]^-{f_*} & H_{m}(M,\partial M;\Z/2) \ar[d]^-\cong \\
		H_{m}(M,M \setminus \{x\};\Z/2) \ar[r]^-{f_*} & H_{m}(M,M \setminus \{f(x)\};\Z/2)
	} \]
	Since $M$ is compact and connected, all of these groups are $\Z/2$. The bottom horizontal map is an isomorphism since $f$ is a codimension-zero embedding, and therefore the top horizontal map is an isomorphism as well.
	
	It follows that for each $y \in \textup{int }M$ the composed map 
	\[ H_{m}(M,\partial M;\Z/2) \xrightarrow{f_\ast} H_{m}(M,\partial M;\Z/2)\to H_{m}(M,M \setminus \{y\};\Z/2) \]
is an isomorphism. This can only be so if $y$ is in the image of $f$. 
\end{proof}
}

\begin{lem}\label{embed_into_V}
	If $M$ is a compact $G$-manifold with corners, then there is a closed embedding $M \to V$ into a sufficiently large orthogonal $G$-representation $V$.
\end{lem}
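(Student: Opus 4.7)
The plan is to adapt the classical Mostow--Palais equivariant embedding theorem to the corners setting, by gluing finitely many equivariant coordinate charts together using a smooth equivariant partition of unity. The key technical input is that, by \autoref{Gcornerdef}, our corners hypothesis forces the local slice model to have the clean product form $G\times_H V \times [0,\infty)^k$, which is exactly what makes embedding into a $G$-representation feasible near corner points.

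First, for each $x\in M$ with stabilizer $G_x$ and depth $k_x$, the trivial-on-corners hypothesis gives an equivariant open neighborhood $T_x$ of the orbit $Gx$ diffeomorphic to $G\times_{G_x}(U_x\times[0,\infty)^{k_x})$, where $U_x$ is a $G_x$-invariant open neighborhood of $0$ in some $G_x$-representation $V_x$. The standard inclusion $U_x\times[0,\infty)^{k_x}\hookrightarrow V_x\oplus\R^{k_x}$ is a $G_x$-equivariant smooth embedding (with $G_x$ acting trivially on $\R^{k_x}$), and composing with the $G_x$-linear inclusion $V_x\oplus\R^{k_x}\hookrightarrow\textup{Ind}_{G_x}^G(V_x\oplus\R^{k_x})$ and inducing up to $G$ yields a $G$-equivariant smooth embedding $f_x\colon T_x\to W_x$ into the orthogonal $G$-representation $W_x:=\textup{Ind}_{G_x}^G(V_x\oplus\R^{k_x})$.

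Next, by compactness I cover $M$ by finitely many such tubes $T_1,\ldots,T_n$ with embeddings $f_i\colon T_i\to W_i$. Averaging non-equivariant bump functions over the finite group $G$ produces an equivariant smooth partition of unity $\{\phi_i\}$ subordinate to $\{T_i\}$, together with equivariant smooth functions $\psi_i\colon M\to[0,1]$ supported in $T_i$ and identically $1$ on an open neighborhood of $\textup{supp}(\phi_i)$. I then set $V:=\bigl(\bigoplus_{i=1}^n W_i\bigr)\oplus\R^n$, with trivial $G$-action on the last summand, and define
\[ F\colon M\to V,\qquad F(x)=\bigl(\psi_1(x)f_1(x),\ldots,\psi_n(x)f_n(x),\phi_1(x),\ldots,\phi_n(x)\bigr), \]
with each term $\psi_i f_i$ extended by zero outside $T_i$. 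This $F$ is smooth and $G$-equivariant.

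Finally, I check that $F$ is a closed embedding. If $F(x)=F(y)$, then $\phi_i(x)=\phi_i(y)$ for every $i$; since $\sum_i\phi_i\equiv 1$, some $\phi_i(x)>0$, which forces both $x$ and $y$ to lie in $\textup{supp}(\phi_i)$, hence in the open set where $\psi_i\equiv 1$ and inside $T_i$. Then $f_i(x)=\psi_i(x)f_i(x)=\psi_i(y)f_i(y)=f_i(y)$, so $x=y$ by injectivity of $f_i$. At any point $x$, picking $i$ with $\phi_i(x)>0$ makes $\psi_i\equiv 1$ on a neighborhood of $x$, so the $W_i$-component of $F$ agrees locally with $f_i$, which has full rank; hence $F$ has full rank at $x$ by \autoref{inverse_fn_thm}. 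Compactness of $M$ and Hausdorffness of $V$ then make the continuous injection $F$ automatically closed. The main obstacle, as noted, is ensuring that the slice charts and bump functions behave well at corner points; this is precisely what the trivial-on-corners assumption provides.
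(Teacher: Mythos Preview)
Your approach is exactly the paper's: cover by finitely many $G$-invariant coordinate charts, take a $G$-invariant partition of unity subordinate to the cover, and assemble the scaled charts into an embedding into a direct sum representation. You have simply written out the details that the paper leaves implicit.

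There is, however, a small technical slip in your construction of the local embeddings $f_x$. The map
\[
G\times_{G_x}\bigl(U_x\times[0,\infty)^{k_x}\bigr)\longrightarrow \textup{Ind}_{G_x}^G(V_x\oplus\R^{k_x}),\qquad [g,u]\longmapsto g\otimes u,
\]
is \emph{not} injective: every point of the central orbit $G/G_x=G\times_{G_x}\{0\}$ is sent to $0$. Concretely, for $G=\Z/2$ acting freely, both sheets of the tube collapse at the origin. Since your injectivity argument for $F$ in the final paragraph relies on the injectivity of each $f_i$, this is a genuine (if minor) gap.

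The standard fix is to adjoin a copy of the permutation representation $\R[G/G_x]$ and send $[g,u]\mapsto (g\otimes u,\,gG_x)$; equivalently, replace $V_x\oplus\R^{k_x}$ by $V_x\oplus\R^{k_x}\oplus\R$ with trivial $G_x$-action on the last summand and embed via $u\mapsto(u,1)$ before inducing. With that correction your argument goes through as written. (Also, the reference to \autoref{inverse_fn_thm} in the full-rank check is not quite apt --- that lemma concerns diffeomorphisms --- but the statement you need, that full rank of one component forces full rank of $F$, is elementary.)
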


\begin{proof}
	A standard proof of the non-equivariant statement can be adapted as follows. \orange{Cover $M$ by a finite set of equivariant coordinate charts $\phi_i\colon U_i \to V_i$, where each $U_i$ is a $G$-invariant open subset of $M$, each $V_i$ is a $G$-representation, and $\phi$ is a diffeomorphism onto its image. Take a smooth partition of unity subordinate to this cover, $\lambda_i\colon M \to [0,1]$, and make it $G$-invariant by replacing each of its functions by the average $\frac{1}{|G|}\sum_{g \in G} \lambda_i \circ g$. The usual proof then constructs the embedding $M \to \bigoplus_i (V_i \oplus \R)$, whose coordinates are $(\lambda_i \cdot \phi_i, \lambda_i)$. As a result of our choices for $\phi_i$ and $\lambda_i$, this embedding is also $G$-equivariant.}
\end{proof}

\orange{
\begin{defn}\label{fiber_bundle}
	A \ourdefn{smooth equivariant fiber bundle} is a smooth equivariant map $p\colon E \to B$, such that for each $x \in B$ with isotropy group $G_x$, there is a $G_x$-invariant neighborhood $U \subseteq B$ on which $p$ is identified up to $G_x$-equivariant diffeomorphism with a product projection $F \times U \to U$, with $F$ a $G_x$-manifold.
	
	The bundle has \ourdefn{structure group} $\Gamma$ if we take a fixed smooth manifold $F$ with faithful left action by $\Gamma$ through diffeomorphisms, and choose a priviledged class of trivializations of $p\colon E \to B$ in which every $G_x$-equivariant fiber arises from a homomorphism $G_x \to \Gamma$, and the transitions between the local trivializations are given through actions by elements of $\Gamma$.
	
	As a special case, a \ourdefn{smooth equivariant vector bundle} is a smooth equivariant fiber bundle in which each fiber $F$ has the structure of a vector space, with $G_x$ acting linearly, and the smooth trivializations $p^{-1}(U) \cong F \times U$ can be chosen to be linear on each fiber. This is the case in which $\Gamma = GL_n(\R)$.
\end{defn}

For example, the tangent bundle $TM \to M$ from \autoref{tangent_bundle} is a smooth equivariant vector bundle, with a trivialization for each $x \in M$ and $G_x$-equivariant coordinate chart containing $x$. We define} the \ourdefn{normal bundle} of a smooth embedding $i\colon M \to N$ as the quotient of tangent bundles $(i^*TN) / TM$. Notice that this \orange{does not require us to pick a metric on $TM$, and} makes sense even at boundary and corner points. If the embedding $i$ is equivariant then the normal bundle is a smooth equivariant vector bundle over $M$.

\orange{
\begin{defn}\label{tubular_nbhd}
For compact $G$-manifolds $M$ and $N$ and an equivariant embedding $i\colon M \to N$, a \ourdefn{tubular neighborhood} consists of a $G$-invariant inner product on the normal bundle $\nu$ of $i$ together with a codimension zero smooth equivariant embedding $\tilde i\colon D(\nu) \to N$ of the unit disk bundle, satisfying the following conditions. First, the restriction of $\tilde i$ to the zero section $M$ must be $i$. Second, the isomorphism from the vector bundle $\nu$ to itself obtained by differentiating $\tilde i$ at $M$ must be the identity. In other words, the first derivative of $\tilde i$ at a point $x\in M$ has the block form
\[ \begin{pmatrix} I & A \\ 0 & I \end{pmatrix} \]
when the tangent space to $D(\nu)$ at $x$ is split into tangent and normal directions using the inner product and the tangent space to $N$ at $x$ is split (somehow) into tangent and normal directions. 
\end{defn}

\begin{rem}\label{tubular_nbhd_alternate}
Given $i\colon M \to N$, any smooth $G$-vector bundle $\mu$ on $M$ with inner product and codimension zero smooth equivariant embedding $\tilde i\colon D(\mu) \to N$, coinciding with $i$ on the zero section, determines a tubular neighborhood. Indeed, the first derivative of $\tilde i$ determines an isomorphism $\mu\cong \nu$ of smooth $G$-vector bundles, and along this isomorphism $\nu$ receives an inner product and the induced embedding $D(\nu) \to N$ has the required condition on its first derivatives.
\end{rem}

\begin{figure}[h]
	\centering
	\def\svgwidth{2.5in}
	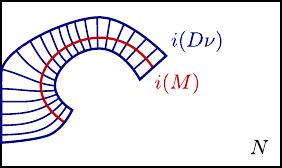
	\vspace{-.5em}
	\caption{A tubular neighborhood of manifolds with corners.}\label{fig:tubular_nbhd}
\end{figure}

We can also consider germs of tubular neighborhoods.
\begin{defn}\label{partial_tubular_nbhd}
If $M$ and $N$ are compact, a \ourdefn{partial tubular neighborhood} of the embedding $i:M\to N$ consists of a $G$-invariant open subset $U \subseteq \nu$ containing the zero section together with a codimension zero embedding $\tilde i:U\to N$ extending $i$ and inducing the identity map of $\nu$. A \ourdefn{tubular neighborhood germ} is an equivalence class of partial tubular neighborhoods, where two are equivalent if they agree in some neighborhood of the zero section. 
\end{defn}}

\begin{rem}\label{germ_to_full}
Of course, every tubular neighborhood determines a tubular neighborhood germ. Conversely, every tubular neighborhood germ is the germ of a tubular neighborhood. To see this, choose any inner product on $\nu$ and compose the given embedding $\tilde i:U\to N$ with an embedding $D(\nu)\to U\subseteq \nu$ that is the identity near the zero section. For example, we could construct such an embedding by applying a smooth embedding $[0,\infty) \to [0,\infty)$ to the radial coordinate, that is the identity near zero and that sends $[0,\infty)$ into $[0,\epsilon)$ for some $\epsilon $. \orange{(By assumption $M$ is compact.)}
\end{rem}

The usual proof of existence of tubular neighborhoods for embeddings into Euclidean space (e.g. \cite[Section 4.5]{hirsch}) applies in this equivariant setting:
\begin{lem}\label{weak_tubular_nbhd_thm}
	If $M$ is compact, every equivariant embedding into an orthogonal $G$-representation $V$ has a tubular neighborhood.
\end{lem}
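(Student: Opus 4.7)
\emph{Proof plan.} I would adapt the standard Euclidean construction of tubular neighborhoods to the equivariant corners setting. Using the $G$-invariant inner product on $V$, define the normal bundle $\nu \to M$ fiberwise as the orthogonal complement of $di(T_x M) \subset V$; this is a $G$-vector bundle with invariant inner product. The candidate map is the smooth equivariant
\[ \ti i \colon \nu \ra V, \qquad (x,v) \mapsto i(x) + v, \]
and the goal is to show that its restriction to a sufficiently small disc subbundle is a codimension zero smooth equivariant embedding. Composing with a fiberwise rescaling will then produce the desired tubular neighborhood.

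First I would check that $\ti i$ is a local diffeomorphism at every point $(x,0)$ of the zero section. In a local chart of the form $[0,\infty)^k \times \R^{n-k}$ around $x$, the map $\ti i$ is smooth in the sense of \autoref{smooth} and hence admits a local smooth extension to an open set in $\R^n \times \R^{m-n}$, where $m = \dim V$. Its derivative at $(x,0)$ restricts to $di$ on $T_x M$ and to the inclusion $\nu_x \hookrightarrow V$ on $\nu_x$, assembling into an isomorphism $T_x M \oplus \nu_x \congar V$ via the orthogonal decomposition defining $\nu$. The classical inverse function theorem applied to the extension, combined with \autoref{inverse_fn_thm}, yields a smooth local inverse to $\ti i$ near $(x,0)$.

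Next I would promote local injectivity to injectivity on an entire $\epsilon$-disc bundle via a standard compactness argument. If this failed, there would exist sequences $(x_n,v_n) \ne (y_n,w_n)$ in $\nu$ with $|v_n|,|w_n| \to 0$ and $\ti i(x_n,v_n) = \ti i(y_n,w_n)$. Passing to convergent subsequences $x_n \to x$ and $y_n \to y$, the embedding property of $i$ forces $x = y$, and for $n$ large both pairs lie in a neighborhood of $(x,0)$ on which $\ti i$ is a diffeomorphism, a contradiction. The resulting $\epsilon > 0$ may be chosen $G$-invariant since the failure condition is. A fiberwise rescaling by $\epsilon/2$ then converts the restriction of $\ti i$ to the $\epsilon$-disc bundle into an equivariant smooth embedding of the full unit disc bundle $D(\nu)$ into $V$.

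The step requiring the most care is the inverse function theorem at corner points: one must verify that the local smooth extension supplied by \autoref{smooth} really has invertible derivative at the origin. This works uniformly because the tangent space at a corner of depth $k$ is the full $n$-dimensional vector space, so the orthogonal decomposition $V = di(T_x M) \oplus \nu_x$ holds at every $x \in M$, including corners. Equivariance is automatic throughout because every piece of data in the construction—$V$, the inner product, $\nu$, the map $\ti i$, and the $\epsilon$-threshold—is built from $G$-invariant ingredients.
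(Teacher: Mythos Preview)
Your proposal is correct and follows essentially the same approach as the paper: define $\ti i(x,v)=i(x)+v$ using the orthogonal complement in $V$, invoke the inverse function theorem along the zero section, and then pass to a disc bundle. The paper is terser---it simply notes that $\ti i$ is full-rank and injective along $M$, cites a generalized inverse function theorem to obtain a tubular neighborhood germ, and relies on the earlier remark that every germ extends to a tubular neighborhood---whereas you spell out the compactness argument and the $\epsilon$-rescaling explicitly; but the content is the same.
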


\begin{proof}
	For this argument we identify the normal space of $M$ at $x$ (a quotient of tangent spaces) with the space of vectors in $V$ that are perpendicular to the tangent space of $M$ at $x$. \orange{So $\nu \subseteq M \times V$. Now define $\tilde i\colon \nu \to V$ by sending $(x,v) \in M \times V$} to $x+v \in V$. This map is clearly equivariant, and it is both full rank and injective along the zero section $M$. \orange{Therefore by \autoref{extend_embedding}} the map is full rank and injective in a \orange{neighborhood the zero section $U \subseteq \nu$. Furthermore its first derivative along the zero section induces the identity map on $\nu$,} and therefore it defines a tubular neighborhood germ. \orange{By \autoref{germ_to_full}, since a germ exists, a tubular neighborhood also exists.}
\end{proof}
	
Note that the above tubular neighborhood for $M \to V$ comes with a smooth retraction $p$ from a neighborhood of $M$ in $V$ to $M$, \orange{defined by picking a $G$-invariant metric on $V$ and} sending every point to its nearest neighbor in $M$.
\orange{
This retraction can be used to prove smooth approximation in the compact case, as in \cite[Corollary 1.12]{wasserman}.
\begin{lem}\label{smooth_approximation}
	Any continuous equivariant map of compact $G$-manifolds $f\colon M \to \textup{int } N$ can be approximated by a smooth equivariant map. If $f$ is smooth on a neighborhood of a closed subset $C \subseteq M$ then the smooth map can be taken to agree with $f$ on $C$.
\end{lem}

\begin{proof}
	Fix an equivariant smooth embedding $N \to V$ in a representation and choose an equivariant smooth retraction $p:\Omega\to N$ as above. The domain $\Omega$ is a neighborhood of $\textup{int } N$. Choose $\epsilon > 0$ such that $\Omega$ contains the $\epsilon$-ball about each point in the closed set $f(M) \subseteq \textup{int }N$.
	
	Take any non-equivariant smooth approximation $\tilde f\colon M \to \textup{int } N$ rel $C$ such that $\|\tilde f(x) - f(x)\| < \epsilon$ for each $x$. Then the average of all the conjugates $g \circ \tilde f \circ g^{-1}$ is a map $M \to V$ that is also close to $f$:
	\begin{eqnarray*}
		\left\| \frac{1}{|G|} \sum_{g \in G} g\tilde fg^{-1}(x) - f(x)  \right\| 
		&\leq & \frac{1}{|G|} \sum_{g \in G} \left\| g\tilde fg^{-1}(x) - gfg^{-1}(x) \right\| \\
		&=& \frac{1}{|G|} \sum_{g \in G} \left \|\tilde f(g^{-1}x) - f(g^{-1}x) \right \| \\
		&<& \epsilon.
	\end{eqnarray*}
	Therefore this average lies in the open set $\Omega \subseteq V$, so we may compose it with the retraction $p$ to get the desired equivariant approximation to $f$. On each point of $C$, $\tilde f = f$, and so this average returns the function $f$ again.
\end{proof}
}

\orange{
We also record the same result for vector fields, whose proof is similar but easier, since we can just average the non-equivariant approximations and do not have to use the retraction $p$:

\begin{lem}\label{smooth_approximation_vectors}
	Any continuous $G$-invariant vector field $\xi$ on $M$ can be approximated by a smooth $G$-invariant vector field. If $\xi$ is smooth on a neighborhood of a closed subset $C \subseteq M$ then the smooth vector field can be taken to agree with $\xi$ on $C$.
\end{lem}

Next we consider a particularly nice kind of embedding for manifolds with corners.

\begin{defn}\label{neatdef} A closed embedding is \ourdefn{neat} if it is locally modeled on the inclusion
\[ [0,\infty)^k \times \R^{m-k} \times \{0\}^{n-m} \to [0,\infty)^k \times \R^{m-k} \times \R^{n-m} \]
with $m \leq n$, so in particular the depth of every point is preserved.
\end{defn}

\begin{figure}[h]
	\centering
	\def\svgwidth{3.5in}
	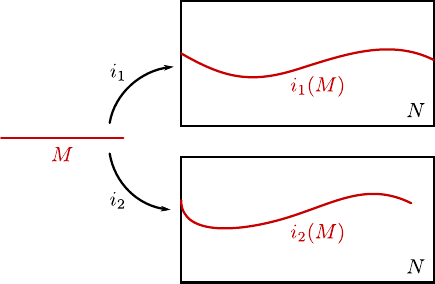
	\vspace{-.5em}
	\caption{A neat embedding $i_1$ and a non-neat embedding $i_2$.}\label{fig:neat_embedding}
\end{figure}

We will need the following extension of \autoref{smooth_on_boundary_tame}.
\begin{lem}\label{smooth_on_boundary_with_neat}
	If $S \subseteq M$ is the union of a partial boundary $A \subseteq \partial M$ and a neat submanifold $P \subseteq M$, a map $f\colon S \to N$ is smooth iff both its restriction to $P$ and its restriction to each component of $\ti A$ are smooth.
\end{lem}

\begin{proof}
	This is by a small modification of the proof of \autoref{smooth_on_boundary_tame}. Again the problem is local, so we consider a map to $\R^m$ defined on an open neighborhood of the origin in the space
	\[ [0,\infty)^a \times [0,\infty)^b \times \R^c \times \R^d, \]
	but only defined on the subspaces $V_1$, $\ldots$, $V_a$ where one of the coordinates in $[0,\infty)^a$ is zero, and on the subspace $W$ where all of the coordinates in $\R^d$ are zero. Our goal is to extend this $f$ to a neighborhood of the origin in the above product space.
	
	As in \autoref{smooth_on_boundary}, we show the problem is unaffected if we project away one of the coordinates in $[0,\infty)^a$, or all of the coordinates in $\R^d$, then apply $f$, and subtract the result from the original map. But after subtracting off all these projections, the map to be extended is identically zero, and so it can be extended by zero.
\end{proof}

Combining this with \autoref{smooth_local_to_global}, we get the following extension of \autoref{smooth_extension}, a smooth extension result for partial boundaries and neat submanifolds.

\begin{cor}\label{smooth_extension_with_neat}
	If $S \subseteq M$ is the union of a $G$-invariant partial boundary $A \subseteq \partial M$ and a $G$-invariant neat submanifold $P \subseteq M$, any equivariant map $f\colon S \to \textup{int }N$ that is smooth along $P$ and along each component of $\ti A$, must extend to a smooth equivariant map $U \to \textup{int }N$ for some $G$-invariant open set $U$ containing $S$.
\end{cor}
}

\orange{We finish this subsection by discussing submersions. For manifolds without boundary, submersions may be defined as smooth maps whose derivatives are surjective maps of tangent spaces, and this is equivalent to saying that the map is locally modeled on the projection $\mathbb R^\ell\times \mathbb R^m\to \mathbb R^\ell\times \{0\}.$ For manifolds with corners, we use the following definition. }

\begin{defn}\label{submersion}
	A \ourdefn{submersion of manifolds with corners} is a smooth map $E \to B$ locally modeled on the projection
	\[ [0,\infty)^k \times [0,\infty)^{j} \times \R^\ell \times \R^{\ell'} \to [0,\infty)^k \times \{0\} \times \R^\ell \times \{0\}. \]
	\orange{An \ourdefn{equivariant submersion} is an equivariant map that is also a submersion. We will only use this term in cases when the action of $G$ on $B$ is trivial.}
\end{defn}

For submersions in this sense the derivative map of tangent spaces is surjective at each point, and additionally the derivative is surjective on the induced maps between (constant-depth) strata. By \cite[5.1]{joyce}, these conditions are equivalent to being a submersion in our sense. 

The next result generalizes the Ehresmann fibration theorem to manifolds with corners. \orange{Recall from \autoref{fiber_bundle} the definition of an equivariant smooth fiber bundle; when $G$ acts trivially on the base, the definition simplifies and it becomes a map that is locally equivariantly diffeomorphic to $F \times U \to U$, where $F$ has $G$-action and $U$ has trivial action.}

\begin{lem}\label{ehresmann_with_corners}
	Let $p\colon E \to M$ be an equivariant submersion of compact manifolds with corners, \orange{where $G$ acts trivially on $M$.} Then $p$ is an equivariant smooth fiber bundle.
	Furthermore, if the base is $\Delta^k$ then the bundle is equivariantly diffeomorphic to a trivial bundle $\Delta^k \times F \to \Delta^k$, \orange{and any given trivialization on a neighborhood of a closed set $\Delta^k \times C$ can be extended rel $\Delta^k \times C$ to a trivialization of the entire bundle.}
\end{lem}

\begin{proof}
	This follows from the usual proof of Ehresmann's theorem. In more detail, let $\mc V$ be the class of \orange{$G$-invariant} vector fields $\xi$ on $E$ with the property that, in any chart on $E$ in which $p$ is a product as in \autoref{submersion}, the component of $\xi$ in the $[0,\infty)^j$ direction is, at each point, tangent to that point's stratum in $[0,\infty)^j$. Note that $\mc V$ is convex and locally nonempty, and therefore globally nonempty using a partition of unity.
	
	Near any point $x \in M$ locally modeled by $[0,\infty)^k \times \R^\ell$, we fix vector fields near $x$ that point in the coordinate directions. Using the product neighborhoods from \autoref{submersion}, we pick local lifts of each of these fields that lie in $\mc V$, \orange{average them to make them $G$-invariant}, and patch them together by a partition of unity to get a lift in $\mc V$ defined on an open subset containing $p^{-1}(x)$. Flowing along the resulting vector fields gives the desired local trivialization of $E$ near $x$. \orange{If a trivialization already exists on a given open set containing $\Delta^k \times C$, we use that trivialization to pick the lifts of the vector fields, and define the remaining lifts on open sets disjoint from $\Delta^k \times C$. After patching these together by a partition of unity, the resulting trivialization will agree with the given one on $C$.}
\end{proof}

\orange{As a result of this lemma, an equivariant submersion from a compact manifold to a $G$-trivial compact manifold always has local models of the form
	\[ [0,\infty)^k \times [0,\infty)^{j} \times \R^\ell \times (G \times_H V) \to [0,\infty)^k \times \{0\} \times \R^\ell \times \{0\}. \]
This also shows that when $E$ and $B$ are compact, a weaker definition of ``equivariant smooth fiber bundle'' implies the stronger definition: it is enough for $E \to B$ to be both a smooth fiber bundle and equivariant, with $G$ acting trivially on $B$. This implies that it is an equivariant smooth fiber bundle admitting $G$-equivariant local trivializations.
}

\orange{
\subsection{Families of maps and families of tubular neighborhoods}

We introduce some notation that we will use frequently in the paper when defining simplicial sets of smooth maps and embeddings. Throughout this subsection we assume that $M$ is compact. Let $P$ be any manifold with corners, not necessarily compact; the main examples are $\Delta^k$ or an open neighborhood of the boundary in $\Delta^k$.

\begin{defn}\label{family}
	A $P$-family of smooth maps $M \to N$ is a smooth map $\phi\colon P \times M \to N$. It is a constant family if the map $\phi_t\colon M \to N$ is independent of the point $t \in P$.
	We say that $\phi$ is a family of embeddings if the track of $\phi$
	\[ \Phi\colon P \times M \to P \times N \]
	\[ (t,x) \mapsto (t,\phi_t(x)) \]
	is a smooth embedding, and similarly we say that $\phi$ is a family of diffeomorphisms if the track $\Phi$ is a diffeomorphism.
\end{defn}

\begin{lem}
	Assuming $M$ is compact, $\phi$ is a family of embeddings iff each map $\phi_t\colon M \to N$ is an embedding. For any $M$, $\phi$ is a family of diffeomorphisms iff each $\phi_t$ is a diffeomorphism.
\end{lem}

\begin{proof}
	It is straightforward to check that $\Phi$ is full-rank on first derivatives iff each $\phi_t$ is full-rank. For diffeomorphisms, $\Phi$ is bijective iff each $\phi_t$ is bijective, and the result follows. For embeddings, if $\Phi$ is a homeomorphism onto its image then each $\phi_t$ is by restricting the homeomorphism. Conversely, if each $\phi_t$ is an embedding, then $\Phi$ is injective. We cover $P$ by open sets $U_i$ with compact closures. On each of the compact sets $\overline U_i \times M$, $\Phi$ is injective and therefore a homeomorphism onto its image. Therefore $\Phi$ is an open mapping on each of the open sets $U_i \times M$, and therefore on all of $P \times M$.
\end{proof}

We may also define a $\partial \Delta^k$-family of maps $M \to N$ to be a $\Delta^{k-1}$-family for each face of $\Delta^k$, that agree along the overlaps. This is a reasonable definition since by \autoref{smooth_extension} any such family landing in $\textup{int }N$ extends to an $T$-family of maps for some open subset $T \subseteq \Delta^k$ containing $\partial\Delta^k$. If we start with a $\partial \Delta^k$-family of embeddings, then for $T$ sufficiently small this is also a family of embeddings:
\begin{lem}\label{extend_embedding_family}
	Suppose $M$ is compact, $T \subseteq \Delta^k$ is an open subset containing $\partial \Delta^k$, $\phi\colon T \times M \to N$ is a family of smooth maps, $\phi_t$ is an embedding for each $t \in \partial\Delta^k$. Then $\phi_t$ is an embedding for each $t \in T_1$, for some open set $T_1 \subseteq T$ containing $\partial\Delta^k$.
\end{lem}

\begin{proof}
	Apply \autoref{extend_embedding} to the track $T \times M \to T \times N$. This track is an embedding along $\partial\Delta^k \times M$, and is therefore an embedding on some open neighborhood of $\partial\Delta^k \times M$. By the tube lemma this contains a set of the form $T_1 \times M$, for some open set $T_1$ containing $\partial\Delta^k$.
\end{proof}
}

\orange{
Given a $\Delta^k$-family of equivariant embeddings $i\colon \Delta^k \times M \to N$, we let $\Delta^k \times \nu$ denote the normal bundle of the track of the embedding, trivialized so that it is a product of $\Delta^k$ and a vector bundle $\nu \to M$. Strictly speaking, $\nu$ is no longer the normal bundle of each of the embeddings $i_t\colon M \to N$, but they are isomorphic, and the notation $\Delta^k \times \nu$ will be more convenient for the following.

\begin{defn}\label{family_of_tubular_nbhd}
	A \ourdefn{$\Delta^k$-family of tubular neighborhoods} of $i$ is a choice of $G$-invariant inner product on the vector bundle $\Delta^k \times \nu$, and an equivariant embedding $D(\Delta^k \times \nu) \to N$, such that for each point of $\Delta^k$ the resulting embedding $D(\nu) \to N$ is an equivariant tubular neighborhood of $i$ (\autoref{tubular_nbhd}).\footnote{By abuse of notation we also call the map $D(\Delta^k \times \nu) \to N$ a family of embeddings, since $\nu$ may be trivialized as a vector bundle with inner product, giving $D(\Delta^k \times \nu) \cong \Delta^k \times D(\nu) \to N$. We still write $D(\Delta^k \times \nu)$ to emphasize that we are allowing the inner product on $\nu$ to vary over $\Delta^k$.}

	
	Similarly a \ourdefn{$\Delta^k$-family of partial tubular neighborhoods} is an open set $U \subseteq \nu$ containing the zero section and a family of equivariant embeddings $\Delta^k \times U \to N$, such that for each point of $\Delta^k$ the resulting embedding $U \to N$ is an equivariant partial tubular neighborhood (\autoref{partial_tubular_nbhd}). Note that $U$ is constant throughout the family. A \ourdefn{$\Delta^k$-family of tubular neighborhood germs} is an equivalence class of such, where we identify two families if they agree on some smaller value of the open set $U$.
\end{defn}
}

\begin{defn}\label{tubular_nbhd_space}
For an equivariant embedding $i\colon M\to \textup{int }N$ \orange{with $M$ compact,} we define a space $\Tub_{\sbt}(M)$ of tubular neighborhoods. \orange{A $k$-simplex is a $\Delta^k$-family of tubular neighborhoods of the constant family of embeddings $i\colon M \to \textup{int }N$, with all of the tubular neighborhoods landing in $\textup{int }N$. Similarly, the space of tubular neighborhood germs $\Tub_{\sbt}^g(M)$ has a $k$-simplex for each $\Delta^k$-family of tubular neighborhood germs of the constant family, all landing in $\textup{int }N$.}
\end{defn}


\begin{thm}[Tubular Neighborhood Theorem, vector bundle version]\label{tubular_nbhd_thm} Assume $M$ is compact. \orange{For every equivariant embedding $i\colon M \to \textup{int }N$, the corresponding space $\Tub_{\sbt}(M)$ is a contractible Kan complex. Better, for any $\Delta^k$-family of embeddings $\Delta^k \times M \to \textup{int }N$, any $\partial \Delta^k$-family of tubular neighborhoods $D(\partial \Delta^k \times \nu) \to \textup{int }N$ can be extended to a $\Delta^k$-family of tubular neighborhoods $D(\Delta^k \times \nu) \to \textup{int }N$.}
\end{thm}	

\orange{
The proof of this follows from the next three results.
\begin{lem}\label{weak_tnthm_2}
	For any family of embeddings $i\colon \Delta^k \times M \to \textup{int }N$, there exists at least one family of partial tubular neighborhoods $\tilde i\colon \Delta^k \times U \to \textup{int }N$. 
\end{lem}

\begin{proof}
We follow the usual proof as in \cite[Section 4.5]{hirsch}, which goes in two stages. For the first stage, we embed $N$ into an orthogonal $G$-representation $V$. By \autoref{weak_tubular_nbhd_thm} the embedding $N \to V$ has a tubular neighborhood germ. Note that there is a smooth retraction $p$ of some open neighborhood $\Omega$ of $\textup{int }N$ in $V$ back to $\textup{int }N$, sending every point in $\Omega$ to the closest point in $N$.
	
	For the second stage, we identify the normal bundle of $\Delta^k \times \nu$ with the subset of $\Delta^k \times M \times V$ in which the vector $V$ must be tangent to $N$ and normal to $i_t(M)$. Then we attempt to define $\tilde i\colon \Delta^k \times \nu \to \textup{int }N$ by $\tilde i_t(x,v) = p(x+v)$. This formula does not make sense on all of $\Delta^k \times \nu$, only on those points where $x+v \in \Omega$. This is an open set of $\Delta^k \times \nu$ containing the zero section $\Delta^k \times M$. The first derivative of $\tilde i$ also induces an isomorphism on the normal bundle, as desired. By \autoref{extend_embedding}, the track of $\tilde i$ therefore defines an embedding on some open subset of $\Delta^k \times \nu$ containing the zero section $\Delta^k \times M$. By the tube lemma, this contains an open set of the form $\Delta^k \times U$. We have therefore defined a partial tubular neighborhood $\tilde i\colon \Delta^k \times U \to \textup{int }N$.
\end{proof}

The next step is to prove the tubular neighborhood theorem for the space of germs.

\begin{prop}\label{tubular_nbhd_thm_germs}
The analogue of \autoref{tubular_nbhd_thm} holds for the space $\Tub_{\sbt}^g(M)$ of equivariant tubular neighborhood germs. In particular, for any $\Delta^k$-family of embeddings $\Delta^k \times M \to \textup{int }N$, any $\partial \Delta^k$-family of tubular neighborhood germs can be extended to a $\Delta^k$-family of tubular neighborhood germs.
\end{prop}	

\begin{proof}
	For simplicity, we first describe the argument for a constant family of embeddings $i\colon M \to \textup{int }N$, in other words we show that $\Tub_{\sbt}^g(M)$ is a contractible Kan complex.
	
	Consider any map of simplicial sets $\partial \Delta^k \to \Tub_{\sbt}^g(M)$. This gives a family of equivariant tubular neighborhood germs $\phi_0\colon \partial \Delta^k \times U \to \textup{int }N$, where $U \subseteq D(\nu)$ is an open subset containing the zero section, and along the zero section $\partial \Delta^k \times M$ the map is the given embedding $i\colon M \to N$. Note that in the manifold $\Delta^k \times \nu$, the subset $\partial \Delta^k \times U$ is a partial boundary, and $\Delta^k \times M$ is a neat submanifold. Therefore by \autoref{smooth_extension_with_neat}, we can extend $\phi_0$ to an equivariant smooth map $\phi_1$ on an open subset $O \subseteq \Delta^k \times \nu$ containing these two subspaces. Shrink $U$ to a compact neighborhood of the zero section, apply the tube lemma, and then shrink $U$ further to be open again. This guarantees that $O$ contains a smaller open set of the form $T \times U$, where $T \subseteq \Delta^k$ is an open subset containing the boundary $\partial \Delta^k$.
	
	The map $\phi_1\colon O \to \textup{int N}$ only smooth, not necessarily an embedding for each point of $\Delta^k$. To put it another way, its track $O \to \Delta^k \times \textup{int }N$ is not necessarily an embedding. However, it is an embedding when restricted to $\partial \Delta^k \times U$. Shrinking $U$ to be compact, applying \autoref{extend_embedding_family}, and then shrinking $U$ again to be open, we conclude that $\phi_2$ gives an embedding for each point of $T$ when $T$ is sufficiently small. It also agrees with the embedding $i$ along $T \times M$. Since $\phi_1$ is full rank and induces the identity in the $M$ direction, in the normal direction to $M$ it induces a smooth isomorphism of vector bundles, that is the identity along $\partial \Delta^k$. Composing with the inverse of this isomorphism, we get a new $T$-family of tubular neighborhood germs $\phi_2\colon T \times U \to \textup{int }N$.

\begin{figure}[h]
	\centering
	\def\svgwidth{4.7in}
	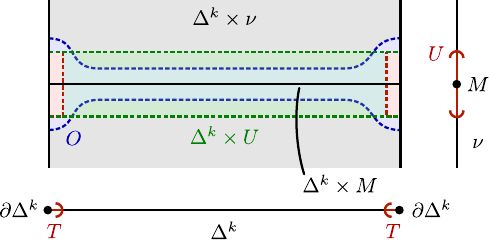
	\vspace{-.5em}
	\caption{Proving the tubular neighborhood theorem for germs.}\label{germtubfig}
\end{figure}

	Now take the single tubular neighborhood from \autoref{weak_tnthm_2}. By shrinking $U$, we may assume it is also defined on the same open set $U$. We use it to make a constant family of tubular neighborhoods $\psi\colon \Delta^k \times U \to \textup{int }N$. Use a smooth partition of unity on $\Delta^k$ subordinate to $\{T,\textup{int } \Delta^k\}$ to interpolate between this constant family $\psi$ and $\phi_2$, as maps $\Delta^k \times U \to V$. The interpolated map $\phi_3\colon \Delta^k \times U \to V$ sends the two subspaces $T \times U$ and $\Delta^k \times M$ into $\textup{int }N$, and therefore sends a neighborhood of these subspaces into $\Omega$, the open neighborhood of $\textup{int }N$ in $V$. Shrinking $U$, we may therefore assume that $\phi_3(\Delta^k \times U) \subseteq \Omega$.
	
	Finally, compose with the retraction $p\colon \Omega \to \textup{int }N$ to get an interpolation between $\phi_2$ and $\psi$ that lands in $\textup{int }N$, and call this $\phi_4\colon \Delta^k \times U \to \textup{int }N$. By construction, every map in this family agrees along the zero section in $U$ with $i\colon M \to N$, and the condition on first derivatives is satisfied throughout. By \autoref{extend_embedding_family}, it is also a family of embeddings if we allow ourselves to shrink $U$ one more time. This finishes the construction of a family of equivariant tubular neighborhood germs $\phi_4\colon \Delta^k \times U\to \textup{int }N$ extending the original family of germs $\phi_0\colon\Delta^k \times U \to \textup{int }N$.
	
	The argument works with the same formulas even if we allow the embedding $M \to N$ to change over $\Delta^k$, since the embedding $N \to \R^n$ is fixed throughout. \orange{In this case the family $\psi$ isn't constant, but it is still defined by the formula $p(x+v)$.}
\end{proof}
}

\orange{
Finally, we show how to go from families of germs back to families of tubular neighborhoods.

\begin{lem}\label{germs_to_full}
	The forgetful map $\Tub_{\sbt}(M) \to \Tub_{\sbt}^g(M)$ is an acyclic Kan fibration. Better, for any family of equivariant embeddings $\Delta^k \times M \to \textup{int }N$, any family of equivariant tubular neighborhoods on $\partial \Delta^k$ whose germs are extended to $\Delta^k$, can be extended as a family of tubular neighborhoods to $\Delta^k$.
\end{lem}

\begin{proof}
	The proof is somewhat similar to the previous one. In this case, we are given a family of tubular neighborhoods $\partial D(\Delta^k \times \nu) \to \textup{int }N$ and an extension of the underlying family of germs to $\Delta^k \times U \to \textup{int }N$, for some open subset $U \subseteq \nu$ containing the zero section. We summarize this data by saying we have a single smooth equivariant map
	\[ \phi\colon D(\partial \Delta^k \times \nu) \cup (\Delta^k \times U) \to \textup{int N}, \]
	and for each $t \in \Delta^k$ the resulting map from either $D(\nu)$ or $U$ to $\textup{int }N$ is a tubular neighborhood or tubular neighborhood germ, respectively.
	
	We begin by picking any inner product on $\Delta^k \times \nu$ extending the given inner products on each face $\Delta^{k-1} \times \nu$. To see this is possible locally, we assume $\nu$ is a trivial bundle and let $\textup{Inn}(\R^n)$ be the space of positive-definite symmetric matrices, an open subset of $\R^{\binom{n}{2}}$, and therefore an open manifold. Then the given inner products on $\partial \Delta^k$ are given by smooth maps $\Delta^{k-1} \to \textup{Inn}(\R^n)$ that agree along the common faces. These can be extended to $\Delta^k \to \textup{Inn}(\R^n)$ by \autoref{smooth_on_boundary}. This shows that the inner products admit extensions to any sufficiently small open subset of $\Delta^k \times \nu$, but we may patch the local extensions together with a smooth partition of unity, just as we do for vector fields, and then make the result $G$-invariant by averaging. Once this is done, our inner product on $\Delta^k \times \nu$ gives us a subspace $D(\Delta^k \times \nu) \subseteq \Delta^k \times \nu$, and our goal is to extend the above map $\phi$ to this subspace while preserving its value along both $D(\partial \Delta^k \times \nu)$ and along $\Delta^k \times U'$ for a smaller open set $U' \subseteq U$.
	
	As in the proof of \autoref{tubular_nbhd_thm_germs}, we first extend $\phi$ from $D(\partial \Delta^k \times \nu)$ to a smooth map $\phi_1$ defined on an open neighborhood $O$ inside the product $D(\Delta^k \times \nu)$. It won't be necessary to specify the value of $\phi_1$ on the zero section $\Delta^k \times M$. Since the disc bundles $D(\nu)$ are compact, $O$ contains a set of the form $D(T \times \nu)$, where $T \subseteq \Delta^k$ is an open subset containing the boundary $\partial \Delta^k$. So we get a smooth equivariant map $\phi_1\colon D(T \times \nu) \to \textup{int }N$.
	
	
\begin{figure}[h]
	\centering
	\def\svgwidth{5.2in}
	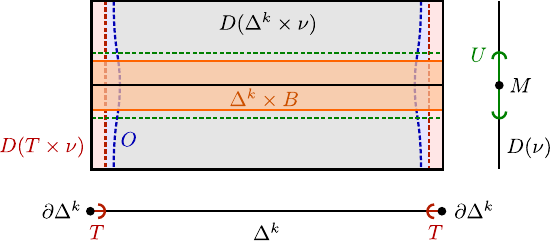
	\vspace{-.5em}
	\caption{Proving the tubular neighborhood theorem for for full tubular neighborhoods.}\label{tubfig1}
\end{figure}

	The next step is different because we want to recover the original family $\phi$ along $\Delta^k \times U$ for $U$ sufficiently small. Let $B \subseteq U$ be any fixed closed neighborhood of the zero section. Take a smooth $G$-invariant partition of unity on $\nu$ subordinate to $\{U,B^c\}$. Use this to interpolate between $\phi\colon \Delta^k \times U \to \textup{int }N$ and $\phi_1\colon T \times B^c \to \textup{int N}$, giving a smooth equivariant map
	\[ \phi_2\colon D(T \times \nu) \cup (\Delta^k \times B) \to V, \]
	which agrees with $\phi$ along both $\Delta^k \times B$ and $D(\partial \Delta^k \times \nu)$. These points land in $\textup{int N}$, so if we shrink $T$ more, then all of $D(T \times \nu)$ lands in $\Omega$. Therefore after composing with the retraction $p$ we get a smooth equivariant map $\phi_3\colon D(T \times \nu) \cup (\Delta^k \times B) \to \textup{int }N$. By applying \autoref{extend_embedding_family} to this family, if we shrink $T$ even more, we can ensure that each of the maps $D(\nu) \to \textup{int N}$ is a smooth embedding.
	
	The map $\phi_3$ agrees with $\phi$ on $D(\partial\Delta^k \times \nu) \cup (\Delta^k \times B)$, as required, so it remains to make it into a family of honest tubular neighborhoods and not just tubular neighborhood germs. Inside the disc bundle $D(\Delta^k \times \nu)$, the subspace $B$ contains all discs of radius $\epsilon$ for some small value of $\epsilon > 0$. Pick a smooth embedding $e\colon I \to I$ sending $I$ into $[0,\epsilon]$ and that is the identity near 0. This gives a constant family of smooth embeddings $(\textup{int }\Delta^k) \times I \to I$. Interpolate between this family and the constant family of identity embeddings $T \times I \to I$, using a smooth partition of unity of $\Delta^k$ subordinate to $\{T,\textup{int }\Delta^k\}$. This gives a family of embeddings $\Delta^k \times I \to I$ that are the identity of $I$ near $\partial \Delta^k$, and that agree with $e$ on $\Delta^k \setminus T$. Applying these embeddings to the radial coordinate of $D(\nu)$ gives a smooth embedding over $\Delta^k$
	\[ \tilde e\colon D(\Delta^k \times \nu) \to D(T \times \nu) \cup (\Delta^k \times B) \]
	that is the identity near the zero section $\Delta^k \times M$ and near the sides $D(\partial\Delta^k \times \nu)$. Composing $\phi_3$ with this map gives the desired family of equivariant tubular neighborhoods $\phi_4\colon D(\Delta^k \times \nu) \to \textup{int }N$.
\end{proof}

\begin{figure}[h]
	\centering
	\def\svgwidth{4.8in}
	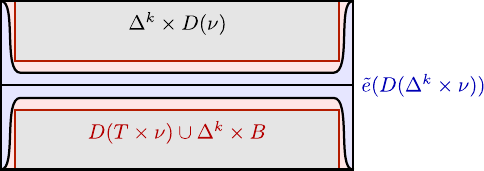
	\vspace{-.5em}
	\caption{The last step of the proof, turning the germs into full tubular neighborhoods.}\label{tubfig2}
\end{figure}

Together \autoref{tubular_nbhd_thm_germs} and \autoref{germs_to_full} give the proof of \autoref{tubular_nbhd_thm}.
}

\begin{rem}
	Neat embeddings can have ``neat'' tubular neighborhood \orange{germs}, i.e. ones in which the map $\tilde i$ is an open embedding. Furthermore the space of neat tubular neighborhood germs is contractible. However we will not need to prove such a statement in this paper.
\end{rem}

\subsection{Trimmings, faces, and collars}\label{sec:trimmings_collars}
Let $M$ be a compact $G$-manifold with corners. A smooth ($G$-invariant) vector field on an open subset of $M$ containing $\partial M$ is \ourdefn{inward pointing} if for each point $x \in \partial M$, in one (therefore in all) charts the vector at $x$ points to the interior of $[0,\infty)^k \times \R^{n-k}$. \orange{Equivalently, thinking of $x$ as a point in $[0,\infty)^k \times \R^{n-k}$, whenever one of the coordinates of $x$ is zero, the corresponding coordinate of $\xi(x)$ is positive. By \autoref{smooth_extension_vectors}}, without loss of generality the vector field is defined on all of $M$. (It can be zero far away from $\partial M$.)

An embedded manifold with boundary $M' \subseteq M$ is a \ourdefn{trimming} if there is a $G$-invariant inward-pointing vector field on $M$ that is nonvanishing on $M \setminus \textup{int}\ M'$, transverse to $\partial M'$, and such that the integral curves give a homeomorphism $\partial M' \cong \partial M$. In particular, this implies that $M' \to M$ is continuously homotopic to a homeomorphism. 

\begin{figure}[h]
	\centering
	\def\svgwidth{3in}
	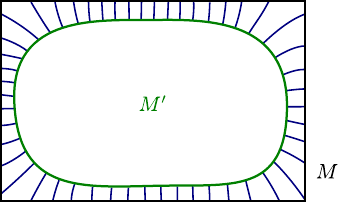
	\vspace{-.5em}
	\caption{A trimming $M'$ of a manifold with corners $M$.}\label{fig:trimming}
\end{figure}

\begin{lem}\label{trimmings_exist}
	Every compact $G$-manifold with corners has a trimming, \orange{which may be taken with $M \setminus M'$ contained in an arbitrarily small neighborhood $U$ of $\partial M$.}
\end{lem}

\begin{proof}
	Choose an inward-pointing vector field $\xi$ \orange{that is supported on $U$}, which exists by gluing together such fields locally using a smooth partition of unity on $M$. By averaging, we can assume that $\xi$ is $G$-invariant. As in \cite[Section 6]{waldhausen_manifold}, $\xi$ gives $\partial M$ a smooth structure in which the charts are obtained by taking discs transverse to $\xi$ and flowing to reach $\partial M$. We use this smooth structure on $\partial M$ throughout this proof.
	
	\orange{Each inward-pointing vector field has unique integral curves defined starting from $\partial M$, using for instance \cite[Cor 1.13.1]{melrose}, and these curves are defined for all positive times $t \geq 0$ since $M$ is compact.} This gives us a continuous map $\phi\colon \partial M \times [0,\infty) \to U$. Unfortunately, $\phi$ is not smooth, using the smooth structure on $\partial M$ described in the previous paragraph, but it is still an open topological embedding.
	
	Let $V = \phi(\partial M \times (0,1))$, with smooth structure coming from the fact that it is an open subset of $M$. Let $p\colon V \to \partial M$ be the projection back to $\partial M$. Although $\phi$ is not smooth, the projection $p$ is smooth, by construction. In fact, it is a smooth submersion whose fibers are open intervals. It therefore has a smooth equivariant section, \orange{defined by for instance taking local sections, averaging them to make them $G$-invariant, and then patching them together with a smooth equivariant partition of unity.} This defines the boundary of the desired submanifold $M' \subseteq M$.
\end{proof}

\orange{We can also use inward-pointing vector fields to extend each manifold with corners to a larger manifold with only interior points.}

\begin{prop}\label{deform_embedding_to_interior}
	If $M$ is a compact smooth $G$-manifold with corners, there is an isotopy of equivariant embeddings from $\id_M$ to an embedding $M \to M$ sending $M$ into the interior. \orange{This isotopy may be chosen to be supported in an arbitrarily small neighborhood $U$ of $\partial M$.}
\end{prop}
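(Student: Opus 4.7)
The plan is to flow $M$ strictly into its interior along a $G$-invariant inward-pointing vector field, reusing the construction from the proof of \autoref{trimmings_exist}. First I build a smooth $G$-invariant vector field $\xi$ on $M$ whose value at every $x\in\partial M$ points into the interior of $M$ at $x$: take inward-pointing constant fields in local charts modeled on $[0,\infty)^k\times\R^{n-k}$, combine them by a smooth partition of unity, and then average over $G$. Both operations preserve inward-pointedness because the set of inward-pointing tangent vectors at a point is convex and $G$-stable.

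Next, let $\Phi\colon M\times[0,1]\to M$, $(x,t)\mapsto \phi_t(x)$, be the time-$t$ flow of $\xi$. Compactness of $M$ together with inward-pointedness of $\xi$ imply that $\Phi$ is defined and smooth for all forward times: in interior charts this is Picard--Lindel\"of, and in charts $[0,\infty)^k\times\R^{n-k}$ the inward-pointing condition prevents forward solutions from escaping through a boundary face. Equivariance of $\xi$ yields equivariance of $\Phi$ in the $M$-variable, and $\phi_0=\id_M$.

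For each $t\geq 0$, the map $\phi_t\colon M\to M$ is a smooth equivariant embedding. Injectivity follows from the semigroup property of the flow, and $(d\phi_t)_x$ is invertible at every $x$ since it satisfies the linear variational ODE along the trajectory; combined with compactness of $M$ and Hausdorffness of the target, this makes $\phi_t$ a topological embedding, hence a smooth embedding in the sense of the paper. For $t>0$ I claim $\phi_t(M)\subset \mathrm{int}(M)$: inward-pointedness pushes boundary points immediately into the interior, and an interior trajectory cannot later reach $\partial M$, since at a first hitting time $s^*>0$ the local boundary coordinate of the trajectory at $y=\phi_{s^*}(x_0)\in\partial M$ would have to be decreasing to $0$ from above (nonpositive derivative), contradicting the strictly positive component of $\xi(y)$ in that coordinate direction given by inward-pointedness.

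The restriction of $\Phi$ to $M\times[0,1]$ is then the desired isotopy of equivariant embeddings from $\id_M$ to the embedding $\phi_1\colon M\to\mathrm{int}(M)$. The main technical point to verify carefully is the smoothness of $\Phi$ at boundary and corner points at time $t=0$: this is the standard statement about ODEs on manifolds with corners, but it is exactly where inward-pointedness is genuinely used, and it is worth spelling out in local coordinates $[0,\infty)^k\times\R^{n-k}$ rather than citing a black box.
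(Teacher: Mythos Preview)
Your proof is correct and follows essentially the same approach as the paper: choose a $G$-invariant inward-pointing vector field and flow along it, restricting to $M\times[0,1]$. The paper's proof is terse (three sentences), while you have supplied the details the paper leaves implicit---existence of the flow for all forward times, why each $\phi_t$ is an embedding, and why the image lies in the interior for $t>0$.
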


\begin{proof}
As before, choose an inward-pointing $G$-invariant vector field $\xi$, \orange{supported on $U$. Then the flow along $\xi$ defines a smooth map $\phi\colon M \times [0,\infty) \to M$. Restricting $\phi$ to $M \times [0,1]$ gives the desired isotopy, since $\phi(-,0)$ is the identity, $\phi(-,1)$ is an embedding of $M$ into its interior, and $\phi(x,t) = x$ whenever $x \not\in U$.}
\end{proof}

\begin{cor}\label{extend_to_open_manifold}
	Every compact $G$-manifold with corners can be smoothly equivariantly embedded into the interior of another $G$-manifold with corners.
\end{cor}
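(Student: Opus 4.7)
The plan is to take $N := M$ itself and apply \autoref{deform_embedding_to_interior} directly. That proposition provides an equivariant isotopy from $\id_M$ to a smooth equivariant embedding $\iota\colon M \to M$ whose image lies in $\textup{int}(M)$. Since $M$ is a $G$-manifold with corners, setting $N := M$ gives an equivariant smooth embedding $\iota\colon M \hookrightarrow N$ with $\iota(M) \subset \textup{int}(N)$, which is exactly the claim. Under this reading, the corollary is essentially a restatement of the preceding proposition and there is no real obstacle.

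If one insists on reading ``another'' to require $N$ to be strictly larger than $M$, one can enlarge $M$ by gluing on an external collar. The recipe is to pick a $G$-invariant inward-pointing vector field $\xi$ on $M$ as in the proof of \autoref{trimmings_exist}; its flow identifies a one-sided neighborhood of $\partial M$ in $M$ with a collar of the form $\widetilde{\partial M} \times [0,\epsilon)$, using the smooth structure on the smooth boundary $\widetilde\partial M$ discussed earlier. Glue an external piece $\widetilde{\partial M} \times (-\epsilon, 0]$ along this identification to form a strictly larger $G$-manifold with corners $N \supset M$. The action on $N$ is still trivial on corners, because the extension only adds a transverse collar direction along which the action is trivial, and the corner stratification of the collar comes entirely from that of $\widetilde{\partial M}$, which inherits its trivial-on-corners action from $M$.

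The only step that needs any checking in this elaboration is that the collar gluing is smooth. This is a local computation: the flow provides a smooth chart on the interior collar, and extending each such chart across $\widetilde{\partial M} \times \{0\}$ into the external collar gives the required smooth atlas on $N$. After this, $M$ sits inside $N$ with $M \subset \textup{int}(N)$ by construction, so the inclusion $M \hookrightarrow N$ (or, if preferred, its composite with $\iota$) is the desired equivariant smooth embedding into the interior of $N$.
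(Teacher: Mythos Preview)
Your first approach is exactly what the paper intends: the corollary is stated without proof immediately after \autoref{deform_embedding_to_interior}, and taking $N=M$ with the embedding $\iota$ from that proposition is the whole argument.

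One caution about your optional second elaboration: the flow of an inward-pointing vector field does \emph{not} in general give a smooth identification of a neighborhood of $\partial M$ with $\widetilde{\partial M}\times[0,\epsilon)$. The paper itself flags this in the proof of \autoref{trimmings_exist}: the map $\phi\colon \partial M\times[0,\infty)\to M$ is only a topological embedding, not a smooth one, with respect to the induced smooth structure on $\partial M$. So the ``local computation'' you allude to for the smooth collar gluing would not go through as written. This does not affect the corollary, since your first reading already suffices.
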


\orange{
The following is an important consequence of \autoref{smooth_extension} and \autoref{extend_to_open_manifold}. It is a smooth extension result that does not assume that $f$ lands in the interior of $N$.
\begin{cor}\label{smooth_on_partial_delta} 
	If $A \subseteq \partial M$ is a partial boundary and $N$ is compact, then every equivariant map $A \to N$ that is smooth as a map $\ti A \to N$ extends to an equivariant smooth map $U \to N'$, where $U$ is a $G$-invariant open neighborhood of $A$ in $M$, and $N'$ is an open manifold containing $N$ in its interior.
\end{cor}
}

By the counterexample in \autoref{smooth_extension_counterexample}, this is the best we can do in general.

\orange{
\begin{defn}\label{partial_inward_pointing}
	Suppose $A \subseteq \partial M$ is a closed partial boundary, in other words the image of a union of connected components of $\ti\partial M$. Then near each point there is a coordinate chart in which $A$ is identified with a subspace of the form
	\[ \partial([0,\infty)^a) \times [0,\infty)^b \times \R^{n-a-b}. \]
	We say that a vector field $\xi$ on $M$ is \ourdefn{inward pointing from $A$} if for each of the first $a$ coordinates, when $x$ is zero in that coordinate then $\xi(x)$ is positive in that coordinate, and for the next $b$ coordinates, if $x$ is zero in that coordinate then $\xi(x)$ is also zero in that coordinate. It is straightforward to see this is independent of the choice of chart.
\end{defn}

These are precisely the vector fields such that flowing along the field brings us away from $A$, but keeps us within any local boundary component that is not in $A$. It is easy to construct such a vector field near each point of $A$. The assumption that $A$ is closed allows us to also construct such a vector field near each point in $\partial M \setminus A$.

\begin{lem}\label{flow_from_partial_boundary}
	If $A \subseteq \partial M$ is a closed $G$-invariant partial boundary, then the space of $G$-invariant inward-pointing vector fields along $A$ is nonempty and convex. If $M$ is compact, flowing along such a field gives an equivariant isotopy from $\id_M$ to a map sending $M$ into $M \setminus A$. The vector field may be chosen so that this isotopy is supported in an arbitrarily small neighborhood of $A$.
\end{lem}

\begin{proof}
	 The proof is the same as in \autoref{trimmings_exist} and \autoref{deform_embedding_to_interior}, except that we prove the existence of such a vector field nonequivariantly, and then average the result over $G$ to make the field $G$-invariant. For this to work we need to know that each $g \in G$ preserves (nonequivariant) inward-pointing vector fields along $A$, but this is easy to see, as in the local model $G \times_H V \times [0,\infty)^k$, the $G$-action does not change the last $k$ coordinates.
\end{proof}

\begin{lem}\label{extend_inward_pointing_from_partial_boundary}
	If $A,B \subseteq \partial M$ are two closed $G$-invariant partial boundaries, any $G$-invariant vector field defined along $A$ that is inward-pointing along $B$ extends to a $G$-invariant vector field on all of $M$, that is inward-pointing along $B$.
\end{lem}

\begin{proof}
	It suffices to do this nonequivariantly, since a nonequivariant extension can be averaged over $G$, as in the previous proof. Since the space of inward-pointing vector fields is convex, it suffices to do the extension locally near each point of $x \in M$. If $x \not\in A$ then it is easy to define an inward-pointing vector field along $B$ near $x$, so assume that $x \in A$. We model $M$ near $x$ as $[0,\infty)^k \times \R^{n-k}$. The partial boundary $A$ is specified by a subset $S_A \subseteq \{1,\ldots,k\}$, and $B$ is specified by a subset $S_B \subseteq \{1,\ldots,k\}$. We perform the smooth extension of $\xi$ from the subset where one of the coordinates in $S_A$ is zero to all of $[0,\infty)^k \times \R^{n-k}$ by the same method as \autoref{smooth_extension_vectors}, see \autoref{smooth_on_boundary} and \autoref{smooth_on_boundary_tame} for the explicit formula. For each $y$ in this neighborhood, when $y_i = 0$ and $i \not\in S_B$, the $i$th coordinate of $\xi(y)$ is a linear combination of zero functions and is therefore zero. When $y_i = 0$ and $i \in S_B$, we need the $i$th coordinate to be positive. We can't guarantee this on our entire neighborhood, but it is true on some open set containing $x$, because the $i$th coordinate being positive is an open condition. Intersecting these open sets for each $i \in S_B$ gives a smaller open set on which the desired smooth extension exists and is inward-pointing along $B$.
\end{proof}
}

\begin{defn}\label{face} If $M$ is a $G$-manifold with corners, a \ourdefn{face} $F$ of $M$ is a $G$-invariant subspace of the smooth boundary $\tipartial M$ such that
\begin{itemize}
	\item $F$ is a union of components of $\tipartial M$ \orange{(possibly empty),} and
	\item the map $i\colon \tipartial M \to M$ is injective when restricted to $F$.
\end{itemize}
\end{defn}

\orange{Since $i$ is injective on $F$, we sometimes blur the distinction between $F$ being a subset of $\ti\partial M$ or a subset of $\partial M$. Note that faces are a special case of partial boundaries in the sense of \autoref{tame_def}.}

\begin{exmp}[Faces] We give some examples and nonexamples of faces.
	\begin{enumerate}
\item Each side of the square $I\times I$ is a face, as is the union of two opposite sides. But the union of two adjacent faces is not, since the inclusion back to $I \times I$ is not injective.
\item The boundary of the $n$-simplex $\Delta^n$  consists of  $n+1$ faces, each diffeomorphic to $\Delta^{n-1}$ as a manifold with corners. 
\item In the teardrop-shaped 2-manifold, there are no faces. There is only one component in the smooth boundary, a closed interval whose endpoints both map under $i$ to the top of the teardrop, so $i$ is not injective on this component. 
	\end{enumerate}
\end{exmp}

Next we define collars on faces of a manifold with corners, which will play an important role in our definition of $h$-cobordisms. 

\begin{defn}
	Let $F$ be a face of a $G$-manifold with corners $M$. A \ourdefn{collar} on $F$ is an extension of $F \to M$ to an equivariant embedding $c\colon F \times I \to M$ that preserves depth on $F \times [0,1)$ (and so is an open embedding on that subset). A collar is \ourdefn{neat} if it decreases depth by exactly 1 on $F \times \{1\}$.
\end{defn}

\begin{figure}[h]
	\centering
	\def\svgwidth{1.5in}
	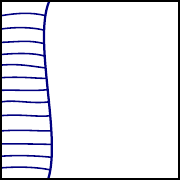
	\vspace{-.5em}
	\caption{A neat collar for the left-hand face of a square.}\label{fig:neat_collar}
\end{figure}

\begin{lem}\label{collars_unique}\label{collar_isotopy_extension}
	Every face $F$ in a smooth compact $G$-manifold with corners $M$ has a neat collar $F \times I \to M$. Any two collars are isotopic through collars, and any two neat collars are related by an ambient isotopy of $M$.
\end{lem}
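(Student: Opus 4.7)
The plan is to address the three assertions in turn. For existence of a neat collar, I would construct a $G$-invariant smooth vector field $\xi$ defined on a neighborhood of $F$ in $M$ that is inward-pointing and transverse to $F$, and that is tangent to every other face of $M$ meeting $F$. In a chart $[0,\infty)^k\times\R^{n-k}$ adapted so that $F$ corresponds to the hyperplane $\{x_1=0\}$ and each other adjacent face to $\{x_i=0\}$ for some $i\geq 2$, the field $\partial/\partial x_1$ has these properties. Since $G$ acts trivially on corners (\autoref{Gcornerdef}), such local fields can be chosen $G$-invariantly near fixed points via the slice theorem and can be patched by a $G$-invariant partition of unity (obtained by averaging). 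Extending $\xi$ arbitrarily to all of $M$ and writing $\phi_\tau$ for its flow, the map $c(x,t):=\phi_{\epsilon t}(x)$ is, for sufficiently small $\epsilon>0$, an equivariant smooth embedding $F\times I\to M$ that is the inclusion on $F\times\{0\}$ and is depth-preserving on $F\times[0,1)$ thanks to the tangency condition. It decreases depth by exactly one on $F\times\{1\}$, so it is a neat collar.

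For isotopy through collars, my plan is to reduce to interpolating germs along $F\times\{0\}$. Given two collars $c_0,c_1$, one first reparametrizes the $I$-coordinate to shrink each so that both images lie in the image of a single fixed collar $c$; this shrinking is a canonical isotopy through collars. Then $c^{-1}\circ c_i$ becomes an embedding $F\times I\to F\times[0,1)$ extending the identity on $F\times\{0\}$ with positive first $t$-derivative there. The space of such embeddings that extend a fixed germ is convex (linear interpolation in the $F\times[0,1)$ coordinates preserves the embedding property on a possibly smaller neighborhood, which can be arranged uniformly by another $I$-reparametrization), and the space of admissible infinitesimal germs---inward-pointing normal vector fields on $F$ that are tangent to adjacent faces---is also convex. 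Interpolating in both stages produces an isotopy through collars joining $c_0$ and $c_1$.

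For the ambient isotopy of neat collars, I would arrange the isotopy from the previous paragraph to pass through neat collars (which is automatic if both endpoints are neat, since the convex combination of vector fields tangent to adjacent faces is again tangent to them), obtaining a smooth family $\{c_s\}_{s\in I}$ of neat embeddings $F\times I\to M$. Then I would apply isotopy extension in the manifold-with-corners setting: the time-dependent vector field $\partial_s c_s\circ c_s^{-1}$ on the image extends to a $G$-invariant smooth vector field on all of $M$ that is tangent to every face, by pairing the image with a tubular neighborhood and patching with a bump function cut off outside a neighborhood of the image. Integrating this field in $s$ produces the desired ambient isotopy of $M$ carrying $c_0$ to $c_1$. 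The principal technical obstacle is precisely this extension step: one needs a vector field that is simultaneously tangent to every face of $M$ while matching a prescribed field along the image of a neat embedding $F\times I\hookrightarrow M$. This requires a careful partition-of-unity argument subordinate to the stratification by faces, and it is the neatness condition on $c_s$ that ensures the prescribed field respects the stratification and hence admits such an extension.
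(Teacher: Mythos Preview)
Your proposal is broadly correct and shares the essential mechanism with the paper---inward-pointing vector fields tangent to the adjacent faces, flowing from $F$, and isotopy extension via time-dependent fields. The existence argument and the ambient-isotopy argument match the paper's closely.

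The meaningful difference is in the middle step. You shrink both collars into a fixed one and then try to interpolate the resulting embeddings $c^{-1}\circ c_i\colon F\times I\to F\times[0,1)$, claiming the relevant space is convex. But convex combinations of embeddings are not embeddings in general, as you implicitly concede by retreating to ``a possibly smaller neighborhood'' and further $I$-reparametrizations. This can be made rigorous, but it is fiddly and the details are not fully supplied. The paper sidesteps the issue entirely: it identifies a genuinely convex class of globally-defined vector fields on $M$ (tangent to all strata away from $F$, inward-pointing along $F$ in the appropriate sense), observes that any neat collar determines such a field on its image, extends that field to all of $M$ by a partition of unity against the zero field, and then linearly interpolates the two resulting global fields. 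Flowing from $F$ along the interpolated field gives the one-parameter family of neat collars directly---no shrinking, no worry about whether an interpolation of embeddings stays injective.

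This same convex class also dissolves your ``principal technical obstacle'' in the ambient step: the time-dependent field coming from the isotopy of neat collars already lies in the class, as does the zero field, so any partition-of-unity combination automatically stays tangent to every face. Finally, the paper disposes of the non-neat case in one line at the start (reparametrize $I$ to make any collar neat), which lets it treat only neat collars thereafter; you might adopt the same reduction.
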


As a result, we could re-define a face of $M$ to be a compact $G$-invariant subset $F \subseteq \partial M$ that has an open neighborhood diffeomorphic to $F \times [0,1)$.

\begin{proof}
	We can deform any collar to a neat collar by pre-composing with an isotopy of embeddings $I \to I$, so we focus on neat collars. \orange{Consider the space of $G$-invariant inward-pointing vector fields along $F$ in the sense of \autoref{partial_inward_pointing}. Spelling out what they look like in this case:}
	\begin{itemize}
		\item for every point $x \in M \setminus F$ of depth $k$, the vector at $x$ lies in the tangent space of the stratum of depth $k$ points 
		\item for every point $x \in F$ of depth $k$ in $F$ (so depth $k+1$ in $M$), \orange{we may identify the inclusion $F \to M$ with the inclusion $F \times \{0\} \to F \times I$, and along this identification,} the vector $x$ lies in the tangent space of the manifold-with-boundary
		\[ \textup{(depth $k$ points of $F$)}\times I, \]
		and is inward pointing (positive in the $I$ direction) at that point.
	\end{itemize}
	\orange{As in \autoref{partial_inward_pointing}, the collection of inward-pointing fields along $F$ is convex and nonempty. As in \autoref{trimmings_exist},} each such vector field has unique integral curves defined starting from $F$, defined for all positive times $t \geq 0$. By the depth-preservation conditions, flowing along one such vector field until time $t = 1$ provides a neat collar for the face $F$.
	
	Conversely, any neat collar $F \times [0,1] \to M$ defines such a vector field on its image. We can then extend this vector field to the rest of $M$ using an equivariant smooth partition of unity and adding the given vector field to the zero vector field in all the other charts. Given two neat collars, we can linearly interpolate between these fields and get a one-parameter family of fields with the same condition. Flowing along these fields from $F$ defines a one-parameter family \orange{(in the sense of \autoref{family})} of neat collars.
	
	To extend this to an ambient isotopy, we take the corresponding time-dependent vector field $F \times [0,1] \times [0,1] \to TM$ defined on the image of the isotopy, as in \cite[\S 8.1]{hirsch}. We extend this field to a time-dependent vector field $M \times [0,1] \to TM$ that is zero far away from this image, again using a smooth partition of unity. Flowing along this time-dependent field gives the desired ambient isotopy (\cite[8.1.2]{hirsch}).
\end{proof}

Two collars are said to have the same germ if they agree on some $G$-invariant open neighborhood of the bottom and sides
\[ (F \times \{0\}) \cup (\partial F \times I) \]
inside $F \times I$. Just as with tubular neighborhoods, every germ can be used to produce a collar:

\begin{lem}\label{partial_collar}
	If $M$ is compact, $F \subseteq M$ is a face, $L$ is an open subset of $F \times I$ containing the bottom and sides
	\[ (F \times \{0\}) \cup (\partial F \times I) \subseteq L \subseteq F \times I, \]
	 and $\tilde c\colon L \to M$ is a partially-defined collar, then there is a collar $c\colon F \times I \to M$ whose germ agrees with that of $\tilde c$.
\end{lem}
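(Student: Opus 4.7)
The plan is to convert the partial collar into a vector field, extend that vector field globally with the interpolation controlled near the bottom and sides, and then flow to produce the desired collar. This mirrors the second half of the proof of \autoref{collars_unique}.

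First I would shrink $L$ if necessary and set $U := \tilde c(L) \subseteq M$; this is open in $M$ because $\tilde c$ is a codimension zero embedding. On $U$ define the smooth $G$-invariant vector field $\xi_1 := \tilde c_*(\partial/\partial t)$. Because $\tilde c$ is a neat collar on $L$, the field $\xi_1$ satisfies, at every point of $U$, the two depth/inward-pointing conditions enumerated in the proof of \autoref{collars_unique}: on points coming from $F \times (0,1)$ or from $F \times \{1\}$, the push-forward of $\partial/\partial t$ is tangent to the appropriate stratum, because depth is preserved on $F \times [0,1)$ and decreases by exactly one on $F \times \{1\}$; and on $F$ itself it is inward-pointing and tangent to (stratum of $f$)$\times I$ by construction of a collar.

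Next, let $\xi_2$ be any $G$-invariant vector field on $M$ satisfying those depth conditions globally (one exists by the argument in \autoref{collars_unique}), and let $\rho\colon M \to [0,1]$ be a smooth $G$-invariant bump function equal to $1$ on a neighborhood $U'$ of the closed subset $F \cup \tilde c(\partial F \times I)$ and supported inside $U$. Such a $\rho$ exists because $F \cup \tilde c(\partial F \times I)$ is closed in $M$ and lies in the open set $U$, and $G$-invariance is arranged by averaging. Now set
\[
\xi := \rho\, \xi_1 + (1-\rho)\, \xi_2
\]
on $U$ and $\xi := \xi_2$ outside $U$. Because the set of vectors at a given point satisfying the depth conditions is convex (stratum-tangency is a linear condition and inward-pointing is an open half-space condition), $\xi$ still satisfies those conditions globally; smoothness is clear since $\rho$ vanishes outside $U$.

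Flowing along $\xi$ starting from $F$ for time $t \in [0,1]$ then yields a smooth equivariant neat collar $c\colon F \times I \to M$ by exactly the argument used in \autoref{collars_unique}. On the open set $\tilde c^{-1}(U') \subseteq F \times I$, which is a neighborhood of $(F \times \{0\}) \cup (\partial F \times I)$, the fields $\xi$ and $\xi_1$ coincide, so the flow of $\xi$ from $F$ agrees there with the flow of $\xi_1$ from $F$, which is $\tilde c$ itself. Hence $c$ and $\tilde c$ have the same germ. The one point requiring care is the verification that $\xi_1$ satisfies the depth conditions at image points $\tilde c(f,1)$ lying on the top face in case $L$ meets $F \times \{1\}$; here neatness of $\tilde c$ is exactly what guarantees that the pushed-forward $\partial/\partial t$ lies in the correct stratum of $M$, and this is the main thing to check in order for the convex interpolation with $\xi_2$ to remain valid.
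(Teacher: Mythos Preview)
Your vector-field approach is genuinely different from the paper's, but as written it has a gap: you repeatedly invoke neatness of $\tilde c$, yet the hypothesis only says $\tilde c$ is a partially-defined \emph{collar}, which by definition need not be neat on $F\times\{1\}$. Since $L$ must contain $\partial F\times\{1\}$, the pushed-forward field $\xi_1=\tilde c_*(\partial/\partial t)$ is forced to live at points $\tilde c(f,1)$ with $f\in\partial F$. If $\tilde c$ happens to preserve depth there (the non-neat case, which the definition allows), then $\xi_1$ is transverse to the stratum rather than tangent, so the depth/tangency conditions from \autoref{collars_unique} fail at exactly the points where you need $\rho=1$. The convexity argument then no longer produces a field satisfying those conditions, and you lose the right to cite \autoref{collars_unique} for the conclusion that the flow is a collar. (A related wrinkle: $\tilde c(L)$ need not be open in $M$, since $\tilde c$ is only guaranteed to be depth-preserving, hence an open embedding, on $F\times[0,1)$.) These issues might be repairable with a more delicate analysis of the flow near $\tilde c(\partial F\times\{1\})$, but that analysis is missing.

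The paper bypasses all of this with a much more elementary device: it never leaves the domain $F\times I$. One simply builds an equivariant embedding $\psi\colon F\times I\to F\times I$ with image in $L$ that is the identity near $(F\times\{0\})\cup(\partial F\times I)$---first compress the $I$-factor into $[0,\epsilon]$ by a reparametrization fixing a neighborhood of $0$, then interpolate with the identity of $I$ using a partition of unity on $F$ subordinate to $\{U_1,V_1\}$ where $U_1$ is a neighborhood of $\partial F$ with $U_1\times I\subseteq L$. Setting $c=\tilde c\circ\psi$ gives the collar immediately, with no vector fields and no hypotheses on $\tilde c|_{F\times\{1\}}$.
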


\begin{proof}
	By compactness there is an $\epsilon > 0$ such that $L$ contains $F \times [0,\epsilon]$. \orange{As in the proof of \autoref{germs_to_full},} pick a smooth embedding $I \to I$ sending $I$ into $[0,\epsilon]$ and that is the identity near 0. Composing $\tilde c$ with this embedding gives an embedding $F \times I \to M$ that agrees with $\tilde c$ on a neighborhood of the bottom, though not the sides.
	
	Now pick two $G$-invariant nested open neighborhoods $U_0 \subseteq U_1 \subseteq F$ containing $\partial F$ such that $U_1 \times I \subseteq L$, and let $V_1$ be the complement of the closure of $U_0$. Pick a $G$-invariant partition of unity subordinate to the cover $\{U_1,V_1\}$ and use it to add together the embedding $I \to I$ constructed above (on $V_1$) and the identity of $I$ (on $U_1$). The resulting equivariant embedding $F \times I \to F \times I$ agrees with the previous embedding outside of $U_1$, and is the identity outside of $V_1$. Therefore it is entirely contained in $L$. By construction it is also the identity near the bottom and sides. Therefore, composing with $\tilde c$ gives an equivariant collar $F \times I \to M$ whose germ agrees with that of $c$.
\end{proof}

This result will be useful---we will use germs of collars more than collars themselves.

\subsection{Smooth simplices of diffeomorphisms}
Suppose $W$ is a compact $G$-manifold with corners, $F \subseteq \partial W$ is a face \orange{(possibly empty)}, and and $C \subseteq \partial W$ is the closure of the complement of $F$ in $\partial W$, \orange{also possibly empty}. In particular, $C$ contains the boundary of $F$, and all of the corner points of $W$.

A \ourdefn{diffeomorphism of $(W,F)$} is a diffeomorphism of $W$ that is the identity on some neighborhood of $C$. Thus it restricts to give a diffeomorphism of $F$ (which is the identity on a neighborhood of $\partial F$).

The simplicial group of diffeomorphisms of $(W,F)$ is defined using families of diffeomorphisms parametrized by $\Delta^k$ that correspond to smooth maps from $\Delta^k \times W$ to $W$:

\begin{defn}\label{smooth_diffeo_space}
Let $\mc D_{\sbt}(W,F)$ be the simplicial set whose $k$-simplices are families of equivariant diffeomorphisms $\Delta^k \times W \cong \Delta^k \times W$ over $\Delta^k$ \orange{(as in \autoref{family})}, that are the identity on $\Delta^k \times U$ for some open set $U$ containing $C$.
\end{defn}

\orange{
The following characterization of homotopies into $\mc D_{\sbt}(W,F)$ is often useful.
\begin{lem}\label{smooth_simplicial_homotopy}
	If $X_{\sbt}$ is a simplicial set and $\Delta_{\sbt}[1]$ is the standard 1-simplex, a simplicial homotopy $X_{\sbt} \times \Delta_{\sbt}[1] \to \mc D_{\sbt}(W,F)$ is given by assigning each point of $X_k$ to a smooth $\Delta^k \times \Delta^1$-family of equivariant diffeomorphisms of $(W,F)$, in a way that respects the face and degeneracy maps of $X_{\sbt}$.
\end{lem}

\begin{proof}
	Given such families $f_x$ for each $x \in X_k$, we define the map of simplicial sets $X_{\sbt} \times \Delta_{\sbt}[1] \to \mc D_{\sbt}(W,F)$ by assigning each point $(x,\alpha) \in X_n \times \Delta_n[1]$ to the composite
	\[ \xymatrix @C=4em{ \Delta^n \times W \ar[r]^-{(\id,\alpha) \times \id} & \Delta^n \times \Delta^1 \times W \ar[r]^-{f_x} & W, } \]
	where $\alpha\colon \Delta^n \to \Delta^1$ is the affine-linear map induced by the map of totally ordered sets $\alpha \colon [n] \to [1]$. This composite is a family of diffeomorphisms which gives a point in $\mc D_n(W,F)$. It is straightforward to see that this respects the face and degeneracy maps applied to the point in $X_n \times \Delta_n[1]$, and therefore defines the desired map of simplicial sets.
\end{proof}
}

\orange{The simplicial set $\mc D_{\sbt}(W,F)$ is a Kan complex since it is a simplicial group \cite[Lemma I.3.4]{goerss_jardine}.} This gives us the following extension of \autoref{ehresmann_with_corners}. Consider an equivariant submersion of compact manifolds with corners $p\colon E \to \Delta^k$, with a face $M \subseteq \partial E$ such that the restricted map $M \to \Delta^k$ is also a submersion. By \autoref{ehresmann_with_corners}, we know that such a family can be trivialized to $\Delta^k \times (W,F)$ for some fixed manifold $W$ and face $F \subseteq \partial W$.

\begin{cor}\label{extend_trivializations}
	Any trivialization of $(E,M)$ that is defined on a proper union of $(k-1)$-dimensional faces $\cup_i D_i \subsetneq \partial\Delta^k$ can be extended to all of $\Delta^k$.
\end{cor}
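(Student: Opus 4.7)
The plan is to reduce the extension problem to the Kan condition for $\mathcal D_{\sbt}(W,N)$ established in \autoref{smooth_simplices}.

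First, I would use \autoref{ehresmann_with_corners} to fix a global trivialization $\Phi\colon E \xrightarrow{\cong} \Delta^k \times W$ over $\Delta^k$, carrying $F$ to $\Delta^k \times N$. To obtain such a $\Phi$ respecting the face, one runs the proof of \autoref{ehresmann_with_corners} with vector fields adapted to $F$: first trivialize the submersion $F \to \Delta^k$ to get $F \cong \Delta^k \times N$, then extend this to a trivialization of $E \to \Delta^k$ by lifting the standard vector fields on $\Delta^k$ to $G$-invariant fields on $E$ that are tangent to $F$ and whose restriction to $F$ agrees with the lift already used. Such simultaneous lifts exist locally in the submersion charts of \autoref{submersion}, and can be assembled globally with a $G$-invariant partition of unity.

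Next, I would translate the given partial trivialization into a simplicial map. Let $K \coloneqq \bigcup_i D_i \subsetneq \partial \Delta[k]$, and let $\Phi'$ denote the given trivialization $(E,F)|_K \xrightarrow{\cong} K \times (W,N)$. The composite $\Phi \circ (\Phi')^{-1}$ is a family of equivariant diffeomorphisms of $(W,N)$ parametrized by $K$; equivalently, a simplicial map
\[
\varphi\colon K \to \mathcal D_{\sbt}(W,N).
\]
The goal is to extend $\varphi$ to $\Delta[k]$, because then composing the extended $\varphi$ with $\Phi$ produces the desired extended trivialization of $(E,F) \to \Delta^k$.

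Now, since $K$ is a proper subcomplex of $\partial\Delta[k]$ consisting of $(k-1)$-dimensional faces, it omits at least one face, so $K \subseteq \Lambda^k_j$ for some $j$. The inclusion $K \hookrightarrow \Delta[k]$ is therefore an anodyne extension (a composite of horn inclusions), and since $\mathcal D_{\sbt}(W,N)$ is a Kan complex by \autoref{smooth_simplices}, the map $\varphi$ extends to $\Delta[k]$. This completes the extension.

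The only subtle step is the first one: producing the global trivialization $\Phi$ that respects the face $F$. The stated form of \autoref{ehresmann_with_corners} gives trivializations of $E$ and of $F$ separately, but one needs them to be compatible. The key point is that the class $\mathcal V$ of admissible vector fields appearing in the proof of \autoref{ehresmann_with_corners} is convex and contains fields tangent to $F$ (this is automatic from the face condition in \autoref{Gcornerdef} combined with the local product structure of a submersion), so one first chooses such lifts along $F$, then extends to all of $E$ using a $G$-invariant partition of unity subordinate to a cover by submersion charts adapted to $F$. Everything else is formal: once we have a face-preserving global trivialization, the problem becomes a lifting problem against a Kan complex along an anodyne inclusion.
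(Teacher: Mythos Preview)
Your proof is correct and follows essentially the same approach as the paper's: reduce to the case $(E,F)=\Delta^k\times(W,N)$ via a global face-preserving trivialization, regard the given partial trivialization as a map from $K=\cup_i D_i$ into $\mathcal D_{\sbt}(W,N)$, and extend using the Kan property from \autoref{smooth_simplices}. You supply more detail than the paper does on two points it treats tersely---why the global trivialization can be taken to respect $F$ (which the paper asserts in the paragraph before the corollary), and why the extension over $K\hookrightarrow\Delta[k]$ exists (you invoke anodyne extensions; the paper just says ``can be extended'')---but the structure of the argument is the same.
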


\begin{proof}
	Without loss of generality $(E,M) = (W,F) \times \Delta^k$, and so the given trivialization is a family of diffeomorphisms of $(W,F)$ defined on $\cup_i D_i$. \orange{Since $\mc D_{\sbt}(W,F)$ is a simplicial group, it} is a Kan complex, so this family of diffeomorphisms can be extended to $\Delta^k$, giving the desired trivialization.
\end{proof}

\orange{
To connect this paper to the previous literature, it is also necessary to prove that these smooth simplices of diffeomorphisms give a space that is equivalent to the usual space of diffeomorphisms of $W$.
}

\begin{thm}\label{smooth_simplices}
	$\mc D_{\sbt}(W,F)$ is equivalent to the space of equivariant diffeomorphisms of $(W,F)$ with the $C^\infty$ topology.
\end{thm}

\begin{proof}
	
	This proof is an adaptation of \cite[Prop 1]{lurie_937_lecture6}. Let $\Diff(W,F)$ be the space of equivariant diffeomorphisms of $(W,F)$ with the $C^\infty$ topology. \orange{Since $\mc D_{\sbt}(W,F)$ and the singular simplices of $\Diff(W,F)$ are both Kan complexes,} it suffices to take a diagram
	\[ \xymatrix{
		\partial \Delta[k] \ar[d] \ar[r] & \mc D_{\sbt}(W,F) \ar[d] \\
		\Delta[k] \ar@{-->}[ur] \ar[r] & \Sing_{\sbt}(\Diff(W,F))
	} \]
	and show that the bottom map can be changed by a simplicial homotopy rel $\partial \Delta[k]$ to a map for which a dotted lift exists.
	
	\orange{Without loss of generality, $W$ is connected.} The top horizontal map in the above diagram corresponds to a family of equivariant diffeomorphisms $\phi_0\colon \partial \Delta^k \times W \to W$ and the bottom map is an extension to an equivariant continuous map
	\[ f\colon \Delta^k \times W \to W, \]
	that at each point $t \in \Delta^k$ gives a diffeomorphism $f_t\colon W \to W$, and that on some neighborhood $U_t$ of $C$ is the identity of $W$. (The partial derivatives in the $W$ direction are also continuous along the product $\Delta^k \times W$.) Furthermore, the neighborhoods $U_t$ can be chosen uniformly on each of the faces in $\partial \Delta^k$, \orange{since by \autoref{smooth_diffeo_space} each simplex in $\mc D_{\sbt}(W,F)$ gives a family of diffeomorphisms that is the identity on a single choice of open subset $U \subseteq W$.} This implies that the $U_t$ can also be chosen uniformly over the entire boundary $\partial \Delta^k$. Call this uniform $G$-invariant neighborhood $U_0$.
	
	\orange{
	The first step is to change the family rel $\partial \Delta^k$ so that there is a uniform choice of open set $U$ containing $C$ on which all of the maps $f_t$ are the identity. Note that $C$ is a closed partial boundary of $W$, so by \autoref{flow_from_partial_boundary}, we may form an isotopy of embeddings $\psi_t\colon W \to W$ from the identity to a map $\psi_1\colon W \to W \setminus C$, that restricts to an isotopy of embeddings $F \to F$, and that is the constant isotopy on $W \setminus U_0$. Conjugating the family $f\colon \Delta^k \times W \to W$ by $\psi_t$ defines a family of diffeomorphisms
	\[ \xymatrix @C=4em{ \Delta^k \times \psi_t(W) \ar[r]^-{\id \times \psi_t^{-1}} & \Delta^k \times W \ar[r]^-f & \Delta^k \times W \ar[r]^-{\id \times \psi_t} & \Delta^k \times \psi_t(W). } \]
	Each of these diffeomorphisms is identity near $\psi_t(C)$, and that therefore extends by the identity to a diffeomorphism $W \to W$. Furthermore this extension is and its partial derivatives in the $W$ direction are all continuous. This deforms our family of diffeomorphisms to a new family with the property that on the open set $U = W \setminus \psi_1(W)$ every diffeomorphism is the identity. Note that $C \subseteq U \subseteq U_0$. Furthermore since $\psi_t$ is supported on $U_0$, any diffeomorphism that is the identity on $U_0$ is unchanged by this isotopy. In particular our family on the boundary $\phi_0\colon \partial \Delta^k \times W \to W$ is unchanged, but now there is a uniform choice of neighborhood $U \subseteq W$ containing $C$ on which every diffeomorphism is the identity.
	
	The second step is to construct a smooth equivariant map
	\[ f_0\colon (T_0 \times W) \cup (\Delta^k \times U) \to W, \]
	where $T_0 \subseteq \Delta^k$ is an open set containing the boundary, the map $f_0$ is a family of equivariant diffeomorphisms of $(W,F)$ on $T_0$ agreeing with $\phi_0$ on $\partial \Delta^k \times W$, and on $\Delta^k \times U$ the map $f_0$ is the constant family of inclusion maps $U \to W$.
	
	Clearly we already know how to define $f_0$ on $\Delta^k \times U$, so we do that first. Then we define $f_0$ along the top face $T_0 \times F$. We do this one component of $F$ at a time, so without loss of generality $F$ is connected. By \autoref{extend_to_open_manifold}, we may find an open manifold $F'$ containing $F$. The map we want is already defined on the partial boundary $\partial \Delta^k \times F$ and along the open set $T_0 \times (U \cap F)$ containing $T_0 \times \partial F$. It is also smooth at every point in these subspaces, so by \autoref{smooth_local_to_global} it extends to some smooth equivariant map $O \to F'$ for some open set $O \subseteq \Delta^k \times F$ containing both $\partial \Delta^k \times F$ and $T_0 \times (U \cap F)$. Shrinking $T_0$, we may assume that $O$ contains all of $T_0 \times F$. Now we have a smooth equivariant map $f_0\colon T_0 \times F \to F'$, that along $T_0 \times (U \cap F)$ is the constant family of inclusions $(U \cap F) \to F$. By \autoref{extend_embedding_family}, if we shrink $T_0$ further then $f_0$ is a family of embeddings $F \to F'$. Since $U \cap F$ contains $\partial F$, these embeddings are all the identity on $\partial F$. Each one is also the identity on some point $x \in U \cap \textup{int }F$, so by \autoref{embedding_is_diffeo}, we therefore have a family of equivariant diffeomorphisms $f_0\colon T_0 \times F \to F$. (In the case that $U$ and therefore $\partial F$ are empty, $F'$ contains $F$ as a connected component, and along $\partial \Delta^k$ each of our embeddings $F \to F'$ lands in $F$. So, as long as we arrange $T_0$ to be path-connected, all of the embeddings are $F \to F$ and so  \autoref{embedding_is_diffeo} applies once more.) 
	
	Now we have defined $f_0$ on the three subspaces $\partial \Delta^k \times W$, $\Delta^k \times U$, and $T_0 \times F$ in a compatible way. We again apply \autoref{smooth_local_to_global} to extend this to a smooth map $O \to W'$, where $W'$ is an open manifold containing $W$ and $O \subseteq \Delta^k \times W$ is an open set containing $\partial \Delta^k \times W$, $\Delta^k \times U$, and $T_0 \times F$. By shrinking $T_0$ once more, we may assume that $O$ contains $T_0 \times W$, and that on $T_0 \times W$ this is a family of smooth embeddings $W \to W'$. But for each point of $T_0$, the resulting map $W \to W'$ is the identity on $U$ and a diffeomorphism on $F$, so it induces a homeomorphism $\partial W \to \partial W$. By \autoref{embedding_is_diffeo}, it is therefore a family of diffeomorphisms $W \cong W$. (If $U$ is empty, then one can only leave $W$ by passing through $\partial W = F$, and again as long as $T_0$ is path-connected this does not happen, so that \autoref{embedding_is_diffeo} applies. If $T_0$ were not path-connected one could flip from the given embeddings to embeddings that send $W$ \emph{outside} of $\textup{int }W \subseteq W'$!)
	
	We have therefore constructed the desired family of diffeomorphisms $f_0\colon T_0 \times W \to W$, that along $\Delta^k \times U$ is the constant family of inclusions $U \to W$, and that along $\partial \Delta^k \times W$ is the original family of diffeomorphisms $\phi_0$.
	}

	\orange{
	To extend $f_0$ to the rest of $\Delta^k \times W$, pick an equivariant smooth embedding $W \to V \times [0,\infty)$, where $V$ is a sufficiently large $G$-representation, that on a collar neighborhood of $F$ is given by an embedding $F \to V$ times the inclusion $[0,1] \subseteq [0,\infty)$, and away from this collar sends everything into $V \times (0,\infty)$. This is possible by the same technique as in \autoref{embed_into_V}. Away from $C$, this gives a neat embedding of manifolds with boundary
	\[ W \setminus C \to V \times [0,\infty). \]
	In particular, $F \setminus C$ goes to $V \times \{0\}$. By the construction in the proof of \autoref{weak_tubular_nbhd_thm}, there is a smooth equivariant retract $\pi\colon \Omega \to W$ where $\Omega$ is a tubular neighborhood of $W$ and therefore a neighborhood of $W \setminus C$, that sends $V \times \{0\}$ to $F$ and $V \times (0,\infty)$ to $W \setminus F$.
	
	For each finite open cover $\{T_i\}_{i=1}^n$ of $\Delta^k - T_0$ by sets contained in the interior of $\Delta^k$, extend the cover to $\Delta^k$ by including $T_0$, then pick a smooth partition of unity $\{\lambda_i\colon \Delta^k \to [0,1]\}$ subordinate to the resulting cover. Pick any point $t_i \in T_i$ and let $f_i = f_{t_i}$ be the diffeomorphism given by $f$ at $t_i$. Then we define a new family $f'\colon \Delta^k \times W \to W$ by the formula
	\[ f'(t,w) = \pi\left(\sum_{i=0}^n \lambda_i(t) f_i(w)\right), \]
	with the sum taken in $V \times [0,\infty)$. Of course, this formula only makes sense when the sum lies in $\Omega$. This is certainly true whenever $w \in U$ because every map $f_t$ is the identity on $U$.
	
	By definition, the values and first derivatives of the functions $f_t(-)$ vary continuously over $\Delta^k \times W$, and therefore are uniformly continuous because $\Delta^k \times W$ is compact. So by making the cover $\{T_i\}_{i=1}^n$ sufficiently fine, we may arrange that the $C^1$ distance from $f_t$ to any $f_i$ for which $\lambda_i(t) > 0$ to be less than $\epsilon$. This implies also that $C^1$ distance from $f_t$ to the sum $\sum_{i=0}^n \lambda_i(t) f_i$ is also less than $\epsilon$. Taking $\epsilon$ to be less than the distance from the compact set $f(\Delta^k \times (W \setminus U)) \subseteq (W \setminus U) \subseteq V \times [0,\infty)$ to the complement of $\Omega$, we therefore see that on $\Delta^k \times (W \setminus U)$, the sum stays within $\Omega$, so that the formula makes sense. Once it makes sense, it is clearly smooth and agrees with $f$ on $\partial \Delta^k$ by construction.
	
	Since $W$ is a compact subspace of an open manifold $W'$, the space of $C^1$ embeddings $W \to W'$ is an open subspace of $C^1(W,W')$. Therefore given $f\colon \Delta^k \to C^1(W,W')$, there is an $\epsilon$ such that any map $W \to W'$ whose $C^1$-distance to any $f_t$ is less than $\epsilon$ must be an embedding.\footnote{This is similar in spirit to \autoref{extend_embedding}, but different because we are talking about continuous families of smooth maps here.} Applying this to the above map $f'$, we conclude that for any sufficiently fine cover, all of the maps $f'(t,-)$ are $C^1$ embeddings $W \to W$. Of course we already know they are smooth as well. Each $f'(t,-)$ induces a map $F \to F$ that is the identity on $F \cap U$ and therefore on $\partial F$, so by \autoref{embedding_is_diffeo} it is a diffeomorphism $F \cong F$. Then $f'(t,-)$ induces a map $W \to W$ that is the identity on $C$ and a diffeomorphism on $F$, therefore a homeomorphism on $\partial W$, so by \autoref{embedding_is_diffeo} again it is a diffeomorphism $W \cong W$. (As above, when $U$ is empty we also have to point out that these embeddings are homotopic to ones sending $F$ to $F$ or $W$ to $W$, in order for \autoref{embedding_is_diffeo} to apply.)
	}
	
	Applying $\pi$ to a straight-line homotopy gives a deformation of this smooth family back to the original continuous family of diffeomorphisms. \orange{It is well-defined because the sum $\sum_{i=0}^n \lambda_i(t) f_i$ is within $\epsilon$ of $f_t$, and so any interpolation between the two is also within $\epsilon$ of $f_t$. At each stage of this homotopy we get a continuous family of diffeomorphisms} since they are embeddings and respect both $U$ and $F$. This concludes the proof.
\end{proof}

\orange{
\begin{rem}
	This definition of the simplicial group of diffeomorphisms $\mc D_{\sbt}(W,F)$ 
	is different, and simpler, than the standard definition in \cite[App A, Sec 3]{blr75}. The standard definition requires collar conditions on the simplices in order to guarantee that smooth extensions exist, see e.g. \cite[Def 2.2.1]{hllrw}, \cite[Sec 1.4]{krannich_homological}, and \cite[Def 2.8]{me_ww}, and as a result only obtains a semi-simplicial group, i.e. one without degeneracy maps. (See \cite[Sec 1.4]{krannich_homological} and \cite[Rem 2.2.2]{hllrw}.) The above proof, which combines the argument from \cite[Prop 1]{lurie_937_lecture6} with the smooth extension trick from \autoref{smooth_on_boundary}, avoids these issues shows that this definition gives an simplicial group with the correct homotopy type.
\end{rem}
}

\section{Pseudoisotopies on manifolds with corners}\label{pseudosection}

Let $M$ be a compact smooth $G$-manifold with corners. In this section we will define a simplicial group  $\mc P_{\sbt}(M) $, the space of smooth pseudoisotopies on $M$. Inside $\mc P_{\sbt}(M) $ we will define a homotopy equivalent subspace  $\mc P^{mr}_{\sbt}(M)$, the space of \emph{mirror} pseudoisotopies. It is designed in such a way that the polar stabilization of a mirror pseudoisotopy is again a mirror pseudoisotopy. This prepares the way for a similar construction involving spaces of smooth $h$-cobordisms.

\subsection{Pseudoisotopies and mirror pseudoisotopies}
Recall that for $W$ a compact $G$-manifold with corners, and $F \subseteq \partial W$ a face, \autoref{smooth_diffeo_space} gives a space $\mc D_{\sbt}(W,F)$ of equivariant diffeomorphisms of $W$ that are the identity near the closure of $\partial W \setminus F$. The case of interest for us is when
\[ W = M \times [-1,0] \cong M \times I \]
with $M$ a compact $G$-manifold with corners, and $F = M \times \{0\}$ is the top face. We call this the space of equivariant pseudoisotopies on $M$:
\[ \mc P_{\sbt}(M) = \mc D_{\sbt}(M \times [-1,0],M \times \{0\}). \]

(We use $[-1,0]$ instead of $[0,1]$ because later we will be doubling $W$ along the top and we like $[-1,1]$ better than $[0,2]$.).

Let $r\colon M \times [-1,1] \to M \times [-1,1]$ be the reflection map $r(x,t) = (x,-t)$. Given a pseudoisotopy $f\colon M \times [-1,0]\to M \times [-1,0]$, the \ourdefn{double} of $f$ is the map
\[ \bar f\colon M \times [-1,1] \to M \times [-1,1] \]
that commutes with $r$ and agrees with $f$ on $M \times [-1,0]$. Note that if we write $\bar f=(\bar f_0,\bar f_1)$ with
\begin{align*}
	\bar f_0\colon & M \times [-1,1] \to M \\
	\bar f_1\colon & M \times [-1,1] \to [-1,1]
\end{align*}
then the requirement that $\bar f$ commutes with $r$ means that $\bar f_0(x,-t) = \bar f_0(x,t)$ and $\bar f_1(x,-t) = -\bar f_1(x,t)$ for all $(x,t )\in M \times [-1,1]$.

We call $f$ a \ourdefn{\mirror} pseudoisotopy if its double $\bar f$ is smooth. In \autoref{fig:pseudoisotopy}, the pseudoisotopy on the right is mirror, while the one on the left is not.

\begin{figure}[h]
	\centering
	\def\svgwidth{3.5in}
	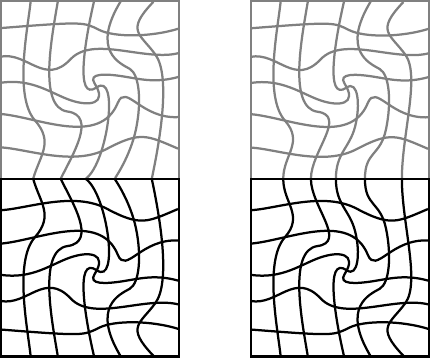
	\vspace{-.5em}
	\caption{Two pseudoisotopies on $I$ and their doubles. The pseudoisotopy on the right is mirror, while the one on the left is not.}\label{fig:pseudoisotopy}
\end{figure}
The doubles of the mirror pseudoisotopies are precisely those diffeomorphisms $\bar f$ of $M \times [-1,1]$ that are $(G \times C_2)$-equivariant ($C_2$ being the group of order $2$, acting by $r$) and such that $\bar f$ coincides with the identity on a neighborhood of the boundary. (Note that these conditions imply that the lower half of $M \times [-1,1]$ is sent to itself, so that $\bar f$ is in fact the double of some $f$.) 
When $f$ is mirror, we frequently drop the bar and just write $f$ for the double. We may even go further, extending $f$ to a diffeomorphism
	\[ f\colon M \times \R \to M \times \R \]
	\[ f(x,t) = (f_0(x,t),f_1(x,t)) \]
	such that $f_0$ is a $G$-equivariant even function to $M$, $f_1$ is a $G$-invariant odd function to $\R$, and $f$ is the identity on a neighborhood of the closure of the complement of $\partial M \times [-1,1]$.

Let $\mc P_{\sbt}^{\mr}(M) \subseteq \mc P_{\sbt}(M)$ denote the subspace of mirror pseudoisotopies on $M$.

\begin{prop}\label{mirror_pseudo_equivalence}
	The inclusion $\mc P_{\sbt}^{\mr}(M) \subseteq \mc P_{\sbt}(M)$ is a weak equivalence.
\end{prop}

\begin{proof}
\orange{For this proof, call a pseudoisotopy \ourdefn{regular} if on some neighborhood $M \times (-\epsilon,0]$ of the top face of $M\times [-1,0]$ it coincides with the product of a diffeomorphism $ M \to M$ and the identity in the $I$ coordinate. Define a subspace $\mc P_{\sbt}^{\reg}(M)$ of $\mc P_{\sbt}(M)$, consisting of $\Delta^k$-families of regular pseudoisotopies on $M$ such that there is a uniform choice of $\epsilon$ valid for all members of the family. Regular implies mirror, so we have inclusions


$$
\mc P_{\sbt}^{\reg}(M) \subseteq \mc P_{\sbt}^{\mr}(M) \subseteq \mc P_{\sbt}(M).
$$

It will suffice if both of the inclusions $\mc P_{\sbt}^{\reg}(M) \subseteq \mc P_{\sbt}^{\mr}(M)$ and $\mc P_{\sbt}^{\reg}(M) \subseteq \mc P_{\sbt}(M)$ are equivalences.
		
	For the latter it suffices if for every commutative solid arrow diagram
	\[ \xymatrix{
		\partial \Delta[k] \ar[d] \ar[r] & \mc P_{\sbt}^{\reg}(M) \ar[d] \\
		\Delta[k] \ar@{-->}[ur] \ar[r] & \mc P_{\sbt}(M)
	} \]
	we can deform the bottom map by a simplicial homotopy rel $\partial \Delta[k]$ (preserving the commutativity) to one that has a lift as shown. The bottom map is a family of pseudoisotopies $f\colon \Delta^k \times M \times [-1,0] \to M \times [-1,0]$, and along $\partial\Delta^k$ the pseudoisotopies are regular. By assumption, every pseudoisotopy in $f$ is the identity along some neighborhood of the bottom and sides. Focusing on the sides and applying the tube lemma, we get a fixed neighborhood $U \subseteq M$ of the boundary $\partial M$ such that each pseudoisotopy in $f$ is the identity along $U \times [-1,0]$. By \autoref{smooth_simplicial_homotopy}, our goal is to give a $(\Delta^1 \times \Delta^k)$-family of pseudoisotopies from the $\Delta^k$-family $f$ to one in which the entire $\Delta^k$-family is regular, without changing the family on $\partial \Delta^k$ throughout the homotopy.
	
	We first describe the $\Delta^k$-family at the end of this homotopy, because then the rest of the homotopy will essentially be a straight line. Along the top $M \times \{0\}$, $f$ gives a $\Delta^k$-family of diffeomorphisms $f_0\colon \Delta^k \times M \to M$, that are all the identity on $U$. Multiplying this by the identity of $[-1,0]$ gives a family of diffeomorphisms $(f_0 \times \id)$, no longer a family of pseudoisotopies because they are not the identity along the bottom $M \times \{-1\}$, though they are still the identity and therefore agree with $f$ along $U \times [-1,0]$.
	
	As in the proof of \autoref{smooth_simplices}, pick an equivariant smooth embedding $M \to V$, where $V$ is a sufficiently large $G$-representation, giving a smooth embedding $M \times [-1,0] \to V \times [-1,0]$. By the construction in the proof of \autoref{weak_tubular_nbhd_thm}, there is a smooth equivariant retract $\pi \times \id\colon \Omega \times [-1,0] \to M \times [-1,0]$, where $\Omega$ is a tubular neighborhood of $M$ and therefore a neighborhood of $\textup{int }M$.
	
	For each $\epsilon > 0$ sufficiently small, take a smooth partition of unity $\{\lambda_0,\lambda_1\}$ on $[0,1]$ subordinate to $[-1,-\epsilon)$ and $(-2\epsilon,0]$, and define the family $f'\colon M \times [-1,0] \to M \times [-1,0]$ by the formula
	\[ f'(x,t) = (\pi \times \id)\left(\lambda_0(t)f(x,t) + \lambda_1(t)(f_0(x),t)\right), \]
	with the sum taken in $V \times [-1,0]$. (We have left the dependence on the point in $\Delta^k$ implicit.) As before, this formula only makes sense when the first coordinate of the sum lies in $\Omega$. This is certainly true whenever $x \in U$ because both $f$ and $f_0$ are the identity in that case. By making $\epsilon$ sufficiently small, we can make the $C^0$ distance from $f$ to $\lambda_0 f + \lambda_1 f_0$ as small as desired, and therefore make it smaller than the distance from $M \setminus U$ to the complement $V \setminus \Omega$. This ensures that the above sum lands inside $\Omega$ and therefore the formula is well-defined. By construction, on $M \times (-\epsilon,0]$ we have $f'(x,t) = (f_0(x),t)$, so that this is a family of regular pseudoisotopies. Furthermore, if $\epsilon$ is chosen sufficiently small then for every point in $\partial\Delta^k$ we have that $f = f_0 \times \id$ on $(-2\epsilon,0]$ and therefore $f'(x,t) = f(x,t)$ along the boundary $\partial\Delta^k$.
	
	Now apply $(\pi \times \id)$ to a straight-line homotopy to give a $(\Delta^1 \times \Delta^k)$-family of pseudoisotopies from $f$ to this new family of regular pseudoisotopies $f'$, that is a constant homotopy on $\partial \Delta^k$. This produces the desired simplicial homotopy, showing that $\mc P_{\sbt}^{\reg}(M) \subseteq \mc P_{\sbt}(M)$ is a weak equivalence.

	If the family $f$ is mirror then this entire procedure produces mirror pseudoisotopies, giving a lift
	\[ \xymatrix{
		\partial \Delta[k] \ar[d] \ar[r] & \mc P_{\sbt}^{\reg}(M) \ar[d] \\
		\Delta[k] \ar@{-->}[ur] \ar[r] & \mc P_{\sbt}^{\mr}(M)
	} \]
	and showing that $\mc P_{\sbt}^{\reg}(M) \subseteq \mc P_{\sbt}^{\mr}(M)$ is a weak equivalence. We conclude that $\mc P_{\sbt}^{\mr}(M) \subseteq \mc P_{\sbt}(M)$ is a weak equivalence as well.
	}
\end{proof}

\subsection{Smoothness properties of even and odd functions}
In order to understand stabilization of mirror pseudoisotopies, we need to recall some facts about even and odd functions. Let $M$ be any smooth manifold with corners. Let
\[ r\colon M \times \R \to M \times \R \]
be the reflection map $r(x,t) = (x,-t)$. We say that a function
\[ f\colon M \times \R \to \R \]
is \emph{odd} if $f \circ r = -f$, and that a function
\[ f\colon M \times \R \to X \]
to any set $X$ is \emph{even} if $f \circ r = f$. These definitions also apply when $f$ is only defined on some $r$-invariant subset of $M \times \R$, such as $M \times [-1,1]$.

\begin{lem}\label{odd_factor_out_t}
	If $f\colon M \times \R \to \R$ is smooth, and if $f(x,0)=0$ for all $x\in M$, then $f(x,t) = tg(x,t)$ for a smooth function $g\colon M \times \R \to \R$. In particular if $f$ is smooth and odd then $f(x,t) = tg(x,t)$ where $g$ is smooth and even.
\end{lem}

\begin{proof}
	The following well known argument is valid even when $M$ has corners. Write $f'(x,t)=\frac{\partial}{\partial t}f(x,t)$. Then
	\[ f(x,t)=f(x,t)-f(x,0)=\int_0^t f'(x,u)du=t \int_0^1 f'(x,st)\ ds. \]
	The last expression is a smooth function of $(x,t)$ by differentiation under the integral.
	\end{proof}

\begin{lem}
	If $f\colon M \times \R \to \R$ is smooth and even then $f(x,t) = h(x,t^2)$ for a smooth function $h$.
\end{lem}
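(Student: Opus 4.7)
The plan is to construct $g$ explicitly by setting $g(x,u) = f(x,\sqrt{u})$ for $u \geq 0$, verify that this is smooth on $M \times [0,\infty)$, and then extend across $u=0$ via a standard half-space extension theorem. The core of the argument is a repeated application of the previous lemma.

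First I would produce, by iterating the previous lemma, a sequence of smooth even functions $\phi_0, \phi_1, \phi_2, \ldots$ on $M \times \R$ with $\phi_0 = f$ and $\partial_t \phi_{n-1}(x,t) = t\,\phi_n(x,t)$ for all $n \geq 1$. Indeed, since $\phi_{n-1}$ is even, $\partial_t \phi_{n-1}$ is odd, so the previous lemma produces such a $\phi_n$, and evenness of $\phi_n$ is forced by the relation. Then, by induction on $n$, for $u > 0$ and any multi-index $\alpha$ in the local coordinates on $M$, the chain rule gives
\[
\partial_x^\alpha \partial_u^n g(x,u) \;=\; \frac{1}{2^n}\,(\partial_x^\alpha \phi_n)(x,\sqrt{u}),
\]
the key cancellation being $\tfrac{1}{2\sqrt{u}} \cdot \sqrt{u} = \tfrac{1}{2}$. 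Since each $\phi_n$ is smooth on $M \times \R$, every such mixed partial derivative extends continuously to $u = 0$. By the characterization of $C^\infty$ up to a boundary (a continuous function on a manifold-with-boundary that is smooth on the interior is smooth globally iff all its partial derivatives extend continuously to the boundary), this forces $g$ to be smooth on $M \times [0,\infty)$. Finally, I would apply Seeley's extension theorem to obtain a smooth extension of $g$ to $M \times \R$; by construction $f(x,t) = g(x,t^2)$.

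The main technical point is confirming that the inductive formulas above actually give genuine one-sided derivatives at $u = 0$, not just identities on the open half-line. This reduces to the standard calculus fact that a $C^\infty$ function on $(0,\infty)$ whose derivatives of every order extend continuously to $0$ is $C^\infty$ on $[0,\infty)$, which follows from iterating the mean value theorem. In the presence of corners on $M$, the smoothness we invoke is the local-extension formulation from \autoref{smooth}, so no additional care is needed at corner points of $M$.
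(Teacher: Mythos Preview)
Your proof is correct and follows essentially the same route as the paper: define $g(x,u)=f(x,\sqrt{u})$, iterate \autoref{odd_factor_out_t} to produce a sequence of smooth even functions whose values at $\sqrt{u}$ give the $u$-derivatives of $g$, and then invoke the fact that a function on a half-space whose interior partials all extend continuously to the boundary is smooth (the paper cites \cite[Theorem 1.4.1]{melrose} and \cite{whitney} for this). The only substantive difference is that the paper carries out the boundary check by an explicit computation of the one-sided derivative at $u=0$ rather than appealing directly to the mean-value-theorem characterization you quote, and the paper stops with $g$ defined on $M\times[0,\infty)$ (which is all that is used downstream), whereas your final Seeley extension to $M\times\R$ is an extra step that is harmless but unnecessary.
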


\begin{proof}
\orange{For $u>0$ define $h(x,u)=f(x,\sqrt u)$. Since $f$ is even, $f(x,t)=h(x,t^2)$ for $t\neq 0$. The assertion is that $h$ extends smoothly from $u>0$ to $u\in \mathbb R$. By \cite[Theorem 1.4.1]{melrose} or \cite{whitney} it is enough if $h(x,u)$, $\frac{\partial}{\partial u}h(x,u)$, and more generally $\frac{\partial^r}{\partial^r u}h(x,u)$ all extend continuously to $u\ge 0$. 

Write $h'(x,u)=\frac{\partial}{\partial u}h(x,u)$. The key is this implication (which is then applied iteratively): if $h(x,u)$ is defined and smooth for $u>0$, and if $h(x,t^2)$ extends smoothly from $t\neq 0$ to $t\in \mathbb R$, then the same holds for $h'(x,u)$. 

To establish the implication, note that if $h(x,t^2)=f(x,t)$, where the right hand side is defined and smooth for all $t$, then $2th'(x,t^2)$ also extends smoothly to all $t$, so that by \autoref{odd_factor_out_t}, $h'(x,t^2)$ extends smoothly to all $t$.}

\end{proof}
	
\begin{cor}\label{even_function_of_t2}
		If $N$ is any smooth manifold with corners and $f\colon M \times \R \to N$ is smooth and even, then the function
	\[ g\colon M \times [0,\infty) \to N, \qquad g(x,t) = f(x,\sqrt{t}) \]
	is also smooth. 
\end{cor}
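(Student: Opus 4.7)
The plan is to reduce the corollary directly to the previous lemma by working coordinate-wise in a chart of $N$. Smoothness is a local condition on both source and target, so it suffices to verify smoothness of $g$ near an arbitrary point $(x_0, t_0) \in M \times [0,\infty)$. Choose a coordinate chart on $N$ around $f(x_0, \sqrt{t_0})$, modeled on an open subset $U$ of $[0,\infty)^k \times \R^{n-k}$. Shrinking $M$ if necessary to a neighborhood of $x_0$ and restricting the $\R$ coordinate to a small symmetric interval $(-\delta,\delta)$, we may assume $f$ takes values in $U$. By the evenness of $f$, the image of a symmetric interval lands in a symmetric neighborhood, so this restriction is compatible with the even structure.

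Write $f = (f_1,\ldots,f_n)$ for the $n$ coordinate functions of $f$ with respect to this chart. Each $f_i \colon M \times (-\delta,\delta) \to \R$ is smooth in the sense of \autoref{smooth} (since $f$ is smooth into $N$, its chart components are by definition smooth real-valued functions), and each $f_i$ is even because $f$ is. By the previous lemma applied to each coordinate, the function
\[ g_i(x,t) = f_i(x, \sqrt{t}) \colon M \times [0, \delta^2) \to \R \]
is smooth. Assembling these gives a smooth map $g = (g_1,\ldots,g_n) \colon M \times [0, \delta^2) \to \R^n$, and since $g(x,t) = f(x,\sqrt{t})$ lies in $U \subseteq [0,\infty)^k \times \R^{n-k}$ at every point, $g$ factors through $U$ and hence through the chart on $N$.

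By the definition of smoothness for maps between manifolds with corners (\autoref{smooth}), verifying smoothness in a coordinate chart is exactly what is required. Hence $g$ is smooth near $(x_0,t_0)$, and varying the base point completes the proof.

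The plan has essentially no obstacle, since the previous lemma does the real work; the only thing to be careful about is that the coordinate functions of $f$ in a chart of $N$ really are smooth in the sense of \autoref{smooth}, i.e.\ extend to smooth functions on an open set in the ambient Euclidean space. This is built into the definition of a smooth map into a manifold with corners, so the reduction to the scalar case is immediate.
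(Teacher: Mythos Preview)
Your proposal is correct and is exactly the argument the paper leaves implicit: the corollary is stated without proof, as an immediate consequence of the preceding scalar lemma, and your reduction to coordinate functions in a chart of $N$ is the intended (and only reasonable) way to fill in the details. One small point: the preceding lemma is stated for functions on $M \times \R$ rather than $M \times (-\delta,\delta)$, but its proof is entirely local in $t$, so your restriction to a symmetric interval causes no trouble.
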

	
\subsection{Polar stabilization}
	Suppose that $f:M\times \R\to M\times \R$ is an equivariant mirror pseudoisotopy. 	
	Given an orthogonal $G$-representation $V$, define
	\[ \St^V f\colon M \times V \times \R \to M \times V \times \R \]
	to be the function that applies $f$ along $M\times L$ for every line $L$ through the origin in $V \times \R$. In formulas,
	\[ (\St^V f)(x,v,t) = \left(f_0(x,\|(v,t)\|),\frac{v}{\|(v,t)\|}f_1(x,\|(v,t)\|),\frac{t}{\|(v,t)\|}f_1(x,\|(v,t)\|)\right), \]
	\orange{with the last two coordinates set to zero if $(v,t) = (0,0)$.} The formula becomes simpler if we write $W = V \times \R$. Then for $(x,w)\in M\times W$ we have
	\[ (\St^V f)(x,w) = \left(f_0(x,\|w\|),\frac{w}{\|w\|}f_1(x,\|w\|)\right), \]
	\orange{with the last coordinate set to zero if $w = 0$.} Note that when $\|w\| \geq 1$ then
		\[ (\St^V f)(x,w)  = \left(x,\frac{w}{\|w\|} \cdot \|w\|\right) = (x,w). \]
	As a result we can regard $\St^V f$ as a pseudoisotopy of $M \times D(V)$ instead of $M \times V$.
	
	\begin{lem}\label{st_pseudo_smooth}
		$\St^V f$ is smooth, $G$-equivariant, and {\mirror}.
	\end{lem}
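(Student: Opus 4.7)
The statement has three parts: smoothness of $\St^V f$, $G$-equivariance, and the mirror property. The hard part will be smoothness at the locus $w = 0$, where the defining formula contains factors $\|w\|$ and $w/\|w\|$ that are individually singular; the plan is to show that these singularities cancel using the even/odd structure of $f_0$ and $f_1$ guaranteed by $f$ being mirror. Once smoothness is in hand, equivariance and the mirror property will follow by direct inspection of the formula.

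For smoothness, since $f$ is mirror, $f_0$ is even in $t$ and $f_1$ is odd in $t$. I will use \autoref{odd_factor_out_t} to write $f_1(x, t) = t \, g_1(x, t)$ with $g_1$ smooth and even, then apply \autoref{even_function_of_t2} in the target-$\R$ case to $f_0$ and to $g_1$ to produce smooth functions $h_0, h_1$ on $M \times [0, \infty)$ with $f_0(x, t) = h_0(x, t^2)$ and $g_1(x, t) = h_1(x, t^2)$. Substituting, the singular factors collapse: for $w = (v, t) \neq 0$,
\[
\tfrac{v}{\|w\|} f_1(x, \|w\|) = \tfrac{v}{\|w\|} \cdot \|w\| \cdot g_1(x, \|w\|) = v \cdot h_1(x, \|w\|^2),
\]
and similarly $\tfrac{t}{\|w\|} f_1(x, \|w\|) = t \cdot h_1(x, \|w\|^2)$. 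Combined with $f_0(x, \|w\|) = h_0(x, \|w\|^2)$, away from $w = 0$ we obtain
\[
\St^V f(x, v, t) = \bigl( h_0(x, \|w\|^2),\ v \cdot h_1(x, \|w\|^2),\ t \cdot h_1(x, \|w\|^2) \bigr).
\]
The right-hand side is defined and smooth on all of $M \times V \times \R$ since $\|w\|^2 = \|v\|^2 + t^2$ is smooth in $w$, so $\St^V f$ extends smoothly across $w = 0$. (The cited lemmas are proved allowing $M$ to have corners, which is exactly the generality needed.)

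The remaining two properties will follow from this smooth rewriting. For $G$-equivariance: because $V$ is orthogonal and $G$ acts trivially on $\R$, the norm $\|w\|$ is $G$-invariant, and combined with the equivariance of $f_0$ and invariance of $f_1$ the formula is manifestly equivariant for the diagonal action on $M \times V \times \R$. For the mirror property, I view $\St^V f$ as a pseudoisotopy of $M \times D(V)$ with parameter $t$. Reflecting $t \mapsto -t$ leaves $v$ and $\|w\|$ fixed, so the $M$- and $V$-components are even in $t$ while the $\R$-component is odd. Finally, because $f$ is the identity in a neighborhood of $\{|t| \geq 1\}$ and of $\partial M \times \R$, and because $\|w\| \geq |t|$ and $\|w\| \geq 1$ whenever $\|v\| = 1$, the formula shows that $\St^V f$ is the identity on a neighborhood of $\partial(M \times D(V)) \times [-1, 1]$ and of $M \times D(V) \times \{\pm 1\}$. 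Hence the restriction of $\St^V f$ to $M \times D(V) \times [-1, 0]$ is a pseudoisotopy whose double on $[-1, 1]$ is smooth, i.e., a mirror pseudoisotopy.
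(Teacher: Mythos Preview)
Your proof is correct and follows essentially the same approach as the paper: rewrite $\St^V f$ in terms of $\|w\|^2$ via \autoref{odd_factor_out_t} and \autoref{even_function_of_t2}, then observe that $\|w\|^2$ is smooth. One small omission: to conclude that $\St^V f$ is a mirror \emph{pseudoisotopy}, you must know it is a diffeomorphism, not just a smooth map with the right boundary behavior; the paper closes this by observing that $\St^V(f^{-1})$ is a smooth inverse (by the same argument applied to $f^{-1}$, which is also mirror).
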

	
	\begin{proof}
		It is straightforward to see that $f_0(x,\|(v,t)\|)$ and $\frac{v}{\|(v,t)\|}f_1(x,\|(v,t)\|)$ are even, $\frac{t}{\|(v,t)\|}f_1(x,\|(v,t)\|)$ is odd, and all three are $G$-equivariant. Clearly $\St^V f$ is smooth away from $w = 0$. For smoothness at $w=0$, note that by \autoref{odd_factor_out_t} and \autoref{even_function_of_t2}
		\[ (f_0(x,t),f_1(x,t)) = (g_0(x,t^2),tg_1(x,t^2)) \]
		for some smooth equivariant functions $g_0$ and $g_1$ on $M \times [0,\infty)$, and therefore
		\[ (\St^V f)(x,w) = \left(g_0(x,\|w\|^2),w \cdot g_1(x,\|w\|^2)\right). \]
		
				The key is that $\|w\|^2$ is smooth (even though $\|w\|$ is not). Finally, $\St^V f$ is a diffeomorphism because its inverse is $\St^V(f^{-1})$.
	\end{proof}
	
	The same applies to a $k$-simplex of pseudoisotopies $\Delta^k \times M \times \R \to M \times \R$. The only difference is that now the functions $f_0$ and $f_1$ (and therefore $g_0$ and $g_1$ in the proof) depend smoothly on an extra argument $u \in \Delta^k$. The operation $\St^V$ clearly respects faces and degeneracies, and hence defines a map
\[ \St^V\colon \mc P_{\sbt}^{\mr}(M) \to \mc P_{\sbt}^{\mr}(M \times D(V)). \]
	
	The next lemma is functoriality for pseudoisotopies along inclusions of vector spaces.
	
	\begin{lem}
		For representations $V$ and $W$, $\St^{W \times V} f = \St^W\St^V f$.
	\end{lem}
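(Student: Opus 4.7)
The claim is purely formal and reduces to unwinding the defining formula
\[ (\St^V f)(x, w) = \left(f_0(x, \|w\|),\, \frac{w}{\|w\|} f_1(x, \|w\|)\right) \]
twice and matching terms. So my plan is simply to write both sides out and compare them, using one application of the Pythagorean identity to reconcile the norms.

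In detail, let $(x, v, w, t) \in M \times V \times W \times \R$, set $s = \|(w,t)\|$, and $R = \|(v,w,t)\|$. The left-hand side is immediate from the definition (after identifying $V \times W$ with $W \times V$ in the obvious way):
\[ (\St^{W \times V} f)(x, v, w, t) = \left(f_0(x, R),\, \frac{v}{R} f_1(x, R),\, \frac{w}{R} f_1(x, R),\, \frac{t}{R} f_1(x, R)\right). \]
For the right-hand side, write $g = \St^V f$, viewed as a mirror pseudoisotopy on $M \times D(V)$, with $g_0 = (g_0^M, g_0^V)$ decomposed according to the factors $M$ and $D(V)$. Then applying $\St^W$ yields
\[ (\St^W g)(x, v, w, t) = \left(g_0^M(x, v, s),\, g_0^V(x, v, s),\, \frac{w}{s} g_1(x, v, s),\, \frac{t}{s} g_1(x, v, s)\right). \]
By definition of $g$, and using $\|(v, s)\|^2 = \|v\|^2 + \|w\|^2 + t^2 = R^2$, we have $g_0^M(x, v, s) = f_0(x, R)$, $g_0^V(x, v, s) = (v/R) f_1(x, R)$, and $g_1(x, v, s) = (s/R) f_1(x, R)$. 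Substituting, the $s$'s cancel in the last two coordinates and each component matches the one above.

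There is no real obstacle here. The only mild care required is to keep track of the two roles played by the $\R$-factor (the original cobordism direction for $f$ versus the new one added when stabilizing by $W$) and to pin down the identification $(W \times V) \times \R \cong W \times V \times \R \cong V \times W \times \R$ under which the equality is being asserted. Smoothness, equivariance, and the mirror property need no separate verification, as they follow from \autoref{st_pseudo_smooth} applied to each single stabilization in the composite.
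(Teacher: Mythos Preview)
Your computation is correct; this is exactly the direct verification from the formulas that the paper mentions but does not write out. The paper instead gives a one-line geometric argument: $\St^W\St^V f$ applies $\St^V f$ along each copy of $V\times\R$ sitting over a line in $W\times\R$, and $\St^V f$ in turn applies $f$ along each line in $V\times\R$, so the composite applies $f$ along every line through the origin in $W\times V\times\R$, which is the definition of $\St^{W\times V} f$. Your approach has the virtue of making the Pythagorean identity $\|(v,s)\|=R$ (and the resulting cancellation of $s$) completely explicit, while the paper's phrasing makes the coordinate-free reason for the equality transparent and avoids any bookkeeping at the origin.
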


	\begin{proof}
		One can check this directly from the formulas, but it perhaps clearer to say this: $\St^W\St^V f$ applies $\St^V f$ along every subspace of $W \times V \times \R$ that is $V$ times a line in $W \times \R$, and $\St^V f$ applies $f$ along every line in this subspace, so that $\St^W\St^V f$ applies $f$ along every line in $W \times V \times R$, and this matches the definition of $\St^{W \times V} f$.
	\end{proof}

As a result we get the commuting square
\[ \xymatrix @C=5em{
	\mc P_{\sbt}^{\mr}(M) \ar[d]^-{\St^{V \oplus W}} \ar[r]^-{\St^V} & \mc P_{\sbt}^{\mr}(M \times D(V)) \ar[d]^-{\St^W} \\
	\mc P_{\sbt}^{\mr}(M \times D(V \times W)) \ar[r]^-{\textup{extend by id}} & \mc P_{\sbt}^{\mr}(M \times D(V) \times D(W)).
} \]

\begin{rem}
	For the most part, the results in this section serve as a technical underpinning and a plausibility check for the more sophisticated stabilization we perform later on $h$-cobordisms. It should be possible to go further and establish functoriality of pseudoisotopies along embeddings of manifolds, as in \cite{pieper}, but we do not do so here.
\end{rem}

\section{$h$-cobordisms on manifolds with corners}\label{hcobsection}

In this section we define the space of $h$-cobordisms on a compact smooth $G$-manifold with corners. We also describe a homotopy-equivalent space of mirror $h$-cobordisms. As with pseudoisotopies, the additional mirror structure is used in creating ``polar'' stabilizations having good formal properties.

\subsection{Definitions}

Let $M$ be a compact smooth $n$-dimensional $G$-manifold with corners. An \ourdefn{equivariant cobordism} on $M$ is a compact $(n+1)$-manifold $W$ with corners, equipped with the following structure. There is a face of $W$ identified with $M$, called the bottom. There is a face $N$ of $W$, disjoint from $M$, called the top. The closure of the complement of $M\cup N$ in $\partial W$ is called the sides. \orange{We let $L \subseteq \partial W$ be the union of the bottom and sides.} There is an equivariant diffeomorphism between a neighborhood of the bottom and sides of $M\times [-1,0]$ and a neighborhood of \orange{$L$} in $W$, taking $M\times \{-1\}$ to the bottom of $W$ and taking $\partial M\times [-1,0]$ to the sides of $W$, and therefore taking a neighborhood of $\partial M\times \{0\}$ in $ M\times \{0\}$ to a neighborhood of $\partial N$ in $N$. We refer to this embedding of a neighborhood of $M\times \{-1\}\cup\partial M\times [-1,0]$ in $W$ as the \ourdefn{lower collar} and denote it by $c$. The germ of $c$ along $L$ is part of the structure of the equivariant cobordism.

(Again, we use $[-1,0]$ instead of $[0,1]$ because later we will be doubling $W$ along the top and we like $[-1,1]$ better than $[0,2]$.). 

\begin{defn}\label{hcob_def} An \ourdefn{equivariant $h$-cobordism} on $M$ is a cobordism as above such that
	the inclusions $M \to W$ and $N \to W$ are equivariant homotopy equivalences.
\end{defn}


\begin{figure}[h]
	\centering
	\def\svgwidth{3.6in}
	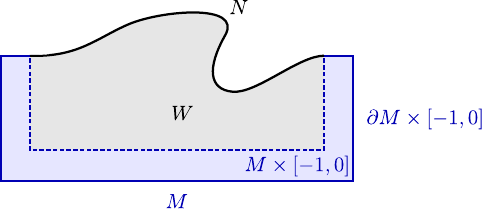
	\vspace{-.5em}
	\caption{An $h$-cobordism on a manifold with corners $M$.}\label{fig:h-cobordism}
\end{figure}

For definiteness, we fix a sufficiently large set $\mc U$ containing $M$ and assume that the underlying set of each cobordism $W$ is a subset of $\mc U$.

\orange{
We recall the notion of an NDR-pair and a DR-pair from e.g. \cite[\S 6.4]{concise}. We always assume that $A \subseteq X$ is a closed $G$-invariant subspace, and say that $(X,A)$ is an equivariant NDR-pair if the inclusion has the equivariant homotopy extension property, equivalently if $X \times I$ admits an equivariant continuous retraction to $(A \times I) \cup (X \times \{0\})$. It is a DR-pair if in addition $A \to X$ is an equivariant homotopy equivalence. For instance, if $A$ has a mapping cylinder neighborhood in $X$ then $(X,A)$ is an NDR-pair.

\begin{lem}\label{partial_boundary_is_ndr}
	If $M$ is any $G$-manifold with corners and $A \subseteq \partial M$ is any closed partial boundary, then $(M,A)$ is an equivariant NDR-pair.
\end{lem}

\begin{proof}
	By \autoref{flow_from_partial_boundary} we may form a neighborhood of $A$ in $M$ that is equivariantly homeomorphic to $A \times I$, in other words a mapping cylinder neighborhood.
\end{proof}

\begin{lem}\label{pushout_product}
	If $(X,A)$ and $(Y,B)$ are NDR-pairs then the pushout-product $(X \times Y,(X \times B) \cup (A \times Y))$ is an NDR-pair. Furthermore, if one of the two is a DR-pair then the pushout-product is a DR-pair. 
\end{lem}

\begin{proof}
	This is standard in the non-equivariant case. In the equivariant case, the spaces have $G$-action, and by inspection the formula giving the required retraction from \cite[\S 6.4]{concise} respects that $G$-action.
\end{proof}

\begin{lem}\label{l_to_w_is_dr}
	If $W$ is an equivariant $h$-cobordism then $(W,L)$ is an equivariant DR-pair.
\end{lem}

\begin{proof}
	By \autoref{partial_boundary_is_ndr} it is an equivariant NDR-pair. But $M \to L$ is an equivariant homotopy equivalence since we may deformation retract the sides $\partial M \times [-1,0]$ back onto $\partial M \times \{-1\}$. Since $M \to W$ is assumed to be an equivariant homotopy equivalence, we conclude that $L \to W$ is an equivariant homotopy equivalence.
\end{proof}
}

The \ourdefn{double} $\bar W$ of an $h$-cobordism $W$ is two copies of $W$ glued along their common top face $N$. We think of one as ``flipped over'' and use the interval $[0,1]$ in the place of $[-1,0]$, with $M$ located at 1 and $N$ located at 0. 

\begin{defn}\label{defn_encasing_function} We define some kinds of structures on $h$-cobordisms.
\begin{enumerate}
	\item A \ourdefn{\mirror} $h$-cobordism is an $h$-cobordism $W$ together with a $(G \times C_2)$-equivariant smooth structure on its double $\bar W$ that restricts to the given smooth structure of $W$, \orange{and that on the lower collar agrees with the standard smooth structure on $M \times [-1,1]$}.
	\item Given a {\mirror} $h$-cobordism, an \ourdefn{\rh} is a smooth $G$-equivariant map
	\[ \rho = (r,h)\colon W \to M \times [-1,0], \]
	having the following properties.
	\begin{itemize}
		\item The composition $\rho\circ c$ coincides with the identity map of $M\times [-1,0]$ in a neighborhood of the bottom and sides. In particular, $r:W\to M$ is a retraction to the bottom.
		\item The function $h$ satisfies \orange{$h^{-1}(-1) = M$} and $h^{-1}(0) = N$, and the derivative $Dh$ has rank 1 along $N$; we call $h$ the \emph{height function}.
		\item \orange{The retraction $r\colon W \to M$ sends $W$ minus the sides to the interior of $M$. }
		\item The double of $\rho$, in other words the map $\bar W \to M \times [-1,1]$ that extends $\rho$ and commutes with reflection, is smooth. (We may again denote this extended map by $\rho$, or by $(r,h)$.) Note that the double of $\rho$ maps a neighborhood of the boundary of $\bar W$ to a neighborhood of the boundary of $M\times [-1,1]$ by a diffeomorphism. 
	\end{itemize}
\end{enumerate}
An \ourdefn{encased} $h$-cobordism is a mirror $h$-cobordism equipped with \arhperiod This additional structure will be useful when we introduce stabilizations of $h$-cobordisms.

\end{defn}

\begin{figure}[h]
	\centering
	\def\svgwidth{5.5in}
	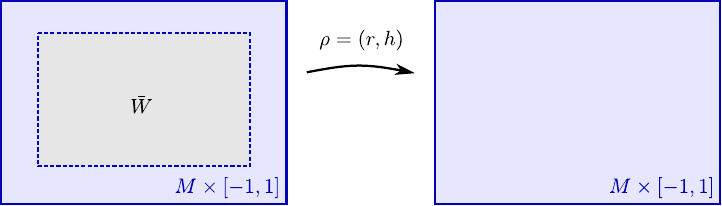
	\vspace{-.5em}
	\caption{An encased $h$-cobordism.}\label{fig:encased}
\end{figure}

Note that when making an encased $h$-cobordism it is redundant to specify the lower collar $c$, since it must coincide with the inverse of \therh  $\rho$ near the boundary.

For a fixed {\mirror} $h$-cobordism $W$, we can define a space of \rhs by defining a $k$-simplex to be a smooth map $\Delta^k \times \bar W \to M \times [-1,1]$ that for each point of $\Delta^k$ defines an encasing function, and such that there is a single open set $U \subseteq W$ containing the bottom and sides on which the \rhs agree with $c^{-1}$, throughout the entire simplex $\Delta^k$.

\orange{
\begin{prop}\label{rh_contractible}
	The space of equivariant \rhs on an equivariant {\mirror} $h$-cobordism is a contractible Kan complex.
\end{prop}

\begin{proof}
	The space of \rhs is a product of two spaces, a space of retractions and a space of height functions, so we consider the two factors separately. Without loss of generality $W$ is connected.
	
	A $k$-simplex of the space of height functions is a $(G\times C_2)$-invariant map $h:\Delta^k\times \bar W\to \R$ satisfying the following conditions. In a neighborhood of each point of $\Delta^k\times \partial\bar W$, $h$ is given by $c^{-1}$. For $u\in \Delta^k$ and $w$ in the interior of the lower copy of $W$, $h(u,w)\in (-1,0)$, and therefore for $u\in \Delta^k$ and $w$ in the interior of the upper copy of $W$, $h(u,w)\in (0,1)$. At each point $(u,w)\in \Delta^k\times N$, $h(u,w)=0$ and $Dh$ has rank $1$.

	We first show that the space of height functions is nonempty. Let $\bar U \subseteq \bar W$ be a (doubled) open neighborhood of the boundary, on which the height function $h$ must already be already defined by the lower collar. By \autoref{trimmings_exist}, there exists a trimming $N_0 \subseteq N$ whose complement is contained in $\bar U$. By the tubular neighborhood theorem (\autoref{tubular_nbhd_thm}), the inclusion $N_0 \to W$ extends to a $G \times C_2$-equivariant tubular neighborhood, in other words an equivariant bicollar $N_0 \times [-1,1] \cong \bar V \subseteq \bar W$. The collar coordinate then gives a definition of $h$ on $\bar V$, and we can patch this together with the existing $h$ on $\bar U$ by a smooth partition of unity subordinate to $\{\bar U,\textup{int }\bar V\}$.
	
	Shrinking these domains slightly to a $G \times C_2$-invariant closed neighborhood $\bar C$ of $\partial \bar W \cup N$, we now have a $G$-equivariant height function $h\colon C \to [-1,0]$, such that $h^{-1}(0) = N$, in other words $h(C \setminus N) \subseteq [-1,0)$. By the Tietze extension theorem, this may be extended to a continuous function $W \setminus N \to [-1,0)$ while preserving its original value on $C \setminus N$. The extension can be made equivariant by averaging, and then we may use smooth approximation (\autoref{smooth_approximation}) to change $h$ rel a smaller closed neighborhood $C'$ of $\partial W \cup N$, to make it equivariant and smooth. This finishes the construction of one height function with the desired properties.
	
	To show that the height functions form a contractible Kan complex, we must extend any $\partial \Delta^k$ of height functions to $\Delta^k$ in a smooth way. The basic idea is convexity, but we have to be careful because extending a $\partial \Delta^k$-family of height functions to $\Delta^k$ by coning off to a single height function does not in general produce a smooth map, even away from the cone point.
	
	Assume we are given a $\partial \Delta^k$-family of height functions $h_0\colon \partial \Delta^k \times \bar W \to [-1,1]$. By definition, there is a uniform choice of neighborhood $U_0$ of the bottom and sides along which $h_0$ agrees with the lower collar of $W$. (There is one for each face in $\partial \Delta^k$, and we intersect them.) We use smooth extension, specifically the version in \autoref{smooth_extension_with_neat}, to extend the given height function
	\[ h_0\colon (\partial \Delta^k \times \bar W) \cup (\Delta^k \times N) \to [-1,1] \]
	to a smooth function $h_1\colon T \times \bar W \to \R$, where $T$ is an open neighborhood of $\partial \Delta^k$ inside $\Delta^k$. This extension $h_1$ doesn't have the correct behavior in $T \times \bar U_0$, so we let $U_1 \subseteq U_0$ be a smaller open neighborhood whose closure is contained in $U_0$, Use a partition of unity subordinate to $\{\bar U_0,\bar U_1^c\}$ to interpolate between the canonical height function on $\bar U_0$ and the function $h_1$ on $\bar U_1^c$, as functions to $\R$. This gives a new smooth function $h_2\colon T \times \bar W \to \R$. Shrinking $T$ if necessary, we can guarantee that the derivative $Dh_2$ has full rank along $T \times N$, because it had full rank along $\partial \Delta^k \times N$.
	
	
	This height function $h_2$ does not need to stay in $[-1,1]$, but on the open set $h_2^{-1}((-1,1)) \cup (T \times \bar U)$, $h_2$ either stays in $(-1,1)$ or attains the endpoints $[-1,1]$ where it is supposed to, inside $\bar U$. This open set in $T \times \bar W$ contains $\partial \Delta^k \times \bar W$ by the assumptions we made in \autoref{defn_encasing_function}, so by the tube lemma it contains $T \times \bar W$ for some possibly smaller value of $T$. So now $h_2\colon T \times \bar W \to [-1,1]$. Similarly, we can take an open set about $T \times N$ on which $h^{-1}(\{0\}) = T \times N$, and take the union with the open set $h_2^{-1}([-1,0) \cup (0,1])$, giving an open set in $T \times \bar W$ on which $h^{-1}(\{0\}) = T \times N$, that contains $\partial \Delta^k \times \bar W$. So by shrinking $T$ again we can guarantee that $h_2^{-1}(\{0\}) = T \times N$.
	
	Now we use a partition of unity of $\Delta^k$ subordinate to $\{T,\textup{int }\Delta^k\}$ to interpolate between this $T$-family of height functions $h_2$ and a constant family on the height function we constructed earlier in the proof. This gives a smooth map $h_3\colon \Delta^k \times \bar W \to [-1,1]$. The interpolated maps are still full-rank along $N$ since the set of maps $\bar W \to [-1,1]$ whose first derivative along $N$ is ``positive'' (sending the bottom half $W$ to $[-1,0]$) is convex, as is the set of maps that sends $W \setminus N$ to $[-1,0)$. Therefore $h_3$ is a $\Delta^k$-family of height functions extending $h_0$, as desired.
	
	Now we turn to retractions; it will be easier here to prove the result all at once. Suppose we have a $\partial \Delta^k$-family of retractions $r_0\colon \partial \Delta^k \times \bar W \to M$. There is some uniform choice of neighborhood $\bar U$ of the boundary $\partial \bar W$ on which these retractions all agree with the one given by the lower collar. By the tube lemma, $\bar U$ contains a product $S \times [-1,1]$ where $S \subseteq M$ is an open neighborhood of $\partial M$. Let $M_0 \subseteq M$ be a trimming with the closure of $M \setminus M_0$ contained in $S$. Let $\bar W_0 \subseteq \bar W$ be the corresponding submanifold in which we have removed the open set $(M \setminus M_0) \times [-1,1]$. Then $r_0$ sends $\partial \Delta^k \times \bar W_0$ into the interior of $M$, since by assumption the retractions only send the sides of $\bar W$ to $\partial M$.
	
	By \autoref{smooth_extension}, 
	$r_0$ extends to a smooth map $r_1\colon T \times \bar W_0 \to \textup{int }M$, for some open set $T \times \bar W_0$ containing $\partial \Delta^k \times \bar W_0$. As for the height functions, we then interpolate between $r_1$ on $T \times \bar W_0$ and the canonical projection on $\Delta^k \times \bar U$. To describe the open sets we use, use the fact that $W$ is normal to separate the closed set $U^c$ from the closure of $(M \setminus M_0) \times [-1,0]$ by two open sets; this gives an open set $U_1$ and a closed set $C_1$ such that
	\[ (M \setminus M_0) \times [-1,0] \subseteq U_1 \subseteq C_1 \subseteq U. \]
	Then we interpolate between $r_1$ and the canonical projection on $\Delta^k \times \bar U$ using a partition of unity subordinate to the doubled open sets $\{\bar C_1^c,\bar U\}$.
	
	$M$ is not a vector space, so to make sense of this interpolation we need to embed $M$ into a representation $V$, and get an open neighborhood $\Omega$ that smoothly retracts to $\textup{int }M$, as in the proof of \autoref{smooth_simplices} and \autoref{mirror_pseudo_equivalence}. We observe that the compact set $\partial \Delta^k \times \bar U_1^c$ is sent by the projection to $\textup{int }M \subseteq  \Omega$, and therefore after shrinking $T$ we get that $T \times \bar U_1^c$ is sent by the interpolated function into $\Omega$. Applying the projection $p\colon \Omega \to \textup{int M}$, we arrive at a smooth function $r_2\colon (T \times \bar W) \cup (\Delta^k \times \bar U) \to M$. It is a retract because it agrees with the canonical projection inside $\Delta^k \times C_1$.
	
	Now that we have smoothly extended to an open neighborhood of the boundary of the entire product manifold $\Delta^k \times \bar W$, we extend to the interior. Let $\bar W_1 \subseteq \Delta^k \times \bar W$ be an equivariant trimming that is supported in the neighborhood of the boundary on which $r_2$ is defined. Passing to the lower half of the double, this gives a submanifold with depth-two corners $W_1 \subseteq \Delta^k \times W$. Let $K \subseteq \Delta^k \times W$ be the closure of the complement of $W_1$. Recall that $L \subseteq \partial W$ refers to the bottom and sides of $W$.
	
	Since $K$ arises from a trimming, it admits an equivariant deformation retraction to a partial boundary of $\Delta^k \times W$, namely the union $(\Delta^k \times L) \cup (\partial \Delta^k \times W)$, in other words the entire boundary minus the interior of the face $\Delta^k \times N$. The inclusion of this partial boundary into $\Delta^k \times W$ is the pushout-product of the two maps $\partial \Delta^k \to \Delta^k$ and $L \to W$. The first is an equivariant NDR-pair by \autoref{partial_boundary_is_ndr}, and the second is an equivariant DR-pair by \autoref{l_to_w_is_dr}, so their pushout-product is an equivariant DR-pair by \autoref{pushout_product}. It follows that the inclusion $K \to W$ is an equivariant homotopy equivalence as well.
	
	Next, consider the face of $\partial W_1$ that lies inside $W$, and take an equivariant neat collar of this face in $W_1$ using \autoref{collar_isotopy_extension}. This collar 
	 gives us an equivariant mapping cylinder neighborhood of $K$ in $W$, allowing us to see that $(W,K)$ is an equivariant NDR-pair. Since it is also an equivariant homotopy equivalence, it is an equivariant DR-pair. We conclude that there is an equivariant continuous deformation retract of $W$ onto $K$.
	
	We may now define a continuous function $r_3\colon \Delta^k \times W \to M$, by first applying this retraction to land in $\Delta^k \times K$, then applying $r_2$. This new retraction $r_3$ is smooth on $\Delta^k \times (W \setminus W_1)$ and continuous everywhere, and sends everything other than the sides of $W$ to $\textup{int }M$. We double $r_3$ to get a map $\Delta^k \times \bar W \to M$, then apply equivariant smooth approximation (\autoref{smooth_approximation}). We get a function $r_4\colon \Delta^k \times \bar W \to M$ which agrees with $r_3$ near the boundary, and is therefore the canonical projection there, and is an equivariant smooth map to the interior of $M$ everywhere else.

%
%
\end{proof}
}

\subsection{The space of $h$-cobordisms}
\begin{defn}\label{encased_diffeo}
A \ourdefn{diffeomorphism of $h$-cobordisms} $W \cong W'$ over $M$ is an (equivariant) diffeomorphism of manifolds with corners whose composition with the germ of the lower collar of $W$ is the germ of the lower collar of $W'$.
A \ourdefn{{\mirror}  diffeomorphism} between {\mirror} $h$-cobordims is one whose extension to the doubles is also a diffeomorphism. An \ourdefn{encased diffeomorphism} is a mirror diffeomorphism that also commutes with the \rhsperiod
\end{defn}

Recall from \autoref{smooth_diffeo_space} that $\mc D_{\sbt}(W,N)$ is the space (simplicial set) of equivariant diffeomorphisms that coincide with the identity in a neighborhood of \orange{the bottom and sides of $W$. We take this as the definition of the space of diffeomorphisms of the $h$-cobordism $W$.}

For a fixed manifold $M$, the desired homotopy type for the space of $h$-cobordisms $\mc H(M)$ is the disjoint union, over all diffeomorphism classes of $W$, of the classifying spaces $B\mc D_{\sbt}(W,N)$. We could take this as the definition of the $h$-cobordism space, but the following definition using families is more canonical and more convenient.

\begin{defn}\label{hcob_space} A \ourdefn{$\Delta^k$-family of equivariant $h$-cobordisms} is a smooth fiber bundle $p\colon E \to \Delta^k$ with $G$-action, whose fibers are equivariant $h$-cobordisms over $M$. Specifically it has 
a face $\Delta^k \times M \subseteq E$ and a lower collar $c\colon \Delta^k \times M \times [-1,0] \to E$, 
satisfying the same conditions on $c$ as in \autoref{hcob_def}.
Again, it is only the germ of the collar along the bottom and sides that is considered to be part of the structure. \orange{However, for a given $k$-simplex of $h$-cobordisms, there must be a single open set of the form $\Delta^k \times U \subseteq \Delta^k \times M \times [-1,0]$ on which the lower collar $c$ is defined.}

For definiteness we assume that each family has as its underlying set a subset of $\Delta^k \times \mc U$. 

Families of mirror $h$-cobordisms are defined similarly, as are families of encased $h$-cobordisms. (In the latter case the retraction and height function that constitute the encasement are required to be smooth across the entire family.) We let $\bar E$ refer to the (fiberwise) double of the family $E$. \end{defn}

By \autoref{ehresmann_with_corners}, every $\Delta^k$-family of $h$-cobordisms can be trivialized, along with the lower collars. The same holds for mirror $h$-cobordisms, thinking of the double of the family $\bar E$ as a $(G \times C_2)$-equivariant fiber bundle over $\Delta^k$ containing a trivial bundle (the germ of the bottom, sides, and their reflections).

\begin{rem}
	For a family of encased $h$-cobordisms, it is not true that \therh can be trivialized as well. That is, we do not assume that the family of cobordisms is isomorphic to one of the form $\Delta^k \times W$ in such a way that the \rh on $W$ is the same for all points in $\Delta^k$. If we did assume this, it would make \autoref{encased_to_mirror} below false.
\end{rem}

\begin{defn}\label{hcob_space}
Let $M$ be a smooth compact $G$-manifold with corners. The \ourdefn{space of equivariant $h$-cobordisms over $M$}  is the simplicial set $\mc H_{\sbt}(M)$ whose $k$ simplices are families of equivariant $h$-cobordisms over $\Delta^k$. Similarly, we define the space of mirror $h$-cobordisms $\mc H_{\sbt}^m(M)$, and the space of encased mirror $h$-cobordisms $\mc H_{\sbt}^c(M)$. The face and degeneracy maps are clear. 

\end{defn}

\begin{lem}\label{h_cobordism_space_kan_complex}
	$\mc H_{\sbt}(M)$ is a Kan complex.
\end{lem}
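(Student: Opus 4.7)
Unpacking \autoref{hcob_space}, the Kan condition asks that every family $E \to \Lambda^n_k$ of equivariant $h$-cobordisms on $M$ (with its bottom/top/sides and lower collar germ) extends to a family over $\Delta^n$. My plan is to trivialize the given family over the entire horn and then extend it by the obvious product family $\Delta^n \times W \to \Delta^n$.

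The horn $\Lambda^n_k$ has $n$ codimension-one faces $D_i$ (for $i \neq k$), each an $(n-1)$-simplex whose $n$ codimension-one faces are the intersections $D_i \cap D_j$ for $j \neq i$. I would first pick one face $D_{i_1}$ of the horn and apply the Ehresmann theorem for manifolds with corners (\autoref{ehresmann_with_corners}) to fix a trivialization $E|_{D_{i_1}} \cong D_{i_1} \times W$ as a family of $h$-cobordisms, for some fixed $h$-cobordism $W$ on $M$. Then I would enumerate the remaining faces as $D_{i_2}, \ldots, D_{i_n}$ and inductively extend the trivialization across each one in turn: when extending to $D_{i_m}$, the trivialization is already defined on the intersection $D_{i_m} \cap (D_{i_1} \cup \cdots \cup D_{i_{m-1}})$, which is the union of $m-1 \leq n-1$ of the $n$ codimension-one faces of $D_{i_m}$, hence a proper union. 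This is exactly the situation to which \autoref{extend_trivializations} applies (to the submersion $E|_{D_{i_m}} \to D_{i_m}$), and it extends the trivialization across $D_{i_m}$. After the induction we obtain $E \cong \Lambda^n_k \times W$ compatibly with the $h$-cobordism structure, and then $\Delta^n \times W \to \Delta^n$ is a family over $\Delta^n$ restricting to the given horn.

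The main obstacle, which I would address carefully, is that \autoref{extend_trivializations} is stated as trivializing a pair $(E, F)$ consisting of a family plus one distinguished face, while an $h$-cobordism carries additional structure (the separate identifications of the bottom, sides, and top, together with the germ of the lower collar $c$). The remedy is that \autoref{extend_trivializations} is deduced from the fact that the simplicial group $\mc D_\sbt(W,N)$ is a Kan complex (\autoref{smooth_simplices}), and the same smooth-approximation and partition-of-unity argument shows that the simplicial group of equivariant diffeomorphisms of $W$ preserving \emph{all} of the $h$-cobordism structure (including the collar germ near the bottom and sides) is likewise a Kan complex. The collar germ causes no difficulty because the approximation procedure in \autoref{smooth_simplices} can be arranged rel any closed neighborhood where the diffeomorphism is already given by $c$, so \autoref{extend_trivializations} upgrades to trivializations of $h$-cobordism families. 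With this upgrade, the inductive trivialization above goes through and produces the desired extension to $\Delta^n$.
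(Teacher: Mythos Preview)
Your approach is the same as the paper's: inductively trivialize the horn using \autoref{extend_trivializations}, then fill with a product. Two remarks.

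First, your ``main obstacle'' paragraph is unnecessary. The paper has already observed (just before \autoref{hcob_space}) that the automorphism group of an $h$-cobordism $W$ is \emph{exactly} $\mc D_\sbt(W,N)$: a diffeomorphism that is the identity near the bottom and sides automatically preserves the lower-collar germ, and the bottom face $M$ is part of that neighborhood. So \autoref{extend_trivializations} already applies to $h$-cobordism families as stated, with no upgrade required.

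Second, there is a small but genuine gap at the end. The simplicial set $\mc H_\sbt(M)$ is defined so that a $k$-simplex is a specific subset of $\Delta^k\times\mc U$, and face maps are literal restrictions. The Kan condition therefore demands that your filler restrict \emph{on the nose} to the given horn, not merely up to the isomorphism $E\cong \Lambda^n_k\times W$ you constructed. The product $\Delta^n\times W$ restricts to $\Lambda^n_k\times W$, which is isomorphic to $E$ but not equal to it as a subset of $\Lambda^n_k\times\mc U$. The paper fixes this by transporting the smooth structure of $\Delta^n\times W$ along a bijection of underlying sets that replaces $\Lambda^n_k\times W$ by the original $E$; you should include this relabeling step.
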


\begin{proof}
	Given a horn $\Lambda^k_i \to \mc H_{\sbt}(M)$, each face $\Delta^{k-1} \to \mc H_{\sbt}(M)$ is a $\Delta^{k-1}$-family of $h$-cobordisms, which is isomorphic to a trivial family $W \times \Delta^{k-1}$. We identify the entire family with $W \times \Lambda^k_i$ by an induction on the faces, using \autoref{extend_trivializations}.
	
	Once the entire family has been identified with $W \times \Lambda^k_i$, we extend it to $W \times \Delta^k$. Then we take the underlying set of $W \times \Delta^k$ and apply a bijection to the subset $W \times \Lambda^k_i$ so that we get the underlying set of the original family (before trivialization). This produces a map $\Delta^k \to \mc H_{\sbt}(M)$ that along $\Lambda^k_i$ strictly agrees with our original map.
\end{proof}

\begin{lem}
	$\mc H_{\sbt}(M)$ is equivalent to the disjoint union, over all diffeomorphism classes of $h$-cobordisms $W$ over $M$, of the classifying spaces $B\mc D_{\sbt}(W,N)$.
\end{lem}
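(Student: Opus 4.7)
The plan is to decompose $\mc H_{\sbt}(M)$ by connected components and realize each component as the classifying space of its diffeomorphism simplicial group by means of a principal bundle with contractible total space.

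First I would establish the disjoint-union decomposition. Any $1$-simplex of $\mc H_{\sbt}(M)$ is a $\Delta^1$-family which, by \autoref{ehresmann_with_corners}, is equivariantly trivializable over $\Delta^1$ compatibly with the bottom face and collar germ; hence its two endpoint fibers are diffeomorphic as $h$-cobordisms over $M$. Consequently $\pi_0\mc H_{\sbt}(M)$ is indexed by diffeomorphism classes of $h$-cobordisms over $M$. Letting $\mc H_{\sbt}^W(M) \subseteq \mc H_{\sbt}(M)$ denote the full sub-simplicial set of families whose fibers are $h$-cobordism-diffeomorphic to $W$, the problem reduces to showing $\mc H_{\sbt}^W(M) \simeq B\mc D_{\sbt}(W,N)$ for each diffeomorphism class $[W]$.

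Next I would construct a principal $\mc D_{\sbt}(W,N)$-bundle $\widetilde{\mc H}_{\sbt}^W(M) \to \mc H_{\sbt}^W(M)$. Let $\widetilde{\mc H}_{\sbt}^W(M)$ have $k$-simplices the pairs $(E, \phi)$, where $E$ is a $k$-family in $\mc H_{\sbt}^W(M)$ and $\phi\colon \Delta^k \times W \xrightarrow{\cong} E$ is an isomorphism of families of $h$-cobordisms matching bottom face and collar germ. The simplicial group $\mc D_{\sbt}(W,N)$ acts freely from the right by $(E, \phi)\cdot\psi = (E, \phi\circ\psi)$, with orbit simplicial set $\mc H_{\sbt}^W(M)$. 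That the forgetful map is a Kan fibration reduces, for each horn-filling problem, to extending a given trivialization of a family $E \to \Delta^k$ from a horn to all of $\Delta^k$, which is supplied by \autoref{extend_trivializations}. Assuming that $\widetilde{\mc H}_{\sbt}^W(M)$ is contractible, standard simplicial bundle theory then identifies the orbit space with $B\mc D_{\sbt}(W,N)$.

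The main obstacle I expect is the contractibility of $\widetilde{\mc H}_{\sbt}^W(M)$. Morally, this is automatic: the trivialization $\phi$ carried by every simplex exhibits it as canonically isomorphic to the standard trivial pair $(\Delta^k \times W, \mathrm{id})$, so one expects a simplicial deformation retraction onto the constant sub-simplicial set of such standard pairs. The subtlety is the paper's convention that each family has a prescribed underlying set in $\Delta^k \times \mc U$, so the contracting homotopy must realize its interpolating $(k+1)$-simplices as actual families with underlying sets in $\Delta^{k+1} \times \mc U$, not merely as isomorphism classes. I would address this by building, for each $(E, \phi)$, a family over the prism $\Delta^k \times \Delta^1$ that uses $\phi$ to reconcile underlying sets along the $\Delta^1$-direction, triangulating the prism into $(k+1)$-simplices in the standard shuffle decomposition, and appealing to \autoref{extend_trivializations} once more to verify that the resulting face and degeneracy identifications fit together into a coherent simplicial homotopy.
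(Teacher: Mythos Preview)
Your proposal is correct and follows essentially the same approach as the paper: the paper's $\mc E_{\sbt}(M)_{[W]}$ is exactly your $\widetilde{\mc H}_{\sbt}^W(M)$, and the argument is free $\mc D_{\sbt}(W,N)$-action on a contractible total space with quotient $\mc H_{\sbt}(M)_{[W]}$. The one place you overcomplicate matters is the contractibility step you flag as the main obstacle: rather than a prism triangulation, the paper simply produces an extra degeneracy by extending each trivialized $k$-simplex $(E,\phi)$ to the $(k{+}1)$-simplex $W\times\Delta^{k+1}$ with the identity trivialization, relabeling the underlying set on the $d_0$-face via $\phi$ so that it literally equals $E$---this is the same bijection trick used in \autoref{h_cobordism_space_kan_complex} and resolves your underlying-set concern directly.
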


\begin{proof}
	These spaces clearly have the same components, so we restrict to a single component $\mc H_{\sbt}(M)_{[W]}$. Let $\mc E_{\sbt}(M)_{[W]}$ denote the same simplicial set except that each family $E \to \Delta^k$ is equipped with a choice of trivialization $E \cong W \times \Delta^k$, and the face and degeneracy maps respect this trivialization.
	
	Since every family in $\mc E_{\sbt}(M)_{[W]}$ has a chosen trivialization, the entire simplicial set deformation retracts onto a single fixed 0-simplex. Specifically, we extend each $\Delta^k$ family $E \cong W \times \Delta^k$ to the $\Delta^{k+1}$ family given by $W \times \Delta^{k+1}$, but with the $\Delta^k$-face changed by a bijection on the underlying set so that it agrees with $E$.
	
	Furthermore $\mc E_{\sbt}(M)_{[W]}$ has a free action by the simplicial group $\mc D_{\sbt}(W,N)$, changing the trivializations. The quotient by this action is $\mc H_{\sbt}(M)_{[W]}$ (because by \autoref{ehresmann_with_corners} every family can be trivialized). It follows  that $\mc H_{\sbt}(M)_{[W]}$ is a classifying space for $\mc D_{\sbt}(W,N)$.
\end{proof}

\begin{prop}\label{space_of_mirrorhcob}
The forgetful map from mirror to ordinary $h$-cobordisms
\[ \mc H_{\sbt}^m(M) \to \mc H_{\sbt}(M) \]
is a weak equivalence of Kan complexes.
\end{prop}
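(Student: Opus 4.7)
The plan is to factor the forgetful map through a space of collared $h$-cobordisms and show each factor is a weak equivalence. As a preliminary, I would verify that $\mc H^m_\sbt(M)$ is a Kan complex by repeating the proof of \autoref{h_cobordism_space_kan_complex}, using a $(G \times C_2)$-equivariant version of \autoref{ehresmann_with_corners} applied to the double family $\bar E \to \Delta^k$; the same vector field and partition-of-unity argument goes through.

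Define the auxiliary simplicial set $\mc H^{col}_\sbt(M)$ whose $k$-simplices are families in $\mc H_\sbt(M)$ equipped with a germ of a collar of the top face $N$, and similarly $\mc H^{m,col}_\sbt(M)$ whose $k$-simplices are families in $\mc H^m_\sbt(M)$ equipped with a $(G \times C_2)$-equivariant bicollar germ of $N$ in $\bar E$, where $C_2$ acts on the collar coordinate by negation. These assemble into the commuting square
\[
\xymatrix{
\mc H^{m,col}_\sbt(M) \ar[r] \ar[d] & \mc H^{col}_\sbt(M) \ar[d] \\
\mc H^m_\sbt(M) \ar[r] & \mc H_\sbt(M).
}
\]
The top horizontal map is an isomorphism: by $C_2$-equivariance a bicollar germ is determined by its restriction to one side, namely an ordinary collar germ of $N$ in the top copy of $E$, and conversely such a collar germ determines a mirror structure on $\bar E$ by pulling back the product smooth structure on $N \times (-\epsilon, \epsilon)$ along the reflected bicollar and combining with the given smooth structures away from $N$.

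The two vertical forgetful maps are trivial Kan fibrations by an analog of \autoref{collar_isotopy_extension}: any $\partial \Delta^k$-family of collar germs of the top face extends to a $\Delta^k$-family, using the partition-of-unity interpolation argument of \autoref{partial_collar} together with the contractibility statement of \autoref{collars_unique}. Combining, the bottom horizontal map is a weak equivalence, as desired.

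The main obstacle is verifying that the vertical maps above are weak equivalences, equivalently that any $\partial \Delta^k$-family of collar germs of $N$ in a family $E$ admits a $\Delta^k$-extension. The technical point is to pick a reference collar germ over the interior of $\Delta^k$ and interpolate with the given boundary family using a smooth $G$-invariant partition of unity, while ensuring the result remains a smooth collar germ rather than merely a smooth embedding near the top face. This follows by the strategy of \autoref{partial_collar}, applied to the family $E$ and its top face $N \times \Delta^k$; once this extension property is in place, together with the encoding equivalence between mirror structures and equivariant bicollars, the proposition follows.
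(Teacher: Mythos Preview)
Your approach is essentially the same as the paper's. The paper introduces a single auxiliary space $\mc H^v_\sbt(M)$ of $h$-cobordisms equipped with a germ of an inward-pointing $G$-invariant vector field along $N$, forming a triangle rather than your square; since such a vector field germ is equivalent to a collar germ, your $\mc H^{col}_\sbt(M)$ plays the same role, and your observation that $\mc H^{m,col}_\sbt(M) \to \mc H^{col}_\sbt(M)$ is an isomorphism is exactly what collapses your square to their triangle.

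The one substantive difference is in how the extension (contractibility of fibers) is argued. The paper works with vector fields precisely because the relevant space of inward-pointing fields is \emph{convex}, so extending a $\partial\Delta^k$-family over $\Delta^k$ is immediate: smooth-extend to a neighborhood, cone off linearly, smooth-approximate. For the mirror case, the paper restricts to vector fields that extend $C_2$-antiequivariantly to $\bar W$, a condition still cut out by linear equations and hence still convex. Your collar-germ formulation is equivalent in principle, but the space of collar germs is not convex, so your appeal to \autoref{partial_collar} and \autoref{collars_unique} requires an extra translation step (essentially passing back to vector fields or to an interpolation of their derivatives). This is not a gap, just a place where the paper's choice of auxiliary data makes the argument shorter.
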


\begin{proof}
\orange{
The proof is conceptually similar to that of \autoref{mirror_pseudo_equivalence}.
}
The \orange{simplicial set $\mc H_{\sbt}^m(M)$} is a Kan complex by the same proof as in \autoref{h_cobordism_space_kan_complex} \orange{-- the analog of \autoref{extend_trivializations} holds here because the simplicial set of mirror diffeomorphisms is also a simplicial group, therefore a Kan complex.} We have a commuting diagram
\begin{equation}\label{compare_to_v}
	\xymatrix{
	&\mc H_G^{v}(M) \ar[rd] \ar[ld] &
	\\
	\mc H_G^m(M) \ar[rr] && \mc H_G(M)
	}
\end{equation}
where $\mc H_G^{v}(M)$ is the space of $h$-cobordisms equipped with the extra data of a \orange{$G$-invariant vector field that is inward-pointing along the top face $N$, and that on some neighborhood of the bottom and sides $L \subseteq W$ agrees with the constant vector field on $M \times [-1,0]$ taking the value $(0,-1) \in TM \times \R$ at every point. Such a vector field gives} the germ of a $G$-equivariant collar on $N$ \orange{that agrees with the canonical collar on $M \times \{0\} \subseteq M \times [-1,0]$ near the sides.} Such a collar gives a mirror structure in a canonical way, by doubling it to a bicollar on $\bar W$ and using it to define the smooth structure at $N$.

It suffices to show that each diagonal map of \eqref{compare_to_v} is a weak equivalence. This follows if we can produce lifts
	\begin{equation}\label{mirrorhcob_eq}
	\xymatrix{
		\partial \Delta^k \ar[d] \ar[r] & \mc H_{\sbt}^{v}(M) \ar[d] \\
		\Delta^k \ar[r] \ar@{-->}[ur] & \mc H_{\sbt}(M)
		}
	\qquad \qquad
	\xymatrix{
		\partial \Delta^k \ar[d] \ar[r] & \mc H_{\sbt}^{v}(M) \ar[d] \\
		\Delta^k \ar[r] \ar@{-->}[ur] & \mc H_{\sbt}^{m}(M).
		}
	\end{equation}
	

	For the first diagram, we first trivialize the family of cobordisms $E \cong \Delta^k \times W$, so that we can regard the vector fields as living on a fixed $h$-cobordism $W$. \orange{Then we want to construct a vector field $\xi$ on $\Delta^k \times W$ that is inward-pointing along $\Delta^k \times N$, has zero $\Delta^k$-coordinate everywhere, and is the standard downward-pointing vector field near $\Delta^k \times L$. We are given $\xi$ on $\partial \Delta^k \times W$, and it agrees with the standard vector field on $\partial \Delta^k \times U$ for some neighborhood $U$ of the bottom and sides $L \subseteq W$. We therefore know $\xi$ on the partial boundary
	\[ (\partial \Delta^k \times W) \cup (\Delta^k \times L). \]
	By \autoref{extend_inward_pointing_from_partial_boundary}, this extends to a vector field on all of $\Delta^k \times W$ that is inward-pointing along $\Delta^k \times N$.
	
	If we examine the proof of \autoref{extend_inward_pointing_from_partial_boundary}, this extension is constructed locally and then patched together with a partition of unity. Therefore we can insist on using the open set $\Delta^k \times U$ and the standard vector field on this open set, and ask for every other one of the open sets to be disjoint from some smaller neighborhood $\Delta^k \times U_1$ whose closure is contained in $\Delta^k \times U$. This guarantees that the extension is standard on $\Delta^k \times U_1$.
	
	Further examination shows that the condition of being zero in the $\Delta^k$ coordinate is also preserved under the extension algorithm from \autoref{smooth_on_boundary}, so in each of the charts intersecting $(\partial \Delta^k \times W) \cup (\Delta^k \times L)$, the vector field in that chart has this condition. The remaining charts are either in the interior or intersect only $\Delta^k \times N$, and in those cases it is easy to pick a vector field with this condition. After patching the vector fields together, we therefore get a vector field with the desired property on all of $\Delta^k \times W$. This produces the lift in the first diagram in \eqref{mirrorhcob_eq}.
	}
	
	
	For the second diagram in \eqref{mirrorhcob_eq} the proof is the same, except that we trivialize the family as a family of {\mirror} $h$-cobordisms, and we \orange{add the condition that the vector field must} respect the mirror structure on $W$. Specifically, we want the fields that are smooth when extended to $\bar W$ by applying the $C_2$ action and then negating the vectors. Note that if the mirror structure comes from a vector field then the vector field must have this property, so the given \orange{vector field on $\partial \Delta^k \times W$ has this property, as does the standard vector field on $\Delta^k \times U$, and it is easy to pick a vector field with this property in each of the remaining charts. Since the space of such fields is preserved by linear combinations, we can extend as before to $\Delta^k \times W$} and get the desired lift.
\end{proof}

\begin{prop}\label{encased_to_mirror}
The forgetful map from encased to mirror $h$-cobordisms
\[ \mc H_{\sbt}^c(M) \to \mc H_{\sbt}^m(M) \]
is a weak equivalence of Kan complexes.
\end{prop}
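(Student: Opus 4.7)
The plan is to show that the forgetful map is a trivial Kan fibration, which immediately implies it is a weak equivalence. By the trivialization argument used in the proof of \autoref{h_cobordism_space_kan_complex}, combined with the parametrized contractibility of the space of \rhs established in \autoref{rh_contractible}, $\mc H_{\sbt}^c(M)$ is itself a Kan complex. Therefore, it suffices to produce a diagonal filler in every square
\[
\xymatrix{
\partial \Delta^k \ar[d] \ar[r] & \mc H_{\sbt}^c(M) \ar[d] \\
\Delta^k \ar[r] \ar@{-->}[ur] & \mc H_{\sbt}^m(M).
}
\]

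To solve this lifting problem, first I would trivialize the $\Delta^k$-family of mirror $h$-cobordisms $E \to \Delta^k$ supplied by the bottom map. Using \autoref{ehresmann_with_corners} applied to the double $\bar E$ with its $(G \times C_2)$-action, we may assume $E = W \times \Delta^k$ and $\bar E = \bar W \times \Delta^k$ as families with lower collars and mirror structure. Under this identification, the given data over $\partial \Delta^k$ becomes a smooth equivariant map $\partial \Delta^k \times \bar W \to M \times [-1,1]$ that is slice-wise \arhperiod The task reduces to extending this across $\Delta^k$ so that each slice remains \arhperiod

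This is exactly the parametrized form of \autoref{rh_contractible}. The space of \rhs on $\bar W$ decomposes as a product of a retraction factor and a height-function factor, both of which are convex subspaces of smooth equivariant maps constrained to agree with $c^{-1}$ near the boundary. Following the recipe of \autoref{rh_contractible} in families: use \autoref{smooth_on_partial_delta} to extend the given data smoothly to a $G$-invariant open neighborhood $U$ of $\partial \Delta^k$ in $\Delta^k$; cone off continuously on the interior by straight-line interpolation with a base point supplied by \autoref{rh_contractible}; and then apply equivariant smooth approximation (\autoref{smooth_approximation}) rel the closure of a smaller neighborhood of $\partial \Delta^k$ to recover smoothness. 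Transporting the resulting family of \rhs back along the trivialization gives the required lift.

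The main obstacle is ensuring that the smooth approximation preserves the defining conditions of \arhperiod For the retraction, this is handled as in \autoref{rh_contractible} by embedding $M$ into an ambient open $G$-manifold and flowing briefly along an inward-pointing field so that the image lies in $\textup{int}\, M$ before the approximation step. For the height function, the condition $h^{-1}(0) = N$ together with $Dh$ having rank $1$ along $N$ is rigid, but it is already satisfied throughout the family in a collar neighborhood of $N$ coming from the mirror bicollar, and we leave the function unchanged there during both the interpolation and the approximation; outside that region the condition $h < 0$ is open and is preserved by sufficiently small $C^\infty$ perturbations. No further obstruction arises.
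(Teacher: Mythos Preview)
Your proof is correct and follows the same approach as the paper: trivialize the $\Delta^k$-family of mirror $h$-cobordisms and invoke \autoref{rh_contractible} to extend the encasing data from $\partial\Delta^k$ to $\Delta^k$. The paper's argument is terse (two sentences); you have unpacked the mechanism with more care, and your handling of the retraction factor matches \autoref{rh_contractible} exactly.

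One small wobble in your treatment of the height-function factor: you propose to ``leave the function unchanged'' near $N$ during interpolation and approximation, appealing to a ``mirror bicollar.'' But a mirror structure does not come with a preferred bicollar, and the height functions over $\partial\Delta^k$ already vary near $N$, so there is nothing fixed to preserve. The cleaner route---and what \autoref{rh_contractible} actually uses---is that the set of height functions is convex: if $h_0,h_1$ both satisfy $h^{-1}(0)=N$ and have $Dh$ of rank $1$ along $N$, then so does any convex combination, since both $h$ and its normal derivative have definite sign off $N$. Thus you can interpolate using a \emph{smooth} partition of unity on $\Delta^k$ between the extension on $U$ and a fixed base height function, and skip the smooth-approximation step for this factor entirely. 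This does not affect the correctness of your argument, only its efficiency.
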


\begin{proof}
	It suffices to define lifts
	\begin{equation*}
	\xymatrix{
		\partial \Delta^k \ar[d] \ar[r] & \mc H_{\sbt}^{c}(M) \ar[d] \\
		\Delta^k \ar[r] \ar@{-->}[ur] & \mc H_{\sbt}^{m}(M).
		}
	\end{equation*}
	Given a $\Delta^k$-family of mirror cobordisms with encasement data on $\partial\Delta^k$, we trivialize the family as before and then use \autoref{rh_contractible} to extend the encasement over the rest of $\Delta^k$.
\end{proof}

\begin{defn}\label{codim0def}
Suppose that $e\colon M\hookrightarrow M'$ is a codimension 0 embedding. We define $W'$, a mirror $h$-cobordism on $M'$, by taking the double $\bar{W'}$ to be the extension of $\bar W$ from $M$ to $M'$ by the trivial cobordism. Here are the details. Topologically we take the pushout of
\[ \overline{(M' \setminus M)}\times [-1,1] \leftarrow \partial M\times [-1,1] \to \bar W. \]
To specify a smooth structure on this that restricts to the given smooth structures on $\overline{(M' \setminus M)}\times [-1,1]$ and $\bar W$, we use the lower collar of $W$. The latter gives a definite way of identifying a neighborhood of $\partial M\times [-1,1]$ in $W$ with a neighborhood of $\partial M\times [-1,1]$ in $M\times [-1,1]$ and therefore a way of identifying a neighborhood of $\overline{(M' \setminus M)}\times [-1,1]$ in $\bar{W'}$ with a neighborhood of $\overline{(M' \setminus M)}\times [-1,1]$ in $M' \times [-1,1]$.

The \rhs on $\bar W$ extend to $\bar{W'}$ in the obvious way, and are smooth.
\end{defn}

The same applies to families, so that a codimension zero embedding $M\to M'$ yields a map $\mc H_{\sbt}^c(M) \to \mc H_{\sbt}^c(M')$.

The following lemma establishes that the homotopy type of the $h$-cobordism space of a manifold with corners is not changed by rounding the corners. Recall the notion of \ourdefn{trimming} from \autoref{sec:trimmings_collars}.

\begin{lem}\label{equivalent_to_boundaries_defn}
	If $M'$ is a trimming of $M$ then the map $\mc H_{\sbt}^c(M') \to \mc H_{\sbt}^c(M)$ induced by the embedding of $M'$ in $M$ is a weak equivalence.
\end{lem}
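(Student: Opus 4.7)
The plan is to construct a homotopy inverse to $i_*\colon \mc H_{\sbt}^c(M') \to \mc H_{\sbt}^c(M)$ using the flow along the inward-pointing vector field witnessing the trimming. Let $\xi$ be a $G$-invariant inward-pointing vector field on $M$ of the type guaranteed by the trimming condition, and let $\phi_t\colon M \to M$ denote its time-$t$ flow, defined for all $t \geq 0$ by compactness. Since the integral curves of $\xi$ carry $\partial M$ homeomorphically onto $\partial M'$, I can choose $s_0 > 0$ large enough that $\phi_{s_0}(M) \subset \textup{int}(M')$. This yields an equivariant codimension $0$ embedding $j := \phi_{s_0}\colon M \to M'$, and by applying \autoref{codim0def} (and its extension to families) to both $j$ and the inclusion $i\colon M' \hookrightarrow M$ I obtain simplicial maps
\[ i_*\colon \mc H_{\sbt}^c(M') \to \mc H_{\sbt}^c(M), \qquad j_*\colon \mc H_{\sbt}^c(M) \to \mc H_{\sbt}^c(M'). \]

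Next, I will use the flow itself to produce simplicial homotopies showing that $j_*$ is a homotopy inverse for $i_*$. The family $\phi_t$ for $t \in [0, s_0]$ is a smooth equivariant isotopy of codimension $0$ embeddings $M \to M$, running from $\id_M$ to $\phi_{s_0} = i \circ j$. Moreover, since $\xi$ is inward-pointing at $\partial M'$, we have $\phi_t(M') \subset M'$ for all $t \in [0, s_0]$, so $\phi_t|_{M'}$ provides an equivariant isotopy of codimension $0$ embeddings $M' \to M'$ from $\id_{M'}$ to $j \circ i$. Feeding each isotopy (reparametrized from $[0, s_0]$ to $\Delta^1$) into the family version of \autoref{codim0def} converts any $\Delta^k$-family of encased $h$-cobordisms into a $\Delta^k \times \Delta^1$-family; after the standard prismatic triangulation, these furnish simplicial homotopies $i_* \circ j_* \simeq \id$ in $\mc H_{\sbt}^c(M)$ and $j_* \circ i_* \simeq \id$ in $\mc H_{\sbt}^c(M')$. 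Both simplicial sets are Kan complexes, by the same argument as \autoref{h_cobordism_space_kan_complex}, so this suffices to conclude that $i_*$ is a weak equivalence.

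The main obstacle will be making precise the claim that the extension construction of \autoref{codim0def} behaves well in smooth families. Concretely, given a smooth $\Delta^1$-family of codimension $0$ embeddings $e_t\colon M \to M'$ and a smoothly varying family of encased $h$-cobordisms $W_t$ on $M$, the construction must yield a smooth family of encased $h$-cobordisms on $M'$. This reduces to verifying that the pushout gluing of $\bar W_t$ with $\overline{M' \setminus e_t(M)} \times [-1,1]$ along $e_t(\partial M) \times [-1,1]$, together with the canonical extensions of the retraction and height function across the trivial region, all depend smoothly on $t$. Given the smoothness of the flow $\phi_t$, the lower collar data, and the encasement of $W_t$, this is routine but is where essentially all of the technical content of the argument resides; it must be checked carefully to ensure that the resulting families respect both the mirror structure and the encasement uniformly over the parameter.
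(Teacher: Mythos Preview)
Your argument is correct and takes a genuinely different route from the paper's. The paper first reduces to the ordinary $h$-cobordism spaces $\mc H_{\sbt}(M')\to\mc H_{\sbt}(M)$ via \autoref{space_of_mirrorhcob} and \autoref{encased_to_mirror}, and then verifies the relative-lifting criterion for weak equivalence. Its key maneuver is that, after trivializing a $\Delta^k$-family as $W\times\Delta^k$, one does not touch the cobordism $W$ at all: instead one precomposes the lower-collar germs by an isotopy of diffeomorphisms of $M$ that compresses $\overline{M\setminus M'}$ into the small neighborhood $U_0$ on which all the collars are already defined. This converts the given square into one where a strict lift exists.

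By contrast, you construct an explicit homotopy inverse $j_*$ using the inward flow $\phi_t$, and you genuinely modify the cobordisms by extending them with growing trivial regions as $t$ increases. Both arguments ultimately exploit the same inward-pointing vector field, but in opposite directions: the paper pushes the collar region outward toward $\partial M$, while you push the cobordism inward toward $M'$. The paper's approach is more economical, since altering only collar germs avoids the bundle-theoretic check you flag at the end; your approach is more constructive and stays entirely within $\mc H_{\sbt}^c$, at the cost of needing to verify that the codimension-zero extension of \autoref{codim0def} varies smoothly in a family of embeddings (which it does, using the two-chart description $\bar W\times\Delta^1$ glued to an open subset of $M\times[-1,1]\times\Delta^1$ via the encasing function and the flow). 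One small point worth making explicit in your write-up: the strict functoriality $(i\circ j)_*=i_*\circ j_*$ and $(j\circ i)_*=j_*\circ i_*$ follows because the codimension-zero extension is literally associative, so your homotopies do land at the correct endpoints.
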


\begin{proof}
	In light of \autoref{space_of_mirrorhcob} and \autoref{encased_to_mirror}, it suffices to prove the same for the ordinary $h$-cobordism spaces $\mc H_{\sbt}(M') \to \mc H_{\sbt}(M)$. We show that any diagram
	\begin{equation}\label{equiv2}
	\xymatrix{
		\partial \Delta^k \ar[d] \ar[r] & \mc H_{\sbt}(M') \ar[d] \\
		\Delta^k \ar[r] \ar@{-->}[ur] & \mc H_{\sbt}(M)
		}
	\end{equation}
	admits a lift after modifying the horizontal maps by a homotopy of commuting squares. (A strict lift may not exist, because the map is not a Kan fibration.)
	
	\orange{We observe that $\mc H_{\sbt}(M')$ is homotopy equivalent to the simplicial set of $h$-cobordisms on $M$ in which the germ of the lower collar is required to be defined on all of $\overline{(M \setminus M')} \times I$. Along this equivalence, the map to $\mc H_{\sbt}(M)$ is just the forgetful map that allows the collar to be defined on a smaller set. Therefore the above diagram gives} a trivial family of $h$-cobordisms $W \times \Delta^k$ over $M$ such that for every point in $\partial\Delta^k$, \orange{the domain of the lower collar contains $\overline{(M \setminus M')} \times I$}. In the rest of $\Delta^k$ the lower collars lack this condition, \orange{but their domains still contain $U_0 \times I$} for some fixed open set $U_0$ containing $\partial M$.
	
	Pick a \orange{vector field on $M$ that is inward-pointing along $\partial M$.} By flowing along this field, we can produce a homotopy of diffeomorphisms of $M$, from the identity diffeomorphism, to one that sends $\overline{(M \setminus M')}$ into $U_0$. Furthermore, throughout the homotopy, $\overline{(M \setminus M')}$ is always sent into itself.
	
	Take the lower collar germs for the entire family over $\Delta^k$ and pre-compose by this homotopy (times the identity of $I$). Those germs that were \orange{defined on $\overline{(M \setminus M')} \times I$} continue to be so, and all of the remaining germs become \orange{defined on $\overline{(M \setminus M')} \times I$} by the end. This produces the desired homotopy of the square \eqref{equiv2} to one in which the lift exists. (We do not change the cobordisms, only their lower collars!)
\end{proof}

\section{Stabilization of $h$-cobordisms}\label{sec:stab}


In this section we describe how a smooth embedding $M \to M'$ determines a map of $h$-cobordism spaces $\mathcal H^c(M)\to \mathcal H^c(M')$. In the case of a codimension zero embedding this was done in \autoref{codim0def}. In the general case it is a two-step process: first use a cobordism over $M$ to make a cobordism over the total space of the normal disc bundle $D(\nu)$ of $M$ in $M'$, and then extend along the codimension 0 embedding $D(\nu) \to M'$ as before.

Although $\nu$ is a vector bundle, we will need to weaken its structure to something called a ``round bundle'' first. This is not necessary for defining the stabilization, but it becomes essential when we stabilize multiple times and compare the results. The structure of a vector bundle is too rigid---composites of vector bundles are not naturally vector bundles, and this creates an issue when composing tubular neighborhoods of successive embeddings.

\subsection{Round diffeomorphisms}\label{sec:round}

A disc bundle over a disc bundle over $M$ is not a disc bundle over $M$ in a natural way. The difficulty is not just that the fiber is a product of discs instead of a disk; the structure group is also wrong. 

\begin{defn}
	Let $V$ be an inner product space. A diffeomorphism $\rho\colon D(V) \to D(V)$ is \ourdefn{round} if $|\rho(v)| = |v|$ for all $v \in D(V)$. The round diffeomorphisms form a topological group $R(V)$ with the $C^\infty$ topology.	\end{defn}

	
	\orange{
	\begin{defn}A  \ourdefn{round bundle} is a smooth fiber bundle with fiber $D(V)$ and structure group $R(V)$. A \emph{$G$-equivariant round bundle} is a smooth equivariant fiber bundle (\autoref{fiber_bundle}) with structure group $R(V)$. In particular, each $G_x$-equivariant fiber is $D(V)$ with an action of $G_x$ through round diffeomorphisms.
	\end{defn} 
	}
	

For a good example of an element of $R(V)$ that does not belong to $O(V)$, suppose that $V = V_1 \oplus V_2$, $\phi$ is an element of $O(V_1)$, and $\gamma_{v_1}$ is a family of elements of $O(V_2)$ depending smoothly on $v_1 \in V_1$. Then
\[ (v_1,v_2) \mapsto (\phi(v_1),\gamma_{v_1}(v_2)) \]
belongs to $R(V)$ but not in general to $O(V)$.
 
\begin{lem}\label{OequalsR}
	The inclusion $O(V) \to R(V)$ is a homotopy equivalence.
\end{lem}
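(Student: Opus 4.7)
The plan is to exhibit a strong deformation retraction of $R(V)$ onto $O(V)$. For $\rho\in R(V)$, define, for $t\in(0,1]$,
\[
\rho_t(v)=\tfrac{1}{t}\,\rho(tv),\qquad v\in D(V),
\]
and set $\rho_0=D\rho(0)$. Since $\rho$ is round, $|\rho_t(v)|=|v|$, so $\rho_t$ maps $D(V)$ into itself, and the same construction applied to $\rho^{-1}$ gives the inverse of $\rho_t$, so $\rho_t\in R(V)$. Also, because $\rho(0)=0$ and $\rho$ preserves the norm, $D\rho(0)\in O(V)$, hence $\rho_0\in O(V)\subset R(V)$. When $\rho$ is already linear (i.e., in $O(V)$), $\rho_t=\rho$ for all $t$, so the homotopy is constant on $O(V)$.

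The first substantive step is to check that the assignment $H\colon R(V)\times[0,1]\to R(V)$, $(\rho,t)\mapsto \rho_t$, is continuous (in fact smooth) even at $t=0$, when $R(V)$ is given the $C^\infty$ topology. The standard trick is Hadamard's lemma: write
\[
\rho(w)=\int_0^1 D\rho(sw)(w)\,ds=A(w)\cdot w
\]
with $A$ a smooth $\End(V)$-valued function of $w$. Substituting $w=tv$,
\[
\rho_t(v)=\int_0^1 D\rho(stv)(v)\,ds,
\]
which is manifestly smooth in $(\rho,v,t)$ down to and including $t=0$ (where it evaluates to $D\rho(0)(v)$). All $v$-derivatives of $\rho_t$ are likewise expressed as integrals of compositions of derivatives of $\rho$ evaluated at $stv$, so they too depend continuously on $\rho$ in the $C^\infty$ topology and on $t\in[0,1]$.

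The second step is to verify that $\rho_t$ really is a diffeomorphism for each $t$, not merely a smooth injection. For $t>0$ this is immediate, since $\rho_t$ is conjugate by the (non-smooth on the boundary, but on $D(V)$ it is a homeomorphism) scaling $v\mapsto tv$ to the restriction of $\rho$ to $D_{t}(V)$, which $\rho$ preserves by roundness. For $t=0$ we already noted $\rho_0=D\rho(0)\in O(V)$.

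Putting this together, $H(\rho,1)=\rho$, $H(\rho,0)\in O(V)$, $H$ is continuous, and $H$ fixes $O(V)$ throughout, so $O(V)\hookrightarrow R(V)$ is a strong deformation retract and hence a homotopy equivalence. The main technical obstacle is purely the boundary behavior at $t=0$: one must confirm that the rescaling $\rho(tv)/t$ and all its $v$-derivatives extend smoothly, and depend continuously on $\rho$, across $t=0$. Hadamard's lemma handles this cleanly, after which everything else is a formal check.
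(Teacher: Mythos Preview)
Your proof is correct and follows essentially the same approach as the paper: both use the deformation retraction $(\rho,t)\mapsto \tfrac{1}{t}\rho(t\,\cdot\,)$ with limit $D\rho(0)$ at $t=0$. You supply more detail than the paper does on the continuity at $t=0$ via Hadamard's lemma, while the paper gives a slightly more careful argument for why $D\rho(0)\in O(V)$ (your one-line justification ``$\rho(0)=0$ and $\rho$ preserves the norm'' is true but elides the short limit computation the paper spells out).
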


\begin{proof}For any $\rho\in R(V)$ the derivative $D_0\rho$ of $\rho$ at the origin belongs to $O(V)$. To see this, observe that
	\[ \frac{|\rho(v) - D_0\rho(v)|}{|v|} \to 0 \quad \textup{ as } \quad v \to 0, \]
	which implies that
	\[  \frac{|\rho(v)|}{|v|} - \frac{|D_0\rho(v)|}{|v|} \to 0 \quad \textup{ as } \quad v \to 0. \]
	Since $\frac{|\rho(v)|}{|v|} = 1$ for all $v$, this means that $\frac{|D_0\rho(v)|}{|v|} \to 1$. But this last quantity is constant along rays, so it must be identically equal to $1$, so that $|D_0\rho(v)| = |v|$ for all $v$. A linear map that preserves distance to the origin must belong to $O(V)$.
	
	We can now make a deformation retraction from $R(V)$ to $O(V)$, using the homotopy 
	\[ (\rho,t) \mapsto \frac{1}{t}\cdot\rho(tv). \]
	At time $t=1$ this is equal to $\rho(v)$. The limit as $t\to  0$ is $(D_0\rho)(v)$.
\end{proof}

\begin{prop}\label{equivalent_to_vector_bundle}
	Every equivariant smooth round bundle on $M$ is smoothly isomorphic (as an equivariant round bundle) to the unit disc bundle of an equivariant Euclidean vector bundle. \orange{Furthermore, this isomorphism can be chosen to agree with a given isomorphism to an equivariant Euclidean vector bundle on any closed partial boundary $A \subseteq \partial M$.}
\end{prop}

\begin{proof}
We deduce this not from \autoref{OequalsR} but from its proof. (One issue is smooth versus topological isomorphism of bundles. The other is that when nontrivial $G$-action on the base of the bundle is allowed then equivariant bundles do not simply correspond to bundles with a certain structure group.) 

We first prove the statement for trivial $G$ \orange{and $A = \emptyset$}. Given a round bundle $E_1\to M$, the homotopy of \autoref{OequalsR} defines a smooth bundle $E \to M\times I$, which at one endpoint is the original bundle $E_1 \to M \times \{1\}$, and at the other end is a smooth round bundle $E_0 \to M \times \{0\}$ whose structure group is $O(V)$, so that it arises from a vector bundle.

To be more specific, we present $E_1$ by preferred local trivializations $U\times D(V)$ and clutching functions $\phi(-)\colon U \cap U' \to R(V)$. We then define $E$ by taking the spaces $U \times I \times D(V)$, and gluing them together along the clutching functions $(U \times I) \cap (U' \times I) \to R(V)$ defined by $\frac{1}{t}\cdot\phi(t -)$. \orange{These still satisfy the cocycle condition because $\frac{1}{t}\cdot \gamma\left(t \cdot \frac{1}{t}\cdot \phi(t -)\right) = \frac{1}{t}\cdot \gamma\phi(t -)$.} When $t = 0$, these clutching functions land in $O(V)$, as desired.

We need to ensure that $E$ can be trivialized in the $I$-direction as a round bundle, so that $E_0 \cong E_1$ as round bundles. We arrange that by choosing a lift to $E$ of the standard vector field in $M\times I$ pointing in the $I$-direction. The lift should be such that in any preferred local trivialization $U\times I \times D(V)$, the component of the vector field along $D(V)$ has no radial component. Note that this condition is preserved by the clutching functions, since the clutching functions take values in the round diffeomorphism group $R(V)$. We may therefore define such fields in each trivialization $U\times I \times D(V)$ separately, and patch them together by a partition of unity.

When flowing along such a vector field, each integral curve maintains a constant distance to the origin in $D(V)$. Thus the flow is through round diffeomorphisms. This proves that we can trivialize $E$ in the $I$-direction as a round bundle, so that $E_0 \cong E_1$ as round bundles.

In the presence of a $G$-action preserving the round structure, this proof can be carried out equivariantly. We define the $G$-action on $E$ the same way we define the clutching functions, by taking the existing action of each element $g \in G$ on $E_1$ and extending it to $E$ by the map $\frac{1}{t}\cdot g(t -)$. When $t = 0$, this is a linear action on each fiber, making $E_0$ into the disc bundle of an equivariant Euclidean vector bundle. The vector fields in $E$ with no radial component are preserved by this action, so we construct such a field non-equivariantly as before, then average it over $G$ to construct such a field that is $G$-invariant. Flowing along this $G$-invariant field, we get an isomorphism of equivariant round bundles $E_0 \cong E_1$.

\orange{Finally, if $(E_1)|_A$ is already equivariantly identified with another Euclidean vector bundle $Y \to A$, then since the operation $\frac{1}{t}\cdot\phi(t -)$ does not change linear maps, the bundle $E|_{A \times I}$ is identified with $Y \times I$. This gives a canonical choice of equivariant vector field along $E|_{A \times I}$. If we can choose our vector field in $E$ to extend this given one, then our equivariant smooth round bundle isomorphism $E_0 \cong E_1$ will agree with the given equivariant smooth vector bundle isomorphism already given on $(E_1)|_A$.

To show the extension exists locally, in each local trivialization, we are given a vector field on the space
\[ [0,\infty)^a \times [0,\infty)^{k-a} \times \R^{n-k} \times I \times D(V) \]
defined along a subset of the partial boundary
\[ O \cap (\partial([0,\infty)^a) \times [0,\infty)^{k-a} \times \R^{n-k} \times I \times D(V)) \]
where $O$ is an open set in the larger manifold. Moreover, this vector field is zero in every coordinate except for $I$, where it is 1, and $D(V)$, where it has no radial component. Examining the proof of \autoref{smooth_on_boundary_tame}, when we extend this vector field to all of $O$, the resulting vector field continues to have these properties, since they are preserved by taking linear combinations. This shows that extensions with the required property exist locally; by patching them together by a partition of unity and then averaging them, the required equivariant extension exists globally as well.} 
\end{proof}

\begin{cor}\label{conjugate}
For a finite group $G$, every homomorphism $G \to R(V)$ is conjugate to a homomorphism $G \to O(V)$.
\end{cor}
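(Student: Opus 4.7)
The plan is to specialize \autoref{equivalent_to_vector_bundle} to a one-point base. A homomorphism $\phi\colon G\to R(V)$ is exactly the data of a $G$-equivariant round bundle over a point, with total space $D(V)$ and $G$ acting via $\phi$. Applying \autoref{equivalent_to_vector_bundle} to this bundle produces an inner-product $G$-representation $W$ together with an isomorphism of equivariant round bundles $\psi_0\colon D(V)\to D(W)$; unpacking, this is a round diffeomorphism satisfying
\[ \psi_0\circ\phi(g) = \rho(g)\circ\psi_0 \]
for every $g\in G$, where $\rho\colon G\to O(W)$ is the linear action on $W$.

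Since $\psi_0$ is a diffeomorphism of closed discs we have $\dim W=\dim V$. Fix any linear isometry $\alpha\colon W\to V$; its restriction to $D(W)\to D(V)$ is a round diffeomorphism. Setting $\psi = \alpha\circ\psi_0\in R(V)$, we compute
\[ \psi\circ\phi(g)\circ\psi^{-1} = \alpha\circ\rho(g)\circ\alpha^{-1}\in O(V) \]
for every $g\in G$, so $\phi$ is conjugate in $R(V)$ to the homomorphism $g\mapsto\alpha\rho(g)\alpha^{-1}\colon G\to O(V)$, which is the desired conclusion.

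There is essentially no new technical work beyond what was already done in the preceding corollary. The one point worth flagging is the passage from an abstract isomorphism of equivariant round bundles to a conjugation that takes place \emph{inside} $R(V)$ itself: \emph{a priori} \autoref{equivalent_to_vector_bundle} produces a linear model $D(W)$ with $W$ abstractly distinct from $V$, and the auxiliary linear isometry $\alpha$ is what transports the conjugating round diffeomorphism back to an element of $R(V)$. This step is unobstructed because $V$ and $W$ are real inner product spaces of equal finite dimension. Finiteness of $G$ enters only through its role in \autoref{equivalent_to_vector_bundle}, where equivariant averaging is used to produce the required $G$-invariant vector field.
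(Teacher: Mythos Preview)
Your proof is correct and is exactly the intended deduction: the paper states this corollary without proof, as an immediate consequence of \autoref{equivalent_to_vector_bundle} specialized to a one-point base. One small simplification: if you read the proof of \autoref{equivalent_to_vector_bundle} rather than just its statement, the vector bundle it produces has the \emph{same} fiber $D(V)$ throughout (the homotopy $\frac{1}{t}\rho(t-)$ never changes the underlying disc), so in fact $W=V$ as inner product spaces and your auxiliary isometry $\alpha$ may be taken to be the identity. Your extra care in allowing $W\neq V$ is harmless and arguably appropriate if you want the argument to rest only on the statement of the preceding corollary.
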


\begin{rem}\label{frechet}
In the non-equivariant case, \autoref{equivalent_to_vector_bundle} would follow directly from \autoref{OequalsR} using the main result of \cite{christoph}, if we knew that the structure group of round diffeomorphisms forms a Frechet Lie group. This is known for $\Diff(D^n)$, but seems more difficult to show for the subgroup of round diffeomorphisms. We leave this as an open question which may be interesting in its own right. 
\end{rem}

The point of round bundles is that they allow us to compose tubular neighborhoods of successive embeddings without selecting additional trivialization data. Given round bundles $E \to A $ and $A \to B$ with fibers $D(V_2)$ and $D(V_1)$ respectively, the composite bundle has fiber $D(V_1) \times D(V_2)$. We define the \ourdefn{round composite} of the bundles by starting with $E \to B$ and then restricting to the subset of $E$ whose fiber over $b \in B$ is the subset of $D(V_1) \times D(V_2)$  corresponding to $D(V_1 \times V_2)$. In other words we restrict attention to those points in $E$ such that $s^2+t^2\le 1$ where $s$ is the norm in the fiber of $E \to A$ and $t$ is the norm of the image in the fiber of $A \to B$. The round composite is again a round bundle with fiber $D(V_1 \times V_2)$. What makes this well-defined independent of local trivialization is that a twisted product of round diffeomorphisms
\[ \xymatrix @R=0.5em{
	D(V_1) \times D(V_2) \ar[r] & D(V_1) \times D(V_2) \\
	(v_1,v_2) \ar@{|->}[r] & (\phi(v_1),\gamma_{v_1}(v_2))
} \]
always restricts to a round diffeomorphism of $D(V_1 \times V_2)$.

We need a variant of \autoref{tubular_nbhd_thm} for round bundles. A \ourdefn{round tubular neighborhood} of $i\colon M \to N$ is a smooth $G$-invariant codimension 0 submanifold $D \subseteq N$ containing $M$, a smooth equivariant map $p\colon D \to M$ that is a left inverse of the inclusion of $M$, and the structure of an equivariant round bundle on $p$. These are considered up to the appropriate equivalence relation. In view of \autoref{equivalent_to_vector_bundle}, \orange{there is a bijection between round tubular neighborhoods and equivalence classes} of vector bundle tubular neighborhoods in the sense of \autoref{tubular_nbhd_space}, where two such are identified if there is a round isomorphism of the disc bundles commuting with the embedding into $N$.

\begin{thm}[Tubular Neighborhood Theorem, round bundle version]\label{round_tubular_nbhd_thm} Assume $M$ is compact. For every family of equivariant embeddings $i\colon M \times \Delta^k \to N$ landing in the interior of $N$, any family of round tubular neighborhoods on $\partial \Delta^k$ that land in the interior of $N$ can be extended to $\Delta^k$.
\end{thm}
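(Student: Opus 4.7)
The plan is to reduce to the vector bundle version \autoref{tubular_nbhd_thm} by converting the given round tubular neighborhoods into vector bundle tubular neighborhoods in a way that is parametrized over $\partial \Delta^k$, extending, and then forgetting back to round tubular neighborhoods.

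First I would assemble the given system of round tubular neighborhoods over $\partial \Delta^k$ into a single equivariant smooth submanifold $D \subseteq \partial \Delta^k \times N$ together with an equivariant round bundle structure $p\colon D \to \partial \Delta^k \times M$ compatible with the family of embeddings $i$ restricted to $\partial \Delta^k$. Applying \autoref{equivalent_to_vector_bundle} to the round bundle $p$, I obtain an equivariant Euclidean vector bundle $\nu \to \partial \Delta^k \times M$ and an isomorphism $D(\nu) \cong D$ of equivariant round bundles. The composite $D(\nu) \cong D \hookrightarrow \partial \Delta^k \times N$ is then an equivariant vector bundle tubular neighborhood, in the sense of \autoref{tubular_nbhd_space}, for the family of embeddings restricted to $\partial \Delta^k$.

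Next I would invoke \autoref{tubular_nbhd_thm} to extend this family of vector bundle tubular neighborhoods from $\partial \Delta^k$ to all of $\Delta^k$. Forgetting the linear structure on the fibers and keeping only the radial (round) structure converts the extension into a family of round tubular neighborhoods over $\Delta^k$. On $\partial \Delta^k$ the underlying round tubular neighborhood of $D(\nu) \hookrightarrow \partial\Delta^k \times N$ is precisely the original one, since the isomorphism $D(\nu) \cong D$ was chosen to be a round isomorphism commuting with the embedding into $\partial \Delta^k \times N$. So the extended round tubular neighborhood restricts on $\partial \Delta^k$ to the system we began with.

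The step I expect to require the most care is the first one: carrying out the construction of \autoref{equivalent_to_vector_bundle} in families over the base $\partial \Delta^k \times M$, a compact $G$-manifold with corners. The ingredients of that proof --- choosing preferred local trivializations, applying the clutching homotopy $(\rho,t) \mapsto \tfrac{1}{t}\,\rho(t-)$, lifting the standard vector field on the homotopy interval to one with no fiberwise radial component, and averaging over $G$ --- are all local, natural in the base, and compatible with smooth partitions of unity, so they go through when the base is a product with $\partial \Delta^k$ and the $G$-action is trivial on the $\partial \Delta^k$ factor. No genuinely new ideas are needed beyond the careful bookkeeping of parameters and of the interaction with corners of $\Delta^k$.
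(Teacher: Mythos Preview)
Your proposal is correct and follows the paper's proof essentially step for step: refine the round tubular neighborhoods to vector bundle ones via \autoref{equivalent_to_vector_bundle}, extend over $\Delta^k$ using \autoref{tubular_nbhd_thm}, then forget back to round. The paper's own proof is three terse sentences making exactly these moves; your version simply supplies the details (and the honest caveat about carrying out \autoref{equivalent_to_vector_bundle} in families) that the paper leaves implicit.
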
	

\begin{proof}
	By \autoref{equivalent_to_vector_bundle}, \orange{we can refine the given family of round tubular neighborhoods on $\partial \Delta^k$ to a family of vector tubular neighborhoods on each face of $\Delta^k$ separately. Working one face at a time, we can choose these refinements to agree with the refinement on the previous faces, since their intersection with the new face $\Delta^{k-1}$ is a closed partial boundary of $\Delta^{k-1}$. Then, by \autoref{tubular_nbhd_thm}, this family of vector tubular neighborhoods on $\partial \Delta^k$} can be extended over the interior. But that extension is, by neglect of structure, an extension as a family of round tubular neighborhoods.
\end{proof}

\subsection{The stabilization and its smooth structure}

Let $M$ be a compact $G$-manifold with corners, let $W$ be a {\mirror} $h$-cobordism on $M$, and suppose that $W$ is equipped with \arhperiod We follow the notation of the previous section, letting $N$ be the top of the cobordism and denoting the double by $\bar W$.

We will make an encased $h$-cobordism $St^\nu(W)$ on the manifold $D(\nu)$, where $\nu$ is the normal bundle of an embedding $e:M\to M'$ and $D(\nu)\to M$ is its closed unit disc bundle. We define $St^\nu(W)$ by defining its double $St^\nu(\bar{W})$.

For the next definition, 
$p\colon D(\nu) \to M$ may be any equivariant round bundle. Let $D(\nu \times \R) \to M$ be the round composite of $p$ with the trivial bundle
\[ D(\nu) \times [-1,1] = D(\nu) \times D(\R) \to D(\nu). \]
In addition to the projection $D(\nu \times \R) \to M$ this has a map to $[0,1]$, namely the norm in the fiber over $M$. On the other hand, $W$ also has a map to $M\times [0,1]$, namely the encasing function $\rho$ to $M \times [-1,0]$ flipped upside down. We can therefore form the fiber product
\[ W \times_{M \times [0,1]} D(\nu \times \R). \]

This space will be the essential part of $St^\nu ( \bar{W})$.

For an inner product bundle $\xi$, let $S(\xi)$ be the total space of the unit sphere bundle. Let $D_0(\xi)\subset D(\xi)$ be the zero section, homeomorphic to the base of the bundle. Let $D_0'(\xi)$ be the complement of $D_0(\xi)$ in $D(\xi)$.

Consider the map 
\[ W \times_M S(\nu \times \R) \to W \times_{M \times [0,1]} D(\nu \times \R)\]
 given by $(w,u)\mapsto (w,|h(w)|u)$. This maps the open subset $(W\backslash N) \times_M S(\nu \times \R)$ homeomorphically to the open subset $(W\backslash N) \times_{M \times (0,1]} D_0'(\nu \times \R)$. It maps the complementary closed subset $N \times_{M} S(\nu\times \R)$ onto the closed set $N\times_{M\times \{0\}} D_0(\nu\times \R)$, and when we identify this last set with $N$ the map becomes the projection $(n,u)\mapsto n$.

The fiber product $W \times_{M \times [0,1]} D(\nu \times \R)$ can therefore be rewritten as the colimit
\[ \xymatrix{
	N \times_M S(\nu \times \R) \ar[d] \ar[r] & N \\
	W \times_M S(\nu \times \R). &
	} \]

Let $C(\nu \times \R)$ denote the complement of the interior of $D(\nu\times \R)$ inside $D(\nu) \times [-1,1]$, so that $D(\nu) \times [-1,1]$ is the union of $C(\nu\times \R)$ and $D(\nu\times \R)$ along $S(\nu\times \R)$.

\begin{defn}\label{hcob_stab}
	As a topological space with $G$-action, $\St^\nu(\bar W)$ is the colimit of the diagram
\[ \xymatrix{
	& N \times_M S(\nu \times \R) \ar[d] \ar[r] & N \\
	S(\nu \times \R) \ar[d] \ar[r]^-{} & W \times_M S(\nu \times \R) & \\
	C(\nu \times \R), & &
	} \]
	or more simply the colimit of
	\[ \xymatrix{
	S(\nu \times \R) \ar[d] \ar[r]^-{} & W \times_{M \times [0,1]} D(\nu \times \R) \\
	C(\nu \times \R). &
	} \]
\noindent	The middle horizontal map is induced by the inclusion of $M\cong M \times \{-1\}$ into $W$. 

The group $C_2$ acts on $\St^\nu(\bar W)$ by flipping the $\R$ direction. The set of $C_2$-fixed points is the fiber product
\[ W \times_{M \times [0,1]} D(\nu), \]
which in \autoref{stabfig}  corresponds to the center line. The whole is the double of the lower half along its top, and this lower half is called $\St^\nu(W)$.
\end{defn}

\begin{figure}[h]
	\centering
	\def\svgwidth{5.8in}
	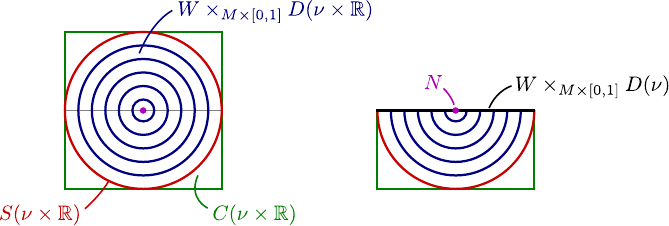
	\vspace{-.5em}
	\caption{The doubled stabilization $\St^\nu(\bar W)$ and its lower half $\St^\nu(W)$.}\label{stabfig}
\end{figure}

Before providing a smooth structure, we verify that we have made a topological equivariant $h$-cobordism.

\begin{lem}\label{is_equiv_hcob}
	If $W$ is $G$-equivariantly homotopy equivalent to its top and bottom, then so is $\St^\nu(W)$.
\end{lem}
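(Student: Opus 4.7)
The plan is to show that both the bottom inclusion $D(\nu)\hookrightarrow\St^\nu(W)$ and the top inclusion $T\hookrightarrow\St^\nu(W)$---where $T$ denotes the top face, namely the $C_2$-fixed $\{t=0\}$-slice inside $\St^\nu(\bar W)$---are $G$-equivariant homotopy equivalences. The strategy is to exhibit $\St^\nu(W)$ as the pushout of $C^-(\nu\times\R):=C(\nu\times\R)\cap\{t\le 0\}$ and the lower half $(W\times_{M\times[0,1]}D(\nu\times\R))^-$ of the fiber product, glued along the lower hemisphere $S^-(\nu\times\R)$, and to construct explicit equivariant deformation retractions on each piece that agree on their common intersection, thereby reducing the result to the hypothesized equivariant homotopy equivalences $M\hookrightarrow W$ and $N\hookrightarrow W$.

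For the bottom, $C^-(\nu\times\R)$ retracts equivariantly onto $S^-(\nu\times\R)$ via the radial map $(v,t)\mapsto (v,t)/\|(v,t)\|$, while the lower-half fiber product retracts equivariantly onto $W$, embedded as the ``south-pole'' section $w\mapsto (w,(0,-|h(w)|))$, by the explicit homotopy $s\mapsto \bigl(w,((1-s)v,-\sqrt{|h(w)|^2-(1-s)^2|v|^2})\bigr)$; this slides each lower-hemisphere fiber of radius $|h(w)|$ in $\nu_{r(w)}\times\R$ to its south pole, and remains continuous at $N$ where the fiber degenerates to a point. The two retractions match on $S^-(\nu\times\R)$, both sending it to $M\subset W$ via $(v,t)\mapsto\pi(v)$. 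The equivariant gluing lemma then yields a homotopy equivalence $\St^\nu(W)\simeq W$ under which the inclusion $D(\nu)\times\{-1\}\hookrightarrow\St^\nu(W)$ is identified with $D(\nu)\xrightarrow{\pi}M\hookrightarrow W$, an equivariant homotopy equivalence by hypothesis.

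For the top, $T$ identifies with $W\times_{M\times[0,1]}D(\nu)$ (the corner $S(\nu)\times\{0\}$ from $C^-$ is already contained there), and the projection $T\to W$, $(w,v)\mapsto w$, has fibers the spheres $S(\nu_{r(w)})\cdot|h(w)|$, which degenerate to a point exactly over $N$. The plan is to combine a $G$-equivariant deformation retract $W\to N$---which exists since $N\hookrightarrow W$ is an equivariant homotopy equivalence and a cofibration---with a synchronized radial scaling of the sphere fibers along the path in $W$, producing an equivariant deformation retract of $T$ onto the section $N\hookrightarrow T$, $n\mapsto (n,0)$. Under the equivalence $\St^\nu(W)\simeq W$ established above, the top inclusion is then identified with $N\hookrightarrow W$, an equivariant homotopy equivalence by hypothesis.

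The main obstacle will be the continuity of this synchronized contraction as $w$ approaches $N$: the radial scaling factor $|h(w_s)|/|h(w)|$ naively takes the indeterminate form $0/0$. This will be resolved by exploiting the rank-$1$ condition on $Dh$ along $N$ together with the round-bundle structure on $D(\nu)$, which together ensure that the retract $W\to N$ can be chosen so that $h$ decays in a controlled linear fashion along the retraction paths, and that the sphere-fiber contraction extends continuously through the limit $|v|=0$.
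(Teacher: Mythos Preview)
Your overall strategy differs from the paper's in a way that creates extra work. The paper retracts $\St^\nu(W)$ onto the cone locus $N$, not onto $W$. It first pushes $C(\nu\times\R)$ in the $\R$-direction onto $S(\nu\times\R)$, and then, using a vector bundle presentation of $\nu$ and a $G$-equivariant deformation retraction $H_s\colon W\to N$, it scales the $D(\nu\times\R)$-coordinate radially toward the origin. The key advantage of targeting $N$ is that this retraction \emph{preserves the top} $T$: on a point $(w,v,0)\in T$ the radial scaling keeps the $\R$-coordinate equal to $0$, so the same homotopy simultaneously exhibits $N\hookrightarrow\St^\nu(W)$ and $N\hookrightarrow T$ as equivalences. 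The bottom is then handled by the zig-zag $N\simeq W\simeq M\simeq D(\nu)$, with no further geometric work.

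Your south-pole retraction onto $W$ is fine for the bottom (once you invoke a vector bundle presentation of $\nu$ so that the scaling $(1-s)v$ makes sense, as the paper also does), but it does \emph{not} preserve $T$: it pushes points of $T$ to negative $t$-values. This forces a separate argument for the top, and that is where your proposal has a genuine gap. Your ``synchronized radial scaling of the sphere fibers along the path in $W$'' tacitly assumes that the basepoint in $M$ stays fixed, i.e.\ that $r(H_s(w))=r(w)$. But a deformation retraction $W\to N$ has no reason to commute with the encasing retraction $r\colon W\to M$, so as $w$ moves, the vector $v$ must be transported to a different fiber $\nu_{r(H_s(w))}$. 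Neither the round-bundle structure nor the rank-$1$ condition on $Dh$ supplies this; you would need a connection on $\nu$. You do not mention this issue at all.

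As for the $0/0$ obstacle you flag: it is something of a red herring. Once you have a way to move between fibers, you can write the scaled vector as $|h(H_s(w))|\cdot(\text{unit direction})$, and since $|h(H_s(w))|\to 0$ as $w\to N$, the product tends to $0$ regardless of the direction. The rank-$1$ hypothesis on $Dh$ is not what resolves this. If you want a clean route to $T\simeq N$ that avoids both issues entirely, use the pushout description $T\cong N\cup_{N\times_M S(\nu)}(W\times_M S(\nu))$: since $N\hookrightarrow W$ is a $G$-cofibration and $G$-equivalence, and pulling back along the fiber bundle $S(\nu)\to M$ preserves both properties, the pushout collapses to $N$.
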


\begin{proof}
	Pick a presentation of $\nu$ as a vector bundle, not just a round bundle. Then deformation retract $C(\nu \times \R)$ in the $\R$-direction back to $S(\nu \times \R)$, then shrink the rest to the center using the $G$-equivariant deformation retraction of $W$ to $N$. Throughout the homotopy we vary the corresponding point in $D(\nu \times \R)$ by scaling in the radial direction (for which we need the vector bundle structure). This shows that the inclusion of $N$ into $\St^\nu(W)$ is a $G$-equivariant homotopy equivalence.
	
	The same homotopy shows that $N$ is also equivalent to the top of $\St^\nu(W)$. Therefore $\St^\nu(W)$ is equivariantly homotopy equivalent to its top. The zig-zag of subspaces of $\St^\nu(W)$
	\[ \xymatrix{
		N \ar[r]^-\sim & W & \ar[l]_-\sim M \ar[r]^-\sim & D(\nu)
	} \]
	and equivariant homotopy equivalences now shows that the inclusion of the bottom $D(\nu)$ into $\St^\nu(W)$ is an equivalence as well.
\end{proof}
	
\begin{notn}\label{regions}
The following language will help us talk about $\St^\nu(\bar W)$ more clearly.
\begin{itemize}
	\item The \emph{trivial region} of $\St^\nu(\bar W)$, pictured in green in \autoref{stabfig}, is $C(\nu \times \R)$.
	\item The \emph{nontrivial region} of $\St^\nu(\bar W)$, pictured in blue in \autoref{stabfig}, is
	\[ W\times_{M\times [0,1]} D(\nu\times \R). \]
	\item The \emph{frontier} of $\St^\nu(\bar W)$, pictured in red in \autoref{stabfig}, is the intersection
\[ S(\nu \times \R)= M\times_{M\times \{1\}} S(\nu\times \R) \]
	of the trivial and nontrivial regions.
	\item The \emph{cone locus} of $\St^\nu(\bar W)$, pictured as the central point in \autoref{stabfig}, is the set
\[  N\times_{M\times \{0\}} D_0(\nu\times \R) \cong N \]
	inside the nontrivial region.
\end{itemize}
\end{notn}

\begin{defn}\label{smoothstr}
	We define the smooth structure on the double $\St^\nu(\bar W)$ as follows. 
	
The trivial region $C(\nu \times \R)$ has an obvious structure as a smooth manifold with corners near each point not on the frontier $S(\nu \times \R)$. Moreover, the map $D(\nu\times \R)\to M\times [0,1]$ is a submersion over the interior of $[0,1]$, giving the fiber product $W \times_{M \times [0,1]} D(\nu \times \R)$ an induced smooth structure away from 0 and 1. We still need to define the smooth structure near the frontier and near the cone locus.

To define a smooth structure near the frontier $S(\nu \times \R)$, we use the lower collar of $W$ to identify a neighborhood of the frontier in the nontrivial region with a neighborhood of $S(\nu\times \R)$ in $D(\nu\times \R)$, and thus to identify a neighborhood of the trivial region in $St^\nu(\bar W)$ with a neighborhood of $C(\nu \times \R)$ in the smooth manifold with corners $D(\nu) \times [-1,1]$.

It remains to define a smooth structure at the cone locus $N$. For each $n \in N$, pick a contractible neighborhood $U\to N$ of $n$ and a collar
\[ b: U \times (-\epsilon,0] \to W, \]
such that the double
\[ b: U \times (-\epsilon,\epsilon) \to \bar W \]
is smooth. Choose it such that $h\circ b$ is projection on the second factor; this is possible because the height function has full rank. It should also be equivariant with respect to the isotropy group $G_n\subset G$. 

Choose also a smooth trivialization of the round bundle $D(\nu) \to M$ near $r(n) \in M$, compatible with the action of $G_{r(w)}$. Pulling this back to $\bar W$ and adding on $\R$ gives a trivialization of the round bundle $W \times_M D(\nu \times \R) \to W$ over the contractible set $U \times (-\epsilon,0]$.

This gives a neighborhood of $(n,0) \in W \times_{M \times [0,1]} D(\nu \times \R)$ of the form
\[ (U\times [0,\epsilon)) \times_{M \times [0,1]} (M \times D(V\times \R)). \]
Since the height function is full rank along $U \times \{0\}$, it induces a local diffeomorphism $U \times [0,\epsilon) \to U \times [0,1]$ defined near 0, so up to homeomorphism this neighborhood is identified with an open subset of the product
\[ U \times D(V \times \R). \]
Therefore, we define the smooth chart on this neighborhood to be the projection map to $U \times D(V \times \R)$.

\end{defn}

\begin{prop}\label{stabsmooth}
The smooth structure given by each chart in \autoref{smoothstr} is independent of the choice of bicollar for $N$ and trivialization of $\nu$ as a round bundle. The charts therefore give a well-defined smooth structure on $\St^\nu(\bar W)$, \orange{making it into a manifold with corners.}
\end{prop}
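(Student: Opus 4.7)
The plan is to reduce the claim to compatibility checks on the overlaps of the various charts. First I would identify which ingredients involve no choices at all: the smooth structure on the trivial region $C(\nu\times \R)$ inherited from $D(\nu)\times[-1,1]$, the submersion-induced smooth structure on the interior of the nontrivial region (valid wherever $|y|>0$ so that $D(\nu\times\R)\to[0,1]$ is a submersion), and the frontier chart built from the lower collar of $W$. These are determined by the encased structure alone. It then remains to verify that each cone-locus chart from \autoref{smoothstr} (a) agrees with the submersion chart on the overlap $0<|y|<\epsilon$, and (b) transforms smoothly under changes of the bicollar $b$ and of the round-bundle trivialization of $\nu$.

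Compatibility (a) is essentially automatic: near a point with $|y|>0$, the parametrization $(u,y)\mapsto (b(u,-|y|),y)$ is a composite of smooth maps, because $|y|$ is smooth for $y\neq 0$ and $b$ is smooth. So the cone-locus chart and the submersion chart determine the same smooth structure on their punctured overlap. For the first half of (b), a change of round-bundle trivialization is given by a smooth family $\gamma_m\in R(V)$, which extends by the identity on the $\R$ factor to a smooth family $\tilde\gamma_m\in R(V\times\R)$; the induced change of chart is $(u,y)\mapsto (u,\tilde\gamma_{r(b(u,-|y|))}(y))$. The subtle point is smoothness at $y=0$, which is handled as follows: the retraction $r\colon \bar W\to M$ is $C_2$-invariant, so $r\circ b\colon U\times(-\epsilon,\epsilon)\to M$ is even in $t$, and \autoref{even_function_of_t2} gives $r(b(u,t))=g(u,t^2)$ for some smooth $g$. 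Hence $r(b(u,-|y|))=g(u,|y|^2)$ is smooth in $(u,y)$, and since $\tilde\gamma$ is a smooth family of round (hence smooth at $0$) diffeomorphisms, the change of chart is smooth.

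The main obstacle, and the one that motivates the mirror structure in the first place, is the second half of (b): a change of bicollar. Two bicollars $b,b'$ with $h\circ b=h\circ b'=\mathrm{proj}_2$ are related by $b(u,t)=b'(\phi_t(u),t)$ for a smooth family of self-diffeomorphisms $\phi_t$ of $U$ with $\phi_0=\mathrm{id}$, and the transition between the corresponding cone-locus charts is $(u,y)\mapsto (\phi_{-|y|}(u),y)$. Since each bicollar is part of the mirror data and is required to double smoothly across $t=0$, each of $b$ and $b'$ is $C_2$-equivariant with respect to the reflection $t\mapsto -t$; comparing $b(u,t)=b'(\phi_t(u),t)$ with its image under this reflection forces $\phi_t=\phi_{-t}$. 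Thus $\phi_t(u)-u$ is an even smooth function of $t$, so by \autoref{even_function_of_t2} it equals $f(u,t^2)$ for a smooth $f$. Substituting $t=-|y|$ yields $\phi_{-|y|}(u)=u+f(u,|y|^2)$, which is smooth in $(u,y)$ because $|y|^2$ is smooth. This completes the hardest compatibility check; together with (a) and the first half of (b), it implies that the cone-locus charts patch with each other and with the other charts of \autoref{smoothstr} into a well-defined smooth atlas on $\St^\nu(\bar W)$.
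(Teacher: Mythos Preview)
Your proof is correct and takes essentially the same approach as the paper: both reduce to showing that the transition between two cone-locus charts is smooth, and both do so by observing that the relevant data are even in the collar parameter $t$ and then invoking \autoref{even_function_of_t2} to rewrite them as smooth functions of $|y|^2$. The paper combines the bicollar and trivialization changes into a single computation and adds an explicit full-rank check (which in your setup follows by applying the same argument to the inverse transition), while you separate the two changes and add the compatibility check (a) with the submersion chart; these are organizational differences only.
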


In other words, the smooth structure on the stabilization comes from the {\mirror} structure of $W$ (the extension of its smooth structure to $\bar W$), not the particular choice of smooth bicollar used to present this {\mirror} structure. (In contrast, the smooth structure at the frontier depends on the choice of lower collar for the cobordism $W$, which is fixed in advance.)

\begin{proof} Suppose $b_1, b_2\colon U \times (-\epsilon,\epsilon) \to \bar W$ are two $(G \times C_2)$-equivariant bicollars of $N \to \bar W$ defined near $n_0 \in N$, both such that $h \circ b_i\colon U \times (-\epsilon,\epsilon) \to [-1,1]$ is the projection map on the second coordinate. Assume also that for each one we choose a trivialization of $\nu \to M$ as a round bundle near $r(n_0)$.
The transition between these two charts is a partially-defined map
\[ \xymatrix @R=0.5em{
	(U\times [0,\epsilon)) \times_{[0,1]} D(V\times \R) \ar[r] & (U\times [0,\epsilon)) \times_{[0,1]} D(V\times \R) \\
	(n,t,v) \ar@{|->}[r] & (n',t',v')
} \]
that is a homeomorphism on a neighborhood of $(n_0,0,0)$. By the definitions of the charts, we get $b_1(n,t)=b_2(n',t')$, and $v'=\ell(n,t)(v)$ for a round diffeomorphism $\ell(n,t)\in R(V \times \R)$ that is smooth and even as a function of $(n,t) \in U \times (-\epsilon, \epsilon)$. (It is a function of the projection to $M$, which is $r \circ b_1$, which is even.)

By the definition of the fiber products we have
\[ |v| = |h \circ b_1(n,t)| = |h \circ b_2(n',t')| = |v'|. \]
By the extra assumption that $h \circ b_1(n,t) = t$, we have $|t| = |v|$.

By our conventions, the formula $b_2^{-1} \circ b_1$ defines the germ of a diffeomorphism of $U \times (-\epsilon,\epsilon)$, whose $U$ coordinate is even. Applying \autoref{even_function_of_t2} to the first coordinates rewrites it as
\[ (b_2^{-1} \circ b_1)_1(n,t) = g_1(n,t^2), \]
for a smooth function $g_1\colon U \times [0,\epsilon) \to U$. Therefore $n' = g_1(n,t^2)= g_1(n,|v|^2)$.

Similarly, since $\ell$ is even in $t$, we have by \autoref{even_function_of_t2}
\[ \ell(n,t)=g_2(n, t^2)\]
for a smooth function $g_2\colon U \times [0,\epsilon) \to R(V \times \R)$. Therefore $v' = g_2(n,t^2)(v) = g_2(n,|v|^2)(v)$.

Putting this all together, the transition between the two charts is the map
\[ \xymatrix @R=0.5em{
	U\times D(V\times \R) \ar[r] & U\times D(V\times \R) \\
	(n,v) \ar@{|->}[r] & \left( \ g_1(n, |v|^2) \ , \ g_2(n, |v|^2)(v) \ \right),
} \]
which is smooth since $|v|^2$, $g_1$, and $g_2$ are smooth. Furthermore the map is full rank as $\ell(n,t) = g_2(n,|v|^2)$ is a diffeomorphism that to first order does not depend on $v$, whereas $(b_2^{-1} \circ b_1)_1(n,t)$ extends to a diffeomorphism of $U \times (-\epsilon,\epsilon)$, so it is full rank in the $n$ direction.
\end{proof}

\begin{defn}\label{st_rho}
	The encasing
\[ \St^\nu \rho\colon\  \St^\nu(\bar W) \to D(\nu) \times [-1,1] \]
is defined by projecting away from $W$ to the pushout
\[ \xymatrix{
	S(\nu \times \R) \ar[d] \ar[r]^-{} & D(\nu \times \R) \\
	C(\nu \times \R), &
} \]
which is $D(\nu) \times [-1,1]$.
\end{defn}

We record the following characterization of points in the stabilization. It follows immediately from the definition, and it will be very useful later. \orange{Recall that the nontrivial region is defined in \autoref{regions}.}

\begin{lem}\label{indexing_coordinates}
	Each point in $\St^\nu(\bar W)$ is determined uniquely by its image under $\St^\nu\rho$, together with its \orange{projection to} $W$ when in the nontrivial region.
\end{lem}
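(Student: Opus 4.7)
My plan is to prove the lemma by case analysis on the region of $\St^\nu(\bar W)$ containing $p$, directly unpacking the colimit descriptions from \autoref{hcob_stab} and \autoref{st_rho}. Recall that $\St^\nu(\bar W)$ is the pushout of the trivial region $C(\nu\times\R)$ and the nontrivial region $W\times_{M\times[0,1]}D(\nu\times\R)$ along the frontier $S(\nu\times\R)$, while $D(\nu)\times[-1,1]$ is the analogous pushout of $C(\nu\times\R)$ and $D(\nu\times\R)$ along the same $S(\nu\times\R)$. The map $\St^\nu\rho$ is induced on pushouts by the identity on $C(\nu\times\R)$ and by the projection $(w,(m,v))\mapsto(m,v)$ on the nontrivial region.

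If $p$ lies in the trivial region $C(\nu\times\R)$, then $\St^\nu\rho$ restricts to the inclusion $C(\nu\times\R)\hookrightarrow D(\nu)\times[-1,1]$, which is injective, so $\St^\nu\rho(p)$ alone recovers $p$ and no extra datum is needed. If $p$ lies in the nontrivial region, I would write $p$ as a triple $(w,(m,v))$ subject to the fiber-product constraints $m=r(w)$ and $|v|=|h(w)|$; the given data then consist of $\St^\nu\rho(p)=(m,v)$ together with the associated $w \in W$, from which the triple is immediately reconstructed. Frontier points lie in both regions, but the two descriptions agree under the colimit gluing, which identifies $(m,v)\in S(\nu\times\R)\subset C(\nu\times\R)$ with the triple $(m,m,v)\in W\times_{M\times[0,1]}D(\nu\times\R)$ (the bottom satisfies $r(m)=m$ and $|h(m)|=1=|v|$).

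The only step that is not a direct unpacking of definitions, and hence the one I would flag as a potential obstacle, is the behavior at the cone locus $N\subset\St^\nu(\bar W)$. There the colimit collapses all of $N\times_M S(\nu\times\R)$ above a fixed $n\in N$ down to $n$ itself, and correspondingly $\St^\nu\rho(p)=(r(n),0)$ cannot distinguish distinct preimages of $r(n)$ under $r\colon N\to M$. However, at such a $p$ the associated point $w\in W$ is precisely the collapsed label $n\in N$, so the ``extra datum in the nontrivial region'' required by the statement supplies exactly the information that $\St^\nu\rho$ has lost. With this cone-locus check in hand the lemma follows.
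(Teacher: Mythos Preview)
Your proof is correct and is essentially the same as the paper's, which simply records the lemma as following ``immediately from the definition'' without further argument; you have written out that immediate verification explicitly by unpacking the colimit description and the projection map. One small remark: your final paragraph flags the cone locus as a ``potential obstacle,'' but in fact nothing special happens there for this purely set-theoretic statement---a point $n\in N$ in the nontrivial region is the fiber-product element $(n,(r(n),0))$, and the pair (image $(r(n),0)$, associated point $n$) recovers it exactly as at any other nontrivial-region point; the cone locus is only delicate for the \emph{smoothness} questions treated later.
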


\begin{lem}\label{rhosmooth}
	The \rh $\St^\nu\rho$ is smooth.
\end{lem}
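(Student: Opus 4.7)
The plan is to verify smoothness locally in each of the four types of regions identified in \autoref{smoothstr}: (i) the interior of the trivial region $C(\nu \times \R)$, (ii) the interior of the nontrivial region away from the cone locus, (iii) the frontier $S(\nu \times \R)$, and (iv) the cone locus $N$. In cases (i)--(iii) smoothness is essentially automatic from the definitions, and all the real work is at (iv).

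\textbf{Trivial region and nontrivial region away from the cone locus.} On the trivial region $C(\nu \times \R)$ the map $\St^\nu\rho$ is literally the inclusion into $D(\nu) \times [-1,1]$, so it is smooth. On the nontrivial region away from the cone locus, the map $D(\nu \times \R) \to M \times [0,1]$ is a submersion, so the fiber product $W \times_{M \times [0,1]} D(\nu \times \R)$ inherits a smooth structure, and $\St^\nu\rho$ factors as the smooth projection to $D(\nu \times \R)$ followed by the smooth inclusion $D(\nu \times \R) \hookrightarrow D(\nu) \times [-1,1]$.

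\textbf{Frontier.} Near $S(\nu \times \R)$ the smooth structure was defined using the lower collar of $W$ to identify a neighborhood of the frontier in the nontrivial region with a neighborhood of $S(\nu \times \R)$ inside the smooth manifold $D(\nu \times \R)$. Under this identification, $\St^\nu\rho$ becomes the inclusion $D(\nu \times \R) \hookrightarrow D(\nu) \times [-1,1]$, which matches the inclusion on the trivial side along $S(\nu \times \R)$ and is smooth.

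\textbf{Cone locus.} This is the main obstacle, since $|v|$ is not smooth at $v=0$. I will use the local chart $U \times D(V \times \R)$ from \autoref{smoothstr}, built from a bicollar $b \colon U \times (-\epsilon,\epsilon) \to \bar W$ with $h \circ b$ equal to projection on the second factor, together with a round trivialization of $\nu$ near $r(n_0)$. Unwinding the identifications, the map $\St^\nu\rho$ in this chart takes the form
\[ (n,v) \longmapsto \bigl( \, r(b(n,|v|)) \, , \, v \, \bigr) \in D(\nu) \times [-1,1], \]
where the second coordinate uses the chosen trivialization of $\nu$. The dependence on $v$ through the trivialization is manifestly smooth, so the only issue is the factor $r(b(n,|v|))$. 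Now $r \colon \bar W \to M$ is even because $\rho$ is even on $\bar W$, and $b$ was obtained by doubling a collar on $W$, hence satisfies $b(n,-t) = \sigma(b(n,t))$ for the reflection $\sigma$ of $\bar W$. Therefore $r \circ b \colon U \times (-\epsilon,\epsilon) \to M$ is an even smooth function of $t$. Applying \autoref{even_function_of_t2} to each coordinate of $r \circ b$ yields a smooth function $g \colon U \times [0,\epsilon^2) \to M$ with $r(b(n,t)) = g(n,t^2)$, and so
\[ r(b(n,|v|)) = g(n, |v|^2), \]
which is smooth in $(n,v)$ since $|v|^2$ is smooth. This establishes smoothness in the cone-locus chart and completes the verification. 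Independence of the chart follows from \autoref{stabsmooth}.
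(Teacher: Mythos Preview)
Your proof is correct and follows the same regional breakdown as the paper's (trivial region, frontier, nontrivial interior, cone locus). At the cone locus you are in fact more careful than the paper: you correctly work out that in the chart $U \times D(V \times \R)$ the map reads $(n,v) \mapsto (r(b(n,|v|)), v)$, observe that $r \circ b$ is even in $t$, and invoke \autoref{even_function_of_t2} to rewrite $r(b(n,|v|)) = g(n,|v|^2)$ and conclude smoothness at $v=0$. The paper's proof merely asserts that in this chart the map is ``the projection $U \times D(V \times \R) \to M \times D(V \times \R)$ by applying $r$ to the first coordinate,'' which taken literally would give $(r(n),v)$ rather than $(r(b(n,|v|)),v)$. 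Your derivation is the accurate one, and your use of the even-function lemma is precisely the mechanism the paper itself relies on in the parallel computation in the proof of \autoref{stabsmooth}.
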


\begin{proof}
	On the trivial region and the chart that includes the frontier, this becomes the identity map of a neighborhood of the trivial region, so it is smooth. At the cone locus this becomes the projection
	\[ U \times D(V \times \R) \to M \times D(V \times \R) \]
	by applying $r$ to the first coordinate, which is smooth.
\end{proof}

In order to define a collar for $\St^\nu(\bar W)$, we first show that $\St^\nu\rho$ is a diffeomorphism onto its image, when restricted to some neighborhood of the boundary of $\St^\nu(\bar W)$.

\begin{lem}\label{st_rho_full_rank}
	$\St^\nu\rho$ is full rank on an open set containing the trivial region, and at any point in the nontrivial region for which $\rho$ is full rank at the associated $w \in W$.
\end{lem}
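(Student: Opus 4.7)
The plan is to check invertibility of the derivative of $\St^\nu\rho$ chart by chart, using the three kinds of charts on $\St^\nu(\bar W)$ given in \autoref{smoothstr}. Since $\St^\nu(\bar W)$ and $D(\nu)\times[-1,1]$ have the same dimension $\dim M + \dim\nu + 1$, ``full-rank'' means the derivative is an isomorphism. On the interior of the trivial region $C(\nu\times\R)$, the map $\St^\nu\rho$ is by construction the inclusion into $D(\nu)\times[-1,1]$, which is a local diffeomorphism. Near the frontier $S(\nu\times\R)$, the collar-based chart of \autoref{smoothstr} identifies a neighborhood of the nontrivial side of the frontier with an open piece of $D(\nu\times\R)\subseteq D(\nu)\times[-1,1]$, and in this chart the projection $(w,v)\mapsto v$ defining $\St^\nu\rho$ becomes the identity. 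These two charts cover an open neighborhood of the whole trivial region, giving the first claim.

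Next fix a point $(w,v)$ in the nontrivial region at which $\rho$ is full-rank at $w$. If $w\notin N$ then $v\neq 0$ and the smooth structure is the fiber-product structure. Since $\rho$ is then a local diffeomorphism near $w$, we may use $\rho$ to identify a neighborhood of $w$ in $W$ with a neighborhood in $M\times[-1,0]$; under this identification the fiber product $W\times_{M\times[0,1]}D(\nu\times\R)$ becomes $D(\nu\times\R)$, and $\St^\nu\rho$ reduces to the inclusion $D(\nu\times\R)\hookrightarrow D(\nu)\times[-1,1]$ at an interior point, which has invertible derivative.

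The remaining and essential case is the cone locus $w=n\in N$, $v=0$. Working in the cone chart $U\times D(V\times\R)$ of \autoref{smoothstr}, unpacking the definitions gives
\[ \St^\nu\rho(n,v) = \bigl(r(b(n,-|v|)),\, v_1,\, v_2\bigr) \in M\times D(V)\times[-1,1], \]
where $v=(v_1,v_2)\in V\times\R$. As in the proof of \autoref{stabsmooth}, the apparent nonsmooth dependence on $|v|$ factors through $|v|^2$, so the partial derivatives of the $M$-component in the $v_i$-directions vanish at $v=0$. The derivative at $(n,0)$ is therefore the block-diagonal matrix with blocks $D(r|_N)_n$, $\id_V$, and $\id_\R$. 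The hypothesis that $\rho$ is full-rank at $n$ translates to $D\rho_n=(Dr_n,Dh_n)$ being an isomorphism $T_nW\to T_{r(n)}M\oplus\R$; since $Dh_n$ has rank $1$ with kernel $T_nN$, this is equivalent to $D(r|_N)_n$ being an isomorphism, which is exactly the condition making the block matrix invertible. The main obstacle in the argument is precisely this cone-chart computation: extracting the derivative requires the $|v|^2$-factorization already used in \autoref{stabsmooth} to isolate the $D(r|_N)_n$ block from the identity contributions in the fiber directions; once this decomposition is set up, the equivalence between full-rank of $\St^\nu\rho$ at the cone point and full-rank of $\rho$ at the associated $w\in N$ is immediate.
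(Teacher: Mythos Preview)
Your proof is correct and follows essentially the same route as the paper's: check the trivial region directly, handle the nontrivial region away from the cone locus via the fact that $\St^\nu\rho$ is locally a pullback of the full-rank $\rho$, and at the cone locus reduce to the equivalence between full-rank of $\rho$ at $n\in N$ and full-rank of $r|_N$ at $n$ (using that $Dh$ has rank one along $N$). Your explicit block-diagonal derivative computation in the cone chart, with the $|v|^2$-factorization killing the off-diagonal term, makes precise what the paper asserts in one sentence; the content is the same.
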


\begin{proof}
	For the trivial region this is obvious because the inclusion $C(\nu \times \R) \subseteq D(\nu) \times [-1,1]$ is full rank. For the nontrivial region away from the cone locus, this is also obvious because it is the pullback of the full rank map $\rho$. At the cone locus, it is full rank iff $r\colon N \to M$ is full rank, which is true iff $\rho\colon W \to M \times [-1,0]$ is full rank because we are already assuming that $h$ is full rank along $N$.
\end{proof}

Now suppose $\rho|_O$ is a diffeomorphism, from some open $(G \times C_2)$-invariant subset $O \subseteq \bar W$ containing $\partial \bar W$, to an open set $V \subseteq M \times [-1,1]$. Let $O'$ be the union of the trivial region and the points in the nontrivial region associated to $O$.

\begin{cor}\label{st_rho_diffeo}
	Under these assumptions, $(\St^\nu\rho)|_{O'}$ is a diffeomorphism onto its image $V' \subseteq D(\nu) \times [-1,1]$.
\end{cor}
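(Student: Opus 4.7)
The plan is to apply \autoref{depthopenlemma}, which gives that a smooth map between manifolds with corners of the same dimension that is injective, full-rank, and depth-preserving is an open embedding. Both $\St^\nu(\bar W)$ and $D(\nu)\times[-1,1]$ have dimension $n + \dim\nu + 1$, and $O'$ is open in $\St^\nu(\bar W)$: since $O \supseteq \partial\bar W$ contains a neighborhood of the bottom $M\subseteq W$, the nontrivial points associated to $O$ contain a neighborhood of the frontier, which together with the trivial region $C(\nu\times\R)$ cover an open set. Smoothness of $(\St^\nu\rho)|_{O'}$ is \autoref{rhosmooth}, and full rank on $O'$ follows from \autoref{st_rho_full_rank} combined with the hypothesis that $\rho|_O$ is a diffeomorphism.

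For injectivity, I use \autoref{indexing_coordinates}: a point of $\St^\nu(\bar W)$ is determined by its image under $\St^\nu\rho$ together with its associated $w \in W$ if it lies in the nontrivial region. Suppose $x_1, x_2 \in O'$ share an image $y \in D(\nu)\times[-1,1]$. If $y$ lies strictly outside $D(\nu\times\R)$, then both $x_i$ are in the trivial region, where $\St^\nu\rho$ is the inclusion, so $x_1 = x_2$. If $y$ is in the interior of $D(\nu\times\R)$, then both $x_i$ lie in the nontrivial region with associated $w_i \in W \cap O$; writing $y = (r(w_i), v, t)$ gives $r(w_1) = r(w_2)$ and $h(w_1)^2 = |v|^2 + t^2 = h(w_2)^2$, which since $h \leq 0$ on $W$ forces $\rho(w_1) = \rho(w_2)$, so $w_1 = w_2$ by injectivity of $\rho|_O$ and hence $x_1 = x_2$. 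At frontier points both cases occur but agree via the pushout gluing.

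The main obstacle is depth-preservation. On the trivial region, $\St^\nu\rho$ is the subspace inclusion $C(\nu\times\R) \hookrightarrow D(\nu)\times[-1,1]$: the chart from \autoref{smoothstr} at each frontier point identifies a neighborhood in $\St^\nu(\bar W)$ with an open subset of $D(\nu)\times[-1,1]$, so depth matches at the frontier, and at all other points of $C(\nu\times\R)$ depth is inherited from the ambient product. On the nontrivial points in $O'$, the associated $w$ lies in the lower collar of $W$, so using the collar to trivialize $W$ near $w$ one sees that a neighborhood in $\St^\nu(\bar W)$ is smoothly identified with an open subset of $M\times[-1,0] \times_{M\times[0,1]} D(\nu\times\R)$, with $\St^\nu\rho$ given by projection to the $D(\nu\times\R)$ factor followed by inclusion into $D(\nu)\times[-1,1]$; depth preservation then follows from the fact that $\rho|_O$ is a diffeomorphism (hence depth-preserving) together with the product corner structure of the fiber product. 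Once these four hypotheses are verified, \autoref{depthopenlemma} concludes that $(\St^\nu\rho)|_{O'}$ is an open embedding onto an open subset $V' \subseteq D(\nu)\times[-1,1]$, i.e., a diffeomorphism $O' \to V'$.
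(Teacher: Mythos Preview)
Your proof is correct and rests on the same two ingredients the paper cites, \autoref{indexing_coordinates} for injectivity and \autoref{st_rho_full_rank} for full rank. The paper's entire argument is the one line ``Follows immediately from \autoref{indexing_coordinates} and \autoref{st_rho_full_rank},'' so you are not taking a different route so much as filling in what the paper leaves implicit.

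The one genuine addition in your write-up is the appeal to \autoref{depthopenlemma} and the accompanying verification of depth preservation. This is a reasonable way to close the gap between ``smooth, injective, full-rank'' and ``diffeomorphism onto an open image'' in the corners setting, and the paper does not spell out how it makes that step. An alternative (and perhaps what the paper has in mind) is to observe from the proof of \autoref{rhosmooth} that in each of the charts used to define the smooth structure on $\St^\nu(\bar W)$, the map $\St^\nu\rho$ is either literally the identity on an open subset of $D(\nu)\times[-1,1]$ (near the trivial region and frontier) or the projection $U\times D(V\times\R)\to M\times D(V\times\R)$ (near the cone locus); on $O'$ the latter also becomes a diffeomorphism because $r|_N$ has full rank there. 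Either way one gets a local diffeomorphism, and injectivity finishes. Your route via \autoref{depthopenlemma} and the chart-based route buy the same conclusion; yours is more systematic, the other is quicker once one remembers how the charts were built.

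A minor notational point: in your injectivity paragraph you write $y=(r(w_i),v,t)$, but $y$ is a point of $D(\nu\times\R)$, so it would be cleaner to say that the base of $y$ in $M$ is $r(w_i)$ and the fiber norm is $-h(w_i)$. The argument itself is fine.
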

 
\begin{proof}
	Follows immediately from \autoref{indexing_coordinates} and \autoref{st_rho_full_rank}.
\end{proof}

To give the collar for $\St^\nu(\bar W)$, we take $O \subseteq \bar W$ to be the image of a germ of a collar, an open subset containing the top, bottom, and sides of $\bar W$. By \autoref{st_rho_diffeo}, since $\rho$ is a diffeomorphism on $O$, $(\St^\nu\rho)|_{O'}$ is a diffeomorphism to an open subset $V' \subseteq D(\nu) \times [-1,1]$. It is easy to check that $V'$ contains the top, bottom, and sides (including the side $D(\nu)|_{\partial M} \times [-1,1]$), so we use this as our germ of a collar for $\St^\nu(\bar W)$.

\orange{
Finally, we check that the height function and retraction that we have constructed have the additional properties required by \autoref{defn_encasing_function}.
\begin{lem}\label{rhoallowed}
	The new height function satisfies
	\begin{itemize}
		\item $(\St^\nu h)^{-1}(-1)$ is the bottom $D(\nu)$,
		\item $(\St^\nu h)^{-1}(0) =  W \times_{M \times [0,1]} D(\nu \times \{0\})$,
		\item $D(\St^\nu h)$ is full-rank along $W \times_{M \times [0,1]} D(\nu \times \{0\})$,
	\end{itemize}
	 and the new retraction $\St^\nu r$ sends the complement of the sides of $\St^\nu(\bar W)$ into the interior of $D(\nu)$.
\end{lem}

\begin{proof}
	The height function arises from the projection of $\nu \times \R$ to the $\R$ coordinate, making the first two claims straightforward to check. To see it is full-rank along the preimage of $0 \in \R$, we know this on a neighborhood of the trivial region by \autoref{st_rho_full_rank}. At a point $(w,v,0)$ in the nontrivial region $W \times_{M \times [0,1]} D(\nu \times \R)$, the curve $t \mapsto (w,v\cos t,\sin t)$ projects along the height function to the curve $t \mapsto \sin t$, and therefore the height function is full rank. At the cone locus, the local chart for $\St^\nu(\bar W)$ is $U \times D(\nu \times \R)$, and the height function is the projection to $\R$, which is clearly full rank.
	
	The condition on the retraction is true on a neighborhood of the trivial region by \autoref{st_rho_full_rank}, and on the nontrivial region as well, when the point in $\bar W$ is in the chosen neighborhood $O$ of the boundary. The remaining points are of the form $(w,v,t) \in W \times_{M \times [0,1]} D(\nu \times \R)$, where $w \not\in O$ and $|(v,t)| < 1$, and the retract sends this to the second coordinate $v \in D(\nu)$. Since $w \not\in O$, $r(w) \not\in \partial M$, which implies by the fiber product condition that $v \not\in D(\nu)|_{\partial M}$. Furthermore the fact that $|(v,t)| < 1$ implies that $|v| < 1$, so that $v \not\in S(\nu)$. All together this proves that $(\St^\nu r)(w,v,t) = v$ is in the interior of $D(\nu)$, as required.
\end{proof}
}

This concludes the construction of $\St^\nu(\bar W)$ as an encased mirror $h$-cobordism. Combining this with codimension 0 embeddings allows us to stabilize along an arbitrary embedding $M \to M'$. 

\begin{defn}\label{fulldef}
Let $W$ be a mirror $h$-cobordism on $M$, $M\to M'$ an embedding with normal bundle $\nu\to M$, and $D(\nu) \to M'$ a tubular neighborhood. We define the double $\St^e(\bar W)$ of the stabilization $\St^e(W)$ by first forming $\St^\nu(\bar W)$ as in \autoref{hcob_stab} and then applying \autoref{codim0def} to the codimension 0 embedding $D(\nu)\to M'$.
\end{defn}

\begin{figure}[h]
	\centering
	\def\svgwidth{4.2in}
	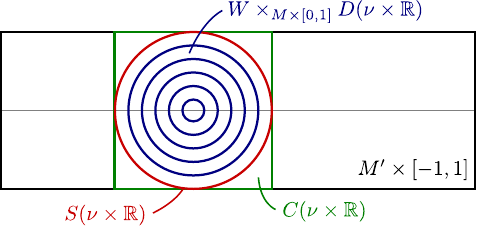
	\vspace{-.5em}
	\caption{The stabilization $\St^e(\bar W)$ for an embedding $e\colon M\to M'$.}\label{fig:polar_stab_2}
\end{figure}

Note that \autoref{indexing_coordinates} applies also to this extended stabilization. Let $C$ be the complement of the image of the interior of $D(\nu)$. The trivial region now means the union of $C \times [-1,1]$ and $C(\nu \times \R)$. The image of the nontrivial region under $\St^e \rho$ is still $D(\nu \times \R) \subseteq M' \times [-1,1]$, while the image of the trivial region under $\St^e \rho$ is the complement of the interior of this disc bundle.

We will also need the following functoriality property of stabilization with respect to isomorphisms, which is easy to check from the definitions.

\begin{lem}\label{isoonstab}
Each encased diffeomorphism $f\colon W\xrightarrow{\cong} W'$ of $h$-cobordisms over $M$ (\autoref{encased_diffeo}) induces an encased diffeomorphism $$\St^e(f)\colon \St^e(\bar W)\xrightarrow{\cong} \St^e (\bar W')$$
of $h$-cobordisms over $M'$.
\end{lem}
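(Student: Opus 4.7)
The plan is to build $\St^e(f)$ piecewise using the decomposition of $\St^e(\bar W)$ into its trivial and nontrivial regions, glue the pieces on overlaps, and then verify smoothness and encasement-compatibility by unpacking the chart definitions.

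First I would set $\St^e(f) := \bar f \times \id$ on the nontrivial region $W \times_{M \times [0,1]} D(\nu \times \R)$; this is well defined because the encased hypothesis $\rho' \circ \bar f = \rho$ implies $(r', |h'|) \circ \bar f = (r, |h|)$, so $\bar f$ is compatible with the two maps into $M \times [0,1]$ used in the fiber product. On the trivial region $C(\nu \times \R)$ and on the codimension $0$ extension $\overline{M' \setminus D(\nu)} \times [-1,1]$, I would define $\St^e(f)$ to be the identity. Since $f$ agrees with the identity modulo lower collars near the bottom and sides of $W$, the pieces glue along the frontier $S(\nu \times \R)$ and along $\partial D(\nu) \times [-1,1]$, producing a continuous $(G \times C_2)$-equivariant map $\St^e(\bar W) \to \St^e(\bar W')$.

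Next I would verify smoothness chart-by-chart using \autoref{smoothstr}. On the interior of the trivial region and on the nontrivial region away from the cone locus and frontier, the map is a fiber product of smooth maps over smooth submersions, hence smooth. Near the frontier, both smooth structures are pulled back from $D(\nu) \times [-1,1]$ via the lower collar identification, and $\St^e(f)$ is the identity in these coordinates. Near a cone point $n \in N$ with chart given by a bicollar $b\colon U \times (-\epsilon,\epsilon) \to \bar W$ (for which $h \circ b$ is projection to the second coordinate) and a trivialization of $\nu$ near $r(n)$, I would take the bicollar $\bar f \circ b$ at $f(n) \in N'$ together with the same trivialization of $\nu$ (valid because $r'(f(n)) = r(n)$). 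The encased identity $h' \circ \bar f = h$ guarantees that $h' \circ (\bar f \circ b)$ is still projection to the second coordinate, so this yields a legitimate chart for $\St^e(\bar W')$, and in these paired charts $\St^e(f)$ is literally the identity on an open subset of $U \times D(V \times \R)$. Applying the same construction to $f^{-1}$ gives a smooth inverse.

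To conclude, $\St^e(f)$ commutes with the encasing functions essentially by construction: $\St^e \rho$ is defined (as in \autoref{st_rho}) by projecting away from the $W$ factor onto $D(\nu) \times [-1,1]$, and $\rho' \circ \bar f = \rho$ forces this projection to be compatible with $\St^e(f)$. Since the germ of the lower collar of $\St^e(\bar W)$ is obtained from the inverse of $\St^e \rho$ on a neighborhood of the boundary (\autoref{st_rho_diffeo}), commutation with $\St^e \rho$ automatically gives commutation with the lower collar germ. The step demanding the most attention is cone-locus smoothness, since the charts in \autoref{smoothstr} depend on auxiliary bicollar and trivialization choices; the encased condition $h' \circ \bar f = h$ is exactly what is needed to transport a chart at $n$ to a compatible chart at $f(n)$, so that $\St^e(f)$ becomes smooth in the paired charts without further work.
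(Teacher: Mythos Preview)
Your argument is correct and is precisely the ``easy check from the definitions'' that the paper leaves to the reader; the paper gives no proof beyond that phrase, so you have supplied the details it omits. One small wording issue: at the cone locus you say $\St^e(f)$ is ``literally the identity'' in the paired charts, but this is only true if you are implicitly identifying the parameter neighborhood $U \subseteq N$ with $f(U) \subseteq N'$ via $f$; if instead you keep $U' = f(U)$ as a subset of $N'$, the map in charts is $f|_U \times \id$ rather than the identity --- still smooth, of course, so the conclusion is unaffected.
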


\subsection{Iterated stabilization}\label{composing_stabilizations}
Now assume that we have two composable embeddings of compact $G$-manifolds with corners
\[ M_0 \to M_1 \to M_2 \]
extending to codimension 0 embeddings
\[ e_{01}\colon D(\nu_{01}) \to M_1, \quad e_{12}\colon D(\nu_{12}) \to M_2, \quad e_{02}\colon D(\nu_{02}) \to M_2 \]
where the various $D(\nu_{ij})$ are smooth $G$-equivariant round bundles
\[ D(\nu_{01}) \to M_0, \quad D(\nu_{12}) \to M_1, \quad D(\nu_{02}) \to M_0 \]
and the embedding $e_{02}$ identifies $D(\nu_{02})$ with the round composite of
\[ \xymatrix{ D(\nu_{12})|_{D(\nu_{01})} \ar[r] & D(\nu_{01}) \ar[r] & M_0. } \]
In this setup we will define a bijection $\St^{e_{12}}\St^{e_{01}}(\bar W) \cong \St^{e_{02}}(\bar W)$ and prove it is a diffeomorphism.

\begin{prop}\label{canonical_homeo}
	There is a canonical natural homeomorphism \[ \St^{e_{12}}\St^{e_{01}}(\bar W) \cong \St^{e_{02}}(\bar W). \]
\end{prop}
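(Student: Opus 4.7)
My strategy is to build the bijection stratum by stratum, using the characterization from \autoref{indexing_coordinates}: each point of the iterated (resp.\ single) stabilization is uniquely specified by its image in $M_2 \times [-1,1]$ under the encasing function, together with its associated point of $W$ when that point lies in the innermost nontrivial stratum. I will identify two points when these data agree.

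The key algebraic computation is that the round composite identification $D(\nu_{02}) \cong \{(v_{01},v_{12}) \in D(\nu_{12})|_{D(\nu_{01})} : |v_{01}|^2 + |v_{12}|^2 \leq 1\}$ from \autoref{sec:round} translates the iterated fiber-product conditions into the single one. Concretely, a point in the outer-nontrivial and inner-nontrivial stratum of the iterated side is a tuple $(w, v_{01}, s, v_{12}, r)$ with $w \in W$, $v_{01}\in D(\nu_{01})_{r(w)}$, $v_{12}\in D(\nu_{12})_{v_{01}}$, and $s \leq 0$ (inherited from working with $\St^{e_{01}}(W)$ rather than its double), satisfying the inner condition $h(w)^2 = |v_{01}|^2 + s^2$ and the outer condition $s^2 = |v_{12}|^2 + r^2$. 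The second equation together with $s \leq 0$ forces $s = -\sqrt{|v_{12}|^2 + r^2}$, and substitution gives $h(w)^2 = |v_{01}|^2 + |v_{12}|^2 + r^2 = |v_{02}|^2 + r^2$ with $v_{02} = (v_{01}, v_{12})$, which is precisely the defining equation for a point of the nontrivial region of $\St^{e_{02}}(\bar W)$. Thus the auxiliary coordinate $s$ is uniquely recovered from the exterior data, so $(w,v_{01},s,v_{12},r) \leftrightarrow (w,v_{02},r)$ is a well-defined bijection on this stratum. A parallel calculation handles each remaining stratum (outer-nontrivial/inner-trivial with $m_1 \in D(\nu_{01})$, outer-nontrivial/inner-trivial with $m_1 \notin D(\nu_{01})$, outer-trivial inside $D(\nu_{12})$, and the far trivial region outside $D(\nu_{12})$); in each case the iterated stratum lands in whichever stratum of $\St^{e_{02}}(\bar W)$ has the matching image in $M_2 \times [-1,1]$, and this always lies in the trivial region of the single stabilization.

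For the topology, both spaces are constructed as colimits of explicit pushouts and fiber products (via \autoref{hcob_stab} and \autoref{fulldef}), and on each stratum the bijection is given by the continuous formulas $v_{02} = (v_{01},v_{12})$ and $s = -\sqrt{|v_{12}|^2 + r^2}$ (and their continuous inverses). These agree on the overlaps that glue the strata together, so the universal property of the colimits yields continuous maps in both directions. Naturality in $W$ is then immediate since the bijection is determined entirely by $w$, $h(w)$, $r(w)$, and the round composite structure on $\nu_{02}$, so it commutes with encased diffeomorphisms via \autoref{isoonstab}.

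The main obstacle is checking compatibility at the frontiers and cone loci where different strata meet. One must confirm that the distinct substructures on the iterated side (the inner frontier $S(\nu_{01}\times\R)$, the inner cone locus $N \subset W$, the outer frontier $S(\nu_{12}\times\R)$, and the outer cone locus $N_1$ in $\St^{e_{01}}(\bar W)$) fit together under the bijection with the single frontier $S(\nu_{02}\times\R)$ and cone locus $N$ on the other side; e.g.\ the inner cone ($h(w) = 0$) forces $v_{01} = s = 0$ and hence $v_{12} = r = 0$, landing exactly on the single cone locus, while the outer cone ($v_{12} = r = 0$) lands on the $r = 0$ slice of the single nontrivial region with $v_{02} = (v_{01},0)$. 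That this bookkeeping closes up is precisely the payoff for using round rather than linear bundle structure: the identity $|v_{02}|^2 = |v_{01}|^2 + |v_{12}|^2$ from the round composite makes the algebra balance without introducing any additional choices.
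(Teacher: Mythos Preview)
Your proof is correct and follows essentially the same strategy as the paper: both use \autoref{indexing_coordinates} to determine the map uniquely from the requirement that it preserve the encasing function and the associated point in $W$, and both verify compatibility via the norm identity $|v_{02}|^2 = |v_{01}|^2 + |v_{12}|^2$ coming from the round composite. The organizational differences are minor: where you enumerate five strata and check each one, the paper consolidates all of your ``trivial'' cases into a single ``extended trivial region'' by invoking \autoref{st_rho_diffeo}, and for the homeomorphism step the paper simply notes that a continuous bijection from a compact space to a Hausdorff space is a homeomorphism, rather than exhibiting continuous inverses stratum by stratum.
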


\orange{We take this homeomorphism to be the unique map that commutes with the encasing function to $M_2 \times [-1,1]$, and that also commutes with the projection to $W$ when in the nontrivial region, see \autoref{regions}.}
By \autoref{indexing_coordinates}, there is at most one map with this property, so we are really just claiming that such a map exists and is a homeomorphism. The canonical map is also natural with respect to isomorphisms, meaning that for each encased diffeomorphism $f\colon W\xrightarrow{\cong} W'$ the following square commutes.
	\[\xymatrix{
	\St^{e_{12}}\St^{e_{01}}(\bar W) \ar[d]_-{\St^{e_{12}}\St^{e_{01}}(f)}^-\cong \ar[r]^-{\cong} &   \St^{e_{02}}(\bar W) \ar[d]_-{\cong}^-{\St^{e_{02}}(f)}\\
	\St^{e_{12}}\St^{e_{01}}(\bar W') \ar[r]^-{\cong} &   \St^{e_{02}}(\bar W')
	}\]
	
\begin{proof}
We begin by identifying the trivial regions. The trivial region of $\St^{e_{01}}(\bar W)$ consists of all points that go to the complement of the interior of $D(\nu_{01} \times \R)$ in $M_1 \times [-1,1]$. If we apply \autoref{st_rho_diffeo} to an open neighborhood of this first trivial region, we conclude that $\St^{e_{12}}\St^{e_{01}}\rho$ is a diffeomorphism on an ``extended trivial region'' in the double stabilization $\St^{e_{12}}\St^{e_{01}}(\bar W)$, consisting of both the complement of the interior of $D(\nu_{12} \times \R)$ in $M_2 \times [-1,1]$, and those points in
\[ \St^{e_{01}}(W) \times_{M_1 \times [0,1]} D(\nu_{12} \times \R) \]
where the first coordinate in $\St^{e_{01}}(W)$ is in the first trivial region. This hits all the points in $D(\nu_{12} \times \R)$ except those in the image of
\[ D(\nu_{01} \times [0,1]) \times_{D(\nu_{01}) \times [0,1]} D(\nu_{12} \times \R)|_{D(\nu_{01})}, \]
mapping to $D(\nu_{12} \times \R) \subseteq M_2 \times [-1,1]$ by projection. The map sends $(v,|u|,u)$ to $(v,u)$, so if we regard the restricted disc bundle as a product
\[ D(\nu_{12} \times \R)|_{D(\nu_{01})} \cong D(\nu_{01}) \times D(\nu_{12} \times \R) \]
then the map hits all pairs $(v,u)$ such that $|v|^2 + |u|^2 < 1$. In other words, the extended trivial region is identified with the complement of the interior of $D(\nu_{02} \times \R)$. However this is exactly the same set of points that are identified with the trivial region in $\St^{e_{02}}(\bar W)$. We conclude there is a unique identification between the extended trivial region in $\St^{e_{12}}\St^{e_{01}}(\bar W)$ and the trivial region in $\St^{e_{02}}(\bar W)$ that commutes with the \rhsperiod

For the nontrivial regions, we pick a trivialization of $D(\nu_{12})$ along the fibers of $D(\nu_{01})$, so that we can regard $D(\nu_{02})$ as the disc inside a direct sum round bundle $D(\nu_{01} \oplus \nu_{12})$. We take the map of fiber products
\begin{equation}\label{canonical_map_nontrivial_region} 
\xymatrix @R=1.5em {
	W \times_{M_0 \times [0,1]} D(\nu_{01} \times \R) \times_{D(\nu_{01}) \times [0,1]} D(\nu_{12} \times \R)|_{D(\nu_{01})}
	\ar[d] & (w,v,t,u) \ar@{|->}[d]\\
	W \times_{M_0 \times [0,1]} D(\nu_{01} \oplus \nu_{12} \times \R) & (w,v,u).
}
\end{equation}
Here $v \in \nu_{01}$, $t \in [0,1]$, and $u \in \nu_{12} \times \R$. This map satisfies \autoref{indexing_coordinates} because it preserves the associated point in $W$, and the \rhs take each side to
\[ (v,\ u) \in D(\nu_{01}) \times D(\nu_{12} \times \R) \subseteq M_2 \times [-1,1]. \]
It is also a bijection with inverse $(w,v,u) \mapsto (w,v,|u|,u)$.

All together this gives the canonical bijection $\St^{e_{12}}\St^{e_{01}}(\bar W) \cong \St^{e_{02}}(\bar W)$. It is clearly continuous, and since the source is compact it is also a homeomorphism.
\end{proof}

We note the following associative property of the canonical homeomorphism.

\begin{lem}\label{assoc_stab}
For any composite of three embeddings with tubular neighborhoods, 
the iterated stabilization maps fit into a commutative square
\[\xymatrix{
\St^{e_{03}}\bar W \ar[r]^-\cong \ar[d]_-\cong & \St^{e_{23}}\St^{e_{02}} \bar W \ar[d]^-\cong\\
\St^{e_{13}}\St^{e_{01}} \bar W  \ar[r]^-\cong & \St^{e_{23}} \St^{e_{12}} \St^{e_{01}} \bar W.
}\]
\end{lem}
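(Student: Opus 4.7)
The plan is to reduce the commutativity of the square to the uniqueness clause of \autoref{indexing_coordinates}: a point in a stabilization is determined by its image under the \rh together with, when it lies in the nontrivial region, its associated point in $\bar W$. Each of the four canonical homeomorphisms in the square was constructed in \autoref{canonical_homeo} precisely so as to preserve these two pieces of data; hence both composites around the square preserve them as well, and the uniqueness statement forces the two composites to agree.

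First I would check that the data of the square is coherent, i.e.\ that the round composite operation is associative. Given compatible trivializations of $D(\nu_{12})|_{D(\nu_{01})}$ along $D(\nu_{01})$ and of $D(\nu_{23})|_{D(\nu_{02})}$ along $D(\nu_{02})$, both bracketings of the iterated round composite produce the same subset of the bundle over $M_0$ with fiber $V_{01}\times V_{12}\times V_{23}$, namely the set of triples $(v_1,v_2,v_3)$ with $|v_1|^2+|v_2|^2+|v_3|^2\leq 1$. Thus $D(\nu_{03})$ is canonically identified both with the round composite of $D(\nu_{02})$ and $D(\nu_{23})$, and with the round composite of $D(\nu_{01})$ and $D(\nu_{13})$, in mutually compatible ways, so each arrow of the square is defined.

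Next I would iterate \autoref{indexing_coordinates}: by induction on the number of stabilizations, each point in $\St^{e_{23}}\St^{e_{12}}\St^{e_{01}}\bar W$ is uniquely recovered from its image under the iterated \rh to $M_3\times[-1,1]$ together with its associated point in $\bar W$ when that point lies in the innermost nontrivial region, i.e.\ in the fiber product $\bar W \times_{M_0\times[0,1]} D(\nu_{01}\times[0,1])\times_{D(\nu_{01})\times[0,1]}\cdots$. Each of the four arrows in the square is assembled from the maps of \autoref{canonical_homeo}, which on the trivial region preserve the \rh by \autoref{st_rho_diffeo}, and on the nontrivial region act as the identity on the $\bar W$-coordinate of the fiber product \eqref{canonical_map_nontrivial_region} while preserving the image in $M_i\times[-1,1]$. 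Consequently both composites around the square preserve both invariants and therefore coincide.

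The only real obstacle is bookkeeping: one must ensure that the two ways of regrouping the iterated fiber product of $\bar W$ with $D(\nu_{01}\times[0,1])$, $D(\nu_{12}\times[0,1])|_{D(\nu_{01})}$, and $D(\nu_{23}\times\R)|_{D(\nu_{02})}$ yield the same underlying subspace. This reduces to ordinary associativity of iterated fiber products combined with the associativity of the round composite noted above. No additional smoothness or transversality verification is required, since each arrow is already a homeomorphism by \autoref{canonical_homeo}, and the uniqueness argument renders the resulting identification independent of the auxiliary trivializations chosen in the construction of \autoref{canonical_homeo}.
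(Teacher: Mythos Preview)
Your proposal is correct and takes essentially the same approach as the paper. The paper's proof is a single sentence invoking ``the defining property of the canonical homeomorphism of \autoref{canonical_homeo}'', namely that it is the unique map preserving the encasing function and the associated $W$-point; you have correctly unpacked this into the uniqueness argument via \autoref{indexing_coordinates}, together with the observation that all four arrows (including the two built via \autoref{isoonstab}) preserve both invariants.
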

Here the bottom horizontal map is the canonical isomorphism from \autoref{canonical_homeo}, applied to the cobordism $\St^{e_{01}} \bar W $, and the right vertical map is the isomorphism from \autoref{isoonstab} induced on the stabilization $ \St^{e_{23}}$ by the canonical isomorphism  $\St^{e_{02}} \bar W\cong \St^{e_{12}} \St^{e_{01}}\bar W$.

\begin{proof}
	This follows easily from the defining property of the canonical homeomorphism of \autoref{canonical_homeo}.
\end{proof}

Lastly, we prove that the iterated stabilization map is a diffeomorphism with respect to the smooth structures we defined on the stabilizations.

\begin{thm}
	The canonical homeomorphism
	\[ \St^{e_{12}}\St^{e_{01}}(\bar W) \cong \St^{e_{02}}(\bar W) \]
	is an encased diffeomorphism.
\end{thm}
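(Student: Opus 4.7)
The plan is to verify that the canonical homeomorphism $\Psi\colon \St^{e_{12}}\St^{e_{01}}(\bar W) \to \St^{e_{02}}(\bar W)$ is smooth with smooth inverse by a local analysis in three regimes. By \autoref{indexing_coordinates} the map $\Psi$ is uniquely determined by its preservation of the encasing function and of the associated point in $W$; since both of these are smooth on each side, each smoothness check reduces to a chart computation. Once smoothness is established, the remaining encased-diffeomorphism requirements come for free: lower collar germs are determined by the encasing function near the boundary via \autoref{st_rho_diffeo}, the encasing function is preserved by construction, and the $C_2$-equivariance of the whole setup propagates through the double to give {\mirror} compatibility.

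The first two regimes are routine. On the extended trivial region identified in the proof of \autoref{canonical_homeo}, two applications of \autoref{st_rho_diffeo} show that $\St^{e_{12}}\St^{e_{01}}\rho$ is a diffeomorphism onto its image, which lies in the trivial region of $\St^{e_{02}}(\bar W)$ where $\St^{e_{02}}\rho$ is also a diffeomorphism, so $\Psi$ factors through the encasing functions and is a diffeomorphism. At a point of the nontrivial region at which $\rho$ is a submersion at the associated $w \in W$ and the second-stabilization radial coordinate is nonzero---so away from both cone loci---both smooth structures coincide with the fiber-product smooth structure from \autoref{smoothstr}; after the round-composite identification $D(\nu_{02}) \cong D(\nu_{01} \oplus \nu_{12})$ the two fiber products become the same space, and the map \eqref{canonical_map_nontrivial_region} is the identity in these coordinates.

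The technical core is the analysis at the cone loci: the cone locus of $\St^{e_{02}}(\bar W)$ is $N$, while the cone locus of $\St^{e_{12}}\St^{e_{01}}(\bar W)$ is the top $N_1$ of $\St^{e_{01}}(W)$, with $N \subseteq N_1$. For a point $n \in N$ the plan is to set up both charts from matching data: a $(G_n \times C_2)$-equivariant bicollar $b$ of $N$ in $\bar W$ with $h \circ b$ the projection, round-bundle trivializations of $\nu_{01}$ and $\nu_{12}$ near $r(n)$ whose direct sum yields the trivialization of $\nu_{02}$ used for the target chart, and a bicollar of $N_1$ in $\St^{e_{01}}(\bar W)$ extending the inner cone-locus chart. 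In these coordinates $\Psi$ becomes an explicit formula whose smoothness follows from \autoref{odd_factor_out_t} and \autoref{even_function_of_t2}, applied to absorb the norms produced by iterated stabilization, exactly as in the proof of \autoref{stabsmooth}. For a point $n \in N_1 \setminus N$ the image lies in the interior of the nontrivial region of the target, so the source bicollar chart at $N_1$ is compared against the target fiber-product chart, and compatibility follows because the bicollar of $N_1$ can be chosen to extend the submersion structure of $\St^{e_{01}}\rho$ off $N$. The main obstacle is the first of these subcases: matching the iterated cone-locus chart to the single cone-locus chart of $\St^{e_{02}}$. What makes it possible is precisely the round-composite identification $\nu_{02} \cong \nu_{01} \oplus \nu_{12}$, which ensures that the two successive trivializations combine into a single trivialization of $\nu_{02}$ with no further choices required.
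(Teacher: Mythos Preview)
Your decomposition into regimes matches the paper's: the extended trivial region (handled via \autoref{st_rho_diffeo}), the nontrivial region away from both cone loci (the paper's Case~1), points of $N_1 \setminus N$ (Case~2), and points of $N$ (Case~3). The handling of the first two regimes is the same as the paper's.

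Where you diverge is in the technique anticipated for the cone-locus cases. You expect an explicit formula whose smoothness must be extracted using \autoref{odd_factor_out_t} and \autoref{even_function_of_t2}, as in the proof of \autoref{stabsmooth}. The paper instead exploits the freedom---already established by \autoref{stabsmooth}---to choose \emph{any} bicollar when writing down a chart. In Case~3, after choosing a bicollar $b$ for $N$ in $\bar W$, the resulting chart on $\St^{e_{01}}(\bar W)$ identifies a neighborhood of the inner cone locus with $U \times D(V_1 \times \R)$, and the top $N_1$ with $U \times D(V_1)$; the paper then takes the bicollar $c$ for $N_1$ to be the literal inclusion $U \times D(V_1) \times (-\epsilon,\epsilon) \subseteq U \times D(V_1 \times \R)$. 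With this choice, $\Psi$ in charts reads $(n,y,u) \mapsto (n,y,|u|,u) \mapsto (n,y,u)$, i.e.\ the identity. Case~2 is analogous: the bicollar $(w,y,t)\mapsto (w,y,t)$ on the part of $N_1$ away from $N$ reduces the composite to the identity. No further appeal to the even/odd lemmas is needed; they already did their work inside \autoref{stabsmooth}.

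Your plan is therefore correct in outline, and since you propose choosing the bicollar of $N_1$ ``extending the inner cone-locus chart'' you are in effect making the paper's choice. The only adjustment is that once you do this, $\Psi$ becomes the identity in coordinates, so the smoothness argument you anticipate is unnecessary. The even/odd lemmas would be the right tool if you had chosen \emph{incompatible} bicollars on the two sides and needed to compare the resulting charts, but that is exactly the comparison \autoref{stabsmooth} has already carried out in order to make the cone-locus chart well-defined; invoking them again here would be redoing that work rather than using it.
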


\begin{proof}
	We already know this homeomorphism commutes with the \rhsperiod The fact that it is a diffeomorphism between the extended trivial region in $\St^{e_{12}}\St^{e_{01}}(\bar W)$ and the trivial region in $\St^{e_{02}}(\bar W)$ follows because the \rhs are diffeomorphisms on those regions (and so the canonical homeomorphism is a composite of two diffeomorphisms).
	
	It remains to show that the canonical homeomorphism is smooth and full rank inside the nontrivial region, where it is given by \eqref{canonical_map_nontrivial_region}. Since the check is local and the smooth charts are defined by trivializing the bundles, without loss of generality the bundles are trivial, so the map to check is
	\begin{equation}\label{canonical_map_simplified}
\xymatrix @R=0.5em{
	W \times_{[0,1]} D(V_1 \times [0,1]) \times_{[0,1]} D(V_2 \times \R)
	\ar[r] &
	W \times_{[0,1]} D(V_1 \times V_2 \times \R)
	\\
	(w,v,t,u) \ar@{|->}[r] & (w,v,u).
}
\end{equation}
	The points in the nontrivial region can be divided into three cases.
	
\noindent  \emph{Case 1: Points  not in the cone locus of $\St^{e_{12}}$.} These are the points $(w,v,t,u)$ for which $t \neq 0$, and therefore also $u \neq 0$ and $w \not\in N$. Both fiber products on the left-hand side of \eqref{canonical_map_simplified} are being taken over $(0,1)$ here, where the map on the right is a submersion and the smooth structure on the fiber product is being inherited from the product. Smoothness is therefore clear. The inverse has formula $(w,v,u) \mapsto (w,v,|u|,u)$, which is also smooth since $|u| > 0$. Therefore the map has full rank at these points as well.

\noindent	\emph{Case 2: Points in the cone locus of $\St^{e_{12}}$ but not also in the cone locus of $\St^{e_{02}}$}. These are the points $(w,v,t,u)$ for which $t = 0$ and therefore $u = 0$, but $w \not\in N$. On the left-hand side of \eqref{canonical_map_simplified} this means the first fiber product is over $(0,1)$ but the second is at 0.
	
	We adopt the shorthand
	\[ W_0 = W \setminus (M \amalg N), \qquad D_0 = D \setminus (\{0\} \cup \partial D), \]
	\[ N_1 = W_0 \times_{(0,1)} D_0(V_1), \qquad \bar W_1 = W_0 \times_{(0,1)} D_0(V_1 \times \R). \]
	So $N_1$ is the top of $\St^{\nu_{01}}(W)$ minus the cone locus and frontier, and $\bar W_1$ is a neighborhood of $N_1$ in $\St^{\nu_{01}}(\bar W)$. We choose the bicollar for $N_1$ in $\bar W_1$ by the formula
\begin{equation*}
	\xymatrix @R=0.5em{
	W_0 \times_{(0,1)} D_0(V_1) \times (-\epsilon,\epsilon) \ar[r] & W_0 \times_{(0,1)} D_0(V_1 \times \R) \\
	(w,y,t) \ar@{|->}[r] & (w,y,t).
	}
\end{equation*}
	As interpreted strictly, this is not defined on the entire domain, but we interpret it as defined near an arbitrary point in the domain for small enough $\epsilon$. It is equivariant and full rank and therefore defines an equivariant bicollar.

	The chart from \autoref{smoothstr} that gives the smooth structure on $\St^{\nu_{12}}$ is the composite of the first two maps below. The remaining map below is the canonical homeomorphism.
	\[ \resizebox{\textwidth}{!}{$
	\xymatrix @R=1.5em @C=.7em{
		N_1\times D(V_{2}\times \R) & W_0 \times_{(0,1)} D_0(V_1)\times D(V_{2}\times \R) & (w, y, v) \\
		\Big( N_1\times [0, \epsilon) \Big) \times_{[0,1]} D(V_{2}\times \R) \ar[u]_-\cong \ar[d]^-b & \Big( W_0 \times_{(0,1)} D_0(V_1)\times [0, \epsilon) \Big) \times_{[0,1]} D(V_{2}\times \R) \ar[u]_-\cong \ar[d]^-b & (w, y, |v|, v) \ar@{|->}[u] \ar@{|->}[d] \\
		W_1 \times_{[0,1]} D(V_{2} \times \R) \ar[d] & W_0 \times_{(0,1)} D_0(V_1 \times [0,1]) \times_{[0,1]} D(V_{2} \times \R) \ar[d] & (w, y, |v|, v) \ar@{|->}[d] \\
		W_0 \times_{[0,1]} D(V_1 \times V_2 \times \R) & W_0 \times_{[0,1]} D(V_1 \times V_2 \times \R) & (w, y, v) \\
	}$}
\]
	The composite is (a restriction of) an identity map, so it is smooth and full rank.
	
\noindent \emph{Case 3: Points in the cone locus of both $\St^{e_{01}}$ and $\St^{e_{02}}$}. These are the points $(w,v,t,u) = (n_0,0,0,0)$ with $n_0 \in N$. From \autoref{smoothstr}, the smooth structure on $\St^{e_{01}}(W)$ near one such point is defined by taking a neighborhood $U$ of $n_0$ in $N$ and $(G \times C_2)$-equivariant bicollar $b\colon U \times (-\epsilon,\epsilon) \to \bar W$. The smooth chart has the formula
\begin{equation*}
	\xymatrix @R=0.5em{
	\Big( U \times [0,\epsilon) \Big) \times_{[0,1]} D(V_1 \times [0,1]) \ar[r] & U \times D(V_1 \times [0,1]) \\
	(n,s,v,t) \ar@{|->}[r] & (n,v,t).
	}
\end{equation*}

Inside this smooth chart, the top of the cobordism is identified with the subset $U \times D(V_1)$. This top has a bicollar that is the inclusion (defined sufficiently close to the center of $D(V_1)$)
\[ c\colon U \times D(V_1) \times (-\epsilon,\epsilon) \subseteq U \times D(V_1 \times \R). \]

In the following diagram, the top horizontal is the canonical homeomorphism. The left-hand column is the smooth chart on the double stabilization $\St^{e_{12}}\St^{e_{01}}(\bar W)$, obtained by taking the smooth chart on the first stabilization defined just above using $b$, and then defining the smooth structure on the second stabilization inside that chart using $c$. The right-hand column is the smooth chart in the single stabilization $\St^{e_{02}}(\bar W)$, defined using the same bicollar $b$.
\[ 
\resizebox{\textwidth}{!}{$
\xymatrix @R=1.5em{
	W \times_{[0,1]} D(V_1 \times [0,1]) \times_{[0,1]} D(V_2 \times \R)
	\ar[r] &
	W \times_{[0,1]} D(V_1 \times V_2 \times \R)
	\\
	\Big( U \times [0,\epsilon) \Big) \times_{[0,1]} D(V_1 \times [0,1]) \times_{[0,1]} D(V_2 \times \R) \ar@{-->}[r] \ar[u]_-b \ar[d]^-\cong &
	\Big( U \times [0,\epsilon) \Big) \times_{[0,1]} D(V_1 \times V_2 \times \R) \ar[u]_-b \ar[d]^-\cong
	\\
	U \times D(V_1 \times [0,1]) \times_{[0,1]} D(V_2 \times \R) \ar@{-->}[r] &
	U \times D(V_1 \times V_2 \times \R)
	\\
	\Big( U \times D(V_1) \times [0,\epsilon) \Big) \times_{[0,1]} D(V_2 \times \R) \ar[u]_-c \ar[d]^-\cong
	\\
	U \times D(V_1) \times D(V_2 \times \R) &
}
$}
\]
The top horizontal has the formula $(w,v,t,u)$ to $(w,v,u)$ as in \eqref{canonical_map_simplified}. On the second line we can define the dashed map in the same way, replacing $w$ by $(n,s)$, which makes the top square commute. On the third line we similarly send $(n,y,t,u)$ to $(n,y,u)$, making the second square commute:
\[
	\xymatrix @R=1.5em{
	(n,s,v,t,u) \ar@{|->}[d] \ar@{|->}[r] & (n,s,v,u) \ar@{|->}[d] \\
	(n,v,t,u) \ar@{|->}[r] & (n,v,u)
	}
\]
Therefore the canonical homeomorphism, in these charts, is given by the lower route of the diagram:
\[ \xymatrix @R=1.5em{
	(n,y,|u|,u) \ar@{|->}[r] & (n,y,u) \\
	(n,y,|u|,u) \ar@{|->}[u] \ar@{|->}[d] \\
	(n,y,u)
} \]
This is just the identity map (restricted to a neighborhood of zero), which is obviously smooth and full rank.
\end{proof}

\subsection{Stabilizing a family}

If $E \to \Delta^k$ is a family of encased $h$-cobordisms over $M$, then we can regard it as a cobordism over $M \times \Delta^k$. Given a round bundle $D(\nu) \to M \times \Delta^k$, we can apply the procedure of \autoref{hcob_stab} to the cobordism $E$ to produce a cobordism $\St^\nu(E)$ over $D(\nu) \times \Delta^k$. Given a $k$-simplex of tubular neighborhoods $e\colon D(\nu) \to M' \times \Delta^k$, we then extend by zero to produce a cobordism $\St^e(E)$ over $M' \times \Delta^k$.

The one difference in this case is that $E$ is not trivial over $M \times \partial \Delta^k$, only over $\partial M \times \Delta^k$. However, we only need $\St^e(E)$ to be trivial on $\partial M' \times \Delta^k$, not $M' \times \partial\Delta^k$. So the collar of $\St^e(E)$ only has to be defined near the bottom $M' \times \Delta^k \times \{0\}$ and the part of the sides corresponding to $\partial M' \times \Delta^k \times I$, and indeed it is using \autoref{st_rho_full_rank}.

We have to establish the following lemma, so that by \autoref{ehresmann_with_corners} the stabilization $\St^e(\bar E)$ is a smooth fiber bundle over $\Delta^k$, as required by \autoref{hcob_space}. 
\begin{prop}
	If $E \to \Delta^k$ is a submersion then so is $\St^e(\bar E) \to \Delta^k$.
\end{prop}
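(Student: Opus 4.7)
The plan is to verify the submersion condition of \autoref{submersion} locally at each point of $\St^e(\bar E)$, using the chart-by-chart description of the smooth structure given in \autoref{smoothstr} (together with the extension by identity along the codimension zero embedding $D(\nu) \to M'$ from \autoref{codim0def}). Since the submersion property is invariant under composition with diffeomorphisms, it suffices to exhibit a compatible product chart for the projection $\St^e(\bar E) \to \Delta^k$ in each of the four regions that enter into the construction of the smooth structure.

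First I would dispose of the easy regions. On the trivial region, namely the union of $(\overline{M' \setminus D(\nu)}) \times [-1,1] \times \Delta^k$ and the portion built from $C(\nu \times \R)$, the total space is locally a product of a fixed manifold with corners and $\Delta^k$, so the projection is manifestly a submersion. Near the frontier, the family of lower collars of $\bar E$---which by the definition of a family of $h$-cobordisms (\autoref{hcob_space}) is compatible with the projection to $\Delta^k$---identifies a neighborhood of the frontier with a neighborhood of $S(\nu \times \R)$ in $D(\nu) \times [-1,1] \times \Delta^k$, so the submersion property is inherited from the trivial region. In the nontrivial region away from the cone locus and frontier, the stabilization is locally the fiber product $\bar E \times_{M \times [-1,1]} D(\nu \times \R)$; off the cone, the map $D(\nu \times \R) \to M \times [-1,1]$ is a submersion, and the projection to $\Delta^k$ factors through projection onto the $\bar E$ factor, which is a submersion by hypothesis.

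The hard part will be the cone locus. Here the chart from \autoref{smoothstr} identifies a neighborhood of a point $(n,0)$ with an open subset of $U \times D(V \times \R)$, where $U$ is a contractible neighborhood of $n$ in the top face $N \subseteq \bar E$. Since the projection factors as $U \times D(V \times \R) \to U \to \Delta^k$, I would reduce the problem to showing that the restriction $N \to \Delta^k$ of $p\colon \bar E \to \Delta^k$ to the top face is itself a submersion. For this, the definition of encased $h$-cobordism provides a smooth height function $h\colon \bar E \to [-1,1]$ with $h^{-1}(0) = N$ that has fiberwise rank $1$ along $N$: for each $x \in N$, $Dh_x$ is nonzero on the vertical tangent space $\ker(Dp_x) \subseteq T_x \bar E$. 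A dimension count then gives $\ker(Dh_x) + \ker(Dp_x) = T_x \bar E$, so that $Dp_x|_{T_x N}$ remains surjective. Choosing local coordinates on $\bar E$ in which $p$ is the projection of \autoref{submersion} and $h$ is one of the remaining $\R$-coordinates (possible by the implicit function theorem for manifolds with corners, applied to $h$ transversally to the vertical strata), $N$ corresponds to setting that coordinate to zero, and $p|_N$ is realized as a projection of the form required by \autoref{submersion}. Equivariance does not require additional argument since all of the charts and the vector field computations above can be carried out $G$-equivariantly, and \cite[5.1]{joyce} allows the submersion property to be checked stratum by stratum.
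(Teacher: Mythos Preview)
Your case breakdown matches the paper's, and the arguments for the trivial region, frontier, and nontrivial region away from the cone locus are essentially the same as in the paper. The difference is at the cone locus.

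The paper begins by observing that the statement is local, so one may assume $E = W \times \Delta^k$ with the bundle $\nu$ trivial. With this trivialization in hand, the top face is $N_0 \times \Delta^k$, one chooses the bicollar $U_0 \times \Delta^k \times (-\epsilon,\epsilon) \to \bar W \times \Delta^k$ to be \emph{fiberwise over $\Delta^k$}, and then the cone-locus chart is $U_0 \times \Delta^k \times D(V \times \R)$ with the projection to $\Delta^k$ being the obvious one. No further argument is needed.

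Your route instead keeps $U \subseteq N$ as a chart in the untrivialized family and reduces to showing $p|_N\colon N \to \Delta^k$ is a submersion, which you prove via transversality of $h$ to the fibers. This is a legitimate alternative, and the transversality argument is correct. However, your intermediate claim that the projection ``factors as $U \times D(V \times \R) \to U \to \Delta^k$'' is not justified as stated: for a generic bicollar $b$, the projection in the chart is $(n,v) \mapsto p(b(n,|v|)) = g(n,|v|^2)$, which depends on $|v|$ unless $b$ is chosen fiberwise over $\Delta^k$. You can repair this either by observing that the $v$-derivative of $g(n,|v|^2)$ vanishes at $v=0$ (so the submersion condition at the cone point reduces to that of $p|_N$ anyway), or by noting that your implicit-function-theorem argument for $p|_N$ simultaneously produces a fiberwise bicollar, making the factorization literal. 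Either fix works, but the logical order should be: first establish that $N \to \Delta^k$ is a submersion (or equivalently, that a fiberwise bicollar exists), then use that to analyze the chart.

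In short, both approaches are valid. The paper's trivialization trick is shorter because it makes the fiberwise bicollar automatic; your approach trades that for a direct transversality computation, which is slightly more work but avoids invoking a local trivialization of $E$.
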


\begin{proof}
	As this is a local statement, we can assume that $E = W \times \Delta^k$ and the bundle $\nu$ is trivial, but we cannot assume that \therh is constant. On an open neighborhood of the trivial region, the map is identified with a product projection
	\[ M' \times \Delta^k \times [-1,1] \to \Delta^k \]
	and is therefore a submersion.
	
	On the interior of the nontrivial region minus the cone locus, we can ignore the $C_2$ quotient, and since $S(V \times \Delta^k) \to M \times \Delta^k$ and $\bar W \times \Delta^k \to \Delta^k$ are submersions, so is the map
\[ ((\bar W \setminus N) \times \Delta^k) \times_{(M \times \Delta^k)} S(V \times \Delta^k) \to \Delta^k. \]
The fact that the retraction $r\colon \bar W \to M$ is not a submersion does not interfere with this.

Finally, on any chart at the cone locus
\[ U \times \Delta^k \times D(V \times \R), \]
in which we have chosen the bicollar $U \times \Delta^k \times (-\epsilon,\epsilon) \to \bar W \times \Delta^k$ to be fiberwise over $\Delta^k$, the map to $\Delta^k$ is just the projection, so it is a submersion here as well.
\end{proof}

For definiteness, each cobordism $W$ is a subset of some sufficiently large set $\mc U$, and for the stabilization procedure we fix a way of assigning the stabilized cobordism $\St^e(W)$ to another subset of $\mc U$. Each family $E \to \Delta^k$ is then considered as a subset of $\mc U \times \Delta^k$---this guarantees that restricting to a face or pulling back along a degeneracy strictly commutes with stabilization. As a result we get a map of spaces
\[ \St^e\colon \mc H_{\sbt}^c(M) \to \mc H_{\sbt}^c(M'). \]

For two stabilizations, the canonical homeomorphism $\St^{e_{01}}\St^{e_{12}}(\bar E) \cong \St^{e_{02}}(\bar E)$ has the same definition as before over each point of $\Delta^k$. As the formulas are built using functions that are smooth on all of $E$, the results are still smooth, and full rank whenever they are full rank over each point of $\Delta^k$ separately (because the derivative is the identity in the $\Delta^k$ direction). The canonical homeomorphism is therefore a diffeomorphism of manifolds over $\Delta^k$.

One can directly construct out of this a homotopy
\[ \St^{e_{01}}\St^{e_{12}} \sim \St^{e_{02}}\colon \mc H_{\sbt}^c(M_0) \times I \to \mc H_{\sbt}^c(M_2). \]
This makes $\mc H_{\sbt}^c$ (and therefore $\mc H_{\sbt}$) into a functor from the homotopy category of manifolds and smooth embeddings to the homotopy category of spaces. (Note the choice of tubular neighborhood $D(\nu) \to M'$ is contractible by \autoref{tubular_nbhd_thm}, making the induced maps well-defined up to homotopy.) In the next section, we do this in a more structured way and get a functor up to \emph{coherent} homotopy.

\section{The $h$-cobordism space as an $(\infty,1)$-functor}\label{infinitysec}

Now that we have most of the geometric preliminaries out of the way, we review a categorical framework to construct $(\infty,1)$-functors using Segal spaces, following \cite{pedro,nima}. We then build the smooth $h$-cobordism functor using this setup.

\subsection{Left fibrations of Segal spaces} We start by recalling the basic definitions. In this section any time we say ``space'' we mean simplicial set.

\begin{defn}\label{segal_cat}
A \ourdefn{Segal space} is a simplicial space $X_{\sbt}$ (i.e. a bisimplicial set) such that the derived Segal maps
\[\xymatrix{
X_n \ar[r] & X_1 \times^h_{X_0} X_1 \times^h_{X_0} \ldots \times^h_{X_0} X_1
} \]
are weak equivalences. An important special case is a \ourdefn{Segal category}, which is a Segal space with $X_0$ discrete.
\end{defn}

\orange{
We will take a simplicial category to mean a category internal to simplicial sets, rather than a category enriched in simplicial sets.
\begin{defn}\label{simplicial_cat}
A \ourdefn{simplicial category} is a category internal to simplicial sets. Thus it has a simplicial set of objects $X_0$ and a simplicial set of morphisms $X_1$, and the source, target, and composition maps are maps of simplicial sets. In the special case when $X_0$ is discrete, this is category enriched in simplicial sets.
\end{defn}

Note that every simplicial category $\mathcal C$ has a nerve, a bisimplicial set $N\mathcal C$ which at level $n$ is the $n$-fold fiber product $X_1 \times_{X_0} \ldots \times_{X_0} X_1$. If the source or target map $X_1 \to X_0$ is a fibration then this is a Segal space. In the special case that $\mathcal C$ is a category enriched in simplicial sets, i.e. $X_0$ is discrete, then $N\mathcal C$ is a Segal category.
}
In fact, up to an appropriate notion of equivalence, every Segal category comes from a simplicially enriched category, and therefore Segal categories are a model for $(\infty,1)$-categories \cite{bergner_thesis}.

\orange{
\begin{defn}\label{segal_equiv}
	A map $X\to Y$ of Segal spaces is a \ourdefn{levelwise equivalence} if $X_n \to Y_n$ is an equivalence of spaces for all $n$. A map $X\to Y$ of simplicial categories is called a levelwise equivalence if the maps $X_0\to Y_0$ and $X_1\to Y_1$ are equivalences of spaces. When $X_0$ and $Y_0$ are discrete we also call this a \ourdefn{pointwise equivalence} of simplicially enriched categories; it is a map inducing a bijection on objects and a weak equivalence on the mapping spaces. A \ourdefn{Dwyer-Kan equivalence} is a map of simplicially enriched categories that is a weak equivalence on each mapping space separately, and that is homotopically essentially surjective \cite[Def 2.7]{bergner_thesis}, \cite[Def 2.4]{dk3}.
\end{defn}
}

In contrast to much of the literature on Segal spaces, following \cite{pedro}, we do \emph{not} assume that our Segal spaces are Reedy fibrant. This will be convenient for the examples we aim to build.

\begin{defn}\label{left_fib}
Let $B$ be a Segal space. A  \ourdefn{left fibration} over $B$ is a map of simplicial spaces $X\to B$ such that $X$ is also a Segal space, and
	 the square
\[\xymatrix{
X_1\ar[r]^{d_0} \ar[d] &X_0\ar[d]\\
B_1\ar[r]^{d_0} & B_0
}\]
is homotopy cartesian. By \cite[1.7]{pedro}, in lieu of checking that $X$ is a Segal space, we could alternatively check that for each $n > 0$ the square
\[\xymatrix{
X_n\ar[r]^{d_0^n} \ar[d] &X_0\ar[d]\\
B_n\ar[r]^{d_0^n} & B_0
}\]
is homotopy cartesian.
\end{defn}

Let $\sF_B$ denote the category of left fibrations over the Segal space $B$. Inverting the \orange{levelwise} equivalences on the category of left fibrations over $B$, gives a homotopy category of left fibrations over $B$, which we denote $ho\sF_B$.

\begin{prop}\label{transport}
	Any \orange{levelwise} equivalence of Segal spaces $B' \to B$ induces an equivalence on homotopy categories of fibrations $ho\sF_{B'} \simeq ho\sF_B$.
\end{prop}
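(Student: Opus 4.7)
The plan is to build the equivalence via pullback, defining $f^* \colon \sF_B \to \sF_{B'}$ by the (derived) pullback $X \mapsto X \times^h_B B'$ along $f\colon B' \to B$. The strategy then splits into two pieces: first, verify that $f^*$ descends to a functor on homotopy categories; second, show this descended functor is an equivalence.

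For the first piece, I would begin by Reedy fibrantly replacing each left fibration $X \to B$ so that the strict pullback computes the homotopy pullback; this step is necessary because the paper does not assume Reedy fibrancy. Then the homotopy-cartesian condition of \autoref{left_fib} and the Segal condition both transfer from $X$ to $f^*X$: in each simplicial degree $n$, the space $(f^*X)_n$ is the homotopy pullback $X_n \times^h_{B_n} B'_n$, so the derived Segal map for $f^*X$ is obtained by taking the homotopy pullback of the derived Segal maps for $X$ and $B'$, both of which are equivalences. Since $f$ is a levelwise equivalence, each $(f^*X)_n \to X_n$ is also an equivalence, so $f^*$ preserves and reflects levelwise equivalences and hence descends to a functor $f^* \colon ho\sF_B \to ho\sF_{B'}$.

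For the second piece, I would invoke the straightening--unstraightening theorem of \cite{pedro, nima}, under which $ho\sF_C$ for any Segal space $C$ is identified with a suitable homotopy category of space-valued covariant functors on $C$. Under this identification, $f^*$ corresponds to restriction of functors along $f$. Since $f$ is a weak equivalence of Segal spaces, restriction along $f$ is an equivalence of these functor categories, and hence $f^*$ is an equivalence of homotopy categories of left fibrations.

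The main obstacle is precisely the lack of Reedy fibrancy, which means strict constructions do not always behave homotopy-theoretically as expected; this is resolved by inserting Reedy fibrant replacements before pulling back, which requires a short but careful check that the Segal and left fibration conditions are preserved under the replacement. A more self-contained alternative would avoid straightening by constructing the quasi-inverse directly as a homotopy left Kan extension along $f$ and verifying that the unit and counit are levelwise equivalences. However, invoking straightening is cleaner and is consistent with the paper's broader strategy of leveraging the machinery of \cite{pedro, nima}.
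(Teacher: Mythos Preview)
Your approach is correct but differs from the paper's. Both begin with the pullback functor $f^*$ after fibrant replacement; the paper uses the condition that $X_n \to B_n$ be a Kan fibration rather than full Reedy fibrancy, but this is a minor variation. The divergence is in how you show $f^*$ is an equivalence. The paper constructs the inverse directly as the left adjoint $f_! \colon \sF_{B'} \to \sF_B$ given by postcomposition with $f$, observes that $X' \to B' \to B$ remains a left fibration because $f$ is a levelwise equivalence, and then notes that the unit and counit of the adjunction $(f_!,f^*)$ are levelwise equivalences. This is precisely the ``self-contained alternative'' you sketch in your final paragraph---postcomposition with $f$ \emph{is} the relevant left Kan extension in the slice category---so your alternative and the paper's actual argument coincide. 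Your primary route through straightening--unstraightening also works but is heavier: it needs the theorem for general Segal spaces rather than just nerves of simplicial categories as in \autoref{mainpedro}, and one should be mindful that the claim ``restriction along a weak equivalence of Segal spaces is an equivalence of functor categories'' is itself often established via a base-change invariance statement like \autoref{transport}, so care with the order of dependencies in \cite{pedro,nima} is required. The paper's direct adjunction argument avoids this and is more transparent.
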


In fact, the homotopy category $ho\sF_B$ comes from a model structure on the category $ssSet_B$ of all simplicial spaces over $B$, described in \cite[Proposition 1.10]{pedro}, and any \orange{levelwise} equivalence of Segal spaces induces a Quillen equivalence. On the other hand, there is a simpler proof of \autoref{transport}:

A left fibration $X \to B$ is \ourdefn{fibrant} if the maps $X_n \to B_n$ are Kan fibrations. If $B' \to B$ is any \orange{levelwise} equivalence of Segal spaces, the pullback of any fibrant left fibration $X$ over $B$ to $B'$ is a left fibration, see \cite[1.11]{pedro}. We therefore get a functor $\sF_B \to \sF_{B'}$. This functor preserves \orange{levelwise} equivalences between fibrant left fibrations over $B$, giving a functor $ho\sF_B \to ho\sF_{B'}$. The left adjoint $\sF_{B'} \to \sF_B$ sends each left fibration $X' \to B'$ to the composite $X' \to B' \to B$. This is always a Segal space, and it is a left fibration as well under the assumption that $B' \to B$ is a \orange{levelwise} equivalence. It is easy to see that the derived functors of this adjunction give an equivalence of homotopy categories $ho\sF_{B'} \simeq ho\sF_B$.

We will rely on the following result from \cite{pedro, nima}, which we restate in terms of left instead of right fibrations.

\begin{thm}\label{mainpedro}
Let $\mc C$ be a simplicially enriched category. There exists a Quillen equivalence
$$\ssSet_{/N\mc C} \leftrightarrows \Fun(\mc C, \sSet),$$
where the category of bisimplicial sets $\ssSet$ over $N\mc C$ is endowed with the left fibration model structure, and the \orange{the category of simplicially enriched functors} is endowed with the projective model structure. 
\end{thm}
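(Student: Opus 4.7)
The plan is to exhibit the Quillen equivalence by constructing the adjoint pair explicitly via a Grothendieck-type construction, and then reducing the verification that it is a Quillen equivalence to a check on representable functors. Define the left adjoint $U\colon \Fun(\mc C,\sSet) \to \ssSet_{/N\mc C}$ (the ``unstraightening'') by sending $F$ to the bisimplicial set whose $(n,k)$-bisimplices are pairs consisting of an $n$-simplex of $N\mc C$ at simplicial level $k$, i.e.\ a composable chain $c_0 \to c_1 \to \cdots \to c_n$ whose arrows are $k$-simplices of the appropriate mapping spaces in $\mc C$, together with a $k$-simplex of $F(c_0)$. The projection to $N\mc C$ forgetting the $F(c_0)$ component is the structure map, and the right adjoint $S$ (``straightening'') exists by general nonsense. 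One checks directly that $S(X)(c)$ recovers the homotopy fiber of $X \to N\mc C$ over the vertex $c$ whenever $X \to N\mc C$ is a left fibration.

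To verify that $(U,S)$ is a Quillen adjunction, I would show that $U$ sends the generating cofibrations and trivial cofibrations of the projective model structure on $\Fun(\mc C,\sSet)$ into cofibrations and trivial cofibrations of the left fibration model structure on $\ssSet_{/N\mc C}$ in the sense of \cite{pedro}. The generating cofibrations take the form $\mc C(c,-) \otimes (\partial\Delta^n \hookrightarrow \Delta^n)$, so the computation reduces to understanding $U$ on representables: $U(\mc C(c,-))$ is canonically isomorphic to the nerve of the under-category $N(c/\mc C) \to N\mc C$, which is a left fibration whose fiber over $c$ is contractible, witnessed by the identity morphism.

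The main work is the Quillen equivalence assertion, which reduces to showing that the derived unit and counit are levelwise weak equivalences. For the unit, one checks it first on representable functors: $SU(\mc C(c,-))(c')$ is the fiber of $N(c/\mc C) \to N\mc C$ over $c'$, and Yoneda identifies this with $\mc C(c,c')$. The general case follows because every cofibrant functor is built from representables by homotopy colimits, and both $U$ and $S$ preserve the relevant colimits by construction. For the counit, one uses that every fibrant left fibration over $N\mc C$ can be built up to weak equivalence from pullbacks of $U$ applied to representable functors via attaching cells; a small-object argument then propagates the equivalence from the representable case.

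The hardest step, I anticipate, is setting up the straightening $S$ in a way that is manifestly compatible with composition in $\mc C$, so as to genuinely land in $\Fun(\mc C,\sSet)$ rather than in some weaker homotopy-coherent category of functors. Concretely, given a left fibration $X \to N\mc C$, producing the functorial action of each morphism space $\mc C(c,c')$ on fibers requires choosing coherent lifts along every simplex of $\mc C(c,c')$ and checking that these lifts are homotopy unique and composable up to specified higher simplices. This is exactly what the left fibration condition in \autoref{left_fib} packages, and turning this local lifting property into a strict simplicial functor $S(X)\colon \mc C \to \sSet$ is the technical core of the argument in \cite{pedro,nima}, relying essentially on the Segal condition together with a careful rectification step.
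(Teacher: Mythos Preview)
The paper does not prove this theorem. It is quoted as a black-box input from the references \cite{pedro,nima} (de Brito and Rasekh), introduced with ``We will rely on the following result from \cite{pedro, nima}, which we restate in terms of left instead of right fibrations.'' There is therefore no proof in the paper to compare your proposal against.

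Your sketch is a reasonable outline of how the cited references argue, but one point is backwards relative to the paper's conventions. You call the unstraightening $U\colon \Fun(\mc C,\sSet)\to \ssSet_{/N\mc C}$ (the bar construction sending $F$ to pairs of a chain in $N\mc C$ and an element of $F$ at the initial vertex) the \emph{left} adjoint. The paper, in the proof of \autoref{associated_diagram_explicit}, explicitly identifies this bar construction as the \emph{right} adjoint: ``Explicitly, this right adjoint takes a diagram with spaces $X(c)$ and forms a simplicial space by the categorical bar construction.'' This matches the usual convention (as in Lurie's straightening/unstraightening) where straightening is left adjoint and unstraightening is right adjoint. Your argument that ``$U$ sends generating projective cofibrations to cofibrations'' would therefore need to be rephrased in terms of the actual left adjoint, or dually in terms of the right adjoint preserving fibrations and trivial fibrations.
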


Along this equivalence, each left fibration $X \to N\mc C$ is sent to a diagram in which the objects and morphisms can be described explicitly up to homotopy. Let $X(c)$ be the pullback $X_0 \times_{(N\mc C)_0} \{c\}$, in other words the subspace of $X_0$ lying over the object $c$. Note that 
\[ X_0 = \coprod_c X(c), \]
and the left fibration condition on $X$ gives canonical weak equivalences
\begin{equation}\label{left_fibration_equivalent_to_bar_construction}
	\xymatrix{ \coprod_{c,c_1,\ldots,c_n} X(c) \times \mc C(c,c_1) \times \ldots \times \mc C(c_{n-1},c_n) & \ar[l]_-\sim X_n. }
\end{equation}
In particular, for $n = 1$ we get a canonical zig-zag
\begin{equation}\label{zigzag_action}
	\xymatrix{ \coprod_{c,d} X(c) \times \mc C(c,d) & \ar[l]_-\sim X_1 \ar[r]^-{d_1} & \coprod_d X(d). }
\end{equation}
This provides a map $X(c)\mc C(c,d)\to X(d)$, up to homotopy.

\begin{lem}\label{associated_diagram_explicit}
	The equivalence of \autoref{mainpedro} sends each left fibration $X \to N\mc C$ to a diagram whose value at $c$ is equivalent to $X(c)$, and for which the action of $\mc C(c,d)$ is in the same homotopy class as the zig-zag \eqref{zigzag_action}.
\end{lem}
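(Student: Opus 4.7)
The plan is to work with the right adjoint of the Quillen equivalence from \autoref{mainpedro}, an unstraightening functor $U\colon \Fun(\mc C, \sSet) \to \ssSet_{/N\mc C}$. By the construction in \cite{pedro,nima}, this sends each diagram $F$ to a simplicial space whose $n$-simplex space decomposes as
\[ (UF)_n = \coprod_{c_0,\ldots,c_n} F(c_0) \times \mc C(c_0,c_1) \times \cdots \times \mc C(c_{n-1},c_n), \]
fibered over $N\mc C$ by projection away from the $F(c_0)$ factor, with face and degeneracy maps in the simplicial space direction that act on the $F(c_0)$ factor via $F$ applied to the outermost morphism.

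First I would verify the lemma in the special case of $UF$. The definition directly gives $(UF)(c) = F(c)$, and the face map $d_1\colon (UF)_1 \to (UF)_0$ sends $(x,f) \in F(c) \times \mc C(c,d)$ to $F(f)(x) \in F(d)$. The left equivalence in the zig-zag \eqref{zigzag_action} for $UF$ is in this case the identity, so the entire zig-zag reduces to the literal action maps $F(c) \times \mc C(c,d) \to F(d)$ given by the functor $F$.

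Next, given an arbitrary left fibration $X \to N\mc C$ with associated diagram $F$ under the Quillen equivalence, the derived counit gives a weak equivalence of left fibrations $UF \simeq X$ over $N\mc C$ (after fibrant and cofibrant replacement as needed). Restricting to fibers over $c$ yields $X(c) \simeq (UF)(c) = F(c)$. Restricting this equivalence to the 1-simplex level and using naturality of the decomposition \eqref{left_fibration_equivalent_to_bar_construction} and of $d_1$ along maps of left fibrations yields a commutative square in the homotopy category of spaces comparing the zig-zag \eqref{zigzag_action} for $X$ with the honest $F$-action on $UF$. Combined with the previous paragraph, this exhibits the zig-zag \eqref{zigzag_action} as being in the same homotopy class as the action map of $F$.

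The main obstacle is being sufficiently careful about the construction of $U$ and its compatibility with the decomposition \eqref{left_fibration_equivalent_to_bar_construction}; once those identifications are set up, naturality of the face maps makes the comparison of actions essentially formal. A secondary subtlety is that the equivalence $UF \simeq X$ lives in the homotopy category and may need to be presented as a zig-zag through a cofibrant or fibrant replacement, but this causes no problem since the conclusion is only asserted in the homotopy category of spaces.
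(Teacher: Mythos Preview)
Your proposal is correct and follows essentially the same approach as the paper: reduce to the case where $X$ is the bar-construction/unstraightening $UF$ of a diagram $F$, check the claim there by inspection (where the zig-zag collapses to the literal action map), and transport to a general left fibration via the derived equivalence $UF \simeq X$ together with the observation that the description of $X(c)$ and of the zig-zag \eqref{zigzag_action} is invariant under weak equivalence of left fibrations. The paper phrases the reduction as ``without loss of generality $X$ arose from a diagram,'' while you phrase it via the derived counit and naturality, but the content is identical.
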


\begin{proof}
	The description we gave of the objects and the actions is invariant under \orange{levelwise} equivalence of left fibrations. Furthermore, each left fibration is equivalent to one coming from a diagram on $\mc C$. Therefore without loss of generality, $X$ arose from a diagram on $\mc C$ by applying the right adjoint functor from \autoref{mainpedro}.
	
	Explicitly, this right adjoint takes a diagram with spaces $X(c)$ and forms a simplicial space by the categorical bar construction. In other words, the maps \eqref{left_fibration_equivalent_to_bar_construction} are isomorphisms. Furthermore the action of $d_1$ in \eqref{zigzag_action} is by the action of $\mc C$ on the diagram $X(-)$. This verifies the claim in this special case, and therefore in general as well.
\end{proof}
%
%

Using this machinery, our strategy will be as follows: we construct a simplicial category $\Manst$ of manifolds and stabilization data, and show it is \orange{pointwise} equivalent to the simplicial category $\Man$ of manifolds and smooth embeddings. We then construct a left fibration $X \to N\Manst$, such that $X_0$ is the disjoint union of the $h$-cobordism spaces $\mc H_{\sbt}^c(M)$, and the actions coming from $X_1$ are the required stabilization maps, up to homotopy.  An application of \autoref{associated_diagram_explicit} produces the desired $h$-cobordism functor on $\mc \Man$.

\subsection{The category of smooth manifolds and tubular neighborhoods}

\begin{defn}\label{Man}
Let $\Mansm$ refer to the category of smooth compact $G$-manifolds with corners, and smooth equivariant maps. The space of morphisms from $M_0$ to $M_1$ is the simplicial set $\Sm(M_0,M_1)$ whose $p$-simplices are equivariant smooth maps $$M_0 \times \Delta^p \to M_1.$$ Let $\Man$ be the subcategory with the same objects, but where the morphism spaces are the subspaces
\[ \Emb(M_0,M_1) \subseteq \Sm(M_0,M_1) \]
of those smooth maps that are smooth embeddings.
\end{defn}

One can show that these mapping spaces are Kan complexes, equivalent to the singular simplices of the spaces of smooth maps or of embeddings with the $C^\infty$ topology.

\begin{defn}\label{Mstab}
	The \emph{category of manifolds and stabilization data} $\Manst$ is a category enriched in simplicial sets, described as follows. The objects are smooth compact $G$-manifolds with corners. A map from $M_0$ to $M_1$ is given by a round bundle $p_0\colon D(\nu_{01})\to M_0$ and an embedding $e_{01}\colon D(\nu_{01})\hookrightarrow M_1$. The round bundles are considered up to isomorphism of the bundles commuting with the embeddings.

Given another morphism from $M_1$ to $M_2$ defined by a round bundle $p_{12}\colon D(\nu_{12})\to M_1$ and embedding $e_{12}\colon D(\nu_{12})\hookrightarrow M_2$, we define the composite map from $M_0$ to $M_2$ by taking the subspace $D(\nu_{02})$ of the pullback 
	\begin{equation}\label{compatibility_pullback}
\xymatrix{
	D(\nu_{02}) \ar@/_1em/[dddr]_-{p_{02}} \ar@/^1em/[rrrd]^-{e_{02}} \ar[dr]^{\subseteq} &&& \\
	& D(\nu_{01})\times_{M_0} D(\nu_{12}) \ar[d] \ar[r]& D(\nu_{12}) \ar[d]^-{p_{12}} \ar[r]_-{e_{12}} & M_2 \\
	& D(\nu_{01}) \ar[d]^-{p_{01}} \ar[r]_-{e_{01}} & M_1 & \\
	& M_0 &&
}
\end{equation}

	\noindent consisting of points $(s,t)$ such that $s^2+t^2\leq 1$. This is a round bundle $p_{02}\colon D(\nu_{02})\to M_0$, and it has an embedding $e_{02}\colon D(\nu_{02})\hookrightarrow M_2$. Note that this composition is associative.
	
We simplicially enrich $\Manst$ by defining a $p$-simplex of morphisms from $M_0$ to $M_1$ to be a round bundle $D(\nu_{01})\to M_0\times \Delta^p$ with a codimension 0 embedding $D(\nu_{01})\to M_1\times \Delta^p$ over $\Delta^p$. The composition map is then a map of simplicial sets.
\end{defn}

\begin{thm}\label{manst_equivalent}
The map that forgets the stabilization data is a \orange{pointwise equivalence of simplicially enriched categories} $\Manst \to \Man$.
\end{thm}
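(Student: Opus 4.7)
The forgetful functor is the identity on objects, so the entire content of the theorem is to show that for each pair $(M_0,M_1)$ of compact smooth $G$-manifolds with corners, the induced map of mapping simplicial sets
$$f\colon \Manst(M_0, M_1) \to \Emb(M_0, M_1)$$
is a weak equivalence. My plan is to compare both sides to sub-simplicial sets of ``interior'' data and then apply \autoref{round_tubular_nbhd_thm} together with \autoref{deform_embedding_to_interior}.

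Let $\Emb^{\textup{int}}(M_0, M_1) \subseteq \Emb(M_0, M_1)$ be the sub-simplicial set of $p$-simplices whose underlying family $M_0 \times \Delta^p \to M_1$ lands in the interior of $M_1$, and let $\Manst^{\textup{int}}(M_0, M_1)$ be its preimage under $f$. I would show that each of the three maps
$$\Manst^{\textup{int}} \to \Emb^{\textup{int}}, \qquad \Emb^{\textup{int}} \hookrightarrow \Emb, \qquad \Manst^{\textup{int}} \hookrightarrow \Manst$$
is a weak equivalence; the two-out-of-three property then yields that $f$ is one as well.

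The first map is a trivial Kan fibration by direct application of \autoref{round_tubular_nbhd_thm}: to lift a boundary inclusion $\partial\Delta^p \hookrightarrow \Delta^p$ is exactly to extend a system of round tubular neighborhoods given on $\partial\Delta^p$ to all of $\Delta^p$ over an interior family of embeddings, and this is precisely what that theorem provides (the existence of a single tubular neighborhood corresponds to the case $\partial\Delta^0 = \emptyset$). The other two maps are deformation retracts: by \autoref{deform_embedding_to_interior} there is an equivariant smooth isotopy $\iota_s\colon M_1 \to M_1$ with $\iota_0 = \id$ and $\iota_1(M_1) \subseteq \textup{int}(M_1)$, and post-composition with $\iota_s$ defines simplicial deformation retractions of $\Emb(M_0,M_1)$ and $\Manst(M_0,M_1)$ onto their interior versions. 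The key point for $\Manst$ is that each $\iota_s$ is a diffeomorphism of $M_1$ onto its image, so it transports round tubular neighborhoods to round tubular neighborhoods via round-bundle isomorphisms.

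The main obstacle I anticipate is ensuring that the deformation retraction on $\Manst$ descends to equivalence classes of round bundles (up to isomorphism commuting with the embedding into $M_1$): two bundle presentations giving the same class should remain equivalent after transport by $\iota_s$, which is immediate because $\iota_s$ supplies a single ambient diffeomorphism intertwining the two presentations. Compatibility with composition is automatic, as the round composite in \autoref{Mstab} was defined precisely so that its underlying embedding is the composite of the underlying embeddings, and all the constructions above are natural in $(M_0,M_1)$.
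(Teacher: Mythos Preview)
Your proposal is correct and follows essentially the same approach as the paper: reduce to ``interior'' subspaces via the isotopy of \autoref{deform_embedding_to_interior}, then invoke \autoref{round_tubular_nbhd_thm}. The paper phrases the reduction as a relative lifting property (deform rel $\partial\Delta^p$) rather than as a global deformation retraction, and it defines the interior version of $\Manst$ by requiring the whole disc bundle $D(\nu)$ to land in $\textup{int}(M_1)$ rather than just the zero section, but these are cosmetic differences. One small point you might make explicit: your deformation retraction argument uses that each $\iota_s$, being a codimension-zero embedding, sends $\textup{int}(M_1)$ into $\textup{int}(M_1)$, so the homotopy preserves the interior subspaces; and your final paragraph about compatibility with composition is unnecessary, since an equivalence of simplicial categories that is the identity on objects only requires the hom-space maps to be weak equivalences.
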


\begin{proof}
\orange{By \autoref{deform_embedding_to_interior}, there is an isotopy of embeddings $N \to N$ from the identity to an embedding sending $N$ into its own interior. By composing with this isotopy, we get an isotopy from any $\Delta^p$ family of embeddings $M\to N$ to a family of embeddings that all lie in the interior of $N$. Furthermore, if the embeddings along $\partial \Delta^p$ already lie in the interior of $N$, then we may take the isotopy of embeddings $N \to N$ to be supported on a neighborhood of $\partial N$ that is disjoint from the image of $M \times \partial \Delta^p$. Then this is an isotopy} rel $\partial \Delta^p$. Therefore, if we restrict the mapping spaces $\Man(M,N)$ to those embeddings that land in the interior of the target, we get an equivalent Kan complex. Similarly, restricting $\Manst(M,N)$ to the data where $D(\nu) \to N$ lands in the interior of $N$, gives an equivalent simplicial set that is a Kan complex. Now the result follows from \autoref{round_tubular_nbhd_thm}, the tubular neighborhood theorem for round bundles.
\end{proof}

\begin{rem}
	We do not prove that the mapping spaces $\Manst(M,N)$ are Kan complexes. This would be easy to prove if we made $D(\nu) \to N$ always land in the interior of $N$. However the stabilization from \autoref{fulldef} works just fine even if $D\nu$ touches $\partial N$, and this occurs frequently in examples. So it is a little more convenient in our setup if $\Manst$ allows all embeddings $D\nu \to N$.
\end{rem}

\subsection{The homotopy coherent $h$-cobordism functor}
The next step is to define the desired left fibration over $N\Man$. By \autoref{transport} and \autoref{manst_equivalent}, it suffices to build the left fibration over $N\Manst$ instead.

\begin{defn}\label{Hstab}
	We define $\Hanst$ as the following category internal to simplicial sets. The simplicial set of objects is $\coprod_M \mc H_{\sbt}^c(M)$, the space of all encased $h$-cobordisms over all compact $G$-manifolds with corners $M$. 
A $0$-simplex of morphisms from an $h$-cobordism $W_0$ on $M_0$ to an $h$-cobordism $W_1$ on $M_1$ is a morphism in $\Manst$ from $M_0$ to $M_1$ together with an encased diffeomorphism $f_{01} \colon \St^{e_{01}}\bar W_0 \cong \bar W_1$.

Given another morphism from $W_1$ to an $h$-cobordism $W_2$ on $M_2$, and encased diffeomorphism $f_{12} \colon \St^{e_{12}}\bar W_1 \cong \bar W_2$, we define the composite morphism from $W_0$ to $W_2$ as follows. We compose the morphisms in $\Manst$, and define the encased diffeomorphism $f_{02}\colon \St^{e_{02}}\bar W_0\cong W_2$ as 
\begin{equation}\label{compositeiso}\St^{e_{02}}\bar W_0\xrightarrow[\cong]{} \St^{e_{12}}\St^{e_{01}} \bar W_0 \xrightarrow[\cong]{\St^{e_{12}}(f_{01})} \St^{e_{12}}\bar W_1\xrightarrow[\cong]{f_{12}}\bar W_2,\end{equation}
where the first unlabeled diffeomorphism is the canonical isomorphism of \autoref{canonical_homeo}. We show associativity in the next lemma.

A $p$-simplex of morphisms in $\Hanst$ is a $p$-simplex in $\Manst$ from $M_0$ to $M_1$ together with an encased diffeomorphism of families $f_{01} \colon \St^{e_{01}}\bar E_0 \cong \bar E_1$ over $\Delta^p$. The composition is defined in the same was as above. The source, target, and composition maps are all maps of simplicial sets.\end{defn}

\begin{lem}
The composition rule for $\Hanst$ defined in \autoref{Hstab} is associative.
\end{lem}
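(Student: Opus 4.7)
The plan is to show that for three composable morphisms with encased diffeomorphisms $f_{01}\colon \St^{e_{01}}\bar W_0 \cong \bar W_1$, $f_{12}\colon \St^{e_{12}}\bar W_1 \cong \bar W_2$, and $f_{23}\colon \St^{e_{23}}\bar W_2 \cong \bar W_3$, the two ways of parenthesizing the composition yield identical morphisms in $\Hanst$. The underlying morphism in $\Manst$ from $M_0$ to $M_3$ is independent of parenthesization (this is asserted at the end of \autoref{Mstab}, and follows from the fact that both associations produce the same subspace of the iterated pullback $D(\nu_{01}) \times_{M_0} D(\nu_{12}) \times_{M_1} D(\nu_{23})$). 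So it remains to compare the two encased diffeomorphisms $\St^{e_{03}} \bar W_0 \cong \bar W_3$ produced by the two orderings.

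First I would spell out both composites explicitly using \eqref{compositeiso}. The ``left-associated'' composition first builds $f_{02}\colon \St^{e_{02}} \bar W_0 \cong \bar W_2$ from $f_{01}$ and $f_{12}$, and then composes with $f_{23}$. Using functoriality of $\St^{e_{23}}$ with respect to encased diffeomorphisms (\autoref{isoonstab}), the resulting diffeomorphism $\St^{e_{03}}\bar W_0 \cong \bar W_3$ unfolds to the composite
\[ \St^{e_{03}}\bar W_0 \xrightarrow{\cong} \St^{e_{23}}\St^{e_{02}} \bar W_0 \xrightarrow{\St^{e_{23}}(\text{can})} \St^{e_{23}}\St^{e_{12}}\St^{e_{01}} \bar W_0 \xrightarrow{\St^{e_{23}}\St^{e_{12}}(f_{01})} \St^{e_{23}}\St^{e_{12}} \bar W_1 \xrightarrow{\St^{e_{23}}(f_{12})} \St^{e_{23}}\bar W_2 \xrightarrow{f_{23}} \bar W_3. \]
The ``right-associated'' composition first builds $f_{13}\colon \St^{e_{13}} \bar W_1 \cong \bar W_3$, then composes with $f_{01}$, giving
\[ \St^{e_{03}}\bar W_0 \xrightarrow{\cong} \St^{e_{13}}\St^{e_{01}} \bar W_0 \xrightarrow{\St^{e_{13}}(f_{01})} \St^{e_{13}}\bar W_1 \xrightarrow{\cong} \St^{e_{23}}\St^{e_{12}} \bar W_1 \xrightarrow{\St^{e_{23}}(f_{12})} \St^{e_{23}}\bar W_2 \xrightarrow{f_{23}} \bar W_3. \]

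To identify the two composites, I would compare them term by term from right to left. The last two arrows $\St^{e_{23}}(f_{12})$ and $f_{23}$ are identical in both. For the preceding stretch, the naturality of the canonical homeomorphism of \autoref{canonical_homeo} with respect to the encased diffeomorphism $f_{01}$ (applied to the triple $e_{01}, e_{12}, e_{13}$, which is to say the naturality square displayed just after \autoref{canonical_homeo}, post-composed with $\St^{e_{23}}$) yields the commuting square
\[ \xymatrix{
\St^{e_{13}}\St^{e_{01}} \bar W_0 \ar[d]_-{\St^{e_{13}}(f_{01})} \ar[r]^-{\cong} & \St^{e_{23}}\St^{e_{12}}\St^{e_{01}} \bar W_0 \ar[d]^-{\St^{e_{23}}\St^{e_{12}}(f_{01})} \\
\St^{e_{13}}\bar W_1 \ar[r]^-{\cong} & \St^{e_{23}}\St^{e_{12}} \bar W_1.
} \]
This allows us to commute $f_{01}$ past the canonical homeomorphism, reducing both composites to the same expression except for the very first arrow from $\St^{e_{03}} \bar W_0$. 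The fact that these two initial canonical homeomorphisms agree is exactly the content of \autoref{assoc_stab}, whose commutative square says precisely that the direct canonical homeomorphism $\St^{e_{03}}\bar W_0 \cong \St^{e_{23}}\St^{e_{12}}\St^{e_{01}}\bar W_0$ agrees regardless of the intermediate stabilization ($\St^{e_{02}}$ or $\St^{e_{13}}$) through which it factors.

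The same argument applies verbatim to $p$-simplices of morphisms, since all of the input lemmas (\autoref{isoonstab}, the naturality of \autoref{canonical_homeo}, and \autoref{assoc_stab}) hold fiberwise over $\Delta^p$. I do not expect any serious obstacle here: everything reduces to an instance of \autoref{assoc_stab} together with naturality, both of which were established precisely for this purpose. The only mild care required is bookkeeping: making sure the applications of $\St^{e_{23}}$ to composites of morphisms are expanded correctly using functoriality before invoking naturality.
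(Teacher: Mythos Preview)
Your proposal is correct and follows essentially the same approach as the paper: both arguments reduce the associativity check to the naturality square for the canonical homeomorphism of \autoref{canonical_homeo} (your middle commuting square) together with the coherence square of \autoref{assoc_stab}, and the paper in fact packages exactly these two squares into a single diagram. One small slip: the parenthetical ``applied to the triple $e_{01}, e_{12}, e_{13}$'' should read $e_{12}, e_{23}, e_{13}$, since the canonical homeomorphism in play is $\St^{e_{13}} \cong \St^{e_{23}}\St^{e_{12}}$; the square you display is nonetheless the right one.
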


\begin{proof}
We consider the associativity of composition on 0-simplices of the morphism simplicial set, since on $p$-simplices the argument is similar.

Suppose we are given morphisms  $M_0\to M_1\to M_2\to M_3$ in $\Hanst$. First, this is the data of such morphisms in $\Manst$, which is a diagram of round bundle composites

\[\xymatrix{
         D(\nu_{03}) \ar@/^2.6pc/@[][rrr]^-{e_{03}} \ar[d] \ar[r]& D(\nu_{13}) \ar@/^1.8pc/@[][rr]_-{e_{13}} \ar[d] \ar[r] & D(\nu_{23})\ar[d] \ar[r]_-{e_{23}}  & M_3\\
	 D(\nu_{02}) \ar@/^2.0pc/@[][rr]^-(0.7){e_{02}} \ar[d] \ar[r]& D(\nu_{12}) \ar[d] \ar[r]_-{e_{12}} & M_2 & \\
	 D(\nu_{01}) \ar[d] \ar[r]_-{e_{01}} & M_1 && \\
	 M_0 &&&
}\]
We recall that this composition in $\Manst$ is strictly associative. Moreover, we are given the data of isomorphisms
$$\St^{e_{01}} \bar W_0 \xrightarrow[\cong]{f_{01}} \bar W_1,\ \ \ \ \St^{e_{12}} \bar W_1 \xrightarrow[\cong]{f_{12}} \bar W_2,\ \ \ \ \St^{e_{23}} \bar W_2 \xrightarrow[\cong]{f_{23}}\bar W_3.$$
The two ways of associating the three-fold composition are given by the top and bottom routes of the following diagram, from $\St^{e_{03}}\bar W_0$ to $W_3$. The left-hand square commutes by \autoref{assoc_stab} and the middle square commutes by \autoref{canonical_homeo}. This verifies associativity in $\Hanst$.
\[\resizebox{\textwidth}{!}{$
 \xymatrix @C=5em{
	\St^{e_{23}}\St^{e_{02}} \bar W_0 \ar[r]^-\cong &
	\St^{e_{23}} \St^{e_{12}} \St^{e_{01}} \bar W_0 \ar[r]_-\cong^-{\St^{e_{23}}\St^{e_{12}}(f_{01})} &
	\St^{e_{23}} \St^{e_{12}} \bar W_1 \ar[r]_-\cong^-{\St^{e_{23}}(f_{12})} &
	\St^{e_{23}}\bar W_2 \ar[d]_-\cong^-{f_{23}}
	\\
	\St^{e_{03}}\bar W_0 \ar[u]^-\cong \ar[r]^-\cong &
	\St^{e_{13}}\St^{e_{01}} \bar W_0 \ar[u]^-\cong \ar[r]_-\cong^-{\St^{e_{13}}(f_{01})} &
	\St^{e_{13}} \bar W_1 \ar[u]^-\cong &
	W_3
}$} \]
\end{proof}

Since $\Hanst$ is a category internal to simplicial sets, its nerve is a strict Segal space. It will be a consequence of \autoref{left_fibration} below that it is also a (homotopical) Segal space.

There is an evident forgetful map of simplicial categories $\Hanst \to \Manst$, and therefore a map of bisimplicial sets $N\Hanst \to N\Manst$, that forgets the $h$-cobordism but remembers the base manifold.

\begin{prop}\label{left_fibration}
	The map $N\Hanst \to N\Manst$ is a left fibration.
\end{prop}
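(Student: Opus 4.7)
The plan is to verify the alternative characterization of left fibration given in \autoref{left_fib}: for each $n \geq 1$, I will show that the square
\[\xymatrix{
(N\Hanst)_n\ar[r]^-{d_0^n} \ar[d] &(N\Hanst)_0\ar[d]\\
(N\Manst)_n\ar[r]^-{d_0^n} & (N\Manst)_0
}\]
is homotopy cartesian. Since $(N\Manst)_0$ is a discrete simplicial set (it is the set of compact smooth $G$-manifolds with corners), the homotopy pullback coincides with the strict pullback, and so it suffices to prove that the natural forgetful map
\[\pi_n\colon (N\Hanst)_n \longrightarrow (N\Manst)_n \times_{(N\Manst)_0} (N\Hanst)_0\]
is a weak equivalence of simplicial sets for every $n\geq 1$. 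Unpacking, the target parametrizes $n$-tuples of composable stabilization data $(e_1,\ldots,e_n)$ in $\Manst$ together with an encased $h$-cobordism $E_0$ over the source manifold $M_0$; the source carries in addition the intermediate encased cobordisms $E_i$ over $M_i$ and encased diffeomorphisms $\phi_i\colon \St^{e_i}\bar E_{i-1}\xrightarrow{\cong} \bar E_i$ for $i=1,\ldots,n$.

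The map $\pi_n$ has a canonical section $s_n$ given by iterated stabilization: $s_n((e_i)_i, E_0)$ sets $E_i := \St^{e_i}(E_{i-1})$ and $\phi_i := \mathrm{id}$ using \autoref{fulldef}, so $\pi_n s_n=\mathrm{id}$ strictly. To show $s_n$ is also a weak equivalence, I would verify that each fiber of $\pi_n$ is contractible and then upgrade this to the weak equivalence via an explicit deformation retraction onto the image of $s_n$. By induction on $n$, contractibility of the fiber reduces to the case $n=1$: the fiber over a $0$-simplex $(W_0,e)$ is the simplicial set $F$ of pairs $(W_1,\phi\colon \St^e \bar W_0 \cong \bar W_1)$. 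The forgetful map $F \to \mc H^c_\bullet(M_1)_{[\St^e W_0]}$ has homotopy fibers equivalent to the simplicial group of encased automorphisms $\mathrm{Aut}(\St^e W_0)$, whereas its target is equivalent to $B\mathrm{Aut}(\St^e W_0)$, so $F$ is equivalent to the total space of the associated universal bundle, which is contractible.

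Finally, I would construct the deformation retraction $H\colon (N\Hanst)_n\times\Delta^1 \to (N\Hanst)_n$ from $\mathrm{id}$ to $s_n\pi_n$, constant after applying $\pi_n$. Concretely, for each $p$-simplex, I would interpolate each $(E_i,\phi_i)$ to the canonical $(\St^{e_i}E_{i-1},\mathrm{id})$, inductively in $i$, by an isotopy of the underlying subsets in $\Delta^p\times\Delta^1\times \mc U$ realizing the contractibility of the corresponding completion space. The main obstacle lies in this last step: one must show that any two realizations of an abstractly isomorphic encased $h$-cobordism inside $\mc U$ are connected by an ambient isotopy compatible with the given identification, which is available because $\mc U$ is chosen sufficiently large throughout the paper. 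Once this is in place, the contractions assemble into the required homotopy, and $\pi_n$ is a weak equivalence as desired.
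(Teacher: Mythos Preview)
Your setup through the first paragraph is correct and matches the paper's: reduce to showing that the comparison map $\pi_n$ from $(N\Hanst)_n$ to the strict pullback is a weak equivalence. From there, however, your route is more complicated than necessary and the final step contains a genuine gap.

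The paper proves the stronger statement that $\pi_n$ is a \emph{trivial Kan fibration}. A lifting problem against $\partial\Delta^p\hookrightarrow\Delta^p$ amounts to: a $\Delta^p$-family of composable stabilization data $(e_1,\ldots,e_n)$, a family $E_0$ over $M_0\times\Delta^p$, and over $\partial\Delta^p$ the additional families $\partial E_i$ with encased diffeomorphisms to the stabilizations of $\partial E_0$. One solves it by forming the iterated stabilization of $E_0$ over all of $\Delta^p$, and then applying a \emph{bijection of underlying sets} over $M_i\times\partial\Delta^p$ so that the points there literally become those of the prescribed $\partial E_i$. This relabeling is purely set-theoretic and immediate; no homotopies are required, and coherence across faces is automatic.

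Your approach instead tries to build a deformation retraction onto the image of the section $s_n$. The fiber-contractibility argument is fine in spirit (the fiber over a vertex is a model for $E\mathrm{Aut}^{\mathrm{enc}}(\St^e W_0)$), but fiber contractibility alone does not give a weak equivalence unless you know $\pi_n$ is a fibration, which you have not established. Your attempt to produce a global homotopy in the last paragraph rests on a misconception: the $\mc U$ in which cobordisms live (introduced in \autoref{hcobsection}) is simply a sufficiently large \emph{set} for bookkeeping, with no topology or smooth structure, so ``ambient isotopy inside $\mc U$'' is not meaningful. The path from $(E_1,\phi)$ to $(\St^e E_0,\mathrm{id})$ you want does exist, but it is produced exactly by the relabeling trick above (take $\St^e(E_0)\times\Delta^1$ and rename the points at one end via $\phi$), not by any isotopy. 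Once you see this, the direct trivial-fibration argument is both shorter and avoids having to assemble such paths coherently over all simplices.
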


\begin{proof}
	By \autoref{left_fib}, it suffices to show that the morphism spaces, object spaces, and source maps define a homotopy pullback square:
	\[ \xymatrix{
		\Hanst_n \ar[d] \ar[r]^-{\textup{source}} & \Hanst_0 \ar[d] \\
		\Manst_n \ar[r]^-{\textup{source}} & \Manst_0
	} \]
	In the right-hand column, the target $\Manst_0$ is discrete, the set of all compact $G$-manifolds with corners $M$. Each fiber of the map $\Hanst_0 \to \Manst_0$ is a space of encased equivariant $h$-cobordisms $\mc H_{\sbt}^c(M)$. By \autoref{encased_to_mirror}, these fiber spaces are Kan complexes, so the right-hand vertical is a Kan fibration. Therefore it suffices to show that the map from $\Hanst_n$ to the strict pullback is an acyclic Kan fibration.
	
	We prove this first when $n = 1$. The claim is that if we have a $p$-simplex of morphisms $e\colon M_0 \to M_1$ in $N\Manst$, a \orange{$\Delta^p$-family of $h$-cobordisms $E_0 \to M_0 \times \Delta^p$ that restricts to the $\partial\Delta^p$-family $\partial E_0 \to M_0 \times \partial \Delta^p$, another $\partial\Delta^p$-family of $h$-cobordisms $\partial E_1 \to M_1 \times \partial\Delta^p$, and a diffeomorphism $\partial E_1 \cong \St^e(\partial E_0)$, then this data can be extended to a $\Delta^p$-family of $h$-cobordisms $E_1 \to M_1 \times \Delta^p$} and a diffeomorphism $E_1 \cong \St^e(E_0)$. This is easy---we form $E_1$ by taking $\St^e(E_0)$, and applying a bijection to the underlying set along $M_1 \times \partial\Delta^p$, so that the points in that subset are replaced by the corresponding points in the original family $\partial E_1$.

	For $n > 1$ the proof is the same, except that we have a $p$-simplex of composable morphisms $M_0 \to M_1 \to \cdots \to M_n$, a \orange{$\Delta^p$-family of $h$-cobordisms $E_0 \to M_0 \times \Delta^p$, and the restricted family $\partial E_0 \to M_0 \times \partial \Delta^p$ is stabilized to each $M_i$ and identified with some family $\partial E_i \to M_i \times \partial \Delta^p$, for each $i$. Again, we extend this to a full $\Delta^p$-family} $E_i \to M_i \times \Delta^p$ by stabilizing all of $E_0$, and relabeling the points over $\partial \Delta^p$ to match the original given family $\partial E_i$.
\end{proof}

Applying \autoref{associated_diagram_explicit} to this left fibration gives the main theorem of the paper.

\begin{thm}\label{h_cobordisms_infinity_1_functor}
There is a functor $$\mc H_{\Diff}(-)\colon \Man \to \sSet\, ,$$ sending each compact $G$-manifold with corners $M$ to a space equivalent to $\mc H_{\Diff}(M)$, and each homotopy class of smooth equivariant embeddings $M \to M'$ to the homotopy class of maps $\mc H_{\Diff}(M) \to \mc H_{\Diff}(M')$ given by the stabilization in \autoref{sec:stab}.
\end{thm}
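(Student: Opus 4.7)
The plan is to assemble the pieces already constructed in the paper. By \autoref{left_fibration}, the forgetful map of bisimplicial sets
\[ N\Hanst \to N\Manst \]
is a left fibration. Applying the straightening/unstraightening equivalence of \autoref{mainpedro} to this left fibration produces a simplicially enriched functor
\[ F\colon \Manst \to \sSet. \]
By \autoref{manst_equivalent}, the forgetful functor $\Manst \to \Man$ is an equivalence of simplicial categories, so in particular $N\Manst \to N\Man$ is a weak equivalence of Segal spaces. Applying \autoref{transport}, we transfer $F$ along this equivalence to obtain a functor $\mc H_{\sbt}(-)\colon \Man \to \sSet$ (well-defined up to equivalence).

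Next I would identify the values of the functor. By \autoref{associated_diagram_explicit}, the value $F(M)$ is equivalent to the fiber of $\Hanst_0 \to \Manst_0$ over $M$, which by construction (\autoref{Hstab}) is exactly the space $\mc H_{\sbt}^c(M)$ of encased equivariant $h$-cobordisms. Chaining the weak equivalences from \autoref{space_of_mirrorhcob} and \autoref{encased_to_mirror},
\[ \mc H_{\sbt}^c(M) \xrightarrow{\sim} \mc H_{\sbt}^m(M) \xrightarrow{\sim} \mc H_{\sbt}(M), \]
identifies this fiber with the honest $h$-cobordism space. This confirms the claimed pointwise description.

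It remains to check that the induced action of a smooth embedding $e\colon M \to M'$ (regarded as a vertex of the morphism space after choosing any compatible round tubular neighborhood, which is contractible by \autoref{round_tubular_nbhd_thm}) agrees, up to homotopy, with the polar stabilization map of \autoref{sec:stab}. By \autoref{associated_diagram_explicit}, the action is computed by the zig-zag
\[ \xymatrix{ \coprod_{M,M'} \mc H_{\sbt}^c(M) \times \Manst(M,M') & \ar[l]_-\sim \Hanst_1 \ar[r]^-{d_1} & \coprod_{M'} \mc H_{\sbt}^c(M'), } \]
where the left-pointing map (source and projection to $\Manst$) is a weak equivalence and $d_1$ extracts the target of a morphism. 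A preferred section of the left map is given by sending $(W,e)$ to the identity encased diffeomorphism $\St^e(\bar W) \xrightarrow{\cong} \St^e(\bar W)$ (with $W_1 := \St^e(W)$), which exists by the construction of \autoref{sec:stab}. Post-composing with $d_1$ returns precisely $\St^e(W)$, so the induced action sends $[W]$ to the class of its polar stabilization.

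The main obstacle is the last verification: showing the section exists as a map of simplicial sets (not merely objectwise) and that the resulting homotopy class of maps is well-defined on the homotopy category of $\Man$, independent of the choice of round tubular neighborhood. The existence of the section follows from the construction of $\St^e$ for families in \autoref{sec:stab}, which produces a genuine simplicial map $\mc H_{\sbt}^c(M) \times \Manst(M,M')_0 \to \Hanst_1$ compatible with faces and degeneracies. The independence of tubular neighborhood data follows because \autoref{round_tubular_nbhd_thm} implies that the space of such choices lifting a given embedding is contractible, so any two choices lie in the same path component of $\Manst(M,M')$ and thus define the same map in the homotopy category. Combined with the associativity established via \autoref{assoc_stab} and \autoref{canonical_homeo}, this completes the identification of $F$ with the claimed stabilization functor.
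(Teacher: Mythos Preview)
Your proposal is correct and follows essentially the same route as the paper: the paper's proof is the single sentence ``Applying \autoref{associated_diagram_explicit} to this left fibration gives the main theorem,'' and you have simply unpacked what that sentence means, including the identification of fibers via \autoref{encased_to_mirror} and \autoref{space_of_mirrorhcob}, and the transfer along the equivalence $\Manst \simeq \Man$ from \autoref{manst_equivalent}. Your explicit construction of the section $(W,e)\mapsto (W,\St^e W,e,\id)$ and the verification that $d_1$ then returns the stabilization is a reasonable spelling-out of what \autoref{associated_diagram_explicit} asserts abstractly; the paper leaves this implicit.
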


\section{The stable $h$-cobordism space }\label{hcobspacesec}

Now that we have made the space of equivariant $h$-cobordisms into a functor on smooth manifolds and smooth embeddings, the last task is to stabilize with respect to representation discs, and extend the functor to all $G$-CW complexes and continuous maps. Let $\mc H(-)$ denote \orange{any functor from $\Man$ to $\sSet$ that is weakly equivalent to the one produced by \autoref{h_cobordisms_infinity_1_functor}. In other words, any strictly functorial model for the $h$-cobordism space.} Let $\mc U$ be a complete $G$-universe\orange{, so it is a direct sum of countably many copies of each irreducible representation. This is topologized as a colimit of finite-dimensional subspaces, so every compact subset lies in a finite-dimensional subspace.}

\begin{defn}\label{stable_h_cobordisms}
	Let
	\[ \mc H^{\mc U}(M) =  \mc H(M \times \mc U) =  \underset{\textup{compact } K \subseteq M \times \mc U} \colim \ \ \mc H(K) \]
	be the space (simplicial set) obtained as the colimit over inclusions of compact $G$-invariant submanifolds of $M \times \mc U$. When $G = 1$ we also refer to this as $\mc H^\infty(M)$. \orange{Note that this is a filtered colimit of simplicial sets and is therefore a homotopy colimit.}
\end{defn}

Since colimits are natural, $\mc H^{\mc U}(-)$ is also a functor on the category $\Man$ of compact smooth $G$-manifolds and embeddings.

\begin{rem}
	By cofinality, this colimit can be evaluated by taking only the submanifolds of the form $M \times D_R(V)$ for finite-dimensional representations $V \subseteq \mc U$, and $R \geq 1$ a radius that goes to infinity. Since $M \times D(V) \to M \times D_R(V)$ is homotopic through embeddings to a diffeomorphism, it induces an equivalence on $\mc H(-)$, and therefore we have an equivalence from the colimit over representation discs,
\[ \xymatrix{ \underset{V \subseteq \mc U} \colim \mc H(M \times D(V)) \ar[r]^-\sim & \mc H(M \times \mc U). } \]
\end{rem}

\orange{
\begin{rem}
	Also by cofinality, the colimit can be evaluated by taking only the submanifolds of $M \times \mc U$ that are framed, or stably framed (cf. \cite[p.152]{waldhausen_manifold}).
\end{rem}
}

Our next task is to extend $\mc H^{\mc U}(-)$ from the category of embeddings $\Man$ to the larger category of smooth maps $\Mansm$ from \autoref{Man}. To do this, we define the space $\Emb(M_0,M_1 \times \mc U)$ as the colimit of the spaces of embeddings into compact submanifolds of $M_1 \times \mc U$. Including the origin into $\mc U$, and projecting away $\mc U$, induce maps
\begin{equation}\label{forgetful}
\xymatrix{
	\Emb(M_0,M_1) \ar[r] & \Emb(M_0,M_1 \times \mc U) \ar[r]^-\sim & \Sm(M_0,M_1)
}
\end{equation}
whose composite is the inclusion of the space of embeddings into the space of all smooth maps.

\begin{lem}\label{emb_u_smooth}
	The second map in \eqref{forgetful}, that projects away $\mc U$, is a weak equivalence.
\end{lem}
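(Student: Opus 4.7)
The plan is to prove the weak equivalence by directly verifying that for every square
\[ \xymatrix{
\partial \Delta^k \ar[d] \ar[r] & \Emb(M_0, M_1 \times \mc U) \ar[d] \\
\Delta^k \ar[r] \ar@{-->}[ur] & \Sm(M_0, M_1)
} \]
a strict lift exists. Both simplicial sets are Kan complexes (the source as a filtered colimit of the Kan complexes $\Emb(M_0, K)$ for compact $K \subseteq M_1 \times \mc U$), so this lifting property at all $k \geq 0$ implies the map is a weak equivalence.

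Given such a square, the boundary data is a family of embeddings $g\colon \partial \Delta^k \times M_0 \to M_1 \times \mc U$. By compactness, $g$ factors through $M_1 \times V$ for some finite-dimensional orthogonal $G$-subrepresentation $V \subseteq \mc U$, and I can write $g(t, x) = (f(t, x), h(t, x))$, where $f\colon \Delta^k \times M_0 \to M_1$ is the given smooth family. Using \autoref{smooth_on_partial_delta} together with a partition-of-unity patching, I extend $h$ to a smooth map $\tilde h\colon \Delta^k \times M_0 \to V$ (this is possible since $V$ is a vector space, so the interior extension can be taken to be any smooth map, even zero).

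The key construction then produces the lift. Since $\mc U$ is a complete $G$-universe, I can pick an orthogonal $G$-subrepresentation $V' \subseteq \mc U$ with $V' \perp V$ and a smooth equivariant embedding $e\colon M_0 \to V'$. Choose also a smooth function $\phi\colon \Delta^k \to [0,\infty)$ with $\phi^{-1}(0) = \partial \Delta^k$ (for instance, a product of barycentric coordinates). Define
\[ G(t, x) = \bigl(f(t, x),\ \tilde h(t, x),\ \phi(t) \cdot e(x)\bigr) \in M_1 \times V \times V'. \]
On $\partial \Delta^k$ the third coordinate vanishes, so $G$ coincides with $g$ after the inclusion $M_1 \times V \hookrightarrow M_1 \times V \times V'$; the two represent the same simplex of the colimit $\Emb(M_0, M_1 \times \mc U)$. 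In the interior of $\Delta^k$, the third coordinate is a positive scaling of the embedding $e$, so $G(t, -)$ is an embedding regardless of whether $(f(t, -), \tilde h(t, -))$ is. Projecting to $M_1$ recovers $f$, so $G$ provides the required lift.

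The main obstacle is ensuring that $\tilde h$ can be chosen genuinely smoothly across the corners of $\Delta^k$, and that the lift agrees \emph{strictly} (not merely up to homotopy) with the given boundary data on $\partial \Delta^k$. The first issue is handled by \autoref{smooth_on_partial_delta}, which extends $h$ smoothly to an open neighborhood of $\partial \Delta^k$; a smooth partition of unity then patches this with any smooth extension on the complement. The second issue is addressed by observing that $g$ and $(g, 0)$ represent the same simplex in the colimit $\Emb(M_0, M_1 \times \mc U)$, because one is obtained from the other by a structure map of the colimit diagram. Everything else --- smoothness and fiberwise-embedding of $G$, $G$-equivariance, and Kan-ness of the source --- is direct from the formulas and the setup.
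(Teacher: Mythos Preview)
Your proof is correct and follows essentially the same strategy as the paper: show the map has the right lifting property against $\partial\Delta^k \hookrightarrow \Delta^k$ by smoothly extending the $\mc U$-component off the boundary and using an embedding into an orthogonal complement in $\mc U$ to force the result to be a fiberwise embedding on the interior. Your execution is slightly cleaner than the paper's---by adding the scaled embedding $\phi(t)\,e(x)$ as a new orthogonal coordinate rather than interpolating within the existing coordinates, you avoid both the auxiliary closed set $C$ and the final smooth-approximation step.
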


\begin{proof}
	The argument is similar to that of \autoref{tubular_nbhd_thm} and \autoref{rh_contractible}. Given a $\Delta^k$-family of equivariant smooth maps $f_t\colon M_0 \to M_1$ and $\partial\Delta^k$-family of equivariant smooth maps $g_t\colon M_0 \to \mc U$ such that the product maps $(f_t,g_t)\colon M_0 \to M_1 \times \mc U$ are embeddings for all $t \in \partial\Delta^k$, we need to extend the family $g_t$ to all of $\Delta^k$ so that the product maps are embeddings for all $t \in \Delta^k$.
	
	 We first \orange{observe that by compactness, the maps $g_t$ for $t \in \partial \Delta^k$ all land in a finite-dimensional subspace $V \subseteq \mc U$. By \autoref{smooth_extension}, we can extend $g\colon \partial\Delta^k  \times M_0 \to V$ to a map $T \times M_0 \to V$ that we also denote $g$, where $T \subseteq \Delta^k$ is an open subset containing $\partial\Delta^k$. By \autoref{extend_embedding_family},} the product maps $(f_t,g_t)$ are still embeddings provided $T$ is small enough. \orange{Take a partition of unity $\{\lambda_T,\lambda_{int}\}$ on $\Delta^k$ subordinate to $\{T,\textup{int }\Delta^k\}$ and let $C = \lambda_{int}^{-1}(0)$, a closed neighborhood of $\partial\Delta^k$ contained in $T$.}
	
	\orange{Next we extend $g$ from a $T$-family of equivariant smooth maps $T \times M_0 \to V$ to a $\Delta^k$-family of equivariant smooth maps $\Delta^k \times M_0 \to W$, where $W$ is a larger finite-dimensional subspace of $\mc U$, such that $g_t\colon M_0 \to W$ is an embedding when $t \not\in C$, in other words when $\lambda_{int}(t) > 0$. To do this, we fix a smooth equivariant embedding $h$ of $M_0$ into the orthogonal complement of $V$ in $\mc U$, and let $W$ be the span of $V$ and the image of $h$. Then we use $\{\lambda_T,\lambda_{int}\}$ to interpolate between $g$ and the constant family of embeddings $h\colon M_0 \to W$. This gives a new $\Delta^k$-family of smooth maps $g_t\colon M_0 \to W$, and when $\lambda_{int}(t) > 0$ the map $g_t$ is an embedding because some projection of it is the embedding $h$ times a positive real number. Now} the product maps $(f_t,g_t)$ are embeddings for all $t \in \Delta^k$, \orange{because when $t \in C$ these maps have not been changed, and when $t \not\in C$ the map $g_t$ itself is an embedding.}
%
\end{proof}

This motivates us to find an intermediate category $\Manth$ that is equivalent to $\Mansm$, but whose morphism spaces involve embeddings of $M_0$ into $M_1 \times \mc U$. The construction of this category is subtle---we can't take all embeddings $M_0 \times \mc U \to M_1 \times \mc U$, because then projecting away $\mc U$ doesn't respect composition. On the other hand, we can't take all maps $M_0 \to M_1 \times \mc U$ and compose them using isometries $\mc U \times \mc U \to \mc U$, because then the category lacks identity maps. The following definition circumvents both of these issues.

Let $\mc L(\mc U,\mc U)$ be the space of equivariant linear isometries $\mc U \to \mc U$, i.e. maps that are equivariant, linear, and metric-preserving, but not necessarily isomorphisms. This is a simplicial set in which a $p$-simplex of linear isometries $\mc U \times \Delta^p \to \mc U$ \orange{must be smooth in the following sense: note that for any finite-dimensional subspace $V \subseteq \mc U$, the restriction to $V \times \Delta^p$ lands in a finite dimensional subspace of $\mc U$ and we require this map between finite dimensional manifolds with corners to be smooth.}

\begin{defn}\label{Mthick}
	The \ourdefn{category of manifolds and $\mc U$-embeddings} $\Manth$ has as objects the smooth compact $G$-manifolds with corners. The morphism space from $M_0$ to $M_1$ is the subspace
	\[ \Manth(M_0,M_1) \subseteq \mc L(\mc U,\mc U) \times \Emb(M_0,M_1 \times \mc U) \]
	of all isometries $f\colon \mc U \to \mc U$ and embeddings $e = (e^{(1)},e^{(2)})\colon M_0 \to M_1 \times \mc U$ such that the second coordinate $e^{(2)}$ lands in the orthogonal complement $f(\mc U)^\perp$.
	
	The composition is by composing the isometries $f$, and composing the embeddings $e$ by the rule
	\[ \xymatrix{
		M_0 \ar[r]^-{e_1} & M_1 \times f_1(\mc U)^\perp \ar[r]^-{e_2 \times f_2} & M_2 \times f_2(\mc U)^\perp \times f_2(f_1(\mc U)^\perp) \ar[r]^-{\subseteq} & M_2 \times (f_2 \circ f_1)(\mc U)^\perp.
	} \]
	It is straightforward to check this is associative and preserves the simplicial structure.
\end{defn}

We define functors
\[ \xymatrix{ \Man \ar[r] & \Manth \ar[r] & \Mansm } \]
that are the identity on objects. The first functor sends each embedding $e\colon M_0 \to M_1$ to the identity $\id\colon \mc U \to \mc U$ and the embedding $(e,0)\colon M_0 \to M_1 \times \mc U$. The second functor sends each pair $f\colon \mc U \to \mc U$ and $e = (e^{(1)},e^{(2)})$ to the smooth map $e^{(1)}\colon M_0 \to M_1$. The composite of these functors sends $e\colon M_0 \to M_1$ to itself, forgetting that $e$ is an embedding.

\begin{lem}\label{first_extension}
	The stable $h$-cobordism space $\mc H^{\mc U}(-)$ extends from $\Man$ to $\Manth$.
\end{lem}
\begin{proof}
\orange{We say a map $M_0 \times \mc U \to M_1 \times \mc U$ is an embedding if its restriction to any compact submanifold of any finite dimensional subspace, which then lands in a finite dimensional subspace, is an embedding.}

	From the definition of the functor $\mc H^{\mc U}(-)$, it clearly extends to the larger category whose objects are manifolds $M_0$, and whose morphisms are embeddings
\[ \Emb(M_0 \times \mc U,M_1 \times \mc U). \]
	However, this category contains $\Manth$ inside, by restricting the morphisms to the subset of embeddings of the form
\[ (x,v) \mapsto (e^{(1)}(x),f(v) + e^{(2)}(x)). \]
	This subcategory, in turn, contains $\Man$ inside, as those embeddings of the form
	\[ (x,v) \mapsto (e(x),v), \]
	and on this subcategory we get the original functor $\mc H^{\mc U}(-)$.
\end{proof}

\orange{
\begin{rem}
We note that an embedding $M_0 \times \mc U \to M_1 \times \mc U$ in our sense might fail to be a topological embedding, but we use this term to refer to maps that are embeddings on each finite-dimensional subspaces, which is all we will need in the upcoming proofs.
\end{rem}
}

To extend $\mc H^{\mc U}(-)$ to the category manifolds and smooth maps $\Mansm$, we need to prove that the forgetful map $\Manth \to \Mansm$ is an equivalence on morphism spaces.

\begin{lem}\label{l_contractible}
	$\mc L(\mc U,\mc U)$ is contractible.
\end{lem}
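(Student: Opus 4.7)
The plan is to prove contractibility via the classical ``linear isometries swindle'' going back to Boardman--Vogt and May. Since $\mc U$ is a complete $G$-universe, it is equivariantly isometric to $\mc U \oplus \mc U$; fix such an isomorphism $\alpha\colon \mc U \oplus \mc U \xrightarrow{\cong} \mc U$, giving rise to two orthogonal isometric embeddings $\iota_1, \iota_2\colon \mc U \to \mc U$ defined by $\iota_j = \alpha \circ e_j$, so that $\iota_1(\mc U) \perp \iota_2(\mc U)$. The workhorse observation, used throughout, is that whenever two elements $\phi, \psi \in \mc L(\mc U, \mc U)$ have mutually orthogonal images, the formula
\[ t \longmapsto \cos(\pi t/2)\,\phi + \sin(\pi t/2)\,\psi \]
defines a smooth path in $\mc L(\mc U, \mc U)$ from $\phi$ to $\psi$; orthogonality is precisely what makes the norm computation $|h_t(v)|^2 = \cos^2(\pi t/2)|v|^2 + \sin^2(\pi t/2)|v|^2 = |v|^2$ work. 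Throughout, ``smooth'' should be understood in the sense of the simplicial structure on $\mc L(\mc U, \mc U)$, i.e.\ smoothness of the restriction to each finite-dimensional subspace of $\mc U$.

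First I would use this observation to show that the left-multiplication self-map $L_{\iota_1}\colon f \mapsto \iota_1 \circ f$ is simplicially null-homotopic. Explicitly, applying the observation with $\phi = \iota_1 \circ f$ and $\psi = \iota_2$, while letting $f$ vary, gives the homotopy
\[ H(f, t)(v) = \cos(\pi t/2)\,\iota_1(f(v)) + \sin(\pi t/2)\,\iota_2(v) \]
from $L_{\iota_1}$ to the constant map at $\iota_2$, which is smooth in the $\Delta^p$-family sense and hence a genuine simplicial homotopy. Next I would identify the identity of $\mc L(\mc U,\mc U)$ with $L_{\iota_1}$ up to homotopy by producing a path from $\mathrm{id}_{\mc U}$ to $\iota_1$ inside $\mc L(\mc U,\mc U)$ and post-composing. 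To produce this path I decompose $\mc U \cong \bigoplus_{n \ge 0} \mc U_n$ into countably many orthogonal copies of $\mc U$ (by iterating $\mc U \cong \mc U \oplus \mc U$) and cascade rotations in the planes spanned by consecutive summands, arranged in two batches (rotations in $(\mc U_{2k},\mc U_{2k+1})$, then in $(\mc U_{2k+1},\mc U_{2k+2})$) so that within each batch the rotated planes are pairwise orthogonal and the batch defines a bona fide equivariant isometry at every time. Combining the two steps, the identity of $\mc L(\mc U, \mc U)$ is simplicially null-homotopic, which is exactly contractibility.

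The hard part is the second step: connecting $\mathrm{id}_{\mc U}$ to $\iota_1$ inside $\mc L(\mc U,\mc U)$, since these two maps have images of different (codimension $0$ versus infinite) codimension, and no rotation supported in finitely many summands can interpolate between them. The Hilbert-hotel cascade sketched above is what resolves this, and the delicate point is verifying that the cascade gives a well-defined equivariant linear isometry at every intermediate time --- which is where the complete-universe hypothesis is really used, via the repeated splittings $\mc U \cong \mc U \oplus \mc U$ needed to furnish the batched rotations. The rest of the argument is routine manipulation of cosines and sines against the orthogonality of the chosen decomposition.
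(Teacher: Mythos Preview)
Your overall architecture matches the paper's exactly: both arguments reduce contractibility to (i) the cosine/sine ``swindle'' showing that left multiplication $L_{\iota_1}\colon f\mapsto \iota_1\circ f$ is null-homotopic, and (ii) the existence of a path in $\mc L(\mc U,\mc U)$ from $\id_{\mc U}$ to $\iota_1$. Step (i) is fine and is precisely the paper's homotopy $H_2$.

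The gap is in your step (ii), the ``two-batch cascade of rotations.'' A rotation in the plane $\mc U_{2k}\oplus\mc U_{2k+1}$ is an isometric \emph{automorphism} of $\mc U$ (it is the identity on the orthogonal complement of that plane), and a direct sum of such rotations over pairwise orthogonal planes is again an isometric automorphism. The same holds for the second batch. Hence each batch, applied to $\id_{\mc U}$ by post-composition, yields a surjective isometry, and the composite of two such batches is still surjective. But $\iota_1$ is not surjective: its image has orthogonal complement $\iota_2(\mc U)\cong\mc U$. So no finite sequence of batched plane rotations can carry $\id_{\mc U}$ to $\iota_1$. If instead you intend to rotate only the restriction to $\mc U_{2k}$ (from the inclusion into $\mc U_{2k}$ to the inclusion into $\mc U_{2k+1}$) while leaving $\mc U_{2k+1}$ fixed, then at intermediate times the images of $\mc U_{2k}$ and $\mc U_{2k+1}$ both meet $\mc U_{2k+1}$ nontrivially and are not orthogonal, so the map fails to be an isometry. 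Either reading of your cascade breaks down.

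The paper handles step (ii) by a genuinely different device. It takes $i_1$ to be the shuffle sending the $n$th copy of the regular representation to the $(2n)$th, forms the straight-line homotopy $(1-t)v + t\,i_1(v)$, which is through equivariant linear \emph{injections} but not isometries, and then applies Gram--Schmidt to restore the isometry condition at every time. Equivariance is maintained by observing that the homotopy factors as $f_t\otimes_\R V$ with $f_t$ a non-equivariant injection and $V$ the regular representation, so one orthonormalizes $f_t$ and tensors back up. The point is that one must temporarily leave the space of isometries to cross from the surjective component to a non-surjective endpoint; your rotation scheme never does this, which is exactly why it cannot succeed.
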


\begin{proof}
	We import a standard proof from the continuous case \cite{lms} to the smooth case. Fix two isometries $i_1,i_2\colon \mc U \to \mc U$ such that the sum map
	\[ \xymatrix{ (i_1,i_2) \colon \mc U \oplus \mc U \ar[r]^-\cong & \mc U } \]
	is an isomorphism. For instance, if we write $\mc U$ as a countable sum of regular representations $\bigoplus_{n=1}^\infty V_n$, we could take $i_1$ to be a shuffle map that sends the $n$th summand to the $(2n)$th summand by an identity map, and $i_2$ the shuffle map that takes the $n$th summand to the $(2n-1)$st summand.
	
	Suppose there a exists smooth homotopy of isometries $H_1$ from $\id$ to $i_1$. Then, post-composing with $H_1$ gives a homotopy from the identity of $\mc L(\mc U,\mc U)$ to a map landing in the subspace of isometries that factor through $i_1$. Specifically, for each isometry $f$, the homotopy $H_1(f(-),t)$ deforms from $f$ to $i_1 \circ f$. After this, we can apply the homotopy
	\[ H_2(v,t) = (\cos t) \cdot i_1(f(v)) + (\sin t) \cdot i_2(g(v)) \]
	for a fixed isometry $g$. Since the images of $i_1$ and $i_2$ are orthogonal, this is a homotopy through isometries, and deforms the map $i_1 \circ (-)$ on $\mc L(\mc U,\mc U)$ to the constant map taking everything to the fixed isometry $i_2 \circ g$. Each of these homotopies is smooth and so induces a simplicial homotopy on $\mc L(\mc U,\mc U)$. (Pasting them together makes something that is only piecewise smooth, which is why we have to carry out the two homotopies separately.) Therefore $\mc L(\mc U,\mc U)$ is contractible.
	
	It remains to construct the homotopy $H_1$---in other words, we have reduced to showing that $\mc L(\mc U,\mc U)$ is path-connected. As in \cite{lms}, if we take the above explicit choices for $i_1$ and $i_2$, then the straight-line homotopy
	\[ H_1(v,t) = (1-t)v + (t)i_1(v) \]
	is a smooth homotopy through equivariant linear injective maps $\mc U \to \mc U$. To make this into a homotopy through isometries, we  apply a version of Gram-Schmidt orthogonalization.
	
	We note that this homotopy is through maps of the form $f \otimes_\R V$, where $f$ is a non-equivariant isometry and $V$ is a regular representation. Therefore it suffices to apply Gram-Schmidt to a one-parameter family of non-equivariant linear injective maps, to make them into isometries. This is straightforward: we pick a well-ordered orthonormal basis and apply the algorithm to its image under $(1-t)v + (t)i_1(v)$ for each value of $t$ separately. The resulting formulas are clearly smooth in $t$, and the resulting vectors are orthonormal, hence they define a one-parameter family of linear isometries from the identity to $i_1$. Applying $(-) \otimes_R V$ provides the desired smooth homotopy of equivariant linear isometries from the identity to $i_1$, finishing the proof.
\end{proof}

\begin{prop}\label{emb_equiv}
	The forgetful map $\Manth(M_0,M_1) \to \Mansm(M_0,M_1)$ is a weak equivalence.
\end{prop}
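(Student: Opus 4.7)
The forgetful map factors as
\[ \Manth(M_0, M_1) \xrightarrow{q} \Emb(M_0, M_1 \times \mc U) \xrightarrow{r} \Mansm(M_0, M_1), \]
where $q$ takes $(f, e)$ to $e$ and $r$ projects away $\mc U$. Since $r$ is a weak equivalence by \autoref{emb_u_smooth}, it suffices to show that $q$ is one. The plan is to construct an explicit section $s$ of $q$ and show that both composites $q \circ s$ and $s \circ q$ are homotopic to the identity.

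Fix a $G$-equivariant orthogonal decomposition $\mc U = A \oplus B$ in which both $A$ and $B$ are complete $G$-universes, together with isometric $G$-isomorphisms $j_A\colon \mc U \xrightarrow{\cong} A$ and $j_B\colon \mc U \xrightarrow{\cong} B$. Define
\[ s\colon \Emb(M_0, M_1 \times \mc U) \to \Manth(M_0, M_1), \qquad s(e) = (j_A, (e^{(1)}, j_B \circ e^{(2)})). \]
This lies in $\Manth$ because $j_B \circ e^{(2)}(M_0) \subseteq B = j_A(\mc U)^\perp$, and $(e^{(1)}, j_B \circ e^{(2)}) = (\id_{M_1} \times j_B) \circ e$ is an embedding. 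The composite $q \circ s$ is post-composition with $\id_{M_1} \times j_B$, so a path in $\mc L(\mc U, \mc U)$ from $\id$ to $j_B$ (which exists by \autoref{l_contractible}) induces the homotopy $\id \simeq q \circ s$.

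The harder direction is producing a homotopy $\id \simeq s \circ q$ on $\Manth(M_0, M_1)$ from $(f, e)$ to $(j_A, (e^{(1)}, j_B \circ e^{(2)}))$. We will build this in three stages, preserving throughout the orthogonality $e^{(2)}_t(M_0) \perp f_t(\mc U)$. First, apply the action $g \cdot (f, e) = (g \circ f, (e^{(1)}, g \circ e^{(2)}))$ along a path $\alpha_t$ in $\mc L(\mc U, \mc U)$ from $\id$ to $j_A$, landing at $(j_A \circ f, (e^{(1)}, j_A \circ e^{(2)}))$; orthogonality is preserved because $\alpha_t$ is an isometry. Next, keeping $f$ fixed as $j_A \circ f$, deform $e^{(2)}$ via the path of isometries $\lambda_s = (\cos s)\, j_A + (\sin s)\, j_B \in \mc L(\mc U, \mc U)$ (isometric because $A \perp B$), landing at $(j_A \circ f, (e^{(1)}, j_B \circ e^{(2)}))$; here orthogonality holds because the $B$-summand of $\lambda_s \circ e^{(2)}$ is automatically perpendicular to $j_A \circ f(\mc U) \subseteq A$, while the $A$-summand inherits the original orthogonality after applying the isometry $j_A$, via $\langle j_A\, e^{(2)}(x), j_A\, f(v)\rangle = \langle e^{(2)}(x), f(v)\rangle = 0$. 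Finally, keeping $e^{(2)}$ fixed as $j_B \circ e^{(2)} \in B$, deform $f$ inside $\mc L(\mc U, A)$ from $j_A \circ f$ to $j_A$; such a path exists because $\mc L(\mc U, A)$ is contractible by the same argument as \autoref{l_contractible} (with $A$ itself a complete $G$-universe), and orthogonality is automatic here since $f_t(\mc U) \subseteq A$ is perpendicular to $j_B \circ e^{(2)}(M_0) \subseteq B$.

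The embedding property of $(e^{(1)}, e^{(2)}_t)$ is preserved throughout, since $e^{(2)}_t$ is always obtained from $e^{(2)}$ by post-composition with an isometric embedding of $\mc U$ into itself. The main subtle point is choosing the section $s$ and the three deformations so that, at every instant, the moving subspaces $f_t(\mc U)$ and $e^{(2)}_t(M_0)$ remain orthogonal; the explicit decomposition $\mc U = A \oplus B$ into orthogonal copies of a complete universe is precisely what provides the room to arrange this. Concatenating the three stages yields $\id \simeq s \circ q$, so $q$ is a weak equivalence, completing the proof.
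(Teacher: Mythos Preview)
Your proof is correct and follows essentially the same strategy as the paper: split $\mc U$ into two orthogonal copies of a complete universe, then use three successive homotopies (push everything into one copy, rotate $e^{(2)}$ into the other copy, then straighten $f$) while checking at each stage that orthogonality and the embedding property are preserved. The only difference is organizational: the paper builds a direct homotopy inverse $\Mansm(M_0,M_1)\to\Manth(M_0,M_1)$ using a \emph{fixed} embedding $g\colon M_0\to\mc U$ (since on $\Mansm$ there is no $e^{(2)}$ available), whereas you first factor through $\Emb(M_0,M_1\times\mc U)$ via \autoref{emb_u_smooth} and then invert the projection $q$ using the given $e^{(2)}$ itself. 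Both routes are equally clean; yours has the small advantage of reusing \autoref{emb_u_smooth} rather than reproving part of it, while the paper's avoids introducing the intermediate space.
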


\begin{proof}
	This is a modification of the proof of \autoref{l_contractible}. We take the same homotopy $H_1$ from the identity of $\mc U$ to the inclusion $i_1$, and compose both $f$ and $e^{(2)}$ with $H_1$ to deform them to maps that factor through $i_1$. Then we use the homotopy $H_2$ to deform the smooth map $i_1 \circ e^{(2)}$ to $i_2 \circ g$ for a fixed embedding $g\colon M_0 \to \mc U$. Since $g$ is an embedding, throughout this homotopy the resulting product map $M_0 \to M_1 \times \mc U$ is an embedding. Also, since both $i_2(\mc U)$ and $i_1(f(\mc U)^\perp)$ are orthogonal to $i_1(f(\mc U))$, this homotopy is through embeddings that are in the orthogonal complement of $i_1(f(\mc U))$. Finally, we deform the isometry $i_1 \circ f$ to $i_1$ by pre-composing $i_1$ by the homotopy $H_1$. Throughout this homotopy, the isometry lands in $i_1(\mc U)$, so the embedding in $i_2(\mc U)$ is always in its orthogonal complement.
	
	Together, these homotopies deform the identity map of $\Manth(M_0,M_1)$ to the map that sends $(f,(e^{(1)},e^{(2)}))$ to $(i_1,(e^{(1)},i_2 \circ g))$. So we have made everything fixed, except the smooth map $e^{(1)} \colon M_0 \to M_1$.
	
	To show that the forgetful map is a weak equivalence, we now define its homotopy inverse to be the map sending $e$ to $(i_1,(e,i_2 \circ g))$. The composite of these two in one direction gives the identity of $\Mansm(M_0,M_1)$, while in the other direction we get the map that we have shown is homotopic to the identity of $\Manth(M_0,M_1)$.
\end{proof}

\begin{cor}\label{second_extension}
	Up to equivalence, the stable $h$-cobordism space $\mc H^{\mc U}(-)$ extends from $\Man$ to $\Mansm$.
\end{cor}

\begin{proof}
	We already know by \autoref{first_extension} that $\mc H^{\mc U}(-)$ extends from $\Man$ to $\Manth$. The map $\Manth \to \Mansm$ is a bijection on objects, and by \autoref{emb_equiv}, it is a weak equivalence on mapping spaces. It follows that any functor on $\Manth$ extends up to equivalence to $\Mansm$, using either \autoref{transport} and \autoref{mainpedro} on the associated Segal spaces, or by taking a homotopy left Kan extension (see e.g. \cite{dhks}). In particular, since the stable $h$-cobordism space $\mc H^{\mc U}(-)$ is a functor on $\Manth$, it is equivalent to a functor that extends to $\Mansm$.
\end{proof}

For the penultimate step, we include $\Mansm$ into the simplicial category of $\mc F$ of finite $G$-CW complexes and continuous maps. 

\begin{prop}\label{dk1}
	The inclusion $\Mansm \to \mc F$ is a \orange{pointwise equivalence of simplicial categories.}
\end{prop}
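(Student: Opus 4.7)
My plan is to show that for any two smooth compact $G$-manifolds with corners $M_0, M_1$, the inclusion on mapping spaces $\Sm(M_0, M_1) \hookrightarrow \mc F(M_0,M_1)$ is a weak equivalence. Both simplicial sets are Kan complexes, so checking the equivalence amounts to the following relative smooth approximation statement: every continuous equivariant map $\tilde g \colon \Delta^n \times M_0 \to M_1$ whose restriction to $\partial \Delta^n \times M_0$ is smooth admits a continuous equivariant homotopy, rel $\partial \Delta^n \times M_0$, to a smooth equivariant map. Applied in dimensions $n$ and $n+1$, this yields surjectivity and injectivity of the induced maps on all $\pi_n$.

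The tool is equivariant smooth approximation via a tubular-neighborhood retraction. I would first embed $M_1$ equivariantly into a $G$-representation $V$ via \autoref{weak_tubular_nbhd_thm}, yielding a tubular neighborhood $D(\nu) \subseteq V$ of $M_1$ with a smooth equivariant retraction $p\colon D(\nu) \to M_1$. The key observation is that $D(\nu)$ is a genuine neighborhood of all of $M_1$ in $V$, including at points of $\partial M_1$ and the corners, because the normal bundle of the embedding is defined everywhere. Next, I would use \autoref{smooth_extension} (together with an open manifold extension of $M_1$ from \autoref{extend_to_open_manifold}) to extend the smooth map $g := \tilde g|_{\partial\Delta^n \times M_0}$ to a smooth equivariant map on an open neighborhood $U$ of $\partial\Delta^n \times M_0$ in $\Delta^n \times M_0$. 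Patching this extension with $\tilde g$ via a smooth partition of unity in $V$ and applying $p$, I obtain a continuous equivariant map, homotopic to $\tilde g$ rel $\partial\Delta^n \times M_0$, that is smooth on a closed neighborhood $C \subset U$ and still lands in $M_1$.

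After these preparations, I would compose with $M_1 \hookrightarrow V$ and invoke standard smooth approximation rel $C$ in Euclidean space, made equivariant by averaging over the finite group $G$ as in the proof of \autoref{smooth_approximation}. Provided the approximation is sufficiently close to $\tilde g$ in the $C^0$ sense, its image lies in $D(\nu)$; post-composing with $p$ gives a smooth equivariant map $h\colon \Delta^n \times M_0 \to M_1$ agreeing with $\tilde g$ on $C$, hence with $g$ on $\partial\Delta^n \times M_0$. The straight-line homotopy in $V$ from $\tilde g$ to its smooth approximation also stays in $D(\nu)$ by closeness, and applying $p$ produces a continuous equivariant homotopy from $\tilde g$ to $h$ in $M_1$, rel $C$ and in particular rel $\partial\Delta^n \times M_0$, completing the required modification.

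The main obstacle is the presence of corners in the target $M_1$. The paper's \autoref{smooth_approximation} as stated applies only to maps into $\textup{int}\,N$, whereas our $\tilde g$ may hit $\partial M_1$. This is resolved by the observation that the disk bundle $D(\nu)$ produced by \autoref{weak_tubular_nbhd_thm} is a neighborhood of all of $M_1$ in $V$, not merely of its interior, so the retraction $p$ is usable throughout and no preliminary push into the interior is needed. If one prefers to reduce strictly to \autoref{smooth_approximation}, an alternative is to use the inward-pointing-vector-field isotopy of \autoref{deform_embedding_to_interior} to deform $\tilde g$ via a cutoff $\rho\colon \Delta^n \to [0,1]$ vanishing near $\partial\Delta^n$, making the image land in $\textup{int}\,M_1$ away from $\partial\Delta^n$ before approximating; this is slightly more bookkeeping but avoids the observation about tubular neighborhoods at corner points.
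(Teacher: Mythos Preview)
Your proposal is correct and follows essentially the same approach as the paper: both reduce to the relative lifting criterion (a $\partial\Delta^n$-family of smooth maps extending to a $\Delta^n$-family of continuous maps can be deformed rel boundary to a smooth family), extend the smooth boundary data to a neighborhood via \autoref{smooth_extension}/\autoref{smooth_on_partial_delta}, and then invoke smooth approximation on the remainder. Your treatment is more detailed than the paper's one-paragraph sketch, and in particular you correctly identify and resolve the subtlety that \autoref{smooth_approximation} is only stated for maps into the interior of the target; the paper glosses over this, but your fix via the tubular-neighborhood retraction in $V$ (or alternatively the inward-flow argument) is exactly what is needed.
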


\begin{proof}
		\orange{As in the proof of \autoref{manst_equivalent}, we can restrict the spaces $\Mansm(M_0,M_1)$ and $\mc F(M_0,M_1)$ to the equivalent subspaces of those maps that land in the interior of $M_1$. Then it suffices to show after doing this that the forgetful map from smooth to continuous is an acyclic Kan fibration.
		
		Suppose we are given a $\partial \Delta^k$-family of smooth equivariant maps $\phi\colon M_0 \to \textup{int }M_1$ that extends to a $\Delta^k$-family of continuous equivariant maps $f\colon M_0 \to \textup{int }M_1$. Since $\partial \Delta^k \times M_0$ is a partial boundary of $\Delta^k \times M_0$, we can use \autoref{smooth_extension} to extend the smooth family to a $T$-family of smooth maps, $\phi\colon T \times M_0 \to \textup{int }M_1$, for an open neighborhood $T$ of $\partial\Delta^k$. As in the proof of \autoref{emb_u_smooth}, we choose a smooth partition of unity $\{\lambda_T,\lambda_{int}\}$ on $\Delta^k$ subordinate to $\{T,\textup{int }\Delta^k\}$ and use this to interpolate between $\phi$ and $f$, giving a new $\Delta^k$-family of continuous maps $\tilde f_0$ that is smooth in a neighborhood of $\partial\Delta^k$ and agrees with $\phi$ on $\partial\Delta^k$. Then we apply smooth approximation (\autoref{smooth_approximation}) to make a smooth family $\tilde f_1$ that agrees with $\tilde f_0$ and therefore with $\phi$ on $\partial\Delta^k$.
		
		To make this interpolation work, since the target $M_1$ is not a vector space, we embed $M_1$ into a representation $V$ and let $\Omega \subseteq V$ be an open neighborhood that \emph{continuously} retracts onto $M_1$ by the nearest-neighbor map $\pi\colon \Omega \to M_1$, and let $\epsilon > 0$ be such that the $\epsilon$-tube about the compact set $M_1$ is contained in $\Omega$. (Note the contrast with the earlier proofs of \autoref{smooth_simplices}, \autoref{mirror_pseudo_equivalence}, and \autoref{rh_contractible} where we had to use the neighborhood that smoothly retracts but only onto $(\textup{int }M_1)$, and so in order to get an $\epsilon$ we had to restrict to some compact subset of $(\textup{int }M_1)$.) Then we choose $T$ small enough that the $C^0$-distance from $\phi$ to $f$ on $T \times M_0$ is smaller than $\frac{\epsilon}{2}$, and when we do the smooth approximation of $\tilde f_0$ to make $\tilde f_1$, we make the $C^0$ distance less than $\frac{\epsilon}{2}$. This guarantees that $\tilde f_0$ is well-defined, and that the $C^0$ distance from $\tilde f_1$ to the original family $f$ is less than $\epsilon$.
		
		Finally, we apply $\pi$ to the straight-line homotopy between $\tilde f_1$ and $f$ in $V$, which is possible because their distance is smaller than the radius of $\Omega$. This gives a continuous homotopy of families from $\tilde f_1$ to $f$, rel $\partial \Delta^k$. This proves that the lift of $\Delta^k$ from continuous maps to smooth maps exists after deforming it rel $\partial\Delta^k$, which is enough to prove that the map of simplicial sets is a weak equivalence.}
\end{proof}

We also recall the following standard fact, which can be proven by embedding into a representation and taking a sufficiently nice neighborhood:
\begin{prop}\label{dk2}
	Every finite $G$-CW complex is equivalent to some compact smooth $G$-manifold with boundary.
\end{prop}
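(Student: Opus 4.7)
The plan is to embed $X$ equivariantly in a $G$-representation, thicken it to a smooth neighborhood with boundary, and check that the thickening deformation retracts equivariantly onto $X$.

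\textbf{Step 1 (equivariant embedding).} I would first embed $X$ equivariantly into the unit disc $D(V)$ of some orthogonal $G$-representation $V$. This can be done by induction on the $G$-cells $G/H \times D^n$: each attaching map lands in the compact image of the previously constructed embedding, and a new cell can be embedded by extending into an additional $G$-summand of a sufficiently large representation (using that the space of $H$-equivariant embeddings of $D^n$ into a high-dimensional $H$-representation is connected). Alternatively, by Illman's equivariant triangulation theorem, $X$ is equivariantly homeomorphic to a finite equivariant simplicial complex, which embeds linearly and equivariantly into the permutation representation on its vertex set. Using \autoref{deform_embedding_to_interior} (applied to a containing manifold) we may assume the image lies in the interior of $D(V)$.

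\textbf{Step 2 (smooth thickening).} Next I would construct a smooth $G$-invariant function $\rho\colon V \to [0,\infty)$ with $\rho^{-1}(0) = X$ that admits arbitrarily small positive regular values. One way is to cover $X$ by finitely many $G$-slice neighborhoods of the form $G \times_H U$, write down a smooth nonnegative defining function on each slice that vanishes exactly on the local intersection with $X$, assemble them by a $G$-invariant partition of unity, and then square and sum. Arbitrarily small regular values exist by applying Sard's theorem on each orbit-type stratum of $V$ (or equivalently on the orbit space, which inherits a smooth structure stratum by stratum). For any such regular value $\epsilon$ with $\rho^{-1}([0,\epsilon]) \subset \textup{int}\,D(V)$, the closed sublevel set
\[ N := \rho^{-1}([0,\epsilon]) \]
is a compact smooth $G$-manifold with boundary $\rho^{-1}(\epsilon)$.

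\textbf{Step 3 (equivariant homotopy equivalence).} Finally I would show $X \hookrightarrow N$ is an equivariant homotopy equivalence. A finite $G$-CW complex is a $G$-ANR, so any sufficiently small closed $G$-invariant neighborhood of $X$ in the smooth $G$-manifold $V$ admits an equivariant strong deformation retraction onto $X$; choosing $\epsilon$ small enough, $N$ is such a neighborhood. Hence $N \simeq_G X$, which gives the claim.

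\textbf{Main obstacle.} The technical heart is Step 2: producing a smooth $G$-invariant defining function for $X$ that has small positive regular values, so that $N$ is genuinely a smooth manifold with boundary rather than a topological thickening. This is where one needs equivariant Sard along orbit-type strata together with a careful slice-by-slice construction of $\rho$; everything else is routine equivariant homotopy theory. A cleaner but heavier alternative is to invoke Illman's triangulation, take a PL regular neighborhood of an equivariant linear embedding, and then apply equivariant smoothing theory to its boundary, but the defining-function approach keeps the argument self-contained.
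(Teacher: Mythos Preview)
Your approach is essentially the one the paper has in mind: the paper does not give a proof at all, merely recording this as a ``standard fact, which can be proven by embedding into a representation and taking a sufficiently nice neighborhood.'' Your three steps flesh out exactly that sentence.

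One simplification worth noting: Step~2 is more elaborate than it needs to be. You do not need a slice-by-slice construction or any equivariant version of Sard. Since $X$ is closed in $V$, Whitney's theorem gives a smooth nonnegative $f\colon V\to[0,\infty)$ with $f^{-1}(0)=X$; averaging $f$ over $G$ yields a $G$-invariant such function. Ordinary Sard then produces arbitrarily small regular values $\epsilon$, and because $\rho$ is $G$-invariant the level set $\rho^{-1}(\epsilon)$ is automatically a $G$-invariant smooth hypersurface---no stratified transversality argument is required for $N=\rho^{-1}([0,\epsilon])$ to be a compact smooth $G$-manifold with boundary. The ``equivariant Sard along orbit-type strata'' you flag as the main obstacle is therefore not actually needed.

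In Step~3, the claim that every sufficiently small closed invariant neighborhood strong-deformation-retracts onto $X$ is true but does not follow immediately from the $G$-ANR property alone; one gets an open such neighborhood $U$, and then uses that $X\hookrightarrow N\hookrightarrow U$ with $X\to U$ an equivariant equivalence and $X\hookrightarrow N$ a $G$-cofibration to conclude $X\to N$ is an equivalence. This is routine, but worth stating rather than asserting the stronger neighborhood-retraction claim directly.
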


Together \autoref{dk1} and \autoref{dk2} prove that $\Mansm \to \mc F$ is a Dwyer-Kan equivalence \orange{as in \autoref{segal_equiv}.} In other words, they have different objects, but the same equivalence classes of objects, and the mapping spaces are equivalent. As in the proof of \autoref{second_extension}, this is enough to conclude that the functor $\mc H^{\mc U}(-)$ extends up to equivalence to $\mc F$, for instance by taking a homotopy left Kan extension from $\Mansm$ to $\mc F$.

Finally, we can perform an additional homotopy left Kan extension to extend the functor $\mc H^{\mc U}(-)$ from $\mc F$ to the category of all $G$-CW complexes, not necessarily finite, and continuous maps. This does not change its homotopy type on the finite complexes. Therefore, on the smooth compact $G$-manifolds and embeddings, we still have the same $h$-cobordism space up to equivalence, and the same stabilization map up to homotopy. We conclude:

\begin{thm}
	Up to equivalence, $\mc H^{\mc U}(-)$ extends to a simplicial functor from all $G$-CW complexes to spaces.
\end{thm}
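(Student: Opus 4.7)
The plan is to extend $\mc H^{\mc U}(-)$ from $\mc F$ to all $G$-CW complexes by homotopy left Kan extension along the fully faithful inclusion $\iota\colon \mc F \hookrightarrow \mc F_\infty$, where $\mc F_\infty$ denotes the simplicially enriched category of all (not necessarily finite) $G$-CW complexes and equivariant continuous maps. At the level of $\infty$-categorical functors, this produces a functor with the explicit formula
\[
\mc H^{\mc U}_\infty(X) \;\simeq\; \hocolim_{(K,\alpha\colon K \to X),\, K \in \mc F} \mc H^{\mc U}(K),
\]
with the indexing taken over the comma $\infty$-category. By cofinality, one may restrict the indexing to the filtered poset of finite $G$-subcomplexes of $X$ with inclusions between them, since the compact image of any map from a finite complex sits inside some finite subcomplex of $X$.

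To produce this as a strictly enriched simplicial functor rather than only an $(\infty,1)$-functor, I would pass through the straightening-unstraightening machinery of \autoref{mainpedro}. Namely, convert $\mc H^{\mc U}(-)\colon \mc F \to \sSet$ into its associated left fibration $E \to N\mc F$, post-compose with $N\iota$ to get a map $E \to N\mc F_\infty$, fibrantly replace this as a left fibration over $N\mc F_\infty$ using the model structure of \cite{pedro}, and then unstraighten. By the adjunction between restriction and left Kan extension, the resulting simplicial functor on $\mc F_\infty$ is the homotopy left Kan extension of $\mc H^{\mc U}$ along $\iota$. Alternatively, one could model the hocolim by hand using the two-sided bar construction and verify strict functoriality directly.

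To verify correctness on finite complexes, I would invoke the standard fact that homotopy left Kan extension along a fully faithful functor preserves the original values up to equivalence: if $X \in \mc F$, then $(X,\id_X)$ is a terminal object of the indexing comma category (equivalently, $X$ is a terminal element of the poset of finite subcomplexes of itself), so the hocolim collapses to $\mc H^{\mc U}(X)$. Combining this with the chain of Dwyer-Kan equivalences $\Man \simeq \Manth \simeq \Mansm \simeq \mc F$ established earlier in this section, the extended functor $\mc H^{\mc U}_\infty$ agrees up to equivalence with the original $\mc H^{\mc U}$ on compact smooth $G$-manifolds and with the stabilization maps from \autoref{sec:stab} on smooth embeddings, yielding the theorem.

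The main obstacle is purely formal rather than geometric: all of the substantive input (including the fact that $\mc H^{\mc U}$ is itself a filtered hocolim over compact submanifolds, per \autoref{stable_h_cobordisms}) is already in place from the preceding sections. The only work left is to package the homotopy left Kan extension as a strictly enriched simplicial functor, which the straightening-unstraightening route handles cleanly with one more appeal to \autoref{mainpedro}.
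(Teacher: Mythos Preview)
Your proposal is correct and follows essentially the same approach as the paper: perform a homotopy left Kan extension of $\mc H^{\mc U}(-)$ along the fully faithful inclusion $\mc F \hookrightarrow \mc F_\infty$, and observe that this does not change the values on finite complexes. The paper's argument is terse (essentially one sentence before the theorem statement), while you supply more implementation detail about realizing the extension as a strict simplicial functor via straightening--unstraightening and about the cofinality reduction to finite subcomplexes; none of this diverges from the paper's route.
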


In particular, this implies that $\mc H^{\mc U}(-)$ sends equivariant homotopy equivalences to homotopy equivalences.

\begin{rem}
	The constructions in this paper also apply to topological $h$-cobordisms $W$ over smooth manifolds $M$. The doubles, mirror structures, and height functions are unnecessary, and all functions only have to be continuous, not smooth. We get a forgetful natural transformation from the smooth functor to the topological one, both before and after stabilizing by $\mc U$. We therefore get a natural transformation of functors on all $G$-CW complexes
	\[ \mc H^{\mc U}_{\Diff}(-) \to \mc H^{\mc U}_{\Top}(-). \]
	In the non-equivariant case, these stable $h$-cobordism functors agree (as functors on the homotopy category) with earlier definitions found in the literature \cite{hatcher_concordance, waldhausen_manifold}. 
	It would be useful to upgrade this by showing that our definition agrees with earlier ones in the topological category as $(\infty,1)$ functors, as in \cite{pieper}. This would require further elaborations concerning PL $h$-cobordisms and simple maps that go beyond the scope of this paper.

\end{rem}

  \bibliographystyle{amsalpha}
  \bibliography{../references}

\providecommand{\bysame}{\leavevmode\hbox to3em{\hrulefill}\thinspace}
\providecommand{\MR}{\relax\ifhmode\unskip\space\fi MR }
\providecommand{\MRhref}[2]{%
  \href{http://www.ams.org/mathscinet-getitem?mr=#1}{#2}
}
\providecommand{\href}[2]{#2}
\begin{thebibliography}{HLLRW21}

\bibitem[BdB18]{pedro}
Pedro Boavida~de Brito, \emph{Segal objects and the {G}rothendieck
  construction}, An alpine bouquet of algebraic topology, Contemp. Math., vol.
  708, Amer. Math. Soc., [Providence], RI, [2018] \copyright 2018, pp.~19--44.
  \MR{3807750}

\bibitem[Ber07]{bergner_thesis}
Julia~E. Bergner, \emph{Three models for the homotopy theory of homotopy
  theories}, Topology \textbf{46} (2007), no.~4, 397--436. \MR{2321038}

\bibitem[BLR75]{blr75}
Dan Burghelea, Richard Lashof, and Melvin Rothenberg, \emph{Groups of
  automorphisms of manifolds}, Lecture Notes in Mathematics, vol. Vol. 473,
  Springer-Verlag, Berlin-New York, 1975, With an appendix (``The topological
  category'') by E. Pedersen. \MR{380841}

\bibitem[Cer61]{cerf}
Jean Cerf, \emph{Topologie de certains espaces de plongements}, Bull. Soc.
  Math. France \textbf{89} (1961), 227--380. \MR{140120}

\bibitem[DHKS04]{dhks}
William~G. Dwyer, Philip~S. Hirschhorn, Daniel~M. Kan, and Jeffrey~H. Smith,
  \emph{Homotopy limit functors on model categories and homotopical
  categories}, Mathematical Surveys and Monographs, vol. 113, American
  Mathematical Society, Providence, RI, 2004. \MR{2102294}

\bibitem[DK80]{dk3}
W.~G. Dwyer and D.~M. Kan, \emph{Function complexes in homotopical algebra},
  Topology \textbf{19} (1980), no.~4, 427--440. \MR{584566}

\bibitem[GJ99]{goerss_jardine}
Paul~G. Goerss and John~F. Jardine, \emph{Simplicial homotopy theory}, Progress
  in Mathematics, vol. 174, Birkh\"{a}user Verlag, Basel, 1999. \MR{1711612}

\bibitem[Hat78]{hatcher_concordance}
A.~E. Hatcher, \emph{Concordance spaces, higher simple-homotopy theory, and
  applications}, Algebraic and geometric topology ({P}roc. {S}ympos. {P}ure
  {M}ath., {S}tanford {U}niv., {S}tanford, {C}alif., 1976), {P}art 1, Proc.
  Sympos. Pure Math., XXXII, Amer. Math. Soc., Providence, R.I., 1978,
  pp.~3--21. \MR{520490}

\bibitem[Hir94]{hirsch}
Morris~W. Hirsch, \emph{Differential topology}, Graduate Texts in Mathematics,
  vol.~33, Springer-Verlag, New York, 1994, Corrected reprint of the 1976
  original. \MR{1336822}

\bibitem[HLLRW21]{hllrw}
Fabian Hebestreit, Markus Land, Wolfgang L\"uck, and Oscar Randal-Williams,
  \emph{A vanishing theorem for tautological classes of aspherical manifolds},
  Geom. Topol. \textbf{25} (2021), no.~1, 47--110. \MR{4226228}

\bibitem[Igu88]{igusa}
Kiyoshi Igusa, \emph{The stability theorem for smooth pseudoisotopies},
  $K$-Theory \textbf{2} (1988), no.~1-2, vi+355. \MR{972368 (90d:57035)}

\bibitem[Joy12]{joyce}
Dominic Joyce, \emph{On manifolds with corners}, Advances in geometric
  analysis, Adv. Lect. Math. (ALM), vol.~21, Int. Press, Somerville, MA, 2012,
  pp.~225--258. \MR{3077259}

\bibitem[Kra22]{krannich_homological}
Manuel Krannich, \emph{A homological approach to pseudoisotopy theory. {I}},
  Invent. Math. \textbf{227} (2022), no.~3, 1093--1167. \MR{4384194}

\bibitem[LMS86]{lms}
L.~G. Lewis, Jr., J.~P. May, and M.~Steinberger, \emph{Equivariant stable
  homotopy theory}, Lecture Notes in Mathematics, vol. 1213, Springer-Verlag,
  Berlin, 1986, With contributions by J. E. McClure. \MR{866482 (88e:55002)}

\bibitem[Lur]{lurie_937_lecture6}
Jacob Lurie, \emph{Topics in {G}eometric {T}opology, {L}ecture 6:
  {D}iffeomorphisms and {PL} {H}omeomorphisms}, available at
  \href{https://www.math.ias.edu/~lurie/937notes/937Lecture6.pdf}{author's
  webpage}.

\bibitem[Lur09]{lurie_htt}
\bysame, \emph{Higher topos theory}, Annals of Mathematics Studies, vol. 170,
  Princeton University Press, Princeton, NJ, 2009. \MR{2522659}

\bibitem[May99]{concise}
J.~P. May, \emph{A concise course in algebraic topology}, Chicago Lectures in
  Mathematics, University of Chicago Press, Chicago, IL, 1999. \MR{1702278}

\bibitem[ME23]{me_ww}
Samuel Mu{\~ n}oz-Ech{\' a}niz, \emph{A {W}eiss-{W}illiams theorem for spaces
  of embeddings and the homotopy type of spaces of long knots}, arXiv preprint
  arXiv:2311.05541 (2023).

\bibitem[Mel]{melrose}
Richard~B. Melrose, \emph{Differential analysis on manifolds with corners},
  available at \href{https://math.mit.edu/~rbm/book.html}{author's webpage}.

\bibitem[MM19]{CaryMona}
Cary Malkiewich and Mona Merling, \emph{Equivariant {$A$}-theory}, Doc. Math.
  \textbf{24} (2019), 815--855. \MR{3982285}

\bibitem[MM22]{CaryMona3}
\bysame, \emph{The equivariant parametrized {$h$}-cobordism theorem, the
  non-manifold part}, Adv. Math. \textbf{399} (2022), Paper No. 108242, 42.
  \MR{4384608}

\bibitem[MW09]{christoph}
Christoph M\"{u}ller and Christoph Wockel, \emph{Equivalences of smooth and
  continuous principal bundles with infinite-dimensional structure group}, Adv.
  Geom. \textbf{9} (2009), no.~4, 605--626. \MR{2574141}

\bibitem[Pie18]{pieper}
Malte~Mario Pieper, \emph{Assembly maps and pseudoisotopy functors},
  Dissertation, Rheinische Friedrich-Wilhelms-Universit{\"a}t Bonn (2018).

\bibitem[Ras17]{nima}
Nima Rasekh, \emph{Yoneda lemma for simplicial spaces}, arXiv preprint
  arXiv:1711.03160 (2017).

\bibitem[Wal82]{waldhausen_manifold}
Friedhelm Waldhausen, \emph{Algebraic {$K$}-theory of spaces, a manifold
  approach}, Current trends in algebraic topology, {P}art 1 ({L}ondon, {O}nt.,
  1981), CMS Conf. Proc., vol.~2, Amer. Math. Soc., Providence, R.I., 1982,
  pp.~141--184. \MR{686115}

\bibitem[Was69]{wasserman}
Arthur~G. Wasserman, \emph{Equivariant differential topology}, Topology
  \textbf{8} (1969), 127--150. \MR{250324}

\bibitem[Whi34]{whitney}
Hassler Whitney, \emph{Analytic extensions of differentiable functions defined
  in closed sets}, Trans. Amer. Math. Soc. \textbf{36} (1934), no.~1, 63--89.
  \MR{1501735}

\bibitem[WJR13]{wjr}
Friedhelm Waldhausen, Bj{\o}rn Jahren, and John Rognes, \emph{Spaces of {PL}
  manifolds and categories of simple maps}, Annals of Mathematics Studies, vol.
  186, Princeton University Press, Princeton, NJ, 2013. \MR{3202834}

\end{thebibliography}

\begingroup%
\setlength{\parskip}{\storeparskip}

\end{document}